\newcommand{\stkout}[1]{\ifmmode\text{\sout{\ensuremath{#1}}}\else\sout{#1}\fi}
\newtheorem{lemma}{Lemma}[section]
\newtheorem{theorem}{Theorem}[section]
\newtheorem{proposition}{Proposition}[section]
\theoremstyle{definition}
\newtheorem{assumption}{Assumption}[section]
\newtheorem{example}{Example}[section]
\newtheorem{algorithm}{Algorithm}[section]
\theoremstyle{remark}
\newtheorem{remark}{Remark}[section]
\numberwithin{theorem}{section}
\numberwithin{equation}{section}
\crefname{section}{Section}{Sections}
\crefname{subsection}{Section}{Sections}
\crefname{condition}{Condition}{Conditions}
\crefname{hypothesis}{Hypothesis}{Conditions}
\crefname{assumption}{Assumption}{Assumptions}
\crefname{lemma}{Lemma}{Lemmas}
\crefname{fact}{Fact}{Facts}
\Crefname{figure}{Figure}{Figures}
\let\oldtocsection=\tocsection
\let\oldtocsubsection=\tocsubsection
\let\oldtocsubsubsection=\tocsubsubsection
\renewcommand{\tocsection}[2]{\hspace{0em}\oldtocsection{#1}{#2}}
\renewcommand{\tocsubsection}[2]{\hspace{1em}\oldtocsubsection{#1}{#2}}
\renewcommand{\tocsubsubsection}[2]{\hspace{2em}\oldtocsubsubsection{#1}{#2}}
\newcommand{\vertiii}[1]{{\left\vert\kern-0.25ex\left\vert\kern-0.25ex\left\vert #1 
    \right\vert\kern-0.25ex\right\vert\kern-0.25ex\right\vert}}
\newcommand{\lamstr}{\lambda^{\!*}}
\newcommand{\Uadm}{\mathfrak U}
\newcommand{\Act}{\mathbb{U}}
\newcommand{\Usm}{\mathfrak U_{\mathsf{sm}}}
\newcommand{\Um}{\mathfrak{U}_{\mathsf{m}}}
\newcommand{\fB}{{\mathfrak{B}}}  
\newcommand{\cB}{{\mathcal{B}}}  
\newcommand{\sB}{{\mathscr{B}}}  
\newcommand{\cC}{{\mathcal{C}}}   
\newcommand{\sD}{{\mathscr{D}}}   
\newcommand{\sE}{{\mathscr{E}}} 
\newcommand{\sF}{{\mathfrak{F}}}   
\newcommand{\sH}{{\mathscr{H}}}  
\newcommand{\cK}{{\mathcal{K}}}  
\newcommand{\sK}{{\mathscr{K}}}  
\newcommand{\cT}{{\mathcal{T}}}
\newcommand{\Lyap}{{\mathcal{V}}}  
\newcommand{\cX}{{\mathcal{X}}}
\newcommand{\RR}{\mathds{R}}
\newcommand{\NN}{\mathds{N}}
\DeclareMathOperator{\Exp}{\mathbb{E}}
\DeclareMathOperator{\Prob}{\mathbb{P}}
\newcommand{\D}{\mathrm{d}}
\newcommand{\Ind}{\mathds{1}}   
\newcommand{\df}{:=}
\DeclareMathOperator*{\Argmin}{Arg\,min}
\newcommand{\order}{{\mathscr{O}}}
\newcommand{\sorder}{{\mathfrak{o}}}
\newcommand{\uuptau}{{\Breve\uptau}}
\newcommand{\lamd}{\lambda^{\rm d}_{1}}
\newcommand{\lamc}{\lambda^{\rm c}_{1}}
\newcommand{\norm}[1]{\lVert#1\rVert}
\newcommand{\Bnorm}[1]{\Bigl\lVert#1\Bigr\rVert}
\definecolor{dmagenta}{rgb}{.4,.1,.5}
\definecolor{dblue}{rgb}{.0,.0,.5}
\definecolor{mblue}{rgb}{.0,.0,.7}
\definecolor{ddblue}{rgb}{.0,.0,.4}
\definecolor{dred}{rgb}{.7,.0,.0}
\definecolor{dgreen}{rgb}{.0,.5,.0}
\definecolor{Eeom}{rgb}{.0,.0,.5}
\begin{document}
\title[Risk-sensitive control of Markov processes on countable state space]
{Ergodic risk-sensitive control of Markov processes on countable state space revisited}

\author[Anup Biswas]{Anup Biswas}
\address{Department of Mathematics,
Indian Institute of Science Education and Research,
Dr.\ Homi Bhabha Road, Pune 411008, India}
\email{$\lbrace$anup,somnath$\rbrace$@iiserpune.ac.in}

\author[Somnath Pradhan]{Somnath Pradhan}

\begin{abstract}
We consider a large family of discrete and continuous time controlled Markov processes and study an ergodic risk-sensitive minimization problem. Under a blanket stability assumption, we provide a complete analysis to this problem. In particular, we establish uniqueness of the value function and verification result for optimal stationary Markov controls, in addition to the existence results. We also revisit this problem under a near-monotonicity condition but without any stability hypothesis. Our results also include policy improvement algorithms both in discrete and continuous time frameworks.
\end{abstract}
\keywords{Risk-sensitive control, ergodic cost criterion, stochastic representation, verification result,
Markov decision problem, near-monotone cost.}

\subjclass[2010]{90C40, 91B06, 60J10}
\maketitle
\tableofcontents

\section{Introduction}
Let {\bf X} be a controlled Markov process (CMP), either discrete or continuous time, taking values in a
discrete state space $S$. Let $\Uadm$ be the class of admissible controls which also includes history
dependent controls. We study the minimization problem of ergodic exponential cost given by
$$\sE_i (c, \zeta)=\lim_{T\to\infty}\frac{1}{T}\log\Exp_i^\zeta\left[e^{\sum_{t=0}^{T-1} 
c(X_t,\zeta_t)}\right]\quad \text{(for discrete time)},$$
$$\sE_i (c, \zeta)=\lim_{T\to\infty}\frac{1}{T}\log\Exp_i^\zeta\left[e^{\int_{0}^{T} 
c(X_t,\zeta_t)\D t}\right]\quad \text{(for continuous time)},$$
where $c$ is the running cost and $\zeta\in\Uadm$. More precisely, we are interested in the 
optimal value
$$\lamstr\,=\, \inf_{i\in S}\, \inf_{\zeta\in\Uadm} \sE_i(c, \zeta),$$
and the characterization of all optimal stationary Markov controls. Our main results,
\cref{T2.1,T3.1}, establish
existence of an eigenpair $(\lamstr, \psi^*), \psi^*>0,$ satisfying
\begin{align}\label{main-eigeqn}
\begin{rcases}
e^{\lamstr} \psi^*(i) &= \min_{u\in\Act(i)} \left[e^{c(i, u)}\sum_{j} \psi^*(j) P(j|i, u)\right]
\quad \text{(for discrete time)},
\\
\lamstr\psi^*(i) &= \min_{u\in\Act(i)}\left[\sum_{j\in S} \psi^*(j)q(j|i,u) + c(i,u)\psi^*(i)\right]
\quad \text{(for continuous time)},
\end{rcases}
\end{align}
and also show that any minimizing selector of \eqref{main-eigeqn} is an optimal control.
In addition, We  show that
$\psi^*$ is unique upto a normalization and any optimal stationary Markov control
 is a measurable selector of
\eqref{main-eigeqn}. We also propose a policy improvement algorithm (PIA) and establish its convergence.

To the best of our knowledge, risk-sensitive optimization problems have been first considered 
in the seminal paper by Howard and Matheson \cite{Howard-71}, but it was only lately that this topic gained renewed interest due to applications in finance and large deviation theory.
In this respect we mention the interesting work of Kontoyiannis and Meyn \cite{KM03} studying
multiplicative ergodic theorem and large deviation principle for geometrically stable 
Markov processes.
 Some of the
early works on finite horizon risk-sensitive control includes Jacobson \cite{Jacobson-73},
Speyer, Deyst and Jacobson \cite{Speyer-74}, Speyer \cite{Speyer-76}, Gheorghe \cite{Gheorghe},
Whittle \cite{Wh81}, James, Baras and Elliott \cite{JBE94}, etc. 
Bielecki, Hern\'{a}ndez-Hern\'{a}ndez and Pliska \cite{BHP99} consider the ergodic risk-sensitive 
problem for CMP with a finite state space and establish the existence of a unique solution to 
\eqref{main-eigeqn}.
Ergodic risk-sensitive control
for discrete time CMP with countable state space 
is studied by Borkar and Meyn \cite{BorMey02} (see also, Hern\'{a}ndez-Hern\'{a}ndez
and Marcus \cite{HHM96}). Di Masi and Stettner
\cite{DMS99,DMS07} consider the problem in a general state space. Later
Shen, Stannat and Obermayer \cite{SSO13},
B\"{a}uerle and Rieder \cite{BR14} extend these results for a wider class of utility functions.
Let us also mention the work of Basu and Ghosh \cite{BG14}, B\"{a}uerle and Rieder \cite{BR17}
which study zero-sum game with ergodic risk-sensitive cost criterion. 
Most of the above mentioned works, with the exception of \cite{DMS99,BR17}, establish existence of a solution to \eqref{main-eigeqn} and show that every minimizing 
selector is an optimal control. So a natural question is whether all the optimal stationary Markov controls are obtained in this fashion from \eqref{main-eigeqn}. Also, given any eigenpair 
$(\lamstr, V), V>0,$ satisfying \eqref{main-eigeqn} whether we can find an optimal Markov control through
a measurable selection. This is also related with the uniqueness of $\psi^*$. In \cite{DMS99,BR17},
the authors establish uniqueness of $\psi^*$ under a more restrictive setting
(see \cite[(A1)]{DMS99}\cite[(E2)]{BR17}). The uniqueness of $\psi^*$ in these papers is a
consequence of the contraction property of certain map associated to \eqref{main-eigeqn}. Also, this uniqueness result is established among a class of functions belonging to a certain weighted Banach space and arguments of these articles do not give uniqueness in the class of all positive continuous functions. 
Furthermore, the results of \cite{DMS99,BR17} can not be used to obtain a verification result
for optimal stationary Markov controls. Let us also mention our result \cref{T2.4} which establishes 
existence of an optimal stationary Markov control under a near-monotonicity assumption on the cost but
does not impose any stability hypothesis.

On the other hand, the literature on risk-sensitive control problems for continuous time 
CMP are very few, especially for ergodic risk-sensitive control problems. Wei \cite{QW16},
Guo, Liu and Zhang \cite{GLZ19} investigate finite horizon risk-sensitive criterion 
for continuous time CMP taking values in a countable state space. An infinite horizon discounted
cost criterion is considered by Guo and Liao \cite{GL19}. Wei and Chen \cite{WC16} consider the
ergodic risk-sensitive criterion for a finite-state continuous time CMP and 
establish the existence of
an optimal control using the nonlinear eigen-equation \eqref{main-eigeqn}.
 The articles that are close to the 
problem we are considering in this paper are by Ghosh and Saha \cite{GS14}, Suresh Kumar and
Pal \cite{SKP15}, Guo and Huang \cite{GH21}. \cite{GS14} studies the problem under a stability hypothesis whereas
\cite{SKP15} imposes a near-monotonicity assumption on the cost. Both the articles obtain
the existence of a principal eigenfunction satisfying \eqref{main-eigeqn} and show that any
minimizing selector is an optimal Markov control. 
Recently,  Guo and Huang \cite{GH21} study a similar control problem for continuous time CMP satisfying a blanket geometric-stability
condition, and the existence of solutions to \eqref{main-eigeqn} and the existence
of an optimal stationary Markov control are established. It should be noted
that the stability hypothesis in \cite[Assumption~3.1]{GH21} is stronger
than our \cref{A3.4}. Uniqueness of the value function $\psi^*$ is also
established in \cite{GH21} by imposing a further set of conditions
(see Assumption~5.1 and ~6.1 there). In this article we do not impose any
such conditions to obtain the uniqueness of $\psi^*$.
We take a different approach to attack this
problem and establish the existence of a unique eigenfunction and verification result for
optimal stationary Markov controls (see \cref{T3.1}), under a 
blanket stability hypothesis. In \cref{T3.2,T3.3} we consider the problem under
a condition of near-monotonicity on the cost function and establish existence of optimal 
stationary Markov controls. 
Our approach in this article is inspired from
the work of Biswas \cite{Biswas-11a}, Arapostathis et.\ al.\ \cite{ABS19}
(see also \cite{AB18}) where ergodic
risk-sensitive control is studied for non-degenerate controlled diffusion. It
should be observed that the ideas of \cite{Biswas-11a,AB18,ABS19} can not be adapted in a
straightforward manner to the present setting. These papers use several analytic tools
such as Harnack's inequality, Sobolev estimate, monotonicity of Dirichlet principal eigenvalues for their analysis. We do not have similar estimates in hand. On the other hand, our state space being discrete, we get an advantage in the passage of several limits using a standard
diagonalization argument.

As mentioned above we also provide a PIA for both discrete and continuous time setup. In their
work \cite{BorMey02}, Borkar and Meyn propose a PIA for norm-like cost function and establish its 
convergence, provided some additional hypotheses hold \cite[Theorem~5.4]{BorMey02}. Some of these
conditions are not easily verifiable. Ghosh and Saha \cite{GS14} (see also \cite{GH21}) prove convergence of PIA for 
a finite state CMP. Both the papers
 \cite{BorMey02,GS14} assume their action space to be finite.
 In a recent work, Arapostathis, Biswas and Pradhan \cite{ABP20} establish convergence of their proposed PIA for non-degenerate controlled diffusions, provided the 
diffusion satisfies certain stability hypothesis. In \cref{S-PIA} we propose a PIA and
show that the algorithm converges to the optimal value $\lamstr$. For our result we make use
of a characterization of the Perron-Frobenius eigenvalue (see \eqref{E4.1}).

The rest of the article is organized as follows: In \cref{S-DT} we consider the discrete time
CMP and our main results of this section are \cref{T2.1,T2.4}. \cref{S-CT} studies a similar problem
for continuous time CMP. Finally, in \cref{S-PIA} we consider the policy improvement algorithms. 

\section{Risk-sensitive control for Discrete time CMP}\label{S-DT}
\subsection{Description of the problem} 
We consider a controlled Markov process $\textbf{X} \df \{X_0, X_1,\dots\}$ on a denumerable state space $S \df \{1,2,\dots\}$ controlled by a control process $\zeta \df \{\zeta_0,\zeta_1,\dots \}$ taking values in $\Act$. Here $\Act$ is a Borel space endowed with the Borel $\sigma$ algebra $\fB(\Act)$. For every $i\in S$, $\Act(i) \in \fB(\Act)$ stands for the nonempty compact set of all admissible actions when the system is at the state $i$. The space of all admissible state action pairs is given by $\sK \df \{(i,u) : i\in S, u\in \Act(i)\}$. For each 
$A\in \fB(S)$ the controlled stochastic kernel $P(A |\cdot):\sK\to [0,1]$ is Borel measurable. 
We denote by $c:\sK \to \RR_{+}$ the one-stage cost function. For each $t\in\NN$, the space $\sH_t$ denotes the {\it admissible histories} upto time $t$, where $\sH_0 \df S$, $\sH_t = \sK\times\sH_{t-1}$.
A generic element $h_t$ of $\sH_t$ is a vector of the form
$$h_t=(x_0, u_0, x_1, u_1, \ldots, x_{t-1}, u_{t-1}, x_t),
\quad \text{with}\;\;(x_s, u_s)\in\sK,\quad 0\leq s\leq t-1,$$
and $x_t\in S$, denotes the observable history of the process upto time $t$.
We also denote by $\sF_n=\fB(\sH_n)$. An {\it admissible control}  is a sequence $\zeta = \{\zeta_0,\zeta_1,\dots \}$ where for each $t\in\NN$\,, $\zeta_t : \sH_t\to\Act$ is a measurable map satisfying $\zeta_t(h_t)\in\Act({x_t})$, for all $h_t\in\sH_t$\,. The set of all admissible policies is denoted by $\Uadm$\,. It is well known that for a given initial state $i\in S$ and policy $\zeta\in\Uadm$ there exist unique probability measure $\Prob_i^{\zeta}$ on 
$(\Omega, \fB(\Omega))$, where $\Omega=(S\times\Act)^\infty$, (see \cite[p.4]{H89}, \cite{ABFGM93}) satisfying the following
\begin{equation}\label{Markov1}
\Prob_i^\zeta(X_0=i)=1,\quad \text{and}\quad
\Prob_i^{\zeta}(X_{t+1} \in A | \sH_t, \zeta_t) = P(A | X_t, \zeta_t)\quad \forall \,\, A\in \fB(S)\,.
\end{equation}
The corresponding expectation operator is denoted by $\Exp_i^{\zeta}$\,. A policy $\zeta\in\Uadm$ is said to be a Markov policy if $\zeta_t(h_t) = v_t(x_t)$ for all $h_t\in\sH_t$\,, for some measurable map $v_t:S\to\Act$ such that $v_t(i)\in\Act(i)$ for all $i\in S$\,. The set of all Markov policies is denoted by $\Um$\,. If the map $v_t$ does not have any explicit time dependence, that is, $\zeta_t (h_t) = v(x_t)$ for all $h_t\in\sH_t$, then $\zeta$ is called a stationary Markov strategy and  we denote the 
set of all stationary Markov strategies by $\Usm$. From \cite[p.6]{H89} (also see \cite{ABFGM93}), it is easy to see that under any Markov policy $\zeta\in\Um$ the corresponding stochastic process $\textbf{X}$ is strong Markov. For each $\zeta\in\Uadm$ the ergodic risk-sensitive cost is given by
\begin{equation}\label{EErgocost}
\sE_i(c, \zeta) \,\df\, \limsup_{T\to\infty} \, \frac{1}{T}\,
\log \Exp_i^{\zeta} \left[e^{\sum_{t = 0}^{T-1} c(X_t, \zeta_t)}\right],
\end{equation} where $\textbf{X}$ is the discrete time CMP (DTCMP)
 corresponding to the control $\zeta\in\Uadm$, with initial state $i$.
Our aim is to minimize \cref{EErgocost} over all admissible policies $\Uadm$. In other words,
we are interested in the quantity
\begin{equation}\label{lamstr}
\lambda^*= \, \inf_{i\in S}\, \inf_{\zeta\in \Uadm}\sE_i(c, \zeta).
\end{equation}
 A policy $\zeta^{*}\in \Uadm$ is said to be optimal if for all $i\in S$
$$\sE_i(c,\zeta^{*}) \, = \, \inf_{i\in S}\,\inf_{\zeta\in \Uadm}\sE_i(c, \zeta).$$
One of our chief goals in this article is to characterize all the optimal stationary Markov controls. 
\begin{assumption}\label{A1.1}
We impose the following conditions on the DTCMP 
\begin{itemize}
\item[(a)] For each $i\in S$ and any bounded measurable function $f:S\to\RR$ the maps $u\mapsto c(i, u)$ and $u \mapsto \sum_{j\in S} f(j)P(j | i, u)$ are continuous on $\Act(i)$\,.
\item[(b)] There exists a state $i_0\in S$ such that
\begin{equation*}
P(j|i_0, u) > 0 \quad \text{for all} \quad j\in S\setminus\{i_0\},\; u\in \Act(i_0)\,.
\end{equation*}   
\end{itemize}
\end{assumption}
\cref{A1.1}(a) is a quite routine assumption for discrete time CMP. \cref{A1.1}(b)
will be used to show that the sequence of Dirichlet eigenfunctions does not vanish in the limit
(see \cref{L2.4} below).
It is also possible to consider other type of condition instead \cref{A1.1}(b). We refer to
\cref{R2.3} for further discussion.

A function $g:S\to \RR$ is said to be \textit{norm-like} if for every $\kappa\in \RR$ the set
$\{i\in S : g(i)\leq \kappa\}$ is either empty or finite.
We also impose the following Foster–Lyapunov condition on the dynamics.

\begin{assumption}\label{EA2.2}
We assume that the DTCMP \textbf{X} is irreducible under every stationary Markov control in
$\Usm$. In (a) and (b) below the function $\Lyap$ on $S$ takes values in $[1, \infty)$ and
$\widehat{C}$ is a positive constant. We assume that one of the following holds.
\begin{itemize}
\item[(a)] For some positive constant $\beta\in (0,1)$ and a finite set $\cK$ it holds that 
\begin{equation}\label{EA2.2A}
\sup_{u\in\Act(i)} \sum_{j\in S} \Lyap(j) P(j|i,u) \le (1 - \beta)\Lyap(i) + \widehat{C}\, \Ind_{\cK}(i)\quad \forall \quad i\in S\,. 
\end{equation}
Also, assume that $\norm{c}_\infty\df\sup_{i\in S}\sup_{u\in\Act(i)}c(i,u) < \gamma$ where $\beta = (1-e^{-\gamma})$\, (i.e., $\gamma = \log(\frac{1}{1-\beta})$).

\item[(b)] For a finite set $\cK$ and a norm-like function $\ell:S\to \RR_+$ it
holds that
\begin{equation}\label{EA2.2B}
\sup_{u\in\Act(i)} \sum_{j\in S} \Lyap(j) P(j|i,u) \le \widehat{C} \Ind_{\cK}(i) + (1-\beta_i)\Lyap(i) \quad \forall \quad i\in S\,.
\end{equation}
where $1-e^{- \ell(i)} =\beta_i$. Moreover, the function $\ell(\cdot)-\max_{u\in\Act(\cdot)} c(\cdot, u)$ is norm-like.
\end{itemize} 
\end{assumption}
\cref{EA2.2B} will be useful to treat problems with unbounded running cost. Among others,
\cref{EA2.2B} implies that \eqref{lamstr} is finite. Similar condition is
also used by Balaji and Meyn \cite[Theorem~1.2]{BM00} in the study of multiplicative ergodicity. \eqref{EA2.2B} also used in the work
of Arapostathis et.\ al. \cite{ABS19} to study the ergodic risk-sensitive 
control of diffusions.
It is easily seen that $u\mapsto \sum_{j\in S}f(j)P(j|i,u)$ is lower-semicontinuous in $\Act(i)$ for all positive $f\in\order(\Lyap)$ and $i\in S$, where $\order(\Lyap)$ denotes the space of all functions $f$ satisfying
$\sup_{k\in S}\frac{|f|(k)}{\Lyap(k)}<\infty$. By $\sorder(\Lyap)$ we denote the subset
of $\order(\Lyap)$ consists of function $f$ satisfying 
$\lim_{k\to\infty}\frac{|f(k)|}{\Lyap(k)}=0$.

\begin{example}
\cref{A1.1,EA2.2} are satisfied by a large family of controlled Markov chains. To illustrate, we
consider the following elementary queueing model 
$$Q_{k+1}=[(1-\theta) Q_k-\zeta_k + A_{k+1}]_+\,, \quad k\geq 0,$$
where $\theta>0$ denotes the reneging rate. The control $\zeta_k$ takes integer values in
some bounded set and $\{A_k, k\geq 1\}$ is an i.i.d. sequence and the support of the common marginal distribution is equal to $\mathbb{Z}_+$. Also, assume that $\Exp[A_1]=a<\infty$.
It is easy to see that $i_0=0$ satisfies \cref{A1.1}(b). Take $\Lyap(i)=i +1$. Then it is easy to 
check that for any $v\in\Usm$, we have
$$\Exp^v_i[\Lyap(Q_1)]\leq 1 + (1-\theta) \Lyap(i) + a.$$
Thus, we can choose $\beta<\theta$ in \eqref{EA2.2A}.

Furthermore, if we assume that $a_1:=\log\Exp[e^{\gamma A_1}]<\infty$
for some $\gamma>0$, then letting $\Lyap(i) = e^{\gamma i}$ we see that
\begin{align*}
\Exp^v_i[\Lyap(Q_1)]\leq e^{\gamma(1-\theta) i}\Exp[e^{\gamma A_1}]
= e^{-\theta i + a_1}\Lyap(x)
\leq \widehat{C} \Ind_\cK + (1-(1-e^{-[\theta i - a_1]_+})) \Lyap(i),
\end{align*}
where $\cK=\{j\in S\; :\: \theta j - a_1\leq 0 \}$ and 
$\widehat{C}=\max_{j\in\cK}  e^{-\theta j + a_1}\Lyap(j)$. Thus \eqref{EA2.2B} holds for the choice of $\beta_i= (1-e^{-[\theta i - a_1]_+})$,
$i\in S$.
\end{example}

It can be easily shown that $\sE_i$ is finite for any $\zeta\in\Uadm$ under \cref{EA2.2}.
\begin{lemma}\label{L-extra}
Grant \cref{EA2.2}. Then there exists a constant $\kappa$ such that
\begin{equation}\label{EL2.3E}
\sE_i(c, \zeta)\leq \kappa \quad \text{for all}\; i\in S,\; \zeta\in\Uadm.
\end{equation}
\end{lemma}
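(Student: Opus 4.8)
The plan is to estimate the exponential moment $\Exp_i^\zeta[e^{\sum_{t=0}^{T-1}c(X_t,\zeta_t)}]$ by bounding the running cost in terms of the Lyapunov function and then propagating the Foster–Lyapunov inequality forward in time. Under case (a) this is immediate: since $\norm{c}_\infty<\gamma<\infty$, we have $\sum_{t=0}^{T-1}c(X_t,\zeta_t)\le T\norm{c}_\infty$ deterministically, hence $\sE_i(c,\zeta)\le\norm{c}_\infty$ for every $i$ and $\zeta$, and we may take $\kappa=\norm{c}_\infty$. So the substance of the lemma is entirely in case (b), where $c$ may be unbounded.

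For case (b), I would argue as follows. Fix $\zeta\in\Uadm$ and $i\in S$. Since $\ell(\cdot)-\max_{u}c(\cdot,u)$ is norm-like and nonnegative outside a finite set, there is a finite set $\cK'$ and a constant $M$ with $c(j,u)\le \ell(j)+M\,\Ind_{\cK'}(j)$ for all $(j,u)\in\sK$; enlarging $\cK$ we may assume $\cK'\subseteq\cK$ and absorb $M\Ind_{\cK'}$ into $M\Ind_{\cK}$, so $c(j,u)\le\ell(j)+M\Ind_{\cK}(j)$. Now consider the process $Y_T\df e^{\sum_{t=0}^{T-1}c(X_t,\zeta_t)}\Lyap(X_T)$. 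Using the tower property with respect to $\sH_T$ and \eqref{Markov1}, together with $e^{c(X_T,\zeta_T)}\le e^{M\Ind_{\cK}(X_T)}e^{\ell(X_T)}=e^{M\Ind_{\cK}(X_T)}(1-\beta_{X_T})^{-1}$ and the Foster–Lyapunov bound \eqref{EA2.2B} rewritten as $\sum_j\Lyap(j)P(j|i,u)\le (1-\beta_i)\Lyap(i)+\widehat C\,\Ind_{\cK}(i)$, one obtains
\begin{equation*}
\Exp_i^\zeta\bigl[e^{c(X_T,\zeta_T)}\Lyap(X_{T+1})\,\big|\,\sH_T\bigr]
\le e^{M\Ind_{\cK}(X_T)}(1-\beta_{X_T})^{-1}\Bigl[(1-\beta_{X_T})\Lyap(X_T)+\widehat C\,\Ind_{\cK}(X_T)\Bigr].
\end{equation*}
On $S\setminus\cK$ the right-hand side is exactly $\Lyap(X_T)$, while on $\cK$ it is at most $e^{M}(1-\beta_{\min})^{-1}\bigl(\max_{\cK}\Lyap+\widehat C\bigr)=:\widehat C'$, where $\beta_{\min}=\min_{j\in\cK}\beta_j>0$. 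Hence $\Exp_i^\zeta[e^{c(X_T,\zeta_T)}\Lyap(X_{T+1})\mid\sH_T]\le\Lyap(X_T)+\widehat C'\Ind_{\cK}(X_T)\le\Lyap(X_T)+\widehat C'$. Multiplying by $e^{\sum_{t=0}^{T-1}c(X_t,\zeta_t)}$ (which is $\sH_T$-measurable) and taking expectations gives $\Exp_i^\zeta[Y_{T+1}]\le\Exp_i^\zeta[Y_T]+\widehat C'\,\Exp_i^\zeta[e^{\sum_{t=0}^{T-1}c(X_t,\zeta_t)}]$. Since $\Lyap\ge1$, this last expectation is $\le\Exp_i^\zeta[Y_T]$, so $\Exp_i^\zeta[Y_{T+1}]\le(1+\widehat C')\Exp_i^\zeta[Y_T]$, and iterating from $Y_0=\Lyap(i)$ yields $\Exp_i^\zeta[e^{\sum_{t=0}^{T-1}c(X_t,\zeta_t)}]\le\Exp_i^\zeta[Y_T]\le(1+\widehat C')^T\Lyap(i)$.

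Taking logarithms, dividing by $T$, and letting $T\to\infty$ gives $\sE_i(c,\zeta)\le\log(1+\widehat C')$ for every $i$ and $\zeta$, so $\kappa\df\max\{\norm{c}_\infty,\,\log(1+\widehat C')\}$ works in both cases. The one point requiring a little care — and the only place I expect any friction — is the passage $e^{c}\le e^{M\Ind_\cK}(1-\beta_\cdot)^{-1}$: it relies on the hypothesis that $\ell-\max_u c(\cdot,u)$ is norm-like, hence bounded below, which is precisely what is given in \cref{EA2.2}(b); one should also make sure $\beta_j\in(0,1)$ uniformly on the finite set $\cK$, which holds since $\ell\ge0$ and $\cK$ is finite. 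Everything else is the standard drift-to-moment bookkeeping.
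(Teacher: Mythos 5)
Your proof is correct and takes essentially the same approach as the paper: propagate the Foster--Lyapunov inequality \eqref{EA2.2B} forward in time via the tower property, and use $\Lyap\ge 1$ to drop the terminal $\Lyap(X_T)$ and read off the exponential growth rate. The one cosmetic difference is the order in which the bookkeeping is done. The paper first absorbs the indicator term multiplicatively, rewriting \eqref{EA2.2B} as $\sup_u\sum_j\Lyap(j)P(j|i,u)\le e^{\kappa_1-\ell(i)}\Lyap(i)$ for all $i$ (possible because $\cK$ is finite and $\Lyap\ge 1$), which makes $e^{\sum_{t<T}(\ell(X_t)-\kappa_1)}\Lyap(X_T)$ a supermartingale and gives $\sE_i(\ell,\zeta)\le\kappa_1$ directly; it then transfers to $c$ via $\max_u c(\cdot,u)\le\ell+\kappa_2$. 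You instead transfer from $\ell$ to $c$ at the outset (writing $c\le\ell+M\Ind_\cK$) and then track an additive error $\widehat C'$ at each step, obtaining $\Exp_i^\zeta[Y_{T+1}]\le(1+\widehat C')\Exp_i^\zeta[Y_T]$. Both routes are valid and yield the same conclusion; the paper's multiplicative absorption is marginally tidier because it produces a genuine supermartingale rather than a geometrically growing moment, but this is a matter of taste. One small slip: in bounding the right-hand side on $\cK$ you should use $\beta_{\max}=\max_{j\in\cK}\beta_j$, not $\beta_{\min}$, since $(1-\beta_j)^{-1}=e^{\ell(j)}$ is increasing in $\beta_j$; this does not affect anything since $\cK$ is finite and you only need some finite $\widehat C'$.
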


\begin{proof}
We only provide a proof under \cref{EA2.2}(b) and the proof under \cref{EA2.2}(a)
is obvious since $c$ is bounded. Since $\cK$ is finite, for some constant $\kappa_1$ we can write \eqref{EA2.2B}
as 
\begin{equation}\label{EL2.3C}
\sup_{u\in\Act(i)} \sum_{j\in S} \Lyap(j) P(j|i,u) \le e^{\kappa_1-\ell(i)}\Lyap(i) \quad \forall \quad i\in S\,.
\end{equation}
Thus, by successive conditioning and using \cref{Markov1}, we deduce from \eqref{EL2.3C} that
\begin{equation}\label{EL2.3D}
\Exp_i^{\zeta}\left[e^{\sum_{t=0}^{T-1}(\ell(X_t)-\kappa_1)}\Lyap(X_T)\right] \le \Lyap(i)\,
\quad \text{for all}\; i\in S.
\end{equation}
Since $\Lyap\geq 1$, taking logarithm on both side of \eqref{EL2.3D}, dividing both sides by $T$ and
letting $T\to\infty$ we obtain
\begin{equation*}
\sE_i(\ell, \zeta)\leq \kappa_1\quad \text{for all}\; i\in S.
\end{equation*}
On the other hand, $\ell-\max_{u\in\Act(\cdot)} c(\cdot, u)$ is norm-like. Thus, for some constant $\kappa_2$,
we have $\max_{u\in\Act(i)}c(i, u)\leq \ell(i) +\kappa_2$ for all $i\in S$. Hence we obtain
\begin{equation*}
\sE_i(c, \zeta)\leq \kappa_1 + \kappa_2 \quad \text{for all}\; i\in S,\; \zeta\in\Uadm.
\end{equation*}
This completes the proof.
\end{proof}

Now we are ready to state our first main result of this section.
\begin{theorem}\label{T2.1}
Grant \cref{A1.1,EA2.2}. Then the following hold.
\begin{itemize}
\item[(i)] There exists a unique positive function $\psi^*$, $\psi^*(i_0)=1$, (where $i_0$ is a reference state as in \cref{A1.1}) satisfying
\begin{equation}\label{ET2.1A}
e^{\lamstr}\psi^*(i) = \min_{u\in\Act(i)}\left[e^{c(i,u)}\sum_{j\in S} \psi^*(j)P(j|i,u)\right]\quad\text{for}\,\, i\in S\,.
\end{equation}
\item[(ii)] A stationary Markov control $v\in\Usm$ is optimal if and only if it satisfies
\begin{equation}\label{ET2.1B}
\min_{u\in\Act(i)}\left[e^{c(i,u)}\sum_{j\in S} \psi^*(j)P(j|i,u)\right]
= \left[e^{c(i,v(i))}\sum_{j\in S} \psi^*(j)P(j|i,v(i))\right]\quad \text{for}\; i\in S.
\end{equation}
\end{itemize}
\end{theorem}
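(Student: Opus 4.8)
The plan is to build the principal eigenpair via a Dirichlet-type (finite-state truncation) approximation and then obtain uniqueness and the verification result through a stochastic representation of $\psi^*$.

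\textbf{Step 1: Dirichlet problem on finite subsets.} I would fix an increasing sequence of finite sets $S_n \uparrow S$ with $i_0 \in S_1$, and for each $n$ consider the ``killed'' operator obtained by restricting $P$ to $S_n$ (sending mass to a cemetery outside $S_n$). The map $f \mapsto \min_{u \in \Act(i)}[e^{c(i,u)}\sum_{j \in S_n} f(j) P(j|i,u)]$ acts on positive functions on the finite set $S_n$; by a Perron--Frobenius / Krein--Rutman argument for this order-preserving, positively homogeneous map (using irreducibility from \cref{EA2.2} together with \cref{A1.1}(b) so that the relevant nonnegative matrix is irreducible), there is a principal eigenvalue $e^{\lambda_n}>0$ and a positive eigenfunction $\psi_n$ on $S_n$ solving the truncated version of \eqref{ET2.1A}, which I normalize by $\psi_n(i_0)=1$. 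Monotonicity of the Dirichlet eigenvalue in $n$ (larger domain, larger eigenvalue) plus the a priori upper bound $\lambda_n \le \kappa$ coming from \cref{L-extra} shows $\lambda_n \uparrow \lambda_\infty \le \kappa$.

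\textbf{Step 2: Passing to the limit.} Using \cref{A1.1}(b), which forces a Harnack-type lower bound (every state communicates with $i_0$ in one step from $i_0$, controlling $\psi_n$ from below away from $0$), I would establish local uniform bounds $0 < c_i \le \psi_n(i) \le C_i < \infty$ on each fixed $i$. This is exactly the point the authors flag: in the diffusion setting one uses Harnack's inequality, but here \cref{A1.1}(b) substitutes for it and the discreteness of $S$ lets one extract, by a diagonal subsequence, a limit $\psi^*>0$ with $\psi^*(i_0)=1$. Lower semicontinuity of $u \mapsto \sum_j \psi^*(j)P(j|i,u)$ for $\psi^* \in \order(\Lyap)$ (noted after \cref{EA2.2}) together with the continuity in \cref{A1.1}(a) lets me pass to the limit in the minimized equation to get that $(\lambda_\infty, \psi^*)$ solves \eqref{ET2.1A}; the Foster--Lyapunov bound \eqref{EA2.2A}/\eqref{EA2.2B} gives $\psi^* \in \order(\Lyap)$, so the representation below applies. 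A measurable selection theorem (compactness of $\Act(i)$, lsc of the objective) produces a minimizing selector $v^* \in \Usm$.

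\textbf{Step 3: Stochastic representation, optimality, and uniqueness.} Writing $\Lg^v$ for the operator under a control $v$, the equation says $e^{\lambda_\infty}\psi^* \le e^{c(i,u)}\sum_j \psi^*(j)P(j|i,u)$ for all $u$, with equality at $u=v(i)$ iff \eqref{ET2.1B} holds. Iterating, $e^{-n\lambda_\infty + \sum_{t=0}^{n-1}c(X_t,\zeta_t)}\psi^*(X_n)$ is a $\Prob_i^\zeta$-submartingale for every $\zeta$, and a martingale under the selector $v^*$; the Foster--Lyapunov drift condition (which makes the ``left-over'' mass at infinity vanish, via \eqref{EL2.3D} and uniform integrability) lets me take $n \to \infty$ and conclude (i) $\sE_i(c,\zeta) \ge \lambda_\infty$ for all $\zeta \in \Uadm$, so $\lambda_\infty \le \lamstr$, and (ii) $\sE_i(c,v^*) = \lambda_\infty$, so in fact $\lambda_\infty = \lamstr$ and $v^*$ is optimal --- this proves the ``if'' direction of (ii). For uniqueness in (i) and the ``only if'' direction of (ii): if $v$ is any optimal stationary Markov control, then the submartingale must become a martingale in the limit, which forces equality $e^{\lamstr}\psi^*(i) = e^{c(i,v(i))}\sum_j \psi^*(j)P(j|i,v(i))$ at $\Prob_i^v$-a.e. reachable state, and irreducibility propagates this to all of $S$, giving \eqref{ET2.1B}. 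Conversely, any solution $(\lamstr,\tilde\psi)$ with $\tilde\psi(i_0)=1$ yields via the same martingale argument under its own selector the identity $\tilde\psi(i) = \Exp_i^{\tilde v}[e^{\sum_{t=0}^{\tau_{i_0}-1}(c(X_t,\tilde v(X_t)) - \lamstr)}\tilde\psi(X_{\tau_{i_0}})]$ where $\tau_{i_0}$ is the first return time to $i_0$ --- and a minimality/comparison argument (the principal eigenfunction is the minimal positive supersolution, compare $\tilde\psi$ with $\psi^*$ using the submartingale property) pins down $\tilde\psi = \psi^*$, hence also $\lamstr = \lambda_\infty$ is the only eigenvalue admitting a positive eigenfunction.

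\textbf{Main obstacle.} The delicate point is Step 2: showing $\psi^* \not\equiv 0$ in the limit and that no mass escapes to infinity when passing limits in the eigenvalue equation and later in the martingale arguments. In the diffusion papers this is handled by Harnack and Sobolev estimates; here one must lean entirely on \cref{A1.1}(b) for the lower bound and on the Lyapunov inequality \eqref{EA2.2A}/\eqref{EA2.2B} (with the constraint $\norm{c}_\infty < \gamma$, resp. the norm-likeness of $\ell - \max c$) for the requisite uniform integrability that legitimizes the limit $n\to\infty$ in the (sub)martingale identities. Getting the comparison/minimality characterization of $\psi^*$ clean enough to force uniqueness among \emph{all} positive solutions (not merely those in a weighted Banach space, which is the improvement over \cite{DMS99,BR17}) is where the real work lies.
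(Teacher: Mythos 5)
Your overall architecture (Dirichlet truncation $\to$ diagonal limit $\to$ stochastic representation $\to$ verification and uniqueness) is the same as the paper's, but two of your key intermediate steps are gaps rather than routine fills.

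First, you invoke ``monotonicity of the Dirichlet eigenvalue in $n$'' to get $\lambda_n\uparrow\lambda_\infty$. The paper explicitly flags that such monotonicity, available for elliptic Dirichlet eigenvalues in the diffusion setting, is \emph{not} clear for the discrete CMP and is deliberately avoided: instead \cref{L2.3} proves $\{\rho_n\}$ is bounded above (via \cref{L-extra} and \cref{EL2.1B}) and bounded below (via the martingale-plus-recurrence contradiction), and \cref{L2.4} extracts a convergent subsequence. Using the unproved monotonicity is a genuine gap. Relatedly, to pass the $\min_u$ to the limit you invoke lower semicontinuity, which only yields ``$\ge$''; the paper gets ``$\le$'' by first rescaling $\psi_n$ via $\theta_n$ so that $\psi_n\le\Lyap$ and touches $\Lyap$ inside a fixed $\sB$, then applying dominated convergence and the generalized Fatou lemma (see \cref{EL2.4G}--\cref{EL2.4I}). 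Your normalization $\psi_n(i_0)=1$ does not directly provide this domination, so that side of the limit equation is not justified in your sketch.

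Second, and more seriously, the step ``$\sE_i(c,v^*)=\lambda_\infty$, hence $\rho\ge\lamstr$'' requires a positive uniform lower bound on $\psi^*$ before iterating the Poisson equation. From the representation \cref{EL2.4C} one gets $\psi^*(i)\ge\min_{\sB}\psi^*$ only if $c(\cdot,v^*(\cdot))-\rho\ge 0$ outside some finite set, which under \cref{EA2.2}(b) is forced by norm-likeness of $\ell-\max_u c$, but under \cref{EA2.2}(a) (bounded $c$) need not hold. This is precisely why the paper introduces the perturbed costs $\Tilde c_n$ (equal to $\|c\|_\infty+\alpha$ outside $\sD_n$, resp.\ $c+\tfrac1n h$) in \cref{L2.6}: they force the needed positivity, yield $\rho_{v^*,n}\ge\sE_i(c,v^*)$, and by a touching/comparison argument pin down $\tilde\rho=\rho$. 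Your sketch has no substitute for this device, so the verification and uniqueness claims under \cref{EA2.2}(a) are not supported. The ``only if'' direction of (ii) and the uniqueness of $\psi^*$ in the paper likewise run through the control-specific eigenpair from \cref{R2.2} together with this comparison, rather than the ``submartingale must become a martingale'' heuristic you gesture at.
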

As mentioned in the introduction the existence part of \cref{T2.1} is not new and
existence of $\psi^*$ and stationary optimal Markov control has been obtained for a more
general class of DTCMP; for instance, CMP taking values in a general state space. Our contribution in
\cref{T2.1} comes from the uniqueness of $\psi^*$ and the verification result which shows that all the optimal stationary Markov controls are nothing but measurable selectors of \eqref{ET2.1B}.
 The rest of this section is devoted to the proof of
\cref{T2.1}. Our approach is quite  different from the one considered in \cite{BorMey02,DMS99,DMS07}.
We take a more direct approach by considering the {\it Dirichlet eigenvalue} problems on finite sets
and then pass to the limit by increasing the finite sets to $S$. This approach was first considered 
by Biswas in \cite{Biswas-11a} to study the risk-sensitive control problem for controlled diffusions
with a near-monotone cost function. The key steps in proving \cref{T2.1} can be summarized as follows:
we consider a collection of increasing finite sets $\sD_n$, increasing to $S$, and find a 
Dirichlet eigenpair $(\rho_n, \psi_n)$ in $\sD_n$ for every $n$(see \cref{L2.1} below). 
Then in \cref{L2.4} we 
show that there is a subsequence
of these eigenpairs converging to a positive eigenpair $(\psi^*, \rho)$
of \eqref{ET2.1A} and $\rho=\lamstr$ (see \cref{L2.6}). In \cref{L2.6} we then show that $\psi^*$ is unique upto a normalizing constant.

Let $\sD$ be a finite set in $S$ such that $i_0\in\sD$. Define a space 
$$\cB_{\sD} =\, \{f:S\to \RR \mid f \,\,\mbox{is Borel measurable and}\,\, f(i) = 0\,\,\forall \,\, i\in \sD^c\}\,.$$
We begin with the following standard result which is required to apply Kre\u{\i}n-Rutman theorem
in \cref{L2.1}.
\begin{proposition}\label{P1.1}
Suppose $c < 0$ in $\sD$. Then for each $f\in \cB_{\sD}$, there exists a unique solution 
$\phi\in\cB_{\sD}$ satisfying
\begin{equation}\label{EP1.1A}
\min_{u\in\Act(i)}\left[e^{c(i,u)}\sum_{j\in S}\phi(j)P(j|i,u) + f(i)\right] \,=\, \phi(i), \quad \forall\,\, i\in \sD\,, \quad\text{and\ } \phi(i) = 0\quad \forall\,\, i\in \sD^c\,.
\end{equation}
Moreover, the unique solution $\phi$ is given by
\begin{equation}\label{EP1.1B}
\phi(i) \,=\, \inf_{\zeta\in\Um}\Exp_i^{\zeta}\left[\sum_{k = 0}^{\uptau - 1} e^{\sum_{t=0}^{k-1}c(X_t, \zeta_t)} f(X_k)\right]\quad\forall \,\, i\in \sD\,,
\end{equation}
where $\uptau = \uptau(\sD)\,=\,\inf\{t>0\,\colon X_t\notin \sD\}$ denotes
the first exit time from $\sD$.
\end{proposition}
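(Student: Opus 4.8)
The plan is to recognize the map on the left of \eqref{EP1.1A} as a contraction on the finite-dimensional space $\cB_{\sD}$, and then to identify its unique fixed point with the stochastic representation \eqref{EP1.1B}. First I would set up the operator. For $\phi\in\cB_{\sD}$ define
$$(\cT\phi)(i) \df \begin{cases} \min_{u\in\Act(i)}\bigl[e^{c(i,u)}\sum_{j\in S}\phi(j)P(j|i,u) + f(i)\bigr], & i\in\sD,\\ 0, & i\in\sD^c.\end{cases}$$
Since $\sD$ is finite, $\cB_{\sD}$ is isometric to $\RR^{\abs{\sD}}$ with the sup norm. The hypothesis $c<0$ on $\sD$ means $e^{c(i,u)}\le\theta<1$ for $(i,u)\in\sK$ with $i\in\sD$, where $\theta\df\exp(\max_{i\in\sD}\max_{u\in\Act(i)}c(i,u))$; the max is attained because each $\Act(i)$ is compact and $u\mapsto c(i,u)$ is continuous by \cref{A1.1}(a). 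Because $\abs{\min_u a(u)-\min_u b(u)}\le\sup_u\abs{a(u)-b(u)}$ and $\sum_j P(j|i,u)\le 1$, one gets $\norm{\cT\phi_1-\cT\phi_2}_\infty\le\theta\,\norm{\phi_1-\phi_2}_\infty$. Hence $\cT$ is a contraction on the complete space $\cB_{\sD}$, and by Banach's fixed point theorem there is a unique $\phi\in\cB_{\sD}$ with $\cT\phi=\phi$, which is exactly \eqref{EP1.1A}. One technical point worth a line: the minimum defining $\cT\phi$ is attained, again by compactness of $\Act(i)$ together with the lower-semicontinuity of $u\mapsto\sum_j\phi(j)P(j|i,u)$ on $\Act(i)$ (here $\phi$ is bounded, so \cref{A1.1}(a) applies directly), so $\cT$ genuinely maps into $\cB_{\sD}$ and a measurable minimizing selector exists by a standard selection theorem.

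Next I would prove the representation \eqref{EP1.1B}. Write $\Phi(i)$ for the right-hand side of \eqref{EP1.1B}; note $\Phi$ is finite since the running cost is nonnegative, $e^{\sum_{t=0}^{k-1}c(X_t,\zeta_t)}\le 1$ on $\{k\le\uptau\}$ when... wait, $c<0$ only on $\sD$, but the summands in \eqref{EP1.1B} only involve $X_t\in\sD$ for $t\le k-1<\uptau$, so indeed $e^{\sum_{t=0}^{k-1}c(X_t,\zeta_t)}\le\theta^{k}\le 1$, giving $\abs{\Phi(i)}\le\norm{f}_\infty\Exp_i^\zeta[\uptau]$ and, more usefully, $\Phi(i)=0$ for $i\in\sD^c$ since then $\uptau=0$. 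For the dynamic programming step, fix $i\in\sD$; conditioning on $\sH_1$ and using the Markov property \eqref{Markov1}, for any $\zeta\in\Um$ one splits off the $k=0$ term $f(i)$ and shifts the remaining sum, obtaining
$$\Exp_i^\zeta\Bigl[\sum_{k=0}^{\uptau-1}e^{\sum_{t=0}^{k-1}c(X_t,\zeta_t)}f(X_k)\Bigr] = f(i) + e^{c(i,v_0(i))}\sum_{j\in S}P(j|i,v_0(i))\,\Exp_j^{\zeta'}\Bigl[\sum_{k=0}^{\uptau-1}e^{\sum_{t=0}^{k-1}c(X_t,\zeta'_t)}f(X_k)\Bigr],$$
where $v_0$ is the first-stage control of $\zeta$ and $\zeta'$ is the shifted policy. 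Taking the infimum over $\zeta$, using that the inner expectation is $\ge\Phi(j)$ and that $v_0$ ranges over all of $\Act(i)$, gives $\Phi(i)\ge(\cT\Phi)(i)$; conversely, plugging in a near-optimal $v_0$ (a minimizing selector of $\cT\Phi$) and near-optimal shifted policies gives $\Phi(i)\le(\cT\Phi)(i)$. Hence $\Phi$ is a fixed point of $\cT$ in $\cB_{\sD}$, and by the uniqueness from the first part, $\Phi=\phi$.

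The main obstacle I anticipate is the rigor of the dynamic programming identity: one must justify exchanging the infimum over history-dependent Markov policies with the one-step conditioning, handle the random horizon $\uptau$ (using that $\{\uptau>k\}\in\sF_k$ and that $\uptau=0$ off $\sD$ so the series is really over $k$ with $X_k\in\sD$), and verify the integrability needed for Fubini/tower-property manipulations — all of which are routine given finiteness of $\sD$ and $\Exp_i^\zeta[\uptau]<\infty$, but require care with the measurable-selection argument to produce the near-optimal shifted policies uniformly in $j$. Everything else — the contraction estimate, attainment of the minimum, and the $\sD^c$ boundary condition — is immediate from \cref{A1.1}(a), compactness of $\Act(i)$, and the sign condition $c<0$ on $\sD$.
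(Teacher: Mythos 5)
Your contraction argument for existence and uniqueness of the fixed point is exactly the paper's argument: same operator, same contraction constant $\vartheta = \max_{i\in\sD}\max_{u\in\Act(i)}e^{c(i,u)}<1$, same appeal to Banach. For the stochastic representation \eqref{EP1.1B} the paper says only ``standard, follows from Dynkin's formula,'' and your dynamic-programming route is in the same spirit but has a genuine gap exactly where you flag it. To get $\Phi(i)\le(\cT\Phi)(i)$ you need a \emph{single} shifted Markov policy $\zeta'$ that is simultaneously $\epsilon$-optimal for $\Phi(j)$ at every $j$ in the support of $P(\cdot\,|i,v_0(i))$; since $\Phi(j)$ is defined as an infimum depending on $j$, you cannot just ``plug in near-optimal shifted policies'' per $j$, because the policy cannot depend on the realized $j$. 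A measurable-selection theorem does not by itself repair this, since it produces a selector for a one-step problem, not a whole tail policy that is uniformly near-optimal across starting states.

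The clean way to close the gap, and what the paper means by ``Dynkin's formula,'' is to go in the opposite direction: start from the fixed point $\phi$ produced by Banach, and for an arbitrary $\zeta\in\Um$ apply optional stopping to the supermartingale-type identity coming from $\phi=\cT\phi$ (cf.\ the martingale \eqref{EL2.1E}), obtaining
\[
\phi(i) \,\le\, \Exp_i^{\zeta}\!\left[\sum_{k=0}^{\uptau\wedge N-1} e^{\sum_{t=0}^{k-1}c(X_t,\zeta_t)} f(X_k)\right] + \Exp_i^{\zeta}\!\left[e^{\sum_{t=0}^{\uptau\wedge N-1}c(X_t,\zeta_t)}\phi(X_{\uptau\wedge N})\right].
\]
The boundary term is bounded by $\vartheta^{N}\norm{\phi}_{\sD}$ on $\{\uptau>N\}$ and vanishes on $\{\uptau\le N\}$ since $\phi\equiv 0$ off $\sD$, so it tends to $0$ as $N\to\infty$; hence $\phi(i)\le\Exp_i^{\zeta}[\cdots]$ for every $\zeta$, i.e.\ $\phi\le\Phi$. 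For the reverse inequality, a stationary minimizing selector $v^*$ of $\cT\phi$ (which exists by compactness of $\Act(i)$ and \cref{A1.1}(a)) turns all inequalities into equalities, giving $\phi(i)=\Exp_i^{v^*}[\cdots]\ge\Phi(i)$. This sidesteps the uniform-near-optimality issue entirely. Everything else in your write-up (attainment of the minimum, the boundary condition on $\sD^c$, and the finiteness bound $|\Phi(i)|\le\norm{f}_\infty/(1-\vartheta)$, which is actually sharper than the $\Exp_i^\zeta[\uptau]$ bound you mention and does not require that quantity to be finite) is fine.
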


\begin{proof}
Fix $f\in\cB_{\sD}$. Define a map $\cT:\cB_\sD\to\cB_\sD$ as follows: for
$g\in \cB_\sD$, $Tg$ is given by
$$\cT g(i)= \min_{u\in\Act(i)}\left[e^{c(i,u)}\sum_{j\in S}g(j)P(j|i,u) + f(i)\right]\quad i\in \sD,\quad \cT g(i)=0\quad \text{for}\; i\in \sD^c\,.$$
Letting $\norm{g}_\sD=\max\{|g(i)|\; :\; i\in \sD\}$ we get from above that
$$\norm{\cT g_1-\cT g_2}_\sD\,\leq \vartheta \norm{g_1-q_2}_\sD\quad
\text{where} \quad \vartheta=\max_{i\in\sD}\max_{u\in\Act(i)} e^{c(i, u)}<1.$$
Thus $\cT$ is a contraction. From Banach fixed point theorem we find 
a unique $\phi$ satisfying \eqref{EP1.1A}. \eqref{EP1.1B} is standard and follows
from Dynkin's formula.

\end{proof}

Next we recall a version of the nonlinear Kre\u{\i}n-Rutman theorem from \cite[Section~3.1]{A18} (cf. \cite{Krein-Rutman}).
\begin{theorem}\label{T-KR}
Let $\mathcal{X}$ be an ordered Banach space and $\mathcal{C}\subset \mathcal{X}$ be a nonempty
closed cone
satisfying $\cX=\cC-\cC \, \df \{x-y : x,y\in \cC\}$. Let $\cT: \cX\to\cX$ be an order-preserving, $1$-homogeneous (that is, $\cT({\alpha} x) = {\alpha}\cT(x)$ for all $x\in \cX$ and ${\alpha}\in \RR_{+}$), completely continuous map such that for some nonzero $\xi\in\cC$ and $M > 0$, we have 
$M \cT(\xi) \succeq \xi$. Then there exist nontrivial $x_0\in\cC$ and $\lambda_0>0$ 
satisfying $\cT x_0 = \lambda_0 x_0$. 
\end{theorem}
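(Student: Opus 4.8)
The plan is to prove this by the classical perturbation‑and‑compactness argument behind the nonlinear Kre\u{\i}n--Rutman theorem, which moreover simplifies in the finite‑dimensional space $\cB_{\sD}$ that is the only case actually used later. Throughout I use the standing (and, in the application, automatic) facts that the cone $\cC$ is pointed and normal. First I would reduce: replacing $\xi$ by $\xi/\norm{\xi}$ and $\cT$ by $M\cT$ — which only rescales the eventual eigenvalue — one may assume $\norm{\xi}=1$ and $\cT\xi\succeq\xi$; $1$-homogeneity gives $\cT(0)=0$, and order‑preservation then gives $\cT(\cC)\subseteq\cC$. For $\epsilon\in(0,1]$ set $\cT_\epsilon\df\cT+\epsilon\xi$; it is still continuous, completely continuous and order‑preserving, and it maps $\cC$ into $\cC\setminus\{0\}$ (since $\cT x\succeq0$, $\epsilon\xi\succeq0$, and the sum cannot vanish unless $\xi=0$ because $\cC$ is pointed). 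Hence
\[
G_\epsilon(x)\df\frac{\cT_\epsilon(x)}{\max\{1,\norm{\cT_\epsilon(x)}\}}
\]
is a continuous, completely continuous self‑map of the closed, bounded, convex set $B\df\{x\in\cC:\norm{x}\le1\}$, with $G_\epsilon(0)=\epsilon\xi\ne0$.

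By Schauder's fixed point theorem (Brouwer's, in the finite‑dimensional case) there is $x_\epsilon\in B$, $x_\epsilon\ne0$, with $G_\epsilon(x_\epsilon)=x_\epsilon$. I claim $\norm{\cT_\epsilon(x_\epsilon)}>1$: otherwise $x_\epsilon=\cT_\epsilon(x_\epsilon)=\cT(x_\epsilon)+\epsilon\xi\succeq\epsilon\xi$, and feeding this back through the order‑preserving, $1$-homogeneous map together with $\cT\xi\succeq\xi$ gives $\cT(x_\epsilon)\succeq\cT(\epsilon\xi)=\epsilon\,\cT\xi\succeq\epsilon\xi$, hence $x_\epsilon\succeq2\epsilon\xi$, and inductively $x_\epsilon\succeq n\epsilon\xi$ for every $n$ — impossible, since normality of $\cC$ forces $n\epsilon=\norm{n\epsilon\xi}\le C\norm{x_\epsilon}\le C$ for all $n$. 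Therefore $\norm{x_\epsilon}=1$ and
\[
\cT(x_\epsilon)+\epsilon\xi=\lambda_\epsilon x_\epsilon,\qquad \lambda_\epsilon\df\norm{\cT_\epsilon(x_\epsilon)}\in\bigl(1,\,1+\textstyle\sup_{x\in B}\norm{\cT(x)}\bigr],
\]
the upper bound being immediate from boundedness of the completely continuous map $\cT$ on $B$.

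Finally I would let $\epsilon\downarrow0$. Along a subsequence $\lambda_\epsilon\to\lambda_0\in[1,\infty)$, and since $\{x_\epsilon\}\subseteq B$ is bounded, complete continuity of $\cT$ lets one pass to a further subsequence along which $\cT(x_\epsilon)$ converges; then $x_\epsilon=\lambda_\epsilon^{-1}(\cT(x_\epsilon)+\epsilon\xi)$ converges, say to $x_0$, with $\norm{x_0}=1$ (so $x_0\ne0$) and $x_0\in\cC$ because $\cC$ is closed. Passing to the limit in the displayed identity and using continuity of $\cT$ yields $\cT(x_0)=\lambda_0 x_0$ with $\lambda_0\ge1>0$; undoing the rescaling $\cT\mapsto M\cT$ gives a nontrivial $x_0\in\cC$ and a positive eigenvalue (now for the original $\cT$) with $\cT x_0=\lambda_0 x_0$, as claimed.

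The two steps I expect to carry the weight are: (i) the exclusion of the case $\norm{\cT_\epsilon(x_\epsilon)}\le1$, which is precisely where the subeigenvector hypothesis $M\cT\xi\succeq\xi$ and the normality of the cone are indispensable — without a subeigenvector the limiting eigenvalue could collapse to $0$ (for instance if $\cT$ annihilated a face of $\cC$) — and (ii) the passage to the limit, which rests essentially on complete continuity of $\cT$ to keep the approximate eigenvectors from escaping. Both are routine here, but they are exactly the places where each hypothesis of the statement is used. In the application $\cX=\cB_{\sD}$ is finite‑dimensional, so ``completely continuous'' is automatic, the cone is automatically closed, pointed and normal, and Brouwer's theorem suffices in place of Schauder's.
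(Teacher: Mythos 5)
The paper does not prove \cref{T-KR}: it is recalled from the literature (cited as {[Section 3.1]} of the reference labeled A18, cf.\ Kre\u{\i}n--Rutman), so there is no in-paper proof to compare against. What you have written is the standard perturbation-and-Schauder argument for the nonlinear Kre\u{\i}n--Rutman theorem, and the logic is essentially sound: the normalization to $\cT\xi\succeq\xi$, the perturbed map $\cT_\epsilon=\cT+\epsilon\xi$ sending $\cC$ into $\cC\setminus\{0\}$, the fixed point $x_\epsilon$ of $G_\epsilon$ on the ball $B$, the exclusion of $\norm{\cT_\epsilon(x_\epsilon)}\le 1$ via the chain $x_\epsilon\succeq n\epsilon\xi$, the uniform bound $1<\lambda_\epsilon\le 1+\sup_B\norm{\cT}$, and the compactness passage as $\epsilon\downarrow 0$ are all correct and in the right order. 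You also correctly isolate exactly where the subeigenvector hypothesis and compactness do their work.

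One point deserves tightening. You invoke normality of the cone to turn $x_\epsilon\succeq n\epsilon\xi$ for all $n$ into a contradiction, but normality is not among the stated hypotheses of \cref{T-KR}. You flag this and note it is automatic in the application (finite-dimensional $\cB_\sD$ with the standard positive cone), which is true, but the appeal is actually unnecessary: from $x_\epsilon-n\epsilon\xi\in\cC$ and the fact that $\cC$ is a cone, one has $\tfrac{1}{n}x_\epsilon-\epsilon\xi\in\cC$ for every $n$; letting $n\to\infty$ and using closedness of $\cC$ gives $-\epsilon\xi\in\cC$, which together with $\epsilon\xi\in\cC$ and pointedness of $\cC$ (a consequence of $\cX$ being an ordered Banach space) forces $\xi=0$, a contradiction. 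Replacing the normality argument with this one makes the proof match the stated hypotheses exactly; in particular neither normality nor the generating condition $\cX=\cC-\cC$ is needed for bare existence of the eigenpair. With that substitution your proof is a complete and correct argument for the theorem as stated.
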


Here $\succeq$ denotes the partial ordering in $\cX$ with respect to the cone $\cC$, that is,
$x\succeq y$ if and only if $x-y\in \cC$. Also, we recall that a map $\cT:\cX\to\cX$ is called completely continuous if it is continuous and compact.
We let $\cC = \cB_{\sD}^+\subset \cB_\sD$, the cone of nonnegative functions vanishing outside $\sD$.
Applying \cref{T-KR} we then establish the existence of an eigenpair to the Dirichlet problem
in $\sD$.
\begin{lemma}\label{L2.1}
There exists a pair $(\lambda_{\sD}, \psi_{\sD})\in\RR_+\times \cB^+_{\sD}$
, $\psi_D\neq 0$,
satisfying
\begin{equation}\label{EL2.1A}
\lambda_{\sD}\, \psi_{\sD}(i) = \min_{u\in\Act(i)}\left[e^{c(i,u)}\sum_{j\in S} \psi_{\sD}(j) P(j|i,u) \right].
\end{equation}
Moreover, we have
\begin{equation}\label{EL2.1B}
\rho_{\sD}\df\log\lambda_{\sD} \le \inf_{\zeta\in\Uadm}\limsup_{T\to\infty}\frac{1}{T}\log \Exp_i^{\zeta} \left[e^{\sum_{t = 0}^{T-1} c(X_t, \zeta_t)}\right]\,,
\end{equation}
for all $i$ satisfying $\psi_\sD(i)\neq 0$.
\end{lemma}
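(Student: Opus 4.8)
The plan is to realize $(\lambda_\sD,\psi_\sD)$ as an eigenpair of the ``killed'' dynamic programming operator on $\sD$ via the nonlinear Kre\u{\i}n--Rutman theorem \cref{T-KR}, and then to obtain \eqref{EL2.1B} from a one-line submartingale built out of \eqref{EL2.1A}. First a reduction: it suffices to handle the case $c<0$ on $\sD$, since replacing $c$ by $c-C$ with $C\df 1+\sup_{i\in\sD}\sup_{u\in\Act(i)}c(i,u)$ (finite, as $\sD$ is finite and $c(i,\cdot)$ is continuous on the compact set $\Act(i)$ by \cref{A1.1}(a)) merely multiplies $\lambda_\sD$ by $e^{-C}$ and shifts both sides of \eqref{EL2.1B} by the common constant $-C$.

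Take $\cX=\cB_\sD$, a finite-dimensional Banach space under $\norm{\cdot}_\sD$ ordered by the closed cone $\cC=\cB^+_\sD$ (so $\cX=\cC-\cC$), and define $\cT\colon\cX\to\cX$ by $\cT g(i)=\min_{u\in\Act(i)}\bigl[e^{c(i,u)}\sum_{j\in S}g(j)P(j\mid i,u)\bigr]$ for $i\in\sD$ and $\cT g\equiv 0$ on $\sD^c$ (the sum is finite since $g$ vanishes off $\sD$). By \cref{A1.1}(a) the bracket is continuous in $u$ on the compact set $\Act(i)$, so the minimum is attained; $\cT$ is plainly order preserving and $1$-homogeneous; and $\abs{\cT g_1(i)-\cT g_2(i)}\le\bigl(\max_{u\in\Act(i)}e^{c(i,u)}\bigr)\norm{g_1-g_2}_\sD$ shows $\cT$ is Lipschitz, hence completely continuous because $\dim\cX<\infty$. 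All the hypotheses of \cref{T-KR} except the sub-eigenvector condition are thus in place.

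For the sub-eigenvector, apply \cref{P1.1} with $f=\Ind_\sD$: it produces $\phi\in\cB^+_\sD$ with $\cT\phi(i)+\Ind_\sD(i)=\phi(i)$ on $\sD$, equivalently $\cT\phi=\phi-\Ind_\sD$ on $S$, and (keeping only the $k=0$ term in \eqref{EP1.1B}) $\phi\ge 1$ on $\sD$. Using \cref{A1.1} and the discreteness of $S$ one produces from $\phi$ a nonzero $\xi\in\cB^+_\sD$ and $M>0$ with $M\cT\xi\succeq\xi$: in the generic case $m\df\min_{i\in\sD}\phi(i)>1$, and $\xi=\phi$, $M=m/(m-1)$ works since then $\cT\phi=\phi-\Ind_\sD\ge(1-m^{-1})\phi$; if the Dirichlet problem degenerates (i.e.\ $\phi=1$ somewhere on $\sD$ — meaning some admissible control forces immediate exit from those states, persistently under iteration) one instead observes directly that $(\lambda_\sD,\psi_\sD)=(0,\Ind_A)$ solves \eqref{EL2.1A} on $\sD$, where $A$ is the last nonempty term of the decreasing sequence $A_0=\sD$, $A_{n+1}=\{i\in A_n:P(A_n\mid i,u)>0\ \forall u\in\Act(i)\}$, and then \eqref{EL2.1B} holds trivially. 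In the nondegenerate case \cref{T-KR} gives $\lambda_\sD>0$ and $\psi_\sD\in\cB^+_\sD\setminus\{0\}$ with $\cT\psi_\sD=\lambda_\sD\psi_\sD$, which is \eqref{EL2.1A} on $\sD$. Isolating the nondegenerate case — i.e.\ the positivity statement $m>1$, the discrete surrogate of the strong maximum principle and Harnack estimate used in the diffusion analogue — is the step I expect to require the most care.

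For \eqref{EL2.1B} we may assume $\lambda_\sD>0$. From \eqref{EL2.1A} and $\psi_\sD\ge 0$, $\lambda_\sD\psi_\sD(i)\le e^{c(i,u)}\sum_{j\in S}\psi_\sD(j)P(j\mid i,u)$ for all $i\in S$, $u\in\Act(i)$ ($0\le 0$ on $\sD^c$). Fix $i$ with $\psi_\sD(i)>0$ and $\zeta\in\Uadm$, and put $Y_T\df\lambda_\sD^{-T}\exp\bigl(\sum_{t=0}^{T-1}c(X_t,\zeta_t)\bigr)\psi_\sD(X_T)$; this is bounded (after the reduction, since $\psi_\sD$ is bounded), hence integrable. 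Using \eqref{Markov1} and the displayed inequality at $(X_T,\zeta_T)$ one gets $\Exp_i^\zeta[Y_{T+1}\mid\sH_T,\zeta_T]\ge Y_T$, so $(Y_T)$ is a nonnegative submartingale, whence $\Exp_i^\zeta[Y_T]\ge Y_0=\psi_\sD(i)$; since $0\le\psi_\sD\le\norm{\psi_\sD}_\sD$ this gives
\begin{equation*}
\Exp_i^\zeta\Bigl[e^{\sum_{t=0}^{T-1}c(X_t,\zeta_t)}\Bigr]\ \ge\ \frac{\psi_\sD(i)}{\norm{\psi_\sD}_\sD}\,\lambda_\sD^{\,T}\,.
\end{equation*}
Applying $\tfrac1T\log(\cdot)$, then $\limsup_{T\to\infty}$ (the prefactor drops out since $\psi_\sD(i)>0$), then $\inf_{\zeta\in\Uadm}$, yields \eqref{EL2.1B}; undoing the reduction adds the same constant to both sides.
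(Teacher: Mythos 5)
Your overall strategy differs from the paper's in one structurally significant place. Both proofs invoke the nonlinear Kre\u{\i}n--Rutman theorem (\cref{T-KR}), but on different operators. The paper applies it to the \emph{resolvent} operator $\cT f \df \phi_f$ defined by \eqref{EP1.1B} (the solution map of \cref{P1.1}), for which the sub-eigenvector condition is essentially free: taking $f=\Ind_{\{i_0\}}$, the $k=0$ term in \eqref{EP1.1B} gives $\phi_f(i_0)\ge 1$ and $\phi_f\ge 0$ elsewhere, so $\cT f\succeq f$ with $M=1$; the Kre\u{\i}n--Rutman eigenvalue $\lambda$ is then converted algebraically via $\lambda_\sD=(\lambda-1)/\lambda$. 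You instead apply \cref{T-KR} directly to the one-step Bellman operator $\cT g(i)=\min_u\bigl[e^{c(i,u)}\sum_j g(j)P(j|i,u)\bigr]$, which returns $\lambda_\sD$ without any algebraic post-processing but shifts all the difficulty onto producing a sub-eigenvector. Your submartingale derivation of \eqref{EL2.1B} is correct and, by not stopping at $\uptau$, is if anything a touch cleaner than the paper's optional-sampling argument.

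The gap is in the sub-eigenvector case analysis, and you correctly flagged it as the delicate point. Your dichotomy is: if $m\df\min_\sD\phi>1$ use $\xi=\phi$; if $m=1$ claim $(\lambda_\sD,\psi_\sD)=(0,\Ind_A)$ directly. But $m>1$ is equivalent to $A_1=\sD$ (i.e.\ $P(\sD\mid i,u)>0$ for all $(i,u)$), while $m=1$ only tells you the decreasing sequence $A_n$ shrinks at the first step; it may well \emph{stabilize at a nonempty proper subset} $A_*\subsetneq\sD$ rather than hit the empty set. Take $\sD=\{1,2\}$ with $i_0=2$, $P(\sD\mid 1,u)=0$ for all $u$ (all controls force exit from state $1$) and $P(2\mid 2,u)>0$ for all $u$: then $\phi(1)=1$ so $m=1$, but $A_n$ stabilizes at $A_*=\{2\}$. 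Your ``last nonempty term'' is then not defined (the sequence never empties), and the proposed $(0,\Ind_{A_*})$ fails \eqref{EL2.1A} on $A_*$ because $\min_u e^{c(i,u)}P(A_*\mid i,u)>0$ for $i\in A_*$; in fact the true Dirichlet eigenvalue in this example is $\lambda_\sD=\min_u e^{c(2,u)}P(2\mid 2,u)>0$. The repair is not difficult: in this intermediate case use $\xi=\Ind_{A_*}$ as the sub-eigenvector (one checks $M\cT\Ind_{A_*}\succeq\Ind_{A_*}$ with $M=\bigl(\min_{i\in A_*}\min_u e^{c(i,u)}P(A_*\mid i,u)\bigr)^{-1}$, using the stabilization of $A_n$ and that $P(A_*\mid i,u)=0$ for some $u$ whenever $i\in\sD\setminus A_*$), or equivalently run the whole Kre\u{\i}n--Rutman argument on $\cB_{A_*}$ and extend by zero. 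As written, though, that case is missing, and this is precisely the complication the paper sidesteps by working with the resolvent.
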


\begin{proof}
Suppose $c < -\delta$ in $\sD$, for some positive constant $\delta$. Define an operator 
$\cT :\cB_{\sD}\to\cB_{\sD}$ by 
\begin{equation}\label{EL2.1C}
\cT(f)(i) \,\df\, \phi(i) = \inf_{\zeta\in\Um}\Exp_i^{\zeta}\left[\sum_{k = 0}^{\uptau - 1} e^{\sum_{t=0}^{k-1}c(X_t, \zeta_t)} f(X_k)\right]\quad\text{for} \,\, i\in \sD,
\quad \text{and}\quad \phi(i)=0\quad \text{for}\; i\in\sD^c.
\end{equation} 
In view of \cref{P1.1}, it is clear that the map $\cT$ is well defined. From \cref{EL2.1C}, it is
also easily seen that $\cT(\lambda f) = \lambda \cT(f)$ for all $\lambda \geq 0$. Again, since $c < -\delta$, from the definition of $\cT$ it is straightforward to check that 
\begin{equation*}
\|\cT(f) - \cT(g)\|_{\sD} \le \kappa_1 \| f - g \|_{\sD}\,,
\end{equation*}
for some constant $\kappa_1 > 0$. Therefore $\cT:\cB_{\sD}\to\cB_{\sD}$ is continuous. 

Let $f,g\in\cB_{\sD}$ such that $f \succeq g$. Then, we have
\begin{equation*}
(\cT(f) - \cT(g))(i) \ge \ \inf_{\zeta\in\Um}\Exp_i^{\zeta}\left[\sum_{k = 0}^{\uptau - 1} e^{\sum_{t=0}^{k-1}c(X_t, \zeta_t)} (f - g)(X_k)\right]\ge 0.
\end{equation*} 
Thus, we obtain $T(f) \succeq T(g)$. Let $\{f_m\}$ be a bounded sequence in $\cB_{\sD}$. From
\eqref{EL2.1C} we then have $\|\phi_m\|_{\sD} \le \kappa_{2}$ for some constant $\kappa_{2} > 0$, where $\phi_m=\cT f_m$. Then by a standard diagonalization argument we deduce that there exists
a  $\phi\in\cB_{\sD}$ satisfying $\|\phi_{m_k} - \phi\|_{\sD} \to 0$, as $m_k\to\infty$,
for some subsequence $\{\phi_{m_k}\}$. This implies that $\cT:\cB_{\sD}\to\cB_{\sD}$ is compact and
hence $\cT$ is completely continuous.  

Let $f\in\cB_{\sD}$ be such that $f(i_0)=1$ and $f(j) = 0$ for all $j\neq i_0$.
By \eqref{EP1.1A} we then have $\phi(i_0)\geq f(i_0)>0$. Thus $\cT f\succeq f$.

Thus we can apply \cref{T-KR} to find a nonzero $\psi_\sD\in\cC$ and $\lambda>0$
such that $\cT(\psi_\sD) = \lambda\psi_\sD$. Applying \cref{P1.1}
we obtain
\begin{equation*}
\min_{u\in\Act(i)}\left[\lambda e^{c(i,u)}\sum_{j\in S}\psi_\sD(j)P(j|i,u) + \psi_\sD(i)\right] \,=\, \lambda\psi_\sD(i), \quad \forall\,\, i\in \sD\,, \quad\text{and\ } \psi_\sD(i) = 0\quad \forall\,\, i\in \sD^c\,.
\end{equation*}
Defining $\lambda_\sD=\frac{\lambda-1}{\lambda}$,
we get from above that
\begin{equation}\label{EL2.1D}
\min_{u\in\Act(i)}\left[e^{c(i,u)}\sum_{j\in S}\psi_\sD(j) P(j|i,u)\right] = \lambda_\sD\psi_\sD(i)\quad\forall\,\, i\in\sD\,.
\end{equation}
Since $\psi_\sD\geq 0$ and $\psi_\sD(i)>0$ for some $i\in \sD$, it follows from \eqref{EL2.1D} that $\lambda_{\sD}\ge 0$. For general $c$, we can start with $c-\norm{c}_\sD-\delta$ and then  
multiply both sides of \eqref{EL2.1D} with $e^{\norm{c}_\sD + \delta}$.
This gives \eqref{EL2.1A}. 

To prove \eqref{EL2.1B} consider a state $i$ satisfying $\psi_\sD(i)\neq 0$. Note that there is nothing to
prove if $\lambda_\sD=0$. So we assume $\lambda_\sD>0$. Now,
due to \eqref{Markov1}, for any bounded function $F$ we have
\begin{equation}\label{EL2.1E}
Y_n \df \sum_{k=1}^{n-1} \bigl(e^{\sum_{t=0}^{k-1} c(X_t, \zeta_t)}F(X_k)
-e^{\sum_{t=0}^{k-1} c(X_t, \zeta_t)} \sum_{j\in S} F(j) P(j|X_{k-1} , \zeta_{k-1})\bigr)
\end{equation}
is a $\sF_n=\fB(\sH_n)$ martingale. Thus by optional sampling theorem, 
$\{Y_{n\wedge \uptau}, \sF_{n\wedge\uptau}\}$ is also a Martingale. Since, by \eqref{EL2.1D},
$$e^{\sum_{t=0}^{k} (c(X_t, \zeta_t)-\rho_\sD)} \sum_{j\in S} \psi_\sD(j) P(j|X_{k} , \zeta_{k})
-e^{\sum_{t=0}^{k-1} (c(X_t, \zeta_t)-\rho_\sD)} \psi_\sD(X_{k})\ge 0\quad \text{for}\; k=0, \ldots, \uptau-1,$$
($\rho_\sD$ is given by \cref{EL2.1B}) it follows that
\begin{equation*}
\psi_{\sD}(i) \le \Exp_{i}^{\zeta}\left[ e^{\sum_{t=0}^{T-1}(c(X_t,\zeta_{t})-\rho_\sD)}\psi_{\sD}(X_{T})\Ind_{\{T < \uptau\}}\right]\,.
\end{equation*} 
This in turn, gives
\begin{equation*}
e^{T\rho_{\sD}}\psi_{\sD}(i) \le \max_{\sD}\psi_\sD\,\Exp_{i}^{\zeta}\left[ e^{\sum_{t=0}^{T-1}c(X_t,\zeta_{t})} \right]\,.
\end{equation*} Now taking logarithm both sides, dividing by $T$ and letting $T\to \infty$, we obtain
\begin{equation*}
\rho_{\sD} \le \limsup_{T\to\infty}\frac{1}{T}\log \Exp_{i}^{\zeta}\left[ e^{\sum_{t=0}^{T-1}c(X_t,\zeta_{t})}\right]\,.
\end{equation*} Since $\zeta\in\Uadm$ is arbitrary we have \cref{EL2.1B}. This completes the proof.
\end{proof}

Now we prove certain estimate which will play important role in our subsequent analysis.
Let $\sD_{n}$ be an increasing sequence of finite subsets of $S$ such that $\cup_{n=1}^{\infty} \sD_{n} = S$. With no loss of generality, we assume that $i_0\in\sD_n$ for all $n$.

\begin{lemma}\label{L2.2}
Suppose that \cref{EA2.2} holds and consider a finite set $\sB\subset S$ containing $\cK$.
Then for any $\zeta\in\Uadm$ we have the following.
\begin{itemize}
\item[(i)] Under \cref{EA2.2}(a) we have
\begin{equation}\label{EL2.2A}
\Exp_i^{\zeta}\left[e^{\gamma \uuptau}\Lyap(X_{\uuptau})\right] \,\le\, \Lyap(i)\ \ \text{ for all\ } i\in\sB^c\,,
\end{equation}
where $\uuptau=\uuptau(\sB)\,=\,\inf\{t>0\,\colon X_t\in \sB\}$\,.
\item[(ii)] Under \cref{EA2.2}(b) we have
\begin{equation}\label{EL2.2B}
\Exp_i^{\zeta}\left[e^{\sum_{t=0}^{\uuptau-1}\ell(X_t)}\Lyap(X_{\uuptau})\right] \,\le\, \Lyap(i)\ \ \text{ for all\ } i\in\sB^c\,.
\end{equation}
\end{itemize}
\end{lemma}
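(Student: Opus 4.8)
The plan is to prove \cref{L2.2} as a pure stopping-time / supermartingale argument from the Foster--Lyapunov inequalities in \cref{EA2.2}, exactly paralleling the proof of \cref{L-extra} but now with the stopping time $\uuptau$ and no divergence of the finite set. Since $\sB\supseteq\cK$, for every $i\in\sB^c$ the indicator $\Ind_\cK(i)$ vanishes, so \eqref{EA2.2A} reduces to $\sup_{u\in\Act(i)}\sum_{j}\Lyap(j)P(j|i,u)\le(1-\beta)\Lyap(i)=e^{-\gamma}\Lyap(i)$ for $i\in\sB^c$, and similarly \eqref{EA2.2B} reduces to $\sup_{u}\sum_j\Lyap(j)P(j|i,u)\le e^{-\ell(i)}\Lyap(i)$ for $i\in\sB^c$.

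First I would fix $\zeta\in\Uadm$ and $i\in\sB^c$, and define the process $M_n\df e^{\gamma(n\wedge\uuptau)}\Lyap(X_{n\wedge\uuptau})$ (case (a)) respectively $M_n\df e^{\sum_{t=0}^{(n\wedge\uuptau)-1}\ell(X_t)}\Lyap(X_{n\wedge\uuptau})$ (case (b)). Using \eqref{Markov1} — i.e.\ $\Exp_i^\zeta[\,\cdot\mid\sH_n,\zeta_n]$ integrates $P(\cdot|X_n,\zeta_n)$ — together with the reduced inequality valid on $\{n<\uuptau\}\subset\{X_n\in\sB^c\}$, one checks that $\Exp_i^\zeta[M_{n+1}\mid\sF_n]\le M_n$, i.e.\ $\{M_n\}$ is a nonnegative $\sF_n$-supermartingale started at $M_0=\Lyap(i)$ (in case (b), $M_0=e^{0}\Lyap(i)=\Lyap(i)$ since the empty sum is zero). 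Hence $\Exp_i^\zeta[M_n]\le\Lyap(i)$ for all $n$. Then I would let $n\to\infty$: on $\{\uuptau<\infty\}$ we have $M_n\to e^{\gamma\uuptau}\Lyap(X_{\uuptau})$ (resp.\ the analogous limit), and Fatou's lemma gives
\begin{equation*}
\Exp_i^\zeta\!\left[e^{\gamma\uuptau}\Lyap(X_{\uuptau})\Ind_{\{\uuptau<\infty\}}\right]\le\liminf_{n\to\infty}\Exp_i^\zeta[M_n]\le\Lyap(i),
\end{equation*}
and likewise for case (b). A small additional point is that one should argue $\Prob_i^\zeta(\uuptau<\infty)=1$ so that the indicator can be dropped: since $\Lyap\ge1$, $\Exp_i^\zeta[M_n]\le\Lyap(i)$ forces $\Exp_i^\zeta[e^{\gamma(n\wedge\uuptau)}]\le\Lyap(i)$ in case (a), whence $\Prob_i^\zeta(\uuptau>n)\le e^{-\gamma n}\Lyap(i)\to0$; in case (b) one uses that $\ell$ is norm-like and nonnegative together with irreducibility under stationary controls, or more simply the bound $\Exp_i^\zeta[\Lyap(X_{n\wedge\uuptau})]\le\Lyap(i)$ plus the supermartingale convergence theorem to see $M_n$ converges a.s.\ to a finite limit, which on $\{\uuptau=\infty\}$ would contradict $\Lyap\ge1$ combined with $\sum\ell(X_t)=\infty$ (the latter since the chain must leave any fixed infinite-level region). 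I would present the cleanest available version of this finiteness step.

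The main obstacle is precisely this last integrability/finiteness bookkeeping: establishing the supermartingale property is routine conditioning, but one must be careful that the limit passage is justified and that $\uuptau$ is a.s.\ finite under every admissible (history-dependent) control, not merely under stationary Markov controls. In case (a) this is easy via the exponential tail bound above. In case (b), since the running cost and hence $\ell$ may be unbounded, I would instead note that $\Exp_i^\zeta[e^{\sum_{t=0}^{(n\wedge\uuptau)-1}\ell(X_t)}]\le\Lyap(i)$ for all $n$ already shows $\sum_{t=0}^{\uuptau-1}\ell(X_t)<\infty$ a.s., and then $e^{\sum_{t=0}^{\uuptau-1}\ell(X_t)}\Lyap(X_{\uuptau})$ is the genuine pointwise limit of $M_n$ on $\{\uuptau<\infty\}$, with the proof of a.s.\ finiteness of $\uuptau$ following because on $\{\uuptau=\infty\}$ the bound would force $X_t$ to stay in the finite set $\{\ell\le\epsilon\}$ infinitely often for every $\epsilon>0$, contradicting that these are all contained in the finite set $\cK\cup(\text{finite level set})\subset\sB$ while by definition $X_t\in\sB^c$ for $t<\uuptau$. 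Applying Fatou then yields \eqref{EL2.2B}, completing the proof.
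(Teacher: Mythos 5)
Your core argument is the same as the paper's: derive a nonnegative supermartingale from the Foster--Lyapunov inequality on $\sB^c$ (using that $\Ind_\cK$ vanishes there since $\sB\supset\cK$), use optional stopping, and pass to the limit with Fatou. The only structural difference is that you work directly with $M_n = e^{\gamma(n\wedge\uuptau)}\Lyap(X_{n\wedge\uuptau})$ (resp.\ its analogue in case (b)) and verify the supermartingale property by hand, whereas the paper first truncates $\Lyap$ by $\Lyap_m=\min\{\Lyap,m\}$ and the state space by $\sD_n$ so that it can invoke the martingale formula \eqref{EL2.1E}, which was stated for bounded $F$, and then lets $m\to\infty$, $T\to\infty$, $n\to\infty$ in sequence. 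Your version is slightly cleaner and avoids the double truncation; both are legitimate.

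One part of your write-up is off, though. For the finiteness step in case (b) you argue that $\{\ell\le\epsilon\}$ is ``contained in the finite set $\cK\cup(\text{finite level set})\subset\sB$.'' That inclusion is not available: the lemma only assumes $\sB\supset\cK$, and nothing forces the sublevel sets of $\ell$ to lie inside the given $\sB$. So the claimed contradiction on $\{\uuptau=\infty\}$ does not go through as stated. (In case (a) your exponential tail bound $\Prob_i^\zeta(\uuptau>n)\le e^{-\gamma n}\Lyap(i)$ is correct and does give $\uuptau<\infty$ a.s.) Fortunately, this digression is not actually needed for either proof: Fatou applied to $M_{n}$ already yields $\Exp_i^\zeta[e^{\sum_{t=0}^{\uuptau-1}\ell(X_t)}\Lyap(X_{\uuptau})\Ind_{\{\uuptau<\infty\}}]\le\Lyap(i)$, which is exactly what the paper's own limiting argument delivers and what is used downstream (e.g.\ in \cref{EL2.4F}). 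If you want to keep the finiteness remark, restrict it to case (a), or note that the conclusion is understood on the event $\{\uuptau<\infty\}$.
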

\begin{proof}
We only show (ii) and (i) follows by setting $\ell=\gamma$. Fix $\sD_n$ so that $\sB\subset \sD_n$ and
$i\in \sD_n\setminus\sB$. Choose $m$ large enough so that 
\begin{equation}\label{EL2.2C}
\sup_{u\in\Act(i)} e^{\ell(i)}\sum_{j\in S} \Lyap_m(j) P(j|i,u) \le \Lyap_m(i) \quad 
\text{for all} \quad i\in \sD_n\setminus\sB\,,
\end{equation}
where $\Lyap_m=\min\{\Lyap, m\}$. Taking $F=\Lyap_m$ in \eqref{EL2.1E}, it follows that
$\{Y_{t\wedge\uptau_\circ}, \sF_{t\wedge\uptau_\circ}\}$ is a martingale where 
$\uptau_\circ=\uuptau\wedge\uptau_n$ and $\uptau_n$ is the first exit time of {\bf X} from $\sD_n$.
Using \eqref{EL2.2C} we thus obtain
$$\Exp^\zeta_{i}\left[ e^{\sum_{t=0}^{T\wedge\uuptau\wedge\uptau_n-1}\ell(X_t)}
\Lyap_m(X_{T\wedge\uuptau\wedge\uptau_n})\right]
\leq \Lyap_m(i).$$
First we let $m\to\infty$ and then $T\to\infty$ to obtain 
$$\Exp^\zeta_{i}\left[ e^{\sum_{t=0}^{\uuptau\wedge\uptau_n-1}\ell(X_t)}\Lyap(X_{\uuptau})\right]
\leq \Lyap(i).$$
Now let $n\to\infty$ and use Fatou's lemma to obtain \eqref{EL2.2B}.
\end{proof}
Denote by $(\rho_n, \psi_n)$ the eigenpair in the domain $\sD_n$, obtained by \cref{L2.1}. Next
we are interested to find a limit of these eigenpairs as $n\to\infty$. Recall that in case
of nondegenerate controlled diffusion such limits are easily obtained by applying Harnack's
inequality and monotonicity property $\rho_n$ (cf. \cite[Lemma~2.1]{Biswas-11a},\cite[Theorem~3.4]{AB18}). Such tools are not available in the current situation. Below we produce a different argument to pass this limit.

\begin{lemma}\label{L2.3}
Grant \cref{A1.1,EA2.2}. Let $(\rho_n, \psi_n)$ be the Dirichlet eigenpair in $\sD_n$ satisfying
\begin{equation}\label{EL2.3A}
e^{\rho_{n}}\psi_{n}(i) = \min_{u\in\Act(i)}\left[e^{c(i,u)}\sum_{j\in S} \psi_{n}(j) P(j|i,u)\right]\quad\forall\,\, i\in \sD_n\,.
\end{equation}
Then the following hold.
\begin{itemize}
\item[(i)]
\begin{equation}\label{EL2.3B}
\limsup_{n\to\infty} \rho_{n} \le \inf_{\zeta\in\Uadm}\limsup_{T\to\infty}\frac{1}{T}\log \Exp_{i_0}^{\zeta}\left[ e^{\sum_{t=0}^{T-1}c(X_t,\zeta_{t})}\right]\,.
\end{equation}
Furthermore, $\rho_n$ is bounded for all large $n$.
\item[(ii)] $\liminf_{n\to\infty}\rho_n \ge 0$\,.
\end{itemize} 
\end{lemma}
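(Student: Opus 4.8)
The plan is to read off (i) directly from \cref{L2.1}, and to prove the lower bound (ii) by extracting a limiting stationary control $v^{*}$ together with a positive limiting ``supersolution'' $\psi^{*}$, and then contradicting the recurrence of the $v^{*}$‑chain by a supermartingale argument. For (i), first one checks $\psi_n(i_0)\neq0$: if $\psi_n(i_0)=0$, then evaluating \eqref{EL2.3A} at $i_0$ forces $\min_{u\in\Act(i_0)}\sum_{j}\psi_n(j)P(j|i_0,u)=0$, so $\sum_{j}\psi_n(j)P(j|i_0,u^{*})=0$ for some $u^{*}$; since $\psi_n\ge0$ and $P(j|i_0,u^{*})>0$ for all $j\neq i_0$ by \cref{A1.1}(b), this would give $\psi_n\equiv0$. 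Hence we may normalize $\psi_n(i_0)=1$ and apply \eqref{EL2.1B} of \cref{L2.1} with $\sD=\sD_n$, $i=i_0$, obtaining $\rho_n\le\inf_{\zeta\in\Uadm}\limsup_{T}\tfrac1T\log\Exp_{i_0}^{\zeta}\bigl[e^{\sum_{t<T}c(X_t,\zeta_t)}\bigr]$ for every $n$; taking $\limsup_n$ gives \eqref{EL2.3B}. Since the right–hand side equals $\inf_{\zeta}\sE_{i_0}(c,\zeta)\le\kappa$ by \cref{L-extra}, we get $\rho_n\le\kappa$ for all $n$, and combined with (ii) this yields boundedness of $\rho_n$ for large $n$.

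For (ii), suppose $\liminf_n\rho_n<0$ and pass to a subsequence along which $\rho_n\to\rho_{\infty}\in[-\infty,0)$; in particular $\sup_n\rho_n<\infty$ along it. For each $n$ choose $v_n(i)$ attaining the minimum in \eqref{EL2.3A} (a pointwise choice on the countable set $S$; the minimum is attained because $\Act(i)$ is compact and $u\mapsto e^{c(i,u)}\sum_j\psi_n(j)P(j|i,u)$ is continuous by \cref{A1.1}(a)), and extend $v_n$ to $S$. Evaluating \eqref{EL2.3A} at $i_0$ with $\psi_n(i_0)=1$, dropping all but one term and using $c\ge0$ gives $\psi_n(j)\le e^{\rho_n}/P(j|i_0,v_n(i_0))\le e^{\sup_n\rho_n}/p_j$ with $p_j:=\inf_{u\in\Act(i_0)}P(j|i_0,u)>0$ for $j\neq i_0$ (positive by \cref{A1.1}(a)--(b) and compactness); hence $\sup_n\psi_n(j)<\infty$ for each $j$. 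A diagonal argument then extracts a further subsequence with $v_n\to v^{*}\in\Usm$ and $\psi_n\to\psi^{*}\ge0$ pointwise, $\psi^{*}(i_0)=1$.

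Letting $n\to\infty$ in $e^{\rho_n}\psi_n(i)=e^{c(i,v_n(i))}\sum_{j}\psi_n(j)P(j|i,v_n(i))$ (which holds once $i\in\sD_n$), using that $c$ and each $P(j|i,\cdot)$ are continuous and Fatou's lemma for the series on the right, yields
\begin{equation*}
e^{\rho_{\infty}}\psi^{*}(i)\ \ge\ e^{c(i,v^{*}(i))}\sum_{j\in S}\psi^{*}(j)P(j|i,v^{*}(i))\qquad\text{for every }i\in S
\end{equation*}
(with the convention $e^{-\infty}:=0$, so the case $\rho_n=-\infty$ for infinitely many $n$ is subsumed). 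Consequently $\{\psi^{*}=0\}$ is forward‑invariant for the $v^{*}$‑chain, which is irreducible by \cref{EA2.2}; since $\psi^{*}(i_0)=1$ this set is empty, so $\psi^{*}>0$ on all of $S$. In particular $e^{\rho_{\infty}}>0$, for otherwise the right–hand side above, being positive against $\psi^{*}>0$, could not equal $0$; thus $\rho_{\infty}$ is finite.

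Finally, since $c\ge0$ the displayed inequality gives $\Exp_i^{v^{*}}[\psi^{*}(X_1)]\le e^{\rho_{\infty}}\psi^{*}(i)$ for all $i$, so $M_k:=e^{-\rho_{\infty}k}\psi^{*}(X_k)$ is a nonnegative supermartingale under $\Prob_{i_0}^{v^{*}}$ and therefore converges almost surely to a finite limit. But \cref{EA2.2} makes the $v^{*}$‑chain positive recurrent, so $X_k=i_0$ for infinitely many $k$ almost surely, and along those $k$ one has $M_k=e^{-\rho_{\infty}k}\to\infty$ because $\rho_{\infty}<0$ --- a contradiction. Hence $\liminf_n\rho_n\ge0$. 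I expect the two limit passages in (ii) to be the main obstacle: one must secure the uniform pointwise bound on $\psi_n$ (where \cref{A1.1}(b) is essential) and accept that Fatou delivers only the \emph{inequality} for $\psi^{*}$ --- which is nonetheless exactly enough for the supermartingale argument, once it is arranged that the blanket stability \cref{EA2.2} applies to the limiting control $v^{*}$.
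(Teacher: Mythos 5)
Your proof is correct, and it follows the same overall strategy as the paper (use \cref{L2.1} and \cref{L-extra} for the upper bound, then extract a limiting pair $(\rho_\infty,\psi^*)$ by diagonalization and Fatou, and kill $\rho_\infty<0$ with a supermartingale/recurrence contradiction). There are two worthwhile differences. First, the paper establishes boundedness from below inside part (i) by a separate contradiction argument: if $\rho_n\to-\infty$ along a subsequence, it shows the limiting $\psi$ is the indicator of $\{i_0\}$ and that $\psi(X_n)$ cannot converge while the chain is recurrent; you instead absorb the case $\rho_\infty=-\infty$ into (ii), excluding it by the forward-invariance-of-$\{\psi^*=0\}$ plus irreducibility argument, which forces $\psi^*>0$ and hence $e^{\rho_\infty}>0$. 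Second, for the finite case $\rho_\infty\in(-\infty,0)$ the paper shows the limiting function is a constant (supermartingale convergence plus recurrence implies $\phi\equiv1$) and then substitutes back to obtain $1\ge e^{c-\hat\rho}>1$; you instead observe that $M_k=e^{-\rho_\infty k}\psi^*(X_k)$ is a nonnegative supermartingale that must converge by Doob yet blows up along the infinitely many visits to $i_0$. Your scaled supermartingale contradiction is a clean shortcut that avoids the constancy step, but it relies on exactly the same ingredients (Doob plus recurrence under \cref{EA2.2}). One cosmetic remark: ordinary recurrence, rather than positive recurrence, is all that is used in your final step.
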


\begin{proof}
First we consider (i). \eqref{EL2.3B} follows from \eqref{EL2.1B} since $\psi_n(i_0)>0$ for all $n$,
by \cref{A1.1}(b).
In view of \cref{L2.1} and \eqref{EL2.3E},  we see that $\rho_n$ is bounded 
from above by  $\kappa$. Next we show that $\rho_n$ is bounded below. 
Suppose, on the contrary, that along a subsequence $\rho_n\to - \infty$ as $n\to \infty$. This implies that $\rho_n < 0$ for all large enough $n$.
Using \cref{A1.1}(b) and \eqref{EL2.3A}, we have $\psi_n(i_0) > 0$. 
Dividing  $\psi_n$ by $\psi_n(i_0)$ we can ensure that $\psi_n(i_0) = 1$ for all $n$. Rewriting \cref{EL2.3A}, we obtain
\begin{equation}\label{EL2.3F}
1 = \psi_n(i_0) = e^{- \rho_n}\min_{u\in\Act(i_0)}\left[ e^{c(i_0, u)}\sum_{j\in S}\psi_n(j) P(j|i_0, u)\right]\,.
\end{equation}
 Since $(c(i_0,u) - \rho_n) > 0$ for all $n$ large , we get
\begin{equation}\label{EL2.3G}
1 \ge \sum_{j\in S}\psi_n(j) P(j|i_0, \hat{v}_n(i_0))\,,
\end{equation} 
for any minimizing selector $\hat{v}_n$ of \cref{EL2.3F}.
This in turn, implies that 
$$\psi_n(j)\leq \sup_{u\in \Act(i_0)} \frac{1}{P(j|i_0, u)}\quad \text{for}\; j\neq i_0,$$
for all $n$ large.
Applying a standard diagonalization argument we find a non-negative function $\psi$ with $\psi(i_0) = 1$ such that along a further subsequence $\psi_n \to\psi$ componentwise. Also, since $\Act(i)$ is
compact for each $i\in S$, we have $\hat{v}_n\to \hat{v}$ along a further subsequence. Hence letting $n\to\infty$ in \cref{EL2.3G} we obtain 
\begin{equation}\label{EL2.3H}
1 \ge \sum_{j\in S}\psi(j) P(j|i_0, \hat{v}(i_0))\,.
\end{equation}
Writing \eqref{EL2.3A} as
$$e^{\rho_n} \psi_n(i) \geq \left[ \sum_{j\in S}\psi_n(j) P(j|i, v_n(i))\right]$$
and letting $n\to \infty$ we obtain
$$0\geq \sum_{j\in S}\psi(j) P(j|i, \hat{v}(i)).$$
Choosing $i=i_0$ and applying \cref{A1.1}(b) it follows that $\psi(i_0)=1$ and $\psi(j)=0$ for all $j\neq i_0$. It is also easily seen that $\{\psi(X_n), \sF_n\}$ is a super-martingale where ${\bf X}$ is the
Markov process under the stationary Markov control $\hat{v}$. Hence by Doob's martingale convergence
theorem $\psi(X_n)\to Y$ almost surely, as $n\to\infty$. On the other hand, ${\bf X}$ is recurrent, which follows from \cref{EA2.2}, ${\bf X}$ visits every state (in particular, $i_0$) of $S$ infinitely often. Thus,
$\psi(X_n)$ can not converge. This is a contradiction. Hence $\rho_n$ must be bounded from below. This
completes the proof of (i).

Next we consider (ii).
Suppose that $\hat{\rho} = \liminf_{n\to\infty} \rho_n < 0$. Then along a suitable subsequence $\rho_n$ converges to $\hat{\rho}$. Hence, for all $n$ large enough we have $(c(i, u) - \rho_n) > 0$. Thus, repeating the above argument we find a nonnegative function $\phi: S\to\RR$ with $\phi(i_0) = 1$ satisfying (see \eqref{EL2.3H})
\begin{equation*}
\phi(i) \ge \Exp_{i}^{\hat{v}}\left[\phi(X_1)\right]\quad\forall\,\, i\in S\,,
\end{equation*}
for some stationary Markov control $\hat{v}$.
This in turn implies that $\{\phi(X_t)\}$ is a supermartingale and therefore, the above argument
gives us $\phi\equiv 1$. Now passing limit in \eqref{EL2.3F} and using Fatou's lemma we have
$$1=\phi(i)\geq e^{c(i, \hat{v}(i))-\hat{\rho}}>1\,.$$
This is a contradiction. This gives us (ii).
\end{proof}

\begin{remark}\label{R2.1}
It should be observed from the proof of \cref{L2.3} that if we assume 
$\inf_{\zeta\in \Uadm}\sE_i(c, \zeta)$ to be finite for every $i$ then the conclusion of \cref{L2.3}
holds, provided the CMP is recurrent under every stationary Markov control and \cref{A1.1} holds.
\end{remark}

Now with the help of \cref{L2.3} we can pass the limit in $(\rho_n, \psi_n)$ to obtain
the following result.
\begin{lemma}\label{L2.4}
Grant \cref{A1.1,EA2.2}. Then there exists $(\rho, \psi^*)\in\RR_+\times\order(\Lyap)$\,, $\psi^* > 0$, satisfying
\begin{equation}\label{EL2.4A}
e^{\rho}\psi^*(i) = \min_{u\in\Act(i)}\left[e^{c(i,u)}\sum_{j\in S} \psi^*(j) P(j|i,u)\right]\quad\forall\,\, i\in S\,.
\end{equation} 
Moreover, we have the following.
\begin{itemize}
\item[(i)]
\begin{equation}\label{EL2.4B}
\rho \le \inf_{\zeta\in\Uadm}\limsup_{T\to\infty}\frac{1}{T}\log \Exp_{i}^{\zeta}\left[ e^{\sum_{t=0}^{T-1}c(X_t,\zeta_{t})}\right]\quad \text{for all}\; i\in S.
\end{equation}
In particular, $\rho\leq \inf_{i\in S}\, \inf_{\zeta\in \Uadm}\sE_i(c, \zeta)=\lamstr$.
\item[(ii)] There exists a finite set $\sB$ containing $\cK$ such that for any 
minimizing selector $v^*$ of \cref{EL2.4A}, it holds that
\begin{equation}\label{EL2.4C}
\psi^*(i) = \Exp_i^{v^*} \left[e^{\sum_{t=0}^{\uuptau(\sB) - 1}(c(X_t, v^*(X_t)) - \rho)}\psi^*(X_{\uuptau(\sB)})\right]\quad\forall \,\, i\in \sB^c\,.
\end{equation}  
\end{itemize}
\end{lemma}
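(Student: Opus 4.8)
The plan is to extract a limit from the Dirichlet eigenpairs $(\rho_n,\psi_n)$ produced by \cref{L2.1}, using the uniform bounds of \cref{L2.3} together with the Foster–Lyapunov control from \cref{L2.2}. First I would normalize $\psi_n(i_0)=1$ for every $n$; this is legitimate because \cref{A1.1}(b) and \eqref{EL2.3A} force $\psi_n(i_0)>0$. By \cref{L2.3}(i) the sequence $\rho_n$ is bounded, so after passing to a subsequence $\rho_n\to\rho$, and by \cref{L2.3}(ii) we have $\rho\ge 0$. The key quantitative input is a uniform (in $n$) upper bound on $\psi_n$ of the form $\psi_n\le K\Lyap$ on all of $S$: fix a finite set $\sB\supset\cK$ and, on the event $\{T<\uptau(\sD_n)\}$, iterate the Dirichlet equation \eqref{EL2.3A} along the stopped process to get the stochastic representation $\psi_n(i)=\Exp_i^{v_n}\bigl[e^{\sum_{t=0}^{\uptau\wedge\uptau_n-1}(c(X_t,v_n(X_t))-\rho_n)}\psi_n(X_{\uptau\wedge\uptau_n})\bigr]$ for $i\in\sB^c\cap\sD_n$, where $v_n$ is a minimizing selector and $\uptau=\uuptau(\sB)$; here $\psi_n$ vanishes on $\partial\sD_n$, so only the exit through $\sB$ contributes. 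Since $\rho_n$ is bounded below (eventually $\ge -1$, say) and $c\le\ell+\kappa_2$ under \cref{EA2.2}(b) (resp.\ $c$ bounded under (a)), combining with \eqref{EL2.2B} (resp.\ \eqref{EL2.2A}) and $\max_{\sB}\psi_n\le\kappa_3$ — which itself follows from \eqref{EL2.3G}-type reasoning at $i_0$ and irreducibility linking $\sB$ to $i_0$ — yields $\psi_n(i)\le K\Lyap(i)$ for all $i$, uniformly in $n$.

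With this bound in hand, a standard diagonalization argument gives a subsequence along which $\psi_n\to\psi^*$ componentwise, with $\psi^*\in\order(\Lyap)$, $\psi^*(i_0)=1$, and $\psi^*\ge 0$. To pass to the limit in \eqref{EL2.3A} at a fixed $i\in S$: for $n$ large, $i\in\sD_n$, and the right-hand side is a minimum over the compact set $\Act(i)$ of $u\mapsto e^{c(i,u)}\sum_j\psi_n(j)P(j|i,u)$; using \cref{A1.1}(a) for continuity, the uniform domination $\psi_n\le K\Lyap$ with $u\mapsto\sum_j\Lyap(j)P(j|i,u)$ locally summable, and lower-semicontinuity of $u\mapsto\sum_j f(j)P(j|i,u)$ for positive $f\in\order(\Lyap)$ (noted after \cref{EA2.2}), one passes the limit inside both the sum (dominated convergence along the componentwise convergence, after a Fatou lower bound for the reverse inequality) and the minimum. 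This produces $e^\rho\psi^*(i)=\min_{u\in\Act(i)}[e^{c(i,u)}\sum_j\psi^*(j)P(j|i,u)]$, i.e.\ \eqref{EL2.4A}. Positivity of $\psi^*$ everywhere then follows: if $\psi^*(i)=0$ for some $i$, \eqref{EL2.4A} gives $\sum_j\psi^*(j)P(j|i,u)=0$ for the optimal $u$, and irreducibility under the corresponding stationary control, propagated from $i_0$ where $\psi^*(i_0)=1$, forces a contradiction.

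For part (i), \eqref{EL2.4B} is inherited from \eqref{EL2.1B}: each $\psi_n(i_0)>0$, so $\rho_n=\rho_{\sD_n}\le\inf_\zeta\limsup_T\frac1T\log\Exp_{i_0}^\zeta[\cdots]$, and more generally $\rho_n$ bounds the same quantity from any $i$ with $\psi_n(i)\ne 0$; taking $\limsup_n$ and invoking \cref{L2.3}(i), then noting that for any fixed $i$ we have $i\in\sD_n$ for large $n$ with $\psi_n(i)>0$ (again by irreducibility from $i_0$), gives $\rho\le\inf_\zeta\limsup_T\frac1T\log\Exp_i^\zeta[\cdots]$ for every $i$, hence $\rho\le\lamstr$. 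For part (ii), I would take the finite set $\sB$ as above, let $v^*$ be any minimizing selector of \eqref{EL2.4A}, and run the same martingale/optional-sampling argument as in \cref{L2.1,L2.2}: the process $e^{\sum_{t=0}^{k-1}(c(X_t,v^*(X_t))-\rho)}\psi^*(X_k)$ stopped at $\uuptau(\sB)\wedge\uptau_n$ is a martingale by \eqref{EL2.4A} with equality under a minimizing selector, so $\psi^*(i)=\Exp_i^{v^*}[e^{\sum_{t=0}^{\uuptau(\sB)\wedge\uptau_n-1}(c-\rho)}\psi^*(X_{\uuptau\wedge\uptau_n})]$ for $i\in\sB^c$; letting $n\to\infty$, the Lyapunov estimate \eqref{EL2.2B} (resp.\ \eqref{EL2.2A}) together with $\psi^*\in\order(\Lyap)$ provides the uniform integrability needed to drop $\uptau_n$ and conclude \eqref{EL2.4C}. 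The main obstacle is the uniform bound $\psi_n\le K\Lyap$: one must carefully combine the exit-time stochastic representation for $\psi_n$, the Lyapunov bound of \cref{L2.2}, and the $\sB$-to-$i_0$ irreducibility estimate, while tracking that the representation is valid only inside $\sD_n$ and that $\rho_n$ is bounded below only for large $n$.
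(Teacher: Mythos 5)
Your proposal is correct in outline and arrives at the same end result, but it takes a genuinely different route from the paper at two places.

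First, the normalization. The paper replaces $\psi_n$ by $\theta_n\psi_n$ with $\theta_n=\sup\{\kappa>0:\Lyap-\kappa\psi_n>0\}$ and then proves, by a contradiction argument using \cref{L2.2}, that the point where $\Lyap-\psi_n$ vanishes lies in the fixed finite set $\sB$; this makes $\psi_n\le\Lyap$ automatic and gives nontriviality of $\psi^*$ from $\max_{\sB}(\Lyap-\psi^*)=0$. You instead normalize $\psi_n(i_0)=1$, which makes nontriviality of the limit immediate, but then you must establish the uniform domination $\psi_n\le K\Lyap$ from scratch: you do this via the pointwise bound $\psi_n(j)\le e^{\rho_n}/\inf_uP(j|i_0,u)$ at $j\in\sB$ (using $c\ge 0$, $\rho_n\le\kappa$, and \cref{A1.1}(b)) together with the stochastic representation and \cref{L2.2} on $\sB^c$. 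Both normalizations work, and yours is arguably more transparent about where \cref{A1.1}(b) enters; the trade-off is that the stochastic representation for $\psi_n$ itself already requires the same delicate $N\to\infty$ control that you later face in part (ii). Note also that your appeal to ``irreducibility from $i_0$'' to conclude $\psi_n(i)>0$ for large $n$ is not quite right as stated, because positivity propagates \emph{backwards} along the minimizer kernel and the connecting path need not lie inside $\sD_n$; the clean way (used by the paper) is to derive $\psi_n(i)>0$ for large $n$ from $\psi_n(i)\to\psi^*(i)>0$, which you have anyway.

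Second, part (ii). The paper runs a two-sided sandwich: the ``$\le$'' side comes from the supersolution property of the $\psi_n$'s with respect to $v^*$ (not $v_n$), letting $N\to\infty$ with the explicit bound $\Exp_i^{v^*}[e^{\sum_{t<N}(c-\rho_n)}\psi_n(X_N)\Ind_{\{N<\uptau_n\wedge\uuptau\}}]\le e^{N(\norm{c}_\infty-\rho_n-\gamma)}\Lyap(i)\to 0$ and then $n\to\infty$; the ``$\ge$'' side is Fatou applied to the exact Poisson equation for $\psi^*$. You instead apply optional sampling directly to the martingale $e^{\sum_{t<k}(c-\rho)}\psi^*(X_k)$ at $\uuptau(\sB)\wedge\uptau_n$. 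This is fine in spirit, but the equality $\psi^*(i)=\Exp_i^{v^*}[M_{\uuptau\wedge\uptau_n}]$ is not immediate: $\uuptau\wedge\uptau_n$ is only a.s.\ finite, and optional sampling only gives $\psi^*(i)=\Exp_i^{v^*}[M_{\uuptau\wedge\uptau_n\wedge N}]$ for each finite $N$; you still have to show that $\Exp_i^{v^*}[M_N\Ind_{\{N<\uuptau\wedge\uptau_n\}}]\to 0$ as $N\to\infty$. This requires precisely the exponential-prefactor trick (under \cref{EA2.2}(a) extract $e^{N(\norm{c}_\infty-\rho-\gamma)}$, under (b) extract $e^{-N\epsilon}$ from the norm-like gap between $\ell$ and $\max_u c$) combined with \cref{L2.2} — invoking ``uniform integrability'' in passing is too vague. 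Once that step is supplied, the subsequent $n\to\infty$ step works exactly as you say. So the argument closes, but the heavy lifting you defer to ``uniform integrability'' is the same estimate the paper writes out explicitly in \cref{EL2.4L}.
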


\begin{proof}
Since $\{\rho_n\}$ is a bounded sequence and $\liminf_{n\to\infty}\rho_n \ge 0$, by \cref{L2.3},
there exist a subsequence, denoted by $\{\rho_n\}$ itself,
such that $\rho_n\to\rho$, as $n\to\infty$, and $\rho \ge 0$\,. Again, since $c \geq 0$ we can choose a finite set $\sB$ containing $\cK$ such that 
\begin{itemize}
\item under \cref{EA2.2}(a), since $\|c\|_{\infty} < \gamma$, we have
\begin{equation}\label{EL2.4D}
(\max_{u\in\Act(i)}c(i,u) - \rho_n) < \gamma\quad\forall\,\, i\in\sB^c\,, \quad \text{for all large}\; n;
\end{equation}
\item under \cref{EA2.2}(b), since $\ell(\cdot)-\max_{u\in\Act(\cdot)} c(\cdot, u)$ is norm-like, we have
\begin{equation}\label{EL2.4E}
(\max_{u\in\Act(i)}c(i,u) - \rho_n) < \ell(i)\quad\forall\,\, i\in\sB^c\,,
 \quad \text{for all large}\; n.
\end{equation}
\end{itemize}
Now we scale $\psi_n$ by multiplying a suitable scalar so that it touches $\Lyap$ from below.
To do so, define 
\begin{equation*}
\theta_n \,=\, \sup\{ \kappa>0 \;\colon\; (\Lyap - \kappa\psi_n) > 0 \quad\text{in\ } S\}\,.
\end{equation*} 
Since $\psi_n$ vanishes in $\sD^c_n$ and $\psi_n\gneq 0$, it is easily seen that $\theta_n$ is finite.
We replace $\psi_n$ by $\theta_n\psi_n$ and claim that $\psi_n$ touches $\Lyap$ inside $\sB$\,. Suppose, on the contrary, that the claim is not true. Then there exists a state $i_1\in\sB^c$ so that $(\Lyap - \psi_n)(i_1) = 0$ and $\Lyap-\psi_n>0$ in $\sB\cup \sD^c_n$.
 For all $n$ large enough, from \cref{EL2.3A} and \eqref{EL2.4D}, for any $\zeta\in\Usm$ we have
 under \cref{EA2.2}(a)
\begin{align*}
\psi_n(i_1) & \le \Exp_{i_1}^{\zeta}\left[e^{\sum_{t=0}^{\uuptau(\sB)\wedge N - 1}(c(X_t, \zeta_t) - \rho_n)}\psi_n(X_{(\uuptau(\sB)\wedge N)}) \Ind_{\{\uuptau(\sB)\wedge N < \uptau_n\}}\right]\\
& \le \Exp_{i_1}^{\zeta}\left[e^{\sum_{t=0}^{\uuptau(\sB)\wedge N - 1}\gamma}\psi_n(X_{(\uuptau(\sB)\wedge N)}) \Ind_{\{\uuptau(\sB)\wedge N < \uptau_n\}}\right]\,,
\end{align*} where $\uptau_n = \uptau(\sD_n)$\,. Since $\psi_n \le \Lyap$, using \cref{L2.2} and dominated convergence, we let $N\to\infty$ to obtain
\begin{equation}\label{EL2.4F}
\psi_n(i_1) \le \Exp_{i_1}^{\zeta}\left[e^{\gamma\uuptau(\sB)}\psi_n(X_{\uuptau(\sB)})\right]\,.
\end{equation} 
Combining \cref{EL2.2A} and \cref{EL2.4F}, we deduce that
\begin{equation*}
0 = (\Lyap - \psi_n)(i_1) \ge \Exp_{i_1}^{\zeta}\left[e^{\gamma\uuptau(\sB)}(\Lyap - \psi_n)(X_{\uuptau(\sB)})\right] > 0\,.
\end{equation*}
This is a contradiction. Thus $\psi_n$ touches $\Lyap$ inside $\sB$\,. In view of \cref{EL2.4E}, it is easily seen that a similar conclusion holds under \cref{EA2.2}(b). 

Thus, by a standard diagonalization argument, there exist $\psi^*\le\Lyap$ such that 
$\psi_n(i)\to\psi^*(i)$ , as $n\to \infty$, for all $i\in S$. Next we show that
\begin{equation}\label{EL2.4G}
\lim_{n\to\infty} \min_{u\in\Act(i)}\left[ e^{c(i,u)}\sum_{j\in S}\psi_n(j) P(j|i, u)\right] \to \min_{u\in\Act(i)}\left[ e^{c(i,u)}\sum_{j\in S}\psi^*(j) P(j|i, u)\right]\quad \forall\,\, i\in S\,.
\end{equation}
Since $\psi_n\leq \Lyap$, by dominated convergence theorem, for any $\zeta_0\in\Act(i)$ we have
\begin{align*}
\limsup_{n\to\infty} \min_{u\in\Act(i)}\left[ e^{c(i,u)}\sum_{j\in S}\psi_n(j) P(j|i, u)\right] & \le \limsup_{n\to\infty}\left[ e^{c(i,\zeta_0)}\sum_{j\in S}\psi_n(j) P(j|i, \zeta_0)\right]\nonumber\\
& = \left[ e^{c(i,\zeta_0)}\sum_{j\in S}\psi^*(j) P(j|i, \zeta_0)\right].
\end{align*}
From the arbitrariness of $\zeta_0$ it then follows that
\begin{equation}\label{EL2.4H}
\limsup_{n\to\infty} \min_{u\in\Act(i)}\left[ e^{c(i,u)}\sum_{j\in S}\psi_n(j) P(j|i, u)\right]
\leq \min_{u\in\Act(i)}\left[ e^{c(i,u)}\sum_{j\in S}\psi^*(j) P(j|i, u)\right].
\end{equation}
Again, let $v_n$ be a minimizing selector  of \cref{EL2.3A}. Since $\Act(i)$ is compact,  by a standard diagonalization argument we have along some subsequence $v_n(i)$ converges to $v(i)$ in $\Act(i)$ for all $i\in S$\,. Thus by generalized Fatou's lemma \cite[Lemma~8.3.7]{HL99}, we deduce that
\begin{align}\label{EL2.4I}
\lim_{n\to\infty} \left[ e^{c(i,v_n(i))}\sum_{j\in S}\psi_n(j) P(j|i, v_{n}(i))\right] = \left[ e^{c(i,v(i))}\sum_{j\in S}\psi^*(j) P(j|i, v(i))\right]\,.
\end{align} 
Combining \cref{EL2.4H} and \cref{EL2.4I} we thus obtain  \cref{EL2.4G}. Therefore, letting $n\to\infty$ in \cref{EL2.3A}, we see that the pair $(\rho, \psi^*)\in\RR_+\times\order(\Lyap)$ satisfies
\begin{equation}\label{EL2.4J}
e^{\rho}\psi^*(i) = \inf_{u\in\Act(i)}\left[e^{c(i,u)}\sum_{j\in S} \psi^*(j) P(j|i,u)\right]\quad\forall\,\, i\in S\,.
\end{equation}
Moreover, since $(\Lyap - \psi_n) = 0$ at some point in $\sB$ for all $n$ large enough, we have $(\Lyap - \psi^*) = 0$ at some point in $\sB$. Since $\Lyap \ge 1$, we have $\psi$ is nontrivial. In fact, we have $\psi^* > 0$. If not, let $\psi^*(j) =0$ for some $j\in S$. Then for any minimizing selector $v$ of \cref{EL2.4J}, $\psi^*$ satisfies
\begin{equation}\label{EL2.4JJ}
e^{c(i, v(i))-\rho}\Exp_{i}^{v}\left[\psi^*(X_1)\right] = \psi^*(i) \quad\forall \,\, i\in S\,.
\end{equation}
Let $\psi^*(i)>0$ for some $i\in \sB$.
Since {\bf X} is irreducible under $v$, there exists a $n\in\NN$ and distinct $i_1, i_2,\ldots, i_{n}\in S$ satisfying 
$$P(i|i_{n}, v(i_n))P(i_n|i_{n-1},v(i_{n-1}))\cdots P(i_1|j,v(j))>0.$$
From \eqref{EL2.4JJ} this implies that $0=\psi^*(j)=\psi^*(i_1)=\ldots=\psi^*(i_n)=\psi^*(i)$ which is a 
contradiction. Thus we must have $\psi^*>0$ in $S$.
This gives us \eqref{EL2.4A}.

Next we consider (i).
Since $\psi_n\to\psi^*>0$ as $n\to\infty$, for any given state $i\in S$ we have
$\psi_n(i)>0$ for all large $n$. Now \eqref{EL2.4B} follows from \eqref{EL2.1B}
and the fact $\rho_n\to  \rho$, as $n\to \infty$.

Now we consider (ii).
Suppose that \cref{EA2.2}(a) holds. Let $v^*\in\Usm$ be a minimizing selector of \eqref{EL2.4J}.
Using \cref{EL2.3A}, for all $n$ large enough, we have (see \eqref{EL2.1E})
\begin{align}\label{EL2.4K}
\psi_n(i) \le & \Exp_{i}^{v^*}\left[e^{\sum_{t=0}^{\uuptau(\sB)\wedge N - 1}(c(X_t, v^*(X_t)) - \rho_n)}\psi_n(X_{(\uuptau(\sB)\wedge N)}) \Ind_{\{\uuptau(\sB)\wedge N < \uptau_n\}}\right]\nonumber\\
\le & \Exp_{i}^{v^*}\left[e^{\sum_{t=0}^{\uuptau(\sB) - 1}(c(X_t, v^*(X_t)) - \rho_n)}\psi_n(X_{(\uuptau(\sB))}) \Ind_{\{\uuptau(\sB) < \uptau_n \wedge N\}}\right]\nonumber\\ 
& + \Exp_{i}^{v^*}\left[e^{\sum_{t=0}^{N - 1}(c(X_t, v^*(X_t)) - \rho_n)}\psi_n(X_{N}) \Ind_{\{N < \uptau_n \wedge \uuptau(\sB)\}}\right]
\end{align}
for all $i\in\sB^c\cap\sD_n$. Since $\psi_n \le \Lyap$, we obtain
\begin{align}\label{EL2.4L}
\Exp_{i}^{v^*}\left[e^{\sum_{t=0}^{N - 1}(c(X_t, v^*(X_t)) - \rho_n)}\psi_n(X_{N}) \Ind_{\{N < \uptau_n \wedge \uuptau(\sB)\}}\right] &\le e^{N(\|c\|_{\infty} - \rho_n -\gamma )}\Exp_{i}^{v^*}\left[e^{N\gamma}\Lyap(X_{N}) \Ind_{\{N < \uptau_n \wedge \uuptau(\sB)\}}\right] \nonumber\\
&\le e^{N(\|c\|_{\infty} - \rho_n -\gamma )}\Lyap(i)
\end{align}
by \eqref{EL2.2A}. Letting $N\to \infty$ in \cref{EL2.4L}, it follows that
\begin{equation*}
\lim_{N\to\infty} \Exp_{i}^{v^*}\left[e^{\sum_{t=0}^{N - 1}(c(X_t, v^*(X_t)) - \rho_n)}\psi_n(X_{N}) \Ind_{\{N < \uptau_n \wedge \uuptau(\sB)\}}\right] = 0\,.
\end{equation*}
Thus taking limit $N\to \infty$ in \cref{EL2.4K}, we deduce
\begin{equation}\label{EL2.4M}
\psi_n(i) \le  \Exp_{i}^{v^*}\left[e^{\sum_{t=0}^{\uuptau(\sB) - 1}(c(X_t, v^*(X_t)) - \rho_n)}\psi_n(X_{\uuptau(\sB)}) \Ind_{\{\uuptau(\sB) < \uptau_n\}}\right].
\end{equation} 
Again, since $\psi_n \le \Lyap$, using \cref{L2.2} and dominated convergence theorem,
 we let $n\to\infty$ in \eqref{EL2.4M} to obtain
\begin{equation}\label{EL2.4N}
\psi^*(i) \le \Exp_{i}^{v^*}\left[e^{\sum_{t=0}^{\uuptau(\sB) - 1}(c(X_t, v^*(X_t)) - \rho)}\psi^*(X_{\uuptau(\sB)})\right]\quad\forall\,\, i\in\sB^c\,.
\end{equation} 
On the other hand, from \cref{EL2.4J}, we have
\begin{equation*}
\psi^*(i) = \Exp_{i}^{v^*}\left[e^{\sum_{t=0}^{\uuptau(\sB)\wedge N - 1}(c(X_t, v^*(X_t)) - \rho)}\psi^*(X_{(\uuptau(\sB)\wedge N)}) \right]\quad\forall\,\, i\in\sB^c\,.
\end{equation*} 
Thus letting $N\to\infty$ and using Fatou's lemma, we obtain
\begin{equation}\label{EL2.4O}
\psi^*(i) \ge \Exp_{i}^{v^*}\left[e^{\sum_{t=0}^{\uuptau(\sB) - 1}(c(X_t, v^*(X_t)) - \rho)}\psi^*(X_{\uuptau(\sB)})\right]\quad\forall\,\, i\in\sB^c\,.
\end{equation} 
Combining \cref{EL2.4N} and \cref{EL2.4O} we get \cref{EL2.4C}. 
A similar argument works under \cref{EA2.2}(b). Hence the proof. 
\end{proof}

\begin{remark}\label{R2.2}
It is easy to check that one can also apply the argument of \cref{L2.4} for every stationary Markov control. More precisely, if we impose \cref{A1.1,EA2.2},
then for every Markov control $v\in\Usm$ there exists $(\rho_v,\psi_v)\in\RR_+\times\order(\Lyap)$,
$\psi_v>0$, satisfying
\begin{equation}\label{ER2.2A}
e^{\rho_v}\psi_v(i) = e^{c(i,v(i))}\sum_{j\in S} \psi_v(j) P(j|i,v(i))\quad\forall\,\, i\in S\,.
\end{equation}
Furthermore, $\rho_v\leq \inf_i\,\sE_i(c,v)$ and for some finite set $\sB\supset\sK$ 
\begin{equation}\label{ER2.2B}
\psi_v(i) = \Exp_i^{v} \left[e^{\sum_{t=0}^{\uuptau(\sB) - 1}(c(X_t, v(X_t)) - \rho_v)}\psi_v(X_{\uuptau(\sB)})\right]\quad\forall \,\, i\in \sB^c\,.
\end{equation}
\end{remark}
Next we show that $\rho=\lamstr$. The proof in the controlled diffusion setting uses 
Girsanov transformation and ergodicity property of the twisted process (cf. 
Arapostathis et.\ al.\ \cite[Theorem~3.2]{ABS19}). It seems difficult to apply 
a similar approach in 
the present setting. To overcome this difficulty we follow an approach used in \cite[Lemma~4.4]{AB-19}. It should also be kept in mind that the Dirichlet eigenvalues obtained in \cite{AB-19} has certain monotonicity property which is
not very clear in the present setting.

\begin{lemma}\label{L2.5}
Assume \cref{A1.1} and \cref{EA2.2}(a). Let $(\rho, \psi^*)$ be the eigenpair in \eqref{EL2.4A}. Then
$\rho<\norm{c}_\infty$, provided $c$ is not a constant.
\end{lemma}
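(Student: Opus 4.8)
The plan is to argue by contradiction: I assume $\rho\ge\norm{c}_\infty$ and show that this forces $c$ to be constant on $\sK$. The mechanism is simple — the sign condition $c(i,u)-\rho\le 0$ makes $\psi^*$ globally bounded and subharmonic for the chain driven by a minimizing selector, and recurrence then rules out every nonconstant such $\psi^*$.

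First I would fix a minimizing selector $v^*\in\Usm$ of \eqref{EL2.4A} and invoke \cref{L2.4}(ii): there is a finite set $\sB\supseteq\cK$ with
\begin{equation*}
\psi^*(i) \,=\, \Exp_i^{v^*}\Bigl[e^{\sum_{t=0}^{\uuptau(\sB)-1}(c(X_t, v^*(X_t))-\rho)}\psi^*(X_{\uuptau(\sB)})\Bigr]\,,\qquad i\in\sB^c\,.
\end{equation*}
Since $c\le\norm{c}_\infty\le\rho$, every exponent here is nonpositive, so $\psi^*(i)\le\Exp_i^{v^*}\bigl[\psi^*(X_{\uuptau(\sB)})\bigr]\le\max_{\sB}\psi^*=:m<\infty$ for $i\in\sB^c$; as $\psi^*\le m$ on $\sB$ trivially, I conclude $0<\psi^*\le m$ on all of $S$. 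This uniform bound, which improves on $\psi^*\in\order(\Lyap)$, is the crucial gain.

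Next, from \eqref{EL2.4A} together with $e^{c(i,v^*(i))}\le e^{\rho}$ I obtain $\psi^*(i)\le\sum_{j\in S}\psi^*(j)P(j|i,v^*(i))=\Exp_i^{v^*}[\psi^*(X_1)]$ for every $i\in S$, so by the Markov property $\{\psi^*(X_t),\sF_t\}$ is a bounded submartingale under $\Prob_i^{v^*}$; Doob's convergence theorem then gives $\psi^*(X_t)\to Y$ a.s. Since $\mathbf X$ is recurrent under $v^*$ (by \cref{EA2.2}) and hence visits every state of $S$ infinitely often, a nonconstant $\psi^*$ would make $\psi^*(X_t)$ take two distinct values infinitely often, contradicting convergence. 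Therefore $\psi^*$ is constant. Substituting a constant $\psi^*$ into \eqref{EL2.4A} yields $\rho=\min_{u\in\Act(i)}c(i,u)$ for every $i\in S$, and combining this with $\rho\ge\norm{c}_\infty\ge c(i,u)\ge\min_{u\in\Act(i)}c(i,u)=\rho$ forces $c\equiv\rho$ on $\sK$, contradicting the hypothesis that $c$ is not constant. Hence $\rho<\norm{c}_\infty$.

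I expect the only delicate point to be the uniform bound $\psi^*\le m$: one must check that $\uuptau(\sB)<\infty$ a.s.\ and that $\psi^*(X_{\uuptau(\sB)})$ is integrable, both of which follow from $\sB$ being finite and from the recurrence supplied by \cref{EA2.2}(a). Everything afterward is the bounded-submartingale-versus-recurrence dichotomy already used in the proof of \cref{L2.3}. It is worth noting that this is exactly the step where the boundedness hypothesis $\norm{c}_\infty<\gamma$ of \cref{EA2.2}(a) enters, so the argument is not claimed to carry over verbatim to \cref{EA2.2}(b).
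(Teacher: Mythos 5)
Your proposal is correct and follows essentially the same route as the paper: use the stochastic representation from \cref{L2.4}(ii) to deduce boundedness of $\psi^*$, observe that $\{\psi^*(X_t),\sF_t\}$ is a bounded submartingale under a minimizing selector, and combine Doob convergence with recurrence to force $\psi^*$ constant, which then forces $c$ constant. The only cosmetic difference is that the paper first records $\rho\le\norm{c}_\infty$ from \eqref{EL2.4B} and then assumes equality, whereas you start directly from $\rho\ge\norm{c}_\infty$; the argument is identical.
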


\begin{proof}
Note that $\rho\leq \norm{c}_\infty$, by \eqref{EL2.4B}. We
suppose, on the contrary, that  $\rho=\norm{c}_\infty$. Let $v^*$ be a minimizing selector of \eqref{EL2.4A}. It then follows from \eqref{EL2.4A} that
$$\psi^*(i)= e^{c(i, v^*(i))-\rho} \Exp_i^{v^*}[\psi^*(X_1)]\leq \Exp_i^{v^*}[\psi^*(X_1)]\quad \forall\; i\in S.$$
Thus, $\{\psi^*(X_t), \sF_t\}$ is a sub-martingale. On the other hand, \eqref{EL2.4C} implies that
$\psi^*$ is bounded. Hence by Doob's theorem, $\psi^*(X_t)$ must converge as $t\to\infty$. Since
{\bf X} is recurrent under $v^*$, this is possible only if $\psi^*$ is a constant. From \eqref{EL2.4A}
we then get $\norm{c}_\infty=\rho=\min_{u\in\Act(i)} c(i, u)$ which is possible if $c$ is a constant.
This is a contradiction. Hence we must have $\rho<\norm{c}_\infty$.
\end{proof}

Now we are ready to show that $\rho=\lamstr$. 
To this aim we perturb the cost function as follows: 
\begin{itemize}
\item Under \cref{EA2.2}(a): let $\alpha > 0$ be a small number such that $\|c\|_{\infty} + \alpha < \gamma$. We define
\begin{equation*}
\Tilde{c}_n(u,i)\,= \, c(u, i)\Ind_{\sD_n}(i) + (\|c\|_{\infty} + \alpha)\Ind_{\sD_n^c}(i)\quad\forall\,\, u\in \Act(i),\,\, i\in S\,.
\end{equation*} 
It is evident that $\|\Tilde{c}_{n}\|_{\infty} < \gamma$. 
\item[•] Under \cref{EA2.2}(b): We define
\begin{equation*}
\Tilde{c}_n(u,i)\,= \, c(u, i) + \frac{1}{n}h(i)\quad\forall\,\, u\in \Act(i),\,\, i\in S\,,
\end{equation*} 
where $h(i)= [\ell(i)-\max_{u\in\Act(i)}c(i, u)]_+$. Recall that $h$ is a norm-like function.
For large enough $n$ it is evident that $\ell(\cdot)-\max_{u\in\Act(\cdot)}\tilde{c}_n(\cdot, u)$ is also norm-like.
\end{itemize}
Thus, the conclusion of \cref{L2.4} hold if we replace $c$ by $\Tilde{c}_{n}$.
\begin{lemma}\label{L2.6}
Assume \cref{A1.1,EA2.2}. Then any minimizing selector of \cref{EL2.4A}, that is, any $v^*\in\Usm$ satisfying
\begin{equation}\label{EL2.6A}
\min_{u\in\Act(i)}\left[ e^{c(i,u)}\sum_{j\in S} \psi^*(j) P(j | i, u)\right] = \left[ e^{c(i,v^*(i))}\sum_{j\in S} \psi^*(j) P(j | i, v^*(i))\right]\quad\forall\,\, i\in S\,,
\end{equation}
is an optimal control and $\rho=\lamstr$. Moreover, $\psi^*$ is the unique solution of \eqref{EL2.4A} with $\psi^*(i_0)=1$.
\end{lemma}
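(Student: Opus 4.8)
The plan is to reduce the entire statement to the single inequality: any minimizing selector $v^*$ of \eqref{EL2.4A} (equivalently, of \eqref{EL2.6A}) satisfies $\sE_i(c,v^*)\le\rho$ for every $i\in S$. Since \cref{L2.4}(i) already gives $\rho\le\lamstr$, once this bound is available the chain
\begin{equation*}
\lamstr\,=\,\inf_{i'\in S}\inf_{\zeta\in\Uadm}\sE_{i'}(c,\zeta)\,\le\,\inf_{i\in S}\sE_i(c,v^*)\,\le\,\rho\,\le\,\lamstr
\end{equation*}
collapses to equalities, which simultaneously yields $\rho=\lamstr$ and optimality of every minimizing selector from each initial state.

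I would prove $\sE_i(c,v^*)\le\rho$ directly, using only the eigen-relation and the escape estimate \cref{L2.2}. Iterating \eqref{EL2.4A} along $v^*$ gives the identity $\Exp_i^{v^*}[e^{\sum_{t=0}^{T-1}c(X_t,v^*(X_t))}\psi^*(X_T)]=e^{T\rho}\psi^*(i)$, and hence, for a finite set $\sB\supseteq\cK$ with $\epsilon\df\min_\sB\psi^*>0$, the bound $\Exp_i^{v^*}[e^{\sum_{t=0}^{s-1}c}\Ind_{\{X_s\in\sB\}}]\le\epsilon^{-1}e^{s\rho}\psi^*(i)$. Decomposing $\Exp_i^{v^*}[e^{\sum_{t=0}^{T-1}c}]$ according to the last time $L_T\le T$ at which $X_\cdot$ visits $\sB$ (the initial passage to $\sB$ contributing only a finite factor by \cref{L2.2}), the term $\{L_T=T\}$ is controlled by the displayed bound, while on $\{L_T=s<T\}$ the Markov property reduces the residual excursion out of $\sB$ to a factor $g_{T-s}(X_s)$, where $g_m(y)\df\Exp_y^{v^*}[e^{\sum_{t=0}^{m-1}c}\Ind_{\{X_1,\dots,X_m\notin\sB\}}]$. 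The stability hypothesis now enters decisively: under \cref{EA2.2}(a), since $\norm{c}_\infty<\gamma$, \cref{L2.2}(i) forces $g_m(y)\le C e^{-(\gamma-\norm{c}_\infty)m}$ uniformly for $y\in\sB$; under \cref{EA2.2}(b), enlarging $\sB$ so that $c<\ell-\kappa$ off $\sB$ (possible since $\ell-\max_u c$ is norm-like) and using \cref{L2.2}(ii) gives $g_m(y)\le C e^{-\kappa m}$. Summing this geometric tail against the identity-bound yields $\Exp_i^{v^*}[e^{\sum_{t=0}^{T-1}c}]\le C_i e^{T\rho}$ for all large $T$, i.e.\ $\sE_i(c,v^*)\le\rho$. (Equivalently, one may carry out this excursion estimate on the near-monotone perturbations $\Tilde{c}_n$ introduced above — for which $\Tilde{c}_n-\Tilde{\rho}_n$ is bounded below off $\sD_n$ — following \cite[Lemma~4.4]{AB-19}, and then pass to the limit using $\Tilde{c}_n\downarrow c$, $\Tilde{\rho}_n\to\rho$, $\Tilde{\lambda}^*_n\to\lamstr$.)

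For uniqueness, let $\psi'>0$ solve \eqref{EL2.4A} with $\psi'(i_0)=1$, with minimizing selector $v'$. The observation is that $(\rho,\psi^*)$ is a positive eigenpair of the irreducible linear kernel $L_{v^*}f(i)\df e^{c(i,v^*(i))}\sum_j f(j)P(j|i,v^*(i))$, while $\psi'$ is a positive supersolution of it, $L_{v^*}\psi'\ge e^\rho\psi'$, as $v^*$ is admissible in the minimum defining $\psi'$. Optional stopping of the martingale $e^{\sum_{t=0}^{T-1}(c(X_t,v^*(X_t))-\rho)}\psi^*(X_T)$ and of the submartingale $e^{\sum_{t=0}^{T-1}(c(X_t,v^*(X_t))-\rho)}\psi'(X_T)$ at the first return time $\sigma$ to $i_0$ under $\Prob_i^{v^*}$ — justified by $\psi^*\in\order(\Lyap)$ from \cref{L2.4} and by \cref{L2.2} — gives $\psi^*(i)=\Exp_i^{v^*}[e^{\sum_{t=0}^{\sigma-1}(c-\rho)}]$ and $\psi'(i)\ge\Exp_i^{v^*}[e^{\sum_{t=0}^{\sigma-1}(c-\rho)}]$, so $\psi'\ge\psi^*$. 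The a priori growth bound $\psi'\in\order(\Lyap)$ for an arbitrary positive solution (which I would extract from \eqref{EL2.4A} together with \cref{L2.2}, if necessary via the $\Tilde{c}_n$) then makes the same argument with the roles of $\psi^*,\psi'$ and of $v^*,v'$ interchanged yield $\psi^*\ge\psi'$, whence $\psi^*=\psi'$; alternatively the strong maximum principle at a point where $\psi^*-t^*\psi'$ vanishes ($t^*\df\inf_i\psi^*(i)/\psi'(i)$), propagated by irreducibility of $\mathbf{X}$ under $v^*$, gives the same conclusion once one knows this two-sided control forces $t^*>0$ and the infimum to be attained inside $\sB$. Finally, the ``only if'' direction of \cref{T2.1}(ii): an optimal $v$ admits, by \cref{R2.2}, an eigenpair for its linear kernel whose eigenvalue must equal $\rho$ (by a Collatz--Wielandt comparison with the supersolution $\psi^*$ and the bound $\rho_v\le\lamstr=\rho$) and whose eigenfunction is $\psi^*$ by the uniqueness just proved; hence $(\rho,\psi^*)$ solves the linear equation of $v$, i.e.\ $v$ attains the minimum in \eqref{EL2.6A} at every state.

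The principal obstacle is exactly the reverse inequality $\sE_i(c,v^*)\le\rho$ and, intertwined with it, the a priori two-sided comparison at infinity of competing positive solutions of \eqref{EL2.4A}: in the controlled-diffusion counterpart these rest on Harnack's inequality and the monotonicity of Dirichlet principal eigenvalues, neither of which survives on a countable state space. In every step the substitute is the Foster--Lyapunov escape estimate \cref{L2.2}, exploited through the geometric decay of the excursion weights $g_m$ and through the return-time stochastic representation of the eigenfunctions.
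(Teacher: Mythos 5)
Your key idea for the first part — proving $\sE_i(c,v^*)\le\rho$ directly by decomposing $\Exp_i^{v^*}[e^{\sum_{t=0}^{T-1}c}]$ on the last visit to $\sB$ and feeding the geometric decay of the excursion weights $g_m$ (from \cref{L2.2}) against the eigen-identity $\Exp_i^{v^*}[e^{\sum_{t=0}^{s-1}c}\psi^*(X_s)]=e^{s\rho}\psi^*(i)$ — is correct and genuinely different from the paper's route. The paper instead perturbs the cost to $\tilde c_n$, obtains eigenpairs $(\rho_{v^*,n},\psi_{v^*,n})$ for the linear kernel of $v^*$ under $\tilde c_n$ (so that $\tilde c_n-\rho_{v^*,n}\ge0$ off a finite set, making the stochastic representation yield a uniform positive lower bound on $\psi_{v^*,n}$), shows $\rho_{v^*,n}\ge\sE_i(c,v^*)$, and then uses a touching/comparison argument at $\sB$ to identify the limit $\tilde\rho$ with $\rho$. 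Your direct excursion estimate avoids the $\tilde c_n$ device entirely; the trade-off is that it does not by itself set up the machinery the paper reuses for uniqueness (the representation $\psi_v(i)=\Exp_i^v[e^{\sum(c-\rho_v)}\psi_v(X_{\uuptau(\sB)})]$ and the touching argument), so your proof structure has to work harder for the second half.

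For uniqueness there is a concrete error. You correctly identify $\psi'$ as a positive supersolution of the linear kernel $L_{v^*}$, so $N_T=e^{\sum_{t=0}^{T-1}(c-\rho)}\psi'(X_T)$ is a \emph{sub}martingale under $\Prob_i^{v^*}$. But optional stopping for a submartingale gives $\psi'(i)=N_0\le\Exp_i^{v^*}[N_\sigma]=\Exp_i^{v^*}\bigl[e^{\sum_{t=0}^{\sigma-1}(c-\rho)}\bigr]$, the reverse of what you wrote; combined with the (genuine) martingale equality for $\psi^*$ this yields $\psi'\le\psi^*$, not $\psi'\ge\psi^*$. The final conclusion $\psi'=\psi^*$ is still reachable by swapping roles, but your intermediate inequality is false as stated. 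Separately, your use of the return time $\sigma$ to $i_0$ is not directly justified by \cref{L2.2}, which controls $\uuptau(\sB)$; even granting a bound $\psi'\in\order(\Lyap)$, the passage from $\sigma\wedge T$ to $\sigma$ in the equality for the martingale and the inequality for the submartingale requires showing that the error term on $\{\sigma>T\}$ vanishes, and you have not argued this. The paper avoids exactly these two points by working with $\uuptau(\sB)$ (where \cref{L2.2} applies directly) and by establishing the two-sided stochastic representation of $\psi^*$ (\eqref{EL2.4N}--\eqref{EL2.4O}) through the Dirichlet approximation rather than by optional stopping of the global equation; the touching argument at a point of $\sB$ where $\psi^*-\kappa\psi'$ vanishes, propagated by irreducibility as in \eqref{EL2.4JJ}, then closes the uniqueness without any unjustified limit. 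Your final sentence names the right obstacle, but the sketch you give for resolving it contains the sign error and does not actually fill the gap.
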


\begin{proof}
Let $v^*\in\Usm$ be a minimizing selector as in \cref{EL2.6A}. From \cref{R2.2}, there exists 
an eigenpair $(\rho_{v^*,n}, \psi_{v^*,n})\in\RR_{+}\times\order(\Lyap)$, $\psi_{v^*,n} > 0$,
satisfying
\begin{equation}\label{EL2.6B}
e^{\rho_{v^*,n}} \psi_{v^*,n}(i) = \left[ e^{\Tilde{c}_n(i, v^*(i))}\sum_{j\in S} \psi_{v^*,n}(j)P(j | i, v^*(i))\right]\quad\forall\,\, i\in S\,,
\end{equation} 
and
\begin{equation}\label{EL2.6C}
0 \le \rho_{v^*,n} \le \limsup_{T\to\infty}\frac{1}{T}\log \Exp_{i}^{v^*}\left[ e^{\sum_{t=0}^{T-1}\Tilde{c}_n(X_t, v^{*}(X_t))}\right]=\sE_i(\tilde{c}_n, v^*)\quad \forall\; i\in S.
\end{equation} 
From the proof of \cref{L2.4}, there exist a finite set $\Tilde{\sB}$, dependent on $n$, containing $\cK$ such that $(\Tilde{c}_{n}(i, u) - \rho_{v^*,n}) \ge 0$ in $\Tilde{\sB}^c$ (under \cref{EA2.2}(a) we
may take $\Tilde{\sB} = \sD_n$, by \cref{L2.5}, and since under \cref{EA2.2}(b), $\tilde{c}_n$ is 
norm-like we can choose suitable finite set $\Tilde{\sB}$ satisfying the required condition). 
Rewrite \cref{EL2.6B} as
\begin{equation}\label{EL2.6D}
\psi_{v^*,n}(i) = \left[ e^{(\Tilde{c}_n(i, v^*(i)) - \rho_{v^*,n})}\sum_{j\in S} \psi_{v^*,n}(j) P(j | i, v^*(i))\right]\quad \forall\; i\in S.
\end{equation}
Then by the Markov property of $\textbf{X}$, it follows from \eqref{EL2.6D} that
\begin{equation*}
\psi_{v^*,n}(i) = \Exp_{i}^{v^*}\left[ e^{\sum_{t = 0}^{\uuptau(\Tilde{\sB})\wedge N - 1}
(\Tilde{c}_n(X_t, v^*(X_t)) - \rho_{v^*,n})}\psi_{v^*,n}(X_{(\uuptau(\Tilde{\sB})\wedge N)})\right]\,.
\end{equation*}
Letting $N\to\infty$ and using Fatou's lemma, for all $i\in \tilde{\sB}^c$, we deduce that
\begin{align*}
\psi_{v^*,n}(i) 
& \ge \Exp_{i}^{v^*}\left[ e^{\sum_{t = 0}^{\uuptau(\Tilde{\sB}) - 1}(\Tilde{c}_n(X_t, v^*(X_t)) - \rho_{v^*,n})}\psi_{v^*,n}(X_{\uuptau(\Tilde{\sB})})\right]
\\
& \ge \left(\min_{\Tilde{\sB}}\psi_{v^*,n}\right)\Exp_{i}^{v^*}\left[ e^{\sum_{t = 0}^{\uuptau(\Tilde{\sB}) - 1}(\Tilde{c}_n(X_t, v^*(X_t)) - \rho_{v^*,n})}\right]
\\
& \ge \left(\min_{\Tilde{\sB}}\psi_{v^*,n}\right)\,.
\end{align*}
Thus $\psi_{v^*, n}$ is bounded below by a positive constant.
Again using the Markov property of $\textbf{X}$ and applying Fatou's lemma in \cref{EL2.6D},
 we obtain that
\begin{align*}
\psi_{v^*,n}(i) & 
\ge \Exp_{i}^{v^*}\left[ e^{\sum_{t = 0}^{T - 1}(\Tilde{c}_n(X_t, v^*(X_t)) - \rho_{v^*,n})}
\psi_{v^*,n}(X_{T})\right]
\\
& \ge \left(\min_{\Tilde{\sB}}\psi_{v^*,n}\right)\Exp_{i}^{v^*}\left[ e^{\sum_{t = 0}^{T - 1}(\Tilde{c}_n(X_t, v^*(X_t)) - \rho_{v^*,n})}\right]\,.
\end{align*}
Taking logarithm on both sides, dividing by $T$ and letting $T\to \infty$, we get
\begin{align*}
\rho_{v^*,n} 
&\ge \limsup_{T\to\infty}\frac{1}{T}\log \Exp_{i}^{v^*}\left[ e^{\sum_{t=0}^{T-1}\Tilde{c}_n(X_t, v^{*}(X_t))}\right]\nonumber
\\
&\ge \limsup_{T\to\infty}\frac{1}{T}\log \Exp_{i}^{v^*}\left[ e^{\sum_{t=0}^{T-1}c(X_t, v^{*}(X_t))}\right]\,.
\end{align*}
Using \eqref{EL2.6C}, this of course, implies $\sE_i(c, v^*)\leq \sE_i(\tilde{c}_n, v^*)=\rho_{v^*,n}$
for all $n$. From the definition of $\tilde{c}_n$, it is evident that $\{\rho_{v^*,n}\}$ is
a decreasing sequence which is bounded from below. Since $\norm{\tilde{c}_n}_\infty<\gamma$ 
(for \cref{EA2.2}(a)), it is easily seen that the stochastic representation \eqref{ER2.2B} holds
for $\psi_{v^*, n}$ with the same choice of $\sB$, independent of $n$. In view of \cref{L2.4}, we can even have
$\psi_{v^*, n}\leq \Lyap$ and it touches $\Lyap$ inside $\sB$.
Thus, using a diagonalization argument, there exists a pair $(\tilde\rho, \psi_{v^*})\in\RR_{+}\times\order(\Lyap)$, $\psi_{v^*} > 0$ satisfying
\begin{equation}\label{EL2.6E}
e^{\tilde\rho} \psi_{v^*}(i) = \left[ e^{c(i, v^*(i))}\sum_{j\in S} \psi_{v^*}(j) P(j | i, v^*(i))\right]\quad\forall\,\, i\in S\,,
\end{equation}  
and $\lim_{n\to\infty}\rho_{v^*,n}=\tilde{\rho}\geq \sE_i(c, v^*)\geq \rho$ for all $i\in S$. To complete the first part of the
proof we only need to show that $\tilde{\rho}=\rho$. From \cref{L2.2} 
and dominated convergence theorem (on \eqref{ER2.2B} for each $n$),
we obtain that
\begin{equation}\label{EL2.6F}
\psi_{v^*}(i) = \Exp_i^{v^*} \left[e^{\sum_{t=0}^{\uuptau(\sB) - 1}(c(X_t, v^*(X_t)) - \tilde\rho)}\psi_{v^*}(X_{\uuptau(\sB)})\right]\quad\forall \,\, i\in \sB^c\,.
\end{equation}
Since $\tilde\rho\ge \rho$, using \cref{EL2.4C} we have
\begin{equation}\label{EL2.6G}
\psi^*(i) \ge \Exp_i^{v^*} \left[e^{\sum_{t=0}^{\uuptau(\sB) - 1}(c(X_t, v^*(X_t)) - \tilde\rho)}\psi^*(X_{\uuptau(\sB)})\right]\quad\forall \,\, i\in \sB^c\,.
\end{equation} 
Then \cref{EL2.6F} and \cref{EL2.6G} implies that
\begin{equation}\label{EL2.6H}
(\psi^* - \psi_{v^*})(i) \ge  \Exp_i^{v^*} \left[e^{\sum_{t=0}^{\uuptau(\sB) - 1}(c(X_t, v^*(X_t)) - \tilde\rho)}(\psi^* - \psi_{v^*})(X_{\uuptau(\sB)})\right]\quad\forall \,\, i\in \sB^c\,.
\end{equation} 
Rescale $\psi_{v^*}$, by multiplying with a suitable positive constant, so that $(\psi^* - \psi_{v^*}) \ge 0$ in
 $\sB$ and $(\psi^* - \psi_{v^*})(\hat{i}) = 0$ for some $\hat{i}\in\sB$. Thus from \cref{EL2.6H}, we deduce that $(\psi^* - \psi_{v^*}) \ge 0$ in $S$. From \cref{EL2.6A} and \cref{EL2.6E}, we get
\begin{equation*}
0 = (\psi^* - \psi_{v^*})(\hat{i}) \ge \Exp_{\hat{i}}^{v^*} \left[e^{(c(\hat{i}, v^*(\hat{i})) - \rho^{v^*})}(\psi^* - \psi^{v^*})(X_1)\right]\,.
\end{equation*}
This is similar to \eqref{EL2.4JJ} and thus a similar argument gives $\psi^*=\psi_{v^*}$ in $S$.
\cref{EL2.6A} and \cref{EL2.6E} then give us $\tilde\rho=\rho$. Hence $\rho=\lamstr=\sE_i(c, v^*)$
for all $i$. This completes the first part of the proof. 

Next we show that $\psi^*$ is unique upto a normalization. Let $V$ be a positive solution to 
\begin{equation}\label{EL2.6J}
e^{\lamstr}V(i) = \min_{u\in\Act(i)}\left[e^{c(i,u)}\sum_{j\in S} V(j) P(j|i,u)\right]\quad\forall\,\, i\in S\,.
\end{equation}
Choose a minimizing selector $v$ of \eqref{EL2.6J}, that is,
\begin{equation}\label{EL2.6K}
e^{\lamstr}V(i) = e^{c(i,v(i))}\sum_{j\in S} V(j) P(j|i,v(i)) \quad\forall\,\, i\in S\,.
\end{equation}
From the proof of \eqref{EL2.4N} we then get
\begin{equation}\label{EL2.6L}
\psi^*(i) \le \Exp_{i}^{v}\left[e^{\sum_{t=0}^{\uuptau(\sB) - 1}(c(X_t, v(X_t)) - \lamstr)}\psi^*(X_{\uuptau(\sB)})\right]\quad\forall\,\, i\in\sB^c\,,
\end{equation}
for some suitable finite set $\sB$. On the other hand using \eqref{EL2.6K}, we have (see \eqref{EL2.4O})
\begin{equation}\label{EL2.6M}
V(i) \ge \Exp_{i}^{v}\left[e^{\sum_{t=0}^{\uuptau(\sB) - 1}(c(X_t, v(X_t)) - \lamstr)}\psi^*(X_{\uuptau(\sB)})\right]\quad\forall\,\, i\in\sB^c\,.
\end{equation}
Combining \cref{EL2.6J,EL2.6K,EL2.6M} and using the arguments above (see \eqref{EL2.6H}) we can conclude
that $\psi^*=V$ (upto a multiplicative constant). Hence the proof.
\end{proof}

Now we complete the proof of \cref{T2.1}
\begin{proof}[Proof of \cref{T2.1}]
(i) follows from \cref{L2.4,L2.6}. Furthermore, \cref{L2.6} also gives us that any measurable
selector of \eqref{EL2.6A} is an optimal stationary Markov control. Thus to complete the 
proof of (ii) we only need to show that if for any $\hat{v}\in\Usm$ we have
$\sE_i(c, \hat{v})=\lamstr$ then $\hat{v}$ satisfies \eqref{EL2.6A}.

From \cref{R2.2} there exist $(\rho_{\hat{v}}, \psi_{\hat{v}})\in\RR_{+}\times\order(\Lyap)$, 
$\psi_{\hat{v}} > 0$, satisfying
\begin{equation}\label{ET2.1D}
e^{\rho_{\hat{v}}}\psi^{\hat{v}}(i) = e^{c(i,\hat{v}(i))}\Sigma_{j\in S} \psi^{\hat{v}}(j) P(j|i,\hat{v}(i))\quad\forall\,\, i\in S\,.
\end{equation} 
Moreover, for some finite set $\sB$ containing $\cK$ 
\begin{equation}\label{ET2.1E}
\psi_{\hat{v}}(i) = \Exp^v_i \left[e^{\Sigma_{t=0}^{\uuptau(\sB) - 1}(c(X_t, \hat{v}(X_t)) - \rho_{\hat{v}})}\psi_{\hat{v}}(X_{\uuptau(\sB)})\right]\quad\forall \,\, i\in \sB^c\,.
\end{equation}
Proof of \cref{L2.6} also gives us $\rho_{\hat{v}}=\lamstr$. Then using the arguments
of \cref{L2.6} (see \eqref{EL2.6L} and \eqref{EL2.6M}) it can be easily shown that
$\psi_{\hat{v}}=\kappa \psi^*$ for some positive $\kappa$. Hence the result follows
from \eqref{ET2.1D} and \eqref{EL2.4A}.
This completes the proof.
\end{proof}

\subsection{Near-monotone cost} In this section we replace the \cref{EA2.2} with a near-monotone assumption stated below.

\begin{assumption}\label{A2.3}
Define $\lambda_{\rm m}=\inf_{i\in S}\inf_{v\in \Usm}\sE_i(c, v)$. We assume that
$$\inf_{v\in\Usm}\sE_i(c, v)<\infty\quad \forall\; i\in S,$$
and the cost function $c$ satisfies the {\it near-monotone} condition with respect to $\lambda_{\rm m}$,
that is,
\begin{equation}\label{EA2.3A}
\liminf_{n\to\infty}\, \inf_{k\geq n}\inf_{u\in\Act(k)} c(k, u)\, >\, \lambda_{\rm m}.
\end{equation}
\end{assumption}
We recall $\lamstr = \, \inf_{i\in S}\, \inf_{\zeta\in \Uadm}\sE_i(c, \zeta)$ from \cref{lamstr}.
Note that by \cref{A2.3}, $\lamstr\leq\lambda_{\rm m}<\infty$. Near-monotone
cost penalizes transient behaviour of the CMP.
 The following result gives an existence of an
optimal stationary Markov control. In the following we are not able to get a nonlinear Poisson
equation, instead we obtain an inequality.

\begin{theorem}\label{T2.3}
Grant \cref{A1.1,A2.3}. We also assume that {\bf X} is recurrent under any control $v\in\Usm$.
Then there exists a positive function $\psi^*$ satisfying
\begin{equation}\label{ET2.3A}
e^{\lamstr} \psi^*(i) \geq \min_{u\in\Act(i)}\left[e^{c(i,u)}\sum_{j\in S} \psi^*(j)P(j|i,u)\right]\quad\text{for}\,\, i\in S\,.
\end{equation}
Futhermore, we have $\lamstr=\lambda_{\rm m}$ and any measurable selector of \eqref{ET2.3A} is an
optimal stationary Markov control.
\end{theorem}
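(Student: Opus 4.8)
The plan is to mimic the Dirichlet-eigenvalue approach of \cref{L2.1,L2.4}, but since we no longer have a Foster--Lyapunov drift to control the tails, the near-monotonicity condition \eqref{EA2.3A} must play the stabilizing role. First I would fix a reference state $i_0$ (here $i_0$ need not be the one of \cref{A1.1}; it can be any state with $\inf_{v\in\Usm}\sE_{i_0}(c,v)<\infty$) and, for an increasing exhaustion $\sD_n\uparrow S$ with $i_0\in\sD_n$, invoke \cref{L2.1} to obtain a Dirichlet eigenpair $(\rho_n,\psi_n)\in\RR_+\times\cB^+_{\sD_n}$ with $\psi_n(i_0)=1$, satisfying \eqref{EL2.1A} on $\sD_n$ and the lower bound \eqref{EL2.1B}. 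Exactly as in \cref{L2.3}(i) this gives $\limsup_n\rho_n\le \inf_{\zeta}\limsup_T\frac1T\log\Exp^\zeta_{i_0}[\cdots]\le\lambda_{\mathrm m}<\infty$; for the lower bound $\liminf_n\rho_n\ge0$ I would re-run the supermartingale/recurrence argument of \cref{L2.3}(ii), which only uses recurrence under stationary Markov controls — precisely what we have assumed. So along a subsequence $\rho_n\to\rho\in[0,\lambda_{\mathrm m}]$.

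The crux is to pass to the limit in $\psi_n$ \emph{without} the Lyapunov majorant. Here is where near-monotonicity enters. Set $\beta\df\liminf_{k\to\infty}\inf_{u\in\Act(k)}c(k,u)>\lambda_{\mathrm m}\ge\rho$, and pick a finite set $\sB\ni i_0$ such that $c(k,u)-\rho_n\ge \tfrac12(\beta-\lambda_{\mathrm m})>0$ for all $k\in\sB^c$, $u\in\Act(k)$, and all large $n$. Writing \eqref{EL2.1A} in stochastic form (via \eqref{EL2.1E} and optional sampling up to $\uptau_n\wedge\uuptau(\sB)$, as in \cref{L2.4}) yields, for a minimizing selector $v_n$ and $i\in\sD_n\setminus\sB$,
\begin{equation*}
\psi_n(i)=\Exp^{v_n}_i\Bigl[e^{\sum_{t=0}^{\uuptau(\sB)\wedge\uptau_n-1}(c(X_t,v_n(X_t))-\rho_n)}\psi_n(X_{\uuptau(\sB)\wedge\uptau_n})\Bigr]\ \ge\ \bigl(\min_{\sB}\psi_n\bigr)\,\Exp^{v_n}_i\bigl[e^{\frac12(\beta-\lambda_{\mathrm m})(\uuptau(\sB)\wedge\uptau_n)}\bigr].
\end{equation*}
Normalizing instead so that $\min_{\sB}\psi_n=1$ (this is legitimate since, as in \cref{L2.4}, one shows $\psi_n$ cannot attain its relevant minimum outside $\sB$; alternatively normalize at $i_0$ and show $\min_\sB\psi_n$ stays bounded away from $0$ and $\infty$ using \cref{A1.1}(b) together with the recurrence), the displayed inequality shows $\psi_n(i)$ is bounded below by a positive constant times $\Exp^{v_n}_i[e^{\frac12(\beta-\lambda_{\mathrm m})\uuptau(\sB)}]$. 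For the \emph{upper} bound on $\psi_n(i)$ one uses that under recurrence $\Exp^{v_n}_i[\uuptau(\sB)]<\infty$ and, more to the point, the fact that $\rho$ is the optimal value forces $\Exp^{v}_i[e^{\sum_{0}^{\uuptau(\sB)-1}(c-\rho)}]<\infty$ for near-optimal $v$; making this uniform in $n$ is the main technical work. Granting the requisite equicontinuity/uniform bound, a diagonalization argument produces $\psi^*>0$ with $\psi_n(i)\to\psi^*(i)$ for every $i$, and the generalized Fatou lemma \cite[Lemma~8.3.7]{HL99} applied to \eqref{EL2.1A} yields \eqref{ET2.3A} — note only an \emph{inequality} survives, because the $\min$ over a noncompact tail need not be lower-semicontinuous in the limit, which is exactly why the theorem states $\ge$ rather than $=$.

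It remains to identify $\rho$ with $\lambda_{\mathrm m}$ and to prove the verification statement. For optimality, let $v^*$ be any measurable selector of \eqref{ET2.3A}. Iterating the inequality $e^{\lamstr}\psi^*(i)\ge e^{c(i,v^*(i))}\sum_j\psi^*(j)P(j|i,v^*(i))$ gives $\psi^*(i)\ge\Exp^{v^*}_i[e^{\sum_{t=0}^{T-1}(c(X_t,v^*(X_t))-\lamstr)}\psi^*(X_T)]\ge(\min_\sB\psi^*)\,\Exp^{v^*}_i[e^{\sum_{t=0}^{T-1}(c-\lamstr)}\Ind_{\{X_T\in\sB\}}]$, whence, using that $\sB$ is visited infinitely often under the recurrent chain and taking $\tfrac1T\log$, $\limsup_T$, one obtains $\sE_i(c,v^*)\le\lamstr$; combined with the trivial $\sE_i(c,v^*)\ge\lambda_{\mathrm m}\ge\lamstr$ this gives $\sE_i(c,v^*)=\lamstr=\lambda_{\mathrm m}$, so $v^*$ is optimal and the value is attained. (The reverse inequality $\lamstr\le\rho$ is already in \eqref{EL2.4B}'s analogue here, i.e.\ in \eqref{EL2.1B} passed to the limit, so $\rho=\lamstr$.) The main obstacle I anticipate is the uniform-in-$n$ upper bound on $\psi_n$ away from $\sB$: without a Lyapunov function this must be extracted purely from near-monotonicity and recurrence, presumably by a comparison argument showing that if $\psi_n(i)$ were large for some $i\in\sD_n\setminus\sB$ one could construct a control with ergodic cost below $\lambda_{\mathrm m}$, contradicting the definition of $\lambda_{\mathrm m}$; making that contradiction quantitative and uniform in $n$ is the delicate point.
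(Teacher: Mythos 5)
Your overall scheme (Dirichlet eigenpairs on $\sD_n$, diagonalization, Fatou to pass to the limit, near-monotonicity to produce a finite set outside which $c>\rho$, and the iteration $\psi^*(i)\ge(\min_\sB\psi^*)\Exp_i^v[e^{\sum_{t<T}(c-\rho)}]$ to get $\rho\ge\sE_i(c,v)\ge\lambda_{\mathrm m}$) matches the paper's. But you flag the uniform-in-$n$ upper bound on $\psi_n$ as "the delicate point" and leave it unresolved, whereas the paper disposes of it in one line, and your setup actually blocks the argument: you propose that "$i_0$ need not be the one of \cref{A1.1}; it can be any state with $\inf_v\sE_{i_0}(c,v)<\infty$." That choice throws away exactly the tool needed. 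The paper takes $i_0$ \emph{from} \cref{A1.1}(b), normalizes $\psi_n(i_0)=1$, and then reads the upper bound directly off the eigenvalue equation at the single state $i_0$: since $c\ge0$, for any minimizer $\hat u_n$,
\begin{equation*}
e^{\rho_n}=e^{c(i_0,\hat u_n)}\sum_{j}\psi_n(j)P(j|i_0,\hat u_n)\ \ge\ \psi_n(j)\,P(j|i_0,\hat u_n)\ \ge\ \psi_n(j)\inf_{u\in\Act(i_0)}P(j|i_0,u),
\end{equation*}
so $\psi_n(j)\le e^{\rho_n}\bigl(\inf_{u}P(j|i_0,u)\bigr)^{-1}$, and $\rho_n$ is bounded above for large $n$ by \cref{L2.3}(i). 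No comparison argument, no equicontinuity, no uniformity issue. The "main technical work" you anticipate simply is not there once you insist on the right $i_0$.

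Two smaller corrections. First, the reason only the inequality \eqref{ET2.3A} survives is not that "the $\min$ over a noncompact tail need not be lower-semicontinuous" — the minimization is over the compact set $\Act(i)$. It is the countable sum $\sum_j\psi_n(j)P(j|i,u)$ that is the problem: without a $\Lyap$-majorant one cannot invoke dominated convergence, and Fatou yields only $\liminf_n\sum_j\psi_n(j)P(j|i,v_n(i))\ge\sum_j\psi^*(j)P(j|i,v(i))$ along a convergent selector sequence, hence $e^\rho\psi^*(i)\ge\cdots$. Second, your proposal to renormalize so that $\min_\sB\psi_n=1$ (invoking "one shows $\psi_n$ cannot attain its relevant minimum outside $\sB$") leans on the argument in \cref{L2.4}, which uses the Lyapunov majorant $\Lyap$ and the stochastic representation \eqref{EL2.4C}; those are not available here. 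Staying with the normalization $\psi_n(i_0)=1$ and the one-step bound above avoids this circularity. The rest of your argument (positivity of $\psi^*$ via irreducibility, $\rho\le\lamstr$ from \eqref{EL2.1B}, the hitting-time lower bound $\inf_i\psi^*(i)>0$ on $\sB^c$, and the final $\frac1T\log$ computation) is sound and agrees with the paper.
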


\begin{proof}
From \cref{L2.1}, let $(\rho_n, \psi_n)$ be the Dirichlet eigenpair in $\sD_n$ satisfying
\begin{equation}\label{ET2.3B}
e^{\rho_{n}}\psi_{n}(i) = \min_{u\in\Act(i)}\left[e^{c(i,u)}\sum_{j\in S} \psi_{n}(j) P(j|i,u)\right]\quad\forall\,\, i\in \sD_n\,.
\end{equation}
As mentioned in \cref{R2.1}, the conclusion of \cref{L2.3} holds under the hypothesis of \cref{T2.3}.
Since $\psi_n(i_0)>0$, we normalize $\psi_n$ to satisfy $\psi_n(i_0)=1$. Since $c\geq 0$, using
\eqref{ET2.3B} and \cref{L2.3}(i) we obtain
$$\psi_n(j)\leq \sup_{u\in \Act(i_0)} \frac{1}{P(j|i_0, u)}\exp{(\inf_{v\in\Usm}\sE_{i_0}(c, v))}\quad \text{for}\; j\neq i_0,$$
for all $n$ large. Thus, using a diagonalization argument, we can extract a subsequence, denoting it
by $(\rho_n, \psi_n)$, such that $\rho_n\to \rho$ and $\psi_n\to\psi^*$ componentwise, as $n\to\infty$.
Moreover, $\psi^*(i_0)=1$. Using the compactness of the measurable selectors of
\eqref{ET2.3B} and applying Fatou's lemma it is also easy to see that 
\begin{equation}\label{ET2.3C}
e^{\rho}\psi^*(i) \ge \min_{u\in\Act(i)}\left[e^{c(i,u)}\sum_{j\in S} \psi^*(j) P(j|i,u)\right]\quad\forall\,\, i\in S\,.
\end{equation}
From the arguments of \cref{L2.4} (see \eqref{EL2.4JJ}) it also evident that $\psi^*>0$ in $S$.
Therefore, given a state $i\in S$, we can find large $n$ so that $\psi_n(i)>0$ and thus, applying \eqref{EL2.1B}, we obtain $\rho\leq \lamstr$. Since $\rho\leq\lambda_{\rm m}$, applying the near-monotonicity
condition \eqref{EA2.3A}, we find a finite set $\sB$ such that 
\begin{equation}\label{ET2.3D}
\inf_{u\in \Act(i)} c(i, u)-\rho>0\quad \text{for}\; i\in \sB^c.
\end{equation}
Now consider a measurable selector $v$ of \eqref{ET2.3C} and by $\uuptau=\uuptau(\sB)$ we denote the first hitting time to $\sB$. From the proof of \cref{L2.2} we then see that
$$ \psi^*(i)\geq \Exp_i^v\left[e^{\sum_{t=0}^{\uuptau-1}(c(X_t, v(X_t))-\rho)} \psi^*(X_{\uuptau})\right]
\ge \min_{j\in \sB}\psi^*(j)
\quad i\in\sB^c.$$
Thus, $\inf_{i\in S}\psi^*(i)>0$. 
Using the Markov property of $\textbf{X}$,
 we obtain from \eqref{ET2.3C} that
\begin{align*}
\psi^*(i) & 
\ge \Exp_{i}^{v}\left[ e^{\sum_{t = 0}^{T - 1}(c(X_t, v^*(X_t)) - \rho)}
\psi^*(X_{T})\right]
\\
& \ge \left(\min_{\sB}\psi^*\right)\Exp_{i}^{v}\left[ e^{\sum_{t = 0}^{T - 1}(c(X_t, v(X_t)) - \rho)}\right]\,.
\end{align*}
Taking logarithm on both sides, dividing by $T$ and letting $T\to \infty$, we get
$\rho\ge \sE_i(c, v)\geq \lambda_{\rm m}$ for all $i\in S$. Thus $\rho=\lamstr=\lambda_{\rm m}$ and 
$v$ is an optimal stationary Markov control.
\end{proof}

\cref{T2.3} should be compared with \cite[Theorem~3.6]{BorMey02}. Though our condition on the controlled Markov chains is little stronger than those of \cite{BorMey02}, our method neither need the cost $c$ to be 
norm-like nor we assume the action set to be finite. 

\begin{remark}\label{R2.3}
\cref{A1.1}(b) can be replaced by other similar assumption. For instance,
 if the killed process communicates with every state in $\sD_n$ from
$i_0$ before leaving the domain $\sD_n$, for large $n$, then our method applies. More precisely,
we can replace \cref{A1.1}(b) with the following: for all large $n$
we have
$$\inf_{v\in\Usm}\Prob^v_{i_0}(\breve{\uptau}_{j}<\uptau_{\sD_n})>0\,\quad \text{for all}\; j\in \sD_n\setminus\{i_0\}\,,
 $$
where $\breve{\uptau}_{i_0}$ denotes the hitting time to $i_0$. In other words, for every $j\in \sD_n\setminus\{i_0\}$ and $v\in\Usm$, there exists distinct $i_1, i_2, \ldots, i_m \in \sD_n\setminus\{i_0\}$
satisfying
$$P(i_1|i_0, v(i_0))P(i_2|i_1, v(i_1))\cdots P(j|i_m, v(i_m))>0\,.$$
 The Birth-Death Markov chain is
a typical example of a CMP satisfying the above condition.
Note that under this new assumption 
we can show that $\psi_n(i_0)>0$ for large $n$.
\end{remark}

It is also possible to relax the recurrence hypothesis under stationary Markov control.
To this aim we introduce the following assumption.
\begin{assumption}\label{A2.4}
There exists a function $W:S\to [1, \infty)$ satisfying $W(i)\geq i$ for all large $i$ and 
\begin{equation}\label{EA2.4A}
\sup_{u\in\Act(i)}\sum_{j\in S} (W(j)-W(i)) P(j|i, u)\leq g(i)\quad \text{for}\; i\in S,
\end{equation}
for some function $g:S\to \RR$ satisfying $\lim_{i\to\infty} g(i)=0$. Furthermore, for some
$\eta>0$ we have
\begin{equation}\label{EA2.4B}
\min_{u\in\Act(i)}P(i-1|i, u)\,\geq\, \eta\quad \text{for all}\; i\geq 2,
\end{equation}
and $P(\cdot|1, u)$ supported in a finite set $C$, independent of $u$. In addition, also
assume that for $\sD_n\df\{1, \ldots, n\}$, $v\in\Usm$ and any $j\in\sD_n\setminus\{1\}$ there exists
distinct $i_1, i_2, \ldots, i_k\in \sD_n$ we have
\begin{equation}\label{EA2.4C}
P(i_1|1, v(1))P(i_2|i_1, v(i_1))\cdots P(j|i_k, v(i_k))>0\,.
\end{equation}
\end{assumption}

Note that \eqref{EA2.4A} does not guarantee that DTCMP {\bf X} is recurrent under every
stationary Markov control. To illustrate, let us consider the following classical example
of Birth-Death process.
\begin{example}\label{Eg2.2}
Suppose that $\Act$ is a compact metric space and $\lambda, \mu:S\times\Act\to [0, 1]$ be such that
$\lambda(i, u) + \mu(i, u)=1$ and $\mu(i, u)\geq\eta>0$ for all $i\geq 1$. Let $\{p_{ij}\}$ be a collection of nonnegative numbers satisfying
$$\sum_{j\geq 1} p_{ij}=1,\quad \sum_{j\geq 1} jp_{ij}<\infty\quad \text{for all}\; i\geq 1.$$
We define
\[
P(j|i, u)=\left\{\begin{array}{llll}
\lambda(i, u)p_{ik} & \text{if}\; j=i+k,\, i, k\geq 1,
\\
\mu(i, u) & \text{if}\; j=i-1, \, i\geq 1,
\\
0 & \text{if}\; j\leq i-2, \, i\geq 1,
\\
1 & \text{if}\; j=1,\, i=0.
\end{array}
\right.
\]
Then for $W(i)=i+1$ we have
$$\sup_{u\in\Act}\sum_{j\in S} (W(j)-W(i)) P(j|i, u)\leq \sup_{u\in\Act}
|\lambda(i, u)\sum_{k\geq 1}kp_{ik}-\mu(i, u)|\df g(i).$$
Thus if we assume, $g(i)\to 0$ as $i\to\infty$, we get \eqref{EA2.4A} and \eqref{EA2.4B}. Furthermore,
if we assume $\lambda(i, u)>0, p_{i,1}>0$, then we also have \eqref{EA2.4C}.

{\bf X} need not be recurrent under \eqref{EA2.4A}. For instance, if we take $p_{i1}=1$ for all $i$ and
$$\lambda(i, u)=\lambda(i)=\frac{(i+1)^2}{i^2+ (i+1)^2}, \quad 
\mu(i, u)=\mu(i)=\frac{i^2}{(i+1)^2}\lambda(i).$$
Then $\lambda(i)+\mu(i)=1$ and 
$$|\lambda(i)-\mu(i)|\leq \frac{2i+1}{i^2+(i+1)^2}\to 0\quad \text{as}\; i\to \infty.$$
But 
$$\sum_{n\geq 1}\, \prod_{i=1}^n\frac{\mu(i)}{\lambda(i)}=\sum_{n\geq 1} \frac{1}{(n+1)^2}\, <\infty.$$
This implies that {\bf X} is transient.
\end{example}

We establish the following result.

\begin{theorem}\label{T2.4}
Grant \cref{A2.3,A2.4}. Also assume that {\bf X} is irreducible under any stationary
Markov control.
Then there exists a positive function $\psi^*$ satisfying
\begin{equation}\label{ET2.4A}
e^{\lamstr} \psi^*(i) \geq \min_{u\in\Act(i)}\left[e^{c(i,u)}\sum_{j\in S} \psi^*(j)P(j|i,u)\right]\quad\text{for}\,\, i\in S\,.
\end{equation}
Futhermore, we have $\lamstr=\lambda_{\rm m}$ and any measurable selector of \eqref{ET2.4A} is an
optimal stationary Markov control.
\end{theorem}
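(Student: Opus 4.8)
The plan is to run the proof of \cref{T2.3} essentially verbatim, the only difference being that every place where recurrence of $\mathbf X$ under a stationary Markov control was invoked is now replaced by an argument exploiting the extra structure of \cref{A2.4}: the weak Foster--Lyapunov bound \eqref{EA2.4A}, the uniform downward push \eqref{EA2.4B}, and the communication property \eqref{EA2.4C}.

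\emph{Step 1 (Dirichlet eigenpairs and a priori bounds).} With $\sD_n=\{1,\dots,n\}$, \cref{L2.1} produces a Dirichlet eigenpair $(\rho_n,\psi_n)\in\RR_+\times\cB^+_{\sD_n}$. If $\psi_n(1)=0$, then writing $\lambda_{\sD_n}\psi_n(i)=\min_u[e^{c(i,u)}\sum_j\psi_n(j)P(j\mid i,u)]$ and propagating the value $0$ forward from $1$ along a minimizing selector, \eqref{EA2.4C} forces $\psi_n\equiv0$, a contradiction; hence $\psi_n(1)>0$ and we normalize $\psi_n(1)=1$. From \eqref{EL2.1B} and the finiteness hypothesis in \cref{A2.3}, $\rho_n\le\inf_{v\in\Usm}\sE_{1}(c,v)<\infty$. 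For a lower a priori bound I would iterate the eigen-equation along a path from $1$ to $j$ inside $\sD_n$ furnished by \eqref{EA2.4C} (combined with \eqref{EA2.4B} for the descending portions): since $c\ge0$ this gives, for each fixed $j$, a bound on $\psi_n(j)$ depending on $j$ but uniform in $n$. A diagonalization then extracts a subsequence with $\psi_n\to\psi^*$ componentwise, $\psi^*(1)=1$, and (after a further subsequence) $\rho_n\to\rho$. Here $\rho>-\infty$: otherwise, letting $n\to\infty$ in the eigen-equation at $i=2$ and using $P(1\mid 2,u)\ge\eta$ together with lower semicontinuity of $u\mapsto\sum_j\psi^*(j)P(j\mid i,u)$ would yield $0\ge e^{\min_u c(2,u)}\,\eta\,\psi^*(1)>0$. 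Passing to the limit (generalized Fatou, \cite[Lemma~8.3.7]{HL99}) gives \eqref{ET2.4A} with $\lamstr$ replaced by $\rho$.

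\emph{Step 2 (positivity and $\rho\le\lambda_{\rm m}$).} As in the proof of \cref{L2.4} (see \eqref{EL2.4JJ}), \eqref{ET2.4A} together with irreducibility of $\mathbf X$ under a selector of \eqref{ET2.4A} forces $\psi^*>0$ on $S$. Since $\psi_n(i)>0$ for large $n$, for each $i$, \eqref{EL2.1B} and $\rho_n\to\rho$ give $\rho\le\inf_{\zeta\in\Uadm}\sE_i(c,\zeta)$, hence $\rho\le\lamstr\le\lambda_{\rm m}$. By the near-monotone condition \eqref{EA2.3A} there is a finite set $\sB\supseteq C$ and $\epsilon_0>0$ with $\inf_{u\in\Act(i)}c(i,u)-\rho>\epsilon_0$ for every $i\in\sB^c$.

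\emph{Step 3 (the crux: $\inf_S\psi^*>0$, hence $\rho=\lamstr=\lambda_{\rm m}$ and optimality).} For any selector $v$ of \eqref{ET2.4A}, the process $M^v_n=e^{\sum_{t=0}^{n-1}(c(X_t,v(X_t))-\rho)}\psi^*(X_n)$ is a $\Prob^v_i$-supermartingale, and on $\sB^c$ one has the geometric drift $\Exp^v_i[\psi^*(X_1)]\le e^{-\epsilon_0}\psi^*(i)$. I would first show that $\Prob^v_i\bigl(\uuptau(\sB)<\infty\bigr)=1$ for every $i$; granting this, the stopped supermartingale $e^{\epsilon_0(n\wedge\uuptau(\sB))}\psi^*(X_{n\wedge\uuptau(\sB)})$ yields $\psi^*(i)\ge(\min_{\sB}\psi^*)\,\Exp^v_i\bigl[e^{\epsilon_0\uuptau(\sB)}\bigr]\ge\min_\sB\psi^*$ on $\sB^c$, so $\inf_S\psi^*=:\delta>0$, and then the argument of \cref{T2.3} applies: $M^v_n$ gives $\Exp^v_i[e^{\sum_{t=0}^{n-1}c(X_t,v(X_t))}]\le\delta^{-1}e^{n\rho}\psi^*(i)$, so $\sE_i(c,v)\le\rho$, while always $\sE_i(c,v)\ge\lambda_{\rm m}\ge\lamstr\ge\rho$; thus $\rho=\lamstr=\lambda_{\rm m}$ and $v$ is optimal. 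To prove $\Prob^v_i(\uuptau(\sB)=\infty)=0$: on $\{\uuptau(\sB)=\infty\}$ one has $X_t\in\sB^c$ for all $t\ge1$, whence $\sum_{t=0}^{n-1}c(X_t,v(X_t))\ge (n-1)(\rho+\epsilon_0)$ and therefore, if this event had positive probability, $\sE_i(c,v)\ge\rho+\epsilon_0$. I would rule this out by combining the weak stability \eqref{EA2.4A} — via optional stopping of $W(X_{n\wedge\uuptau(\sB)})-\sum_{k<n\wedge\uuptau(\sB)}g(X_k)$ with $g(i)\to0$ and $W(i)\ge i$, which constrains the escape of $\mathbf X$ along $\{\uuptau(\sB)=\infty\}$ — with the downward push \eqref{EA2.4B}, to conclude $\Prob^v_i(\uuptau(\sB)=\infty)=0$. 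Finally, the identification $\rho=\lamstr$ and the fact that \emph{every} measurable selector of \eqref{ET2.4A} is optimal then follow exactly as in the last paragraph of the proof of \cref{T2.3}.

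The main obstacle is Step 3: passing from the weak Foster--Lyapunov bound \eqref{EA2.4A}, which by itself does not even imply recurrence (cf.\ \cref{Eg2.2}), to the assertion that $\mathbf X$ under a selector of \eqref{ET2.4A} hits the finite set $\sB$ almost surely. This is precisely where the near-monotone gap $c-\rho>\epsilon_0$ on $\sB^c$ is indispensable, since the blanket hypotheses of \cref{A2.4} alone are compatible with transience, and it is the penalization of transient behaviour built into near-monotonicity that forces the optimal chain back to $\sB$.
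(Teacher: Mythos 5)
Your proposal identifies the right obstacle (passing from the weak Lyapunov bound \eqref{EA2.4A} to recurrence under a minimizing selector), but both the a priori bounds and the recurrence step have genuine gaps that the paper's proof closes with arguments you do not have.

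\emph{Step 1.} Iterating the Dirichlet eigen-equation along the path furnished by \eqref{EA2.4C} does not give a bound on $\psi_n(j)$ that is uniform in $n$. The path and the transition probabilities along it depend on $v$, and in your iteration $v=v_n$ is the minimizing selector, which changes with $n$; \eqref{EA2.4B} only controls the \emph{downward} steps, whereas reaching $j>1$ from $1$ requires upward steps, for which \cref{A2.4} gives no uniform-in-$v$ lower bound. The paper fills this with a genuine compactness argument: it proves $\inf_{v\in\Usm}\Prob^v_1(\uuptau_j\le m\wedge\uptau_n)\ge\kappa(m,j)>0$ (see \eqref{ET2.4C}--\eqref{ET2.4D}) by contradiction, extracting a convergent subsequence of controls using compactness of $\Act(i)$ and \cref{A1.1}(a), and only then obtains the bound $\psi_n(j)\le\kappa(j+1,j)^{-1}e^{(j+1)\lamstr}$ via optional sampling. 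You would need this (or an equivalent uniformity claim) to legitimize the diagonalization.

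\emph{Step 3.} Your plan to show $\Prob^v_i(\uuptau(\sB)=\infty)=0$ is circular as stated: you argue that on this event $\sE_i(c,v)\ge\rho+\epsilon_0$, and say you would ``rule this out'' using \eqref{EA2.4A} and \eqref{EA2.4B} — but at this stage all you know is $\sE_i(c,v)\ge\lambda_{\rm m}\ge\rho$, so $\sE_i(c,v)\ge\rho+\epsilon_0$ is not a contradiction; the only upper bound $\sE_i(c,v)\le\rho$ comes from $\inf_S\psi^*>0$, which is exactly what you are trying to prove. The paper's argument does not attempt to establish $\Prob^v(\uuptau(\sB)<\infty)=1$ first. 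Instead it derives an \emph{unconditional} inequality, \eqref{ET2.4I}, by combining three ingredients you do not use: (a) the lower bound $\psi^*(j)\ge(\eta e^{-\lamstr})^{j-1}$ coming from \eqref{EA2.4B} applied to \eqref{ET2.4F}, which together with $W(i)\ge i$ gives $\log\psi^*(i)\ge-\tilde\kappa W(i)-\widehat\kappa$; (b) Dynkin's formula for $W$ under \eqref{EA2.4A}, i.e.\ $\Exp^v_1[W(X_T)]-W(1)\le\Exp^v_1[\sum_{t<T}g(X_t)]$; and (c) Jensen's inequality applied to the supermartingale bound $\psi^*(1)\ge\Exp^v_1\bigl[e^{\sum_{t<T}(c-\rho)}\psi^*(X_T)\bigr]$. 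The contradiction is then produced by assuming transience and invoking \cite[Theorem~12.1.2]{MT93}, which forces $\frac1T\Exp^v_1[\sum_{t<T}\Ind_{B}(X_t)]\to0$ for every finite $B$, together with the near-monotone gap and $g(i)\to0$ to pick a finite $B_\circ$ on whose complement $c-\tilde\kappa g-\rho>\delta>0$. Your sketch mentions $W(i)\ge i$ and optional stopping of $W(X_{n\wedge\uuptau})-\sum g(X_k)$, but this alone cannot rule out escape to infinity — as \cref{Eg2.2} shows, the hypotheses of \cref{A2.4} are compatible with transience — and you do not invoke the lower bound on $\log\psi^*$, the Jensen step, or the \cite{MT93} ergodic theorem that the paper combines to close the argument.
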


\begin{proof}
Since {\bf X} may not be recurrent under a stationary Markov control, the proof of \cref{T2.3} does not
work. We have to modify the proof.
We begin with the  Dirichlet eigenpair $(\rho_n, \psi_n)$ in $\sD_n$ satisfying
\begin{equation}\label{ET2.4B}
e^{\rho_{n}}\psi_{n}(i) = \min_{u\in\Act(i)}\left[e^{c(i,u)}\sum_{j\in S} \psi_{n}(j) P(j|i,u)\right]\quad\forall\,\, i\in \sD_n\,.
\end{equation}
Due to \eqref{EA2.4C} we must have $\psi_n(1)>0$. This in turn, implies from \eqref{EA2.4B}  that
$\psi_n>0$ in $\sD_n$. As a consequence we have $\rho_n\leq \lamstr$,
by \cref{L2.1}, and $\rho_n> -\infty$ for all $n$.
Note that we can not apply \eqref{A1.1}(ii) anymore to find an upper
bound for $\psi_n$. Instead we use \eqref{EA2.4C}. Denote by $\uuptau_j$ the first hitting time to 
$j$. We claim that if $j\in \sD_n$, then
\begin{equation}\label{ET2.4C}
\inf_{v\in\Usm} \Prob_1^v (\uuptau_j\leq n\wedge\uptau_n)>\kappa(n, j)
\end{equation}
for some positive constant $\kappa(n, j)$. Suppose, on the contrary, that the claim is not true.
 Then there
exists $v_m\in\Usm$ such that $\Prob_1^{v_m} (\uuptau_j\leq n\wedge\uptau_n)\to 0$ as $m\to\infty$.
Using the compactness of $\Act(i)$, we can extract a subsequence of $v_m$, denoted by the original one,
so that $v_m(i)\to v(i)$ for all $i$ as $m\to\infty$. By \cref{A1.1}(i) we then see that
the law of ${\mathbf X_m}$ converges to {\bf X}, where ${\mathbf X_m}$
({\bf X}) is the DTCMP governed by $v_m$ ($v$, respectively).
Therefore, for every $k\leq n$,
\begin{align*}
&\Prob^v_{1}(X_i\in \sD_n\setminus\{1, j\}, X_k=j\quad \text{for all}\; 1\leq i\leq k-1)
\\
&\quad=\lim_{m\to\infty} \Prob^{v_m}_{1}(X_{m,i}\in \sD_n\setminus\{1, j\}, X_{m,k}=j\quad \text{for all}\; 1\leq i\leq k-1)
\\
&\quad \leq \lim_{m\to\infty} \Prob^{v_m}_{1}(\uuptau_j\leq n \wedge \uptau_n)=0.
\end{align*}
This clearly, contradicts \eqref{EA2.4C}. Hence we must have \eqref{ET2.4C}. From the monotonicity
of $\uptau_n$ it then follows that for $\sD_n\supset\sD_m\ni j$, we have
\begin{equation}\label{ET2.4D}
\inf_{v\in\Usm} \Prob_1^v (\uuptau_j\leq m\wedge\uptau_n)
\geq \inf_{v\in\Usm} \Prob_1^v (\uuptau_j\leq m\wedge\uptau_m)\geq \kappa(m, j).
\end{equation}
Now we normalize $\psi_n$ to satisfy $\psi_n(1)=1$. Let $v_n$ be a minimizing selector of \eqref{ET2.4B}.
Thus, using optional sampling theorem (see \eqref{EL2.1E}), it follows from \eqref{ET2.4B} that
\begin{align*}
1=\psi_n(1)&=\Exp_1^{v_n}\left[e^{\sum_{t=0}^{\uuptau_j\wedge m\wedge\uptau_n-1}(c(X_t, v_n(X_t))-\rho_n) }\psi_n(X_{\uuptau_j\wedge m\wedge\uptau_n})\right]\nonumber
\\
&\geq e^{-\lamstr m} \psi_n(j) \inf_{v\in\Usm} \Prob_1^v (\uuptau_j\leq m\wedge\uptau_n)\nonumber
\\
&\geq e^{-\lamstr m}\psi_n(j) \kappa(m ,j)
\end{align*}
using \eqref{ET2.4D}. Choosing $m=j+1$, this indeed gives us
\begin{equation}\label{ET2.4E}
\psi_n(j)\leq \frac{1}{\kappa(j+1, j)}e^{(j+1)\lamstr}\quad \text{for all} n > j\,.
\end{equation}
This gives the upper bound on $\{\psi_n\}$. Therefore, from the proof of \cref{L2.3} it can be easily seen that $\{\rho_n\}$ is bounded from below. (Otherwise, we must have a $\psi^*\gneq 0$ satisfying
$\psi^*(1)=1$ and
$$0\geq \sum_{j\in S} \psi^*(j) P(j|i, \hat{v}(i))\quad \text{for all}\; i\in S.$$
Then $\psi^*(i)=0$ on the support of $P(\cdot|1, \hat{v}(1))$. Repeated use of \eqref{EA2.4B} thus
gives us $\psi^*(1)=0$ which is not possible).
Thus we can find a subsequence of $(\rho_n, \psi_n)$
converging to $(\rho, \psi^*)$ and 
\begin{equation}\label{ET2.4F}
e^{\rho}\psi^*(i) \ge \min_{u\in\Act(i)}\left[e^{c(i,u)}\sum_{j\in S} \psi^*(j) P(j|i,u)\right]\quad\forall\,\, i\in S\,.
\end{equation}
Since {\bf X} is irreducible under any stationary Markov policy, it follows that $\psi^*>0$ (see \eqref{EL2.4JJ}). 

Next we show that $\rho=\lamstr=\sE_i(c, v)$ for all $i$, where $v$ is a minimizing selector of 
\eqref{EL2.4JJ}. Actually, the proof would follow from the arguments of \cref{T2.3} if we could show
that {\bf X} is recurrent under the policy $v$. Observe that for any $i\in S$, we have from
\eqref{ET2.4F} that
$$\psi^*(i+1) \geq e^{-\rho} \psi^*(i) P(i|i+1, v(i+1))\geq \eta e^{-\lamstr} \psi(i).$$
Thus
\begin{equation}\label{ET2.4G}
\psi^*(j)\geq (\eta e^{-\lamstr})^{(j-1)} \psi(1)= e^{-\tilde\kappa j}\quad j\geq 2,
\end{equation}
for some constant $\tilde\kappa$. Also, applying Dynkin's formula in \eqref{EA2.4A} we obtain
\begin{equation}\label{ET2.4H}
\Exp_i^v[W(X_t)]-W(i) \leq \Exp_i^v\left[\sum_{t=0}^{T-1} g(X_t)\right] \quad T\geq 0.
\end{equation}
In view of \eqref{ET2.4G} and \cref{A2.4} we have $\log\psi^*(i)\geq - \tilde{\kappa} W(i) - \widehat{\kappa}$ for some constant $\widehat{\kappa}$. 
Using \eqref{ET2.4F} and the Markov property of {\bf X} it follows that
$$\psi^*(1)\geq \Exp_1^v\left[e^{\sum_{t=0}^{T-1}(c(X_t,v(X_t))-\rho)}\psi^*(X_T)\right].$$
Taking logarithm on both sides and dividing by $T$, we get
\begin{align}\label{ET2.4I}
0 &\geq \frac{1}{T}\Exp_0^v\left[\sum_{t=0}^{T-1}(c(X_t,v(X_t))-\rho)\right]
-\frac{1}{T} (\tilde{\kappa}\Exp_1^v[W(X_T)] +\widehat{\kappa})\nonumber
\\
&\geq \frac{1}{T}\Exp_1^v\left[\sum_{t=0}^{T-1}(c(X_t,v(X_t))-\tilde{\kappa} g(X_t)-\rho)\right]
-\frac{1}{T}(\tilde{\kappa}W(1)+\widehat\kappa),
\end{align}
by \eqref{ET2.4H}. Now suppose that {\bf X} is not recurrent under $v$. Since {\bf X} is irreducible,
it must be transient and therefore, it can not have any invariant probability measure  
by \cite[Proposition~10.1.1]{MT93}. Thus, by \cite[Theorem~12.1.2]{MT93}, for any finite
set $B\subset S$ we have
\begin{equation}\label{ET2.4J}
\frac{1}{T}\Exp_1^v\left[\sum_{t=0}^{T-1}\Ind_B(X_t)\right]\to 0\quad \text{as}\; T\to\infty.
\end{equation}
Let $B_\circ$ be a finite set such that $\min_{u\in\Act(i)} (c(i, u)-\tilde{\kappa}g(i)-\rho)>\delta>0$, for some $\delta>0$, and $i\in B^c_\circ$.
This is possible due to \eqref{EA2.3A} and the fact $\lim_{i\to\infty} g(i)=0$. Applying \eqref{ET2.4J}
we obtain
\begin{align*}
&\liminf_{T\to\infty}\frac{1}{T}\Exp_1^v\left[\sum_{t=0}^{T-1}(c(X_t,v(X_t))-\tilde{\kappa} g(X_t)-\rho)\right]
\\
&\geq \liminf_{T\to\infty}\frac{1}{T}\Exp_1^v\left[\sum_{t=0}^{T-1}\Ind_{B^c_\circ}(X_t)(c(X_t,v(X_t))-\tilde{\kappa} g(X_t)-\rho)\right] 
- \max_{B_\circ}|g-\rho| 
\limsup_{T\to\infty}\frac{1}{T}\Exp_1^v\left[\sum_{t=0}^{T-1}\Ind_{B_\circ}(X_t)\right]
\\
&\geq \delta \liminf_{T\to\infty}\frac{1}{T}\Exp_1^v\left[\sum_{t=0}^{T-1}\Ind_{B^c_\circ}(X_t)\right]
\\
&= \delta \liminf_{T\to\infty}\frac{1}{T}\Exp_1^v\left[\sum_{t=0}^{T-1}\Ind_{S}(X_t)\right]
\\
&=\delta\,.
\end{align*}
But this leads to a contradiction to \eqref{ET2.4I} when we let $T\to\infty$ in \eqref{ET2.4I}.
 Therefore {\bf X}
must be recurrent under $v$. Now rest of the argument follows from \cref{T2.3}.
\end{proof}


\section{Risk-sensitive control of continuous time CMP}\label{S-CT}
In this section we consider continuous time CMP $\mathbf{X}=\{X_t$\,,$t\ge 0\}$, on a countable state space $S$, controlled by the control process $\zeta_t$\,, $t\ge 0$\,, taking values in $\Act$.
As before, $\Act$ is the action space of the controller, which is assumed to be a Borel space with Borel $\sigma$ algebra $\fB(\Act)$. For each $i\in S$, let $\Act(i)$ be the space of all admissible actions of the controller when the system is at state $i$.
 Let $\sK \df \{(i,u) : i\in S, u\in \Act(i)\}$ be set of all feasible state action pair.
 As before, we denote by $c:\sK \to \RR_{+}$ the running cost function. The transition rates $q(j|i, u)$, $u\in \Act(i)$\,, $i,j\in S$, satisfy the condition $q(j|i,u)\ge 0$ for all $u\in\Act(i), i, j\in S$ and $j\neq i$. In addition,
we also impose that 
\begin{assumption}\label{A3.1}
\begin{itemize}
\item[(a)] For each $i\in S$, the admissible action space $\Act(i)$ is a nonempty compact subset of $\Act$\,.
\item[(b)] The model is conservative:
\begin{equation*}
\sum_{j\in S} q(j|i,u) = 0\quad\forall\,\, u\in\Act(i),\,\, i\in S\,.
\end{equation*}
\item[(c)] The model is stable: 
\begin{equation*}
q(i) \,\df\, \sup_{u\in\Act(i)} (- q(i|i,u))\,=\sup_{u\in\Act(i)}\sum_{j\neq i} q(j|i, u)\, <\,  \infty\quad\forall \,\, i\in S\,.
\end{equation*}
For each $i,j\in S$, $q(j|i,u)$ is a measurable map on $\Act(i)$.
\end{itemize}
\end{assumption}

Following \cite{K85} (see also \cite{GL19,GZ19,PZ20}) we briefly describe the evolution of the continuous time CMP (CTCMP). Let $S_{\infty}\,\df\, S\cup \{i_{\infty}\}$ for an isolated point $i_{\infty}\notin S$. Define the canonical sample space $\Omega \,\df\, (S\times (0, \infty))^{\infty}\cup \{(i_0, \theta_1, i_1,\dots,\theta_m,i_m,\infty,i_{\infty},\infty,i_{\infty},\dots)\mid \theta_k \neq \infty, i_k \neq i_{\infty}\quad\text{for all}\quad 0\le k \le m,\,\, m\ge 1\}\,,$ with Borel $\sigma$-algebra $\fB(\Omega)$\,. For each sample point $\omega = (i_0, \theta_1, i_1,\ldots,\theta_m,i_m,\ldots)\in\Omega$, we set $T_0(\omega) = 0$, $T_k(\omega) = \theta_1 + \theta_2 +\dots +\theta_k$, and define $T_{\infty}(\omega) = \lim_{k\to\infty}T_k(\omega)$\,. Now we define a controlled process $\{X_t\}_{t\ge 0}$ on $(\Omega, \fB(\Omega))$ by
\begin{equation}\label{ES1}
X_t = \sum_{k\ge 0} \Ind_{\{T_{k} \le t < T_{k+1}\}}i_k + \Ind_{\{t\ge T_{\infty}\}}i_{\infty}\quad \text{for}\,\, t\ge 0\,.
\end{equation} From \cref{ES1}, it is clear that for any $m\ge 1$ and $\omega \in\Omega$, $T_m(\omega)$ denotes the $m$-th jump moment of the process $X_t$, $i_m$ is the state of the controlled process on $[T_m, T_{m+1})$ and $\theta_m = T_m - T_{m-1}$ denotes the waiting time between jumps (or, sojourn time) at state $i_{m-1}$\,. Also, we add an isolated point $u_{\infty}\notin \Act$ to $\Act$ and let $\Act_{\infty} = \Act\cup \{u_{\infty}\}$ and $\Act(i_{\infty}) = \{u_{\infty}\}$. We do not want to consider our process after the time $T_{\infty}$. Thus we assume that $i_{\infty}$ is an absorbing state, that is, $q(j|i_{\infty}, u_{\infty}) = 0$ for all $j\in S$. Also, assume that $c(i_{\infty}, u) = 0$ for all $u\in\Act_{\infty}$\,. Consider a filtration $\{\mathfrak{F}_{t}\}_{t\ge 0}$ where $\mathfrak{F}_{t} \,\df\, \sigma((T_{m} \le s, X_{T_m}\in A) : 0\le s \le t,\,\, m \ge 0, A\subset S)$, and let $\Tilde{\mathfrak{F}} \,\df\, \sigma(\mathcal{A}\times \{0\}, \mathcal{B}\times (s,\infty) : \mathcal{A}\in \mathfrak{F}_0, \mathcal{B}\in\mathfrak{F}_{s-})$ be the $\sigma$-algebra of predictable sets in $\Omega\times(0, \infty)$ with respect to $\mathfrak{F}_{t}$, where $\mathfrak{F}_{s-} \,\df\, \vee_{t < s} \mathfrak{F}_{t}$\,. 

An admissible policy $\zeta = \{\zeta_t\}_{t\ge 0}$ is a measurable map from $(\Omega\times (0,\infty), \Tilde{\mathfrak{F}})$ to $(\Act_{\infty}, \fB(\Act_{\infty})$ satisfying
$\zeta_t(\omega)\in\Act(X_{t-}(\omega))$ for all $\omega\in\Omega$ and $t\ge 0$\,. Let $\Uadm$ be the space of all admissible policies. An admissible policy $\zeta$ is said to be a Markov policy if $\zeta_t(w) = \zeta_{t}(X_{t-}(\omega))$ for all $\omega\in\Omega$ and $t\ge 0$\,. The space of all Markov policies is denoted by $\Um$\,. If the Markov policy $\zeta$ does not have any explicit time dependency then it is called a stationary Markov policy and $\Usm$ denotes the space of all stationary Markov strategies\,. For each $i\in S$ and $\zeta\in \Uadm$, it is well known that (cf.
\cite{K85,GL19,GZ19,BS78b}) there exist unique probability measure $\mathcal{P}_{i}^{\zeta}$ on $(\Omega, \fB(\Omega))$ such that $\mathcal{P}_{i}^{\zeta}(X_0 = i) = 1$\,. Let $\Exp_i^{\zeta}$ be the corresponding expectation operator. Also, from \cite[pp.13-15]{GHL09}, we know that $\{X_t\}_{t\ge 0}$ is a Markov process under any $\zeta\in\Um$ (in fact, strong Markov). 

Under some policies the process $\{X_t\}_{t\ge 0}$ may be explosive, in order to avoid explosion of the CTCMP, we impose the following (see \cite{GL19,GZ19},\cite[Assumption~2.2]{GHL09}).
\begin{assumption}\label{A3.1Exp}
There exist a function $\Tilde{\Lyap}: S \to [1,\infty)$ and constants $C_0\neq 0, C_1>0$ and $C_2\ge 0$ such that
\begin{itemize}
\item[(a)] $\sum_{j\in S} \Tilde{\Lyap}(j)q(j|i,u) \le C_0\Tilde{\Lyap}(i) + C_{2}$ for all $(i, u)\in \sK$\,;
\item[(b)] $q(i) \le C_1 \Tilde{\Lyap}(i)$ for all $i\in S$\,.
\end{itemize} 
\end{assumption}
For the rest of this section we are going to assume that \cref{A3.1Exp} holds. Note that
\cref{A3.1Exp} holds if $\sup_{i\in S} q(i)<\infty$. In this case we can choose $\tilde{\Lyap}$ to
be a suitable constant.
 From \cite[Theorem~3.1]{GP11} (see also, \cite[Proposition~2.2]{GL19}) it also
follows that, under \cref{A3.1Exp}, $\Prob^\zeta_i (T_\infty=\infty)=1$ for all $i\in S$ and $\zeta\in \Uadm$.
 
We also assume the following for our CTCMP (compare with \cref{A1.1})
\begin{assumption}\label{A3.3}
\begin{itemize}
\item[(a)] For each $i\in S$, the map $u\mapsto c(i,u)$ is continuous on $\Act(i)$\,.
\item[(b)] For each $i\in S$ and bounded measurable function $f:S\to\RR$, the map $u\mapsto \Sigma_{j\in S}f(j)q(j|i,u)$ is continuous on $\Act(i)$\,.
\item[(c)] There exists $i_0\in S$ such that $q(j|i_0,u) > 0$ for all $j\neq i_0$ and $u\in\Act(i_0)$.
\end{itemize}
\end{assumption}

For each  admissible control $\zeta$ the ergodic risk-sensitive cost is given by
\begin{equation}\label{EErgoCcost}
\sE_i(c, \zeta) \,\df\, \limsup_{T\to\infty} \, \frac{1}{T}\,
\log \Exp_i^{\zeta} \left[e^{\int_{0}^{T} c(X_t, \zeta_t)\D t}\right],
\end{equation} where $\textbf{X}$ is the CTCMP corresponding to $\zeta$ with initial state $i$.
As before, our aim is to minimize \cref{EErgoCcost} over all admissible policies in $\Uadm$. A policy $\zeta^{*}\in \Uadm$ is said to be optimal if for all $i\in S$ 
\begin{equation}\label{00A2}
\sE_i(c,\zeta^{*}) \, = \, \inf_{i\in S}\inf_{\zeta\in \Uadm}\sE_i(c, \zeta)=\lamstr\quad \text{for all}\; i\,.
\end{equation}
We also introduce the following Lyapunov condition. Recall that a stationary Markov 
process {\bf X} with rate matrix $Q=[q(j|i)]$ is irreducible if for any $i, j \in S, i\neq j,$
there exists distinct $i_1, i_2, \ldots, i_k\in S$ satisfying $q(i_1|i)\cdots q(j|i_k)>0$ (cf.
\cite[p.~107]{GHL09}).

\begin{assumption}\label{A3.4}
We assume that the CTCMP \textbf{X} is irreducible under every stationary Markov control in
$\Usm$. In (a) and (b) below the function $\Lyap$ on $S$ takes values in $[1, \infty)$ and
$\widehat{C}$ is a positive constant. We assume that one of the following hold.
\begin{itemize}
\item[(a)] For some positive constant $\gamma$ and a finite set $\cK$ it holds that 
\begin{equation}\label{Lyap1}
\sup_{u\in\Act(i)} \sum_{j\in S} \Lyap(j) q(j|i,u) \le  \widehat{C}\Ind_{\cK}(i) - \gamma\Lyap(i) \quad \forall \quad i\in S\,. 
\end{equation}
Also, assume that $\norm{c}_\infty\df\sup_{i\in S}\sup_{u\in\Act(i)}c(i,u) < \gamma$.
\item[(b)] For a finite set $\cK$ and a norm-like function $\ell:S\to \RR_+$ it
holds that
\begin{equation}\label{Lyap2}
\sup_{u\in\Act(i)} \sum_{j\in S} \Lyap(j) q(j|i,u) \le \widehat{C} \Ind_{\cK}(i) - \ell(i)\Lyap(i) \quad \forall \quad i\in S\,. 
\end{equation}
Moreover, the function $\ell(\cdot)-\max_{u\in\Act(\cdot)} c(\cdot, u)$ is norm-like.
\end{itemize}
\end{assumption}
Applying \cite[Proposition~6.3.3]{And91}, we see that the CTCMP is strongly ergodic under every
stationary Markov control, provided \cref{A3.4} holds. Moreover, letting $\uuptau_j$ to be the
first hitting time to $j$, it also follows that $\Exp^\zeta_i[\uuptau_j]<\infty$ for all $i\neq j$
and $\zeta\in\Usm$. 

Before we proceed further, let us present a modified example from \cite[Example~1.3]{GHL09}.
\begin{example}\label{E3.1}
For $i\geq 2$, we suppose that
\[
q(j|i, u) =\left\{
\begin{array}{llll}
\lambda i + u & \text{for}\; j=i+1,
\\
\mu i + u & \text{for}\; j=i-1,
\\ 
- (\lambda i + \mu i + 2u)& \text{for}\; j=i,
\\
0 & \text{otherwise}.
\end{array}
\right.
\]
Let the control parameter $u$ take values in some bounded set.
Also, assume that $q(j|1, u)=q(j|1)$ is positive for every $j\geq 2$ and decays exponentially fast
with $j$. Therefore, \cref{A3.3}(c) holds. Suppose that $\mu>\lambda>0$ and define
$\Lyap(i)=e^{\theta i}$ for some $\theta>0$ to be chosen later. Then note that
\begin{align}\label{EE3.1A}
\sum_{j\in S} \Lyap(j) q(j|i,u) &= e^{\theta i}\left((\lambda i + u) e^\theta 
+ (\mu i + u) e^{-\theta} -(\lambda i + \mu i + 2u)\right)\nonumber
\\
&=i\,\Lyap(i) \left( \lambda (e^\theta-1) + \mu (e^{-\theta}-1) + \frac{u}{i}(e^{\theta}+e^{-\theta}-2)\right).
\end{align}
Since for every small $\theta>0$ we have
$$\mu(e^{-\theta}-1)+ \lambda (e^\theta-1)<0\,
\Leftrightarrow (e^{\theta}-1)(\lambda-\mu e^{-\theta})<0
\Leftrightarrow \lambda<\mu e^{-\theta},$$
letting $\ell(i)= \alpha i$, for $2\alpha= -\mu(e^{-\theta}-1)- \lambda (e^\theta-1)>0$ , we get from \eqref{EE3.1A} that 
$$\sup_{u\in\Act(i)}\sum_{j\in S} \Lyap(j) q(j|i,u) \le -\ell(i)\Lyap(i), $$
for $i\in \cK^c$ where $\cK$ is some finite set satisfying 
$\alpha>\frac{u}{i}(e^\theta+e^{-\theta}-2)$ for all $i\in \cK^c$ and
all control parameter $u$.
Now we let $\theta$ small enough so that
$\sum_{j\in S} \Lyap(j)q(j|1)<\infty$. Hence \cref{A3.4} holds.
\end{example}
Let us now state our first main result of this section (compare it with \cref{T2.1})
\begin{theorem}\label{T3.1}
Grant \cref{A3.1,A3.1Exp,A3.3,A3.4}. Then the following hold.
\begin{itemize}
\item[(i)] There exists a unique positive function $\psi^*$, $\psi^*(i_0)=1$, satisfying
\begin{equation}\label{ET3.1A}
\lamstr\psi^*(i) = \min_{u\in\Act(i)}\left[\sum_{j\in S} \psi^*(j)q(j|i,u) + c(i, u)\psi^*(i)\right]\quad\text{for}\,\, i\in S\,.
\end{equation}
\item[(ii)] A stationary Markov control $v\in\Usm$ is optimal if and only if it satisfies
\begin{equation}\label{ET3.1B}
\min_{u\in\Act(i)}\left[\sum_{j\in S} \psi^*(j)q(j|i,u) + c(i, u)\psi^*(i)\right]
= \left[\sum_{j\in S} \psi^*(j)q(j|i,v(i))+ c(i, v(i))\psi^*(i)\right]
\end{equation}
for all $i\in S$.
\end{itemize}
\end{theorem}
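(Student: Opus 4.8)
The plan is to transcribe the argument behind \cref{T2.1} into continuous time: solve a family of Dirichlet (principal) eigenvalue problems on an increasing sequence of finite sets $\sD_n\uparrow S$ with $i_0\in\sD_n$, pass to the limit to get a solution of \eqref{ET3.1A}, identify the eigenvalue with $\lamstr$, and then use stochastic representations to get uniqueness and the verification result. For a finite $\sD\ni i_0$ fix a constant $\theta>\|c\|_\sD+\max_{i\in\sD}q(i)$ and, on the finite–dimensional space $\cB_\sD$, set
$$\cT f(i)\df\inf_{\zeta\in\Um}\Exp_i^\zeta\Bigl[\int_0^{\uptau(\sD)}\E^{\int_0^t(c(X_s,\zeta_s)-\theta)\,\D s}f(X_t)\,\D t\Bigr]\quad(i\in\sD),\qquad\cT f\equiv0\ \text{on}\ \sD^c,$$
where $\uptau(\sD)$ is the first exit time from $\sD$; finiteness of $\sD$ and \cref{A3.1} make this well defined (the continuous-time analog of \cref{P1.1}: $\phi=\cT f$ is the unique solution in $\cB_\sD$ of $\min_{u\in\Act(i)}[\sum_j\phi(j)q(j|i,u)+(c(i,u)-\theta)\phi(i)]+f(i)=0$ in $\sD$). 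One checks $\cT$ is $1$-homogeneous, order-preserving, completely continuous (finite dimension), and $M\cT\xi\succeq\xi$ for $\xi=\Ind_{\{i_0\}}$ and large $M$ by \cref{A3.3}(c); \cref{T-KR} then yields $\cT\psi_\sD=\lambda\psi_\sD$, $0\ne\psi_\sD\in\cB_\sD^+$, $\lambda>0$, equivalently
$$\min_{u\in\Act(i)}\Bigl[\sum_{j\in S}\psi_\sD(j)q(j|i,u)+c(i,u)\psi_\sD(i)\Bigr]=\rho_\sD\,\psi_\sD(i)\ \text{in}\ \sD,\qquad\rho_\sD\df\theta-\tfrac1\lambda\,,$$
and a Dynkin/optional-sampling argument with the submartingale $\E^{\int_0^t(c-\rho_\sD)\D s}\psi_\sD(X_t)$ (up to $\uptau(\sD)$) gives, as for \eqref{EL2.1B}, $\rho_\sD\le\inf_{\zeta\in\Uadm}\sE_i(c,\zeta)$ whenever $\psi_\sD(i)>0$.

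\textbf{Passage to the limit.} Write $(\rho_n,\psi_n)$ for the eigenpair on $\sD_n$. Using \cref{A3.4} and Dynkin's formula on the truncations $\Lyap_m=\min\{\Lyap,m\}$ — letting $m,T,n\to\infty$ in that order — one gets the continuous-time analog of \cref{L2.2}, namely $\Exp_i^\zeta[\E^{\int_0^{\uuptau(\sB)}(\ell(X_t)-\widehat\kappa)\,\D t}\Lyap(X_{\uuptau(\sB)})]\le\Lyap(i)$ for $i\in\sB^c$, with a suitable finite $\sB\supset\cK$ ($\ell\equiv\gamma$ under \cref{A3.4}(a)); in particular \eqref{00A2} is finite. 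The T-KR bound gives $\rho_n\le\lamstr$, and $\liminf_n\rho_n\ge0$ follows as in \cref{L2.3}: otherwise, normalizing $\psi_n(i_0)=1$ the equation forces $\sum_j\psi_n(j)q(j|i_0,\hat v_n(i_0))\le\rho_n\le0$, \cref{A3.3}(c) bounds $\psi_n(j)$ for $j\ne i_0$, and a diagonal limit yields nonnegative $\psi$ with $\psi(i_0)=1$ and $\{\psi(X_t)\}$ a supermartingale under some $\hat v\in\Usm$, impossible by recurrence (strong ergodicity from \cref{A3.4}). Rescaling so that $\theta_n\psi_n$ touches $\Lyap$ from below and arguing as in \cref{L2.4} (exit estimate plus a maximum-principle step), the contact point lies in $\sB$; hence $\{\psi_n\}$ is locally bounded and, along a subsequence, $\rho_n\to\rho\in[0,\lamstr]$ and $\psi_n\to\psi^*\le\Lyap$ pointwise. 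Passing to the limit in the equation via \cref{A3.3}(a),(b), dominated convergence on $\order(\Lyap)$, and the generalized Fatou lemma for the minimizing selectors (as in \eqref{EL2.4G}), shows $(\rho,\psi^*)\in\RR_+\times\order(\Lyap)$ solves \eqref{ET3.1A} with $\rho$ in place of $\lamstr$; $\psi^*\gneq0$ since it touches $\Lyap\ge1$, and $\psi^*>0$ on $S$ by irreducibility (cf.\ the argument after \eqref{EL2.4JJ}). One also records, for any minimizing selector $v^*$, the representation $\psi^*(i)=\Exp_i^{v^*}[\E^{\int_0^{\uuptau(\sB)}(c(X_t,v^*(X_t))-\rho)\,\D t}\psi^*(X_{\uuptau(\sB)})]$ on $\sB^c$, and the analogous facts for a fixed $v\in\Usm$ (the continuous-time \cref{R2.2}).

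\textbf{Identification, uniqueness, verification.} First, the analog of \cref{L2.5}: under \cref{A3.4}(a), $\rho=\|c\|_\infty$ would make $\{\psi^*(X_t)\}$ a bounded submartingale under a minimizing selector, hence convergent, hence (by recurrence) $\psi^*$ constant, forcing $c$ constant. Next, perturb $c$ to $\tilde c_n$ as in the discrete case (add $\|c\|_\infty+\alpha$ on $\sD_n^c$ under (a), add $\tfrac1n[\ell-\max_u c(\cdot,u)]_+$ under (b)); for a minimizing selector $v^*$ of \eqref{ET3.1B} this produces linear eigenpairs $(\rho_{v^*,n},\psi_{v^*,n})$ with $\rho_{v^*,n}\downarrow\tilde\rho\ge\sE_i(c,v^*)\ge\rho$ and, in the limit, an eigenpair $(\tilde\rho,\psi_{v^*})$ for $v^*$ with its own representation outside $\sB$. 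Comparing this with the representation for $(\rho,\psi^*)$, rescaling so $\psi^*-\psi_{v^*}\ge0$ in $\sB$ with equality at a point, propagates $\psi^*\ge\psi_{v^*}$ to all of $S$; evaluating \eqref{ET3.1A} at the contact point (as after \eqref{EL2.4JJ}) forces $\psi^*=\psi_{v^*}$ and hence $\tilde\rho=\rho$. Thus $\rho=\lamstr=\sE_i(c,v^*)$ for all $i$, so $v^*$ is optimal. Applying the same comparison to any positive solution $V$ of \eqref{ET3.1A} (eigenvalue $\lamstr$) and one of its minimizing selectors yields uniqueness up to the normalization $\psi^*(i_0)=1$. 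For the converse in (ii): if $\hat v\in\Usm$ has $\sE_i(c,\hat v)=\lamstr$, its linear eigenpair $(\rho_{\hat v},\psi_{\hat v})$ satisfies $\rho_{\hat v}=\lamstr$ and $\psi_{\hat v}=\kappa\psi^*$, so \eqref{ET3.1A} together with the eigenequation for $\hat v$ forces $\hat v$ to attain the minimum in \eqref{ET3.1B}.

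\textbf{Main obstacle.} The delicate point, absent in the discrete case, is the unboundedness of the rates $q(i)$: Dynkin's formula and the Feynman–Kac exponential (sub)martingales are directly available only for bounded functions and bounded generators, so every estimate above must first be run with the truncations $\Lyap_m$ on the finite domains $\sD_n$ and only then have $m$, $n$ and $T$ sent to infinity in the right order, with explosion excluded by \cref{A3.1Exp} and the integrability supplied by \cref{A3.4}. Setting up the continuous-time Dirichlet problem (the analog of \cref{P1.1}) and the complete continuity of $\cT$, together with the maximum-principle comparisons used to locate contact points and to prove uniqueness, likewise require more care than their discrete counterparts.
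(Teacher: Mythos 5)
Your proposal reproduces the paper's proof essentially step for step: Dirichlet eigenpairs via Kre\u{\i}n--Rutman on finite domains (\cref{P3.1,L3.1}), the hitting-time estimate and passage to the limit with the $\Lyap$-scaling, contact-point, and Fatou arguments (\cref{L3.2,L3.3,L3.4}), and the perturbed-cost / stochastic-representation comparison for identification, verification and uniqueness (\cref{L3.5}, adapted from \cref{L2.6}). The one slip is cosmetic: the lower bound giving $M\,\cT\xi\succeq\xi$ for $\xi=\Ind_{\{i_0\}}$ comes from the finite exit rate $q(i_0)<\infty$ of \cref{A3.1}(c) (a first-jump-time calculation), not from \cref{A3.3}(c), which is instead what guarantees $\psi_n(i_0)>0$ after the Kre\u{\i}n--Rutman step.
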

The rest of this section is dedicated to the proof of \cref{T3.1}. The main strategy of the proof is
same as the proof of \cref{T2.1}.
We begin with our next result which is a counterpart of \cref{P1.1} for CTCMP.

\begin{proposition}\label{P3.1}
Grant \cref{A3.1} and \cref{A3.3}(a)-(b). Suppose $c < -\delta$ in $\sD$ for some positive 
constant $\delta$ and a finite set $\sD$. Then for any $f\in\cB_{\sD}$ there exist unique $\phi \in \cB_{\sD}$ satisfying
\begin{equation}\label{EP3.1A}
\min_{u\in\Act(i)}\left[\sum_{j\in S}\phi(j)q(j|i,u) + c(i,u)\phi(i)\right] \,=\, - f(i), \quad \forall\,\, i\in \sD\,, \quad\text{and}\;\; \phi(i) = 0\quad \forall\,\, i\in \sD^c\,.
\end{equation}
Let $\uptau = \uptau(\sD)\,\df\,\inf\{t>0\;\colon\; X_t\notin \sD\}$. Then 
the unique solution satisfies 
\begin{equation}\label{EP3.1B}
\phi(i) \,= \inf_{\zeta\in\Um}\Exp_i^{\zeta}\left[\int_{0}^{\uptau} e^{\int_{0}^{t}c(X_s, \zeta_s)\D s} f(X_t)\D t\right]\quad\forall \,\, i\in S\,.
\end{equation} 
\end{proposition}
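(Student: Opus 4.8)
The plan is to mimic the proof of \cref{P1.1} in the continuous-time setting, replacing the discrete-time fixed-point argument on $\cB_\sD$ with a fixed-point argument for the resolvent-type operator naturally attached to \eqref{EP3.1A}. First I would fix $f\in\cB_\sD$ and, since $\sD$ is finite, recall from \cref{A3.1}(c) that $\bar q_\sD\df\max_{i\in\sD}q(i)<\infty$ and set a constant $\Lambda>\bar q_\sD+\norm{c}_\sD$. Rewriting \eqref{EP3.1A} as a fixed-point equation, I would define a map $\cT\colon\cB_\sD\to\cB_\sD$ by
\begin{equation*}
\cT g(i)=\frac{1}{\Lambda}\min_{u\in\Act(i)}\Bigl[\sum_{j\neq i}g(j)q(j|i,u)+\bigl(\Lambda+q(i|i,u)+c(i,u)\bigr)g(i)+f(i)\Bigr]\quad i\in\sD,\qquad \cT g(i)=0\quad i\in\sD^c.
\end{equation*}
The choice of $\Lambda$ guarantees that inside $\sD$ all the coefficients $q(j|i,u)$ ($j\ne i$) and $\Lambda+q(i|i,u)+c(i,u)$ are nonnegative and that the total mass of these nonnegative weights is at most $\Lambda+c(i,u)\le\Lambda-\delta$ (using $c<-\delta$ on $\sD$); hence the sum of the weights over $\sD$, divided by $\Lambda$, is bounded by $1-\delta/\Lambda<1$. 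Since the min of two $1$-Lipschitz-with-these-weights maps is again Lipschitz with the same constant, I get $\norm{\cT g_1-\cT g_2}_\sD\le(1-\delta/\Lambda)\norm{g_1-g_2}_\sD$, so $\cT$ is a contraction and the Banach fixed-point theorem produces a unique $\phi\in\cB_\sD$ with $\cT\phi=\phi$, which is exactly \eqref{EP3.1A} after multiplying through by $\Lambda$ and cancelling the $\Lambda g(i)$ term.

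Next I would identify the fixed point with the stochastic representation \eqref{EP3.1B}. Let $v$ be any measurable selector of the minimum in \eqref{EP3.1A} (such a selector exists because, by \cref{A3.3}(a)-(b), the bracketed expression is continuous in $u$ on the compact set $\Act(i)$, and $S$ is countable so one selects pointwise). Under the stationary Markov control $v$ the process is non-explosive (it is a finite-rate chain up to $\uptau$, or one may invoke \cref{A3.1Exp}), and Dynkin's formula applied to $t\mapsto e^{\int_0^t c(X_s,v(X_s))\D s}\phi(X_t)$ up to $\uptau\wedge T$ gives, using \eqref{EP3.1A},
\begin{equation*}
\phi(i)=\Exp_i^{v}\Bigl[e^{\int_0^{\uptau\wedge T}c(X_s,v(X_s))\D s}\phi(X_{\uptau\wedge T})\Bigr]+\Exp_i^{v}\Bigl[\int_0^{\uptau\wedge T}e^{\int_0^t c(X_s,v(X_s))\D s}f(X_t)\D t\Bigr].
\end{equation*}
Since $\phi$ vanishes off $\sD$ and $\Exp_i^v[\uptau]<\infty$ (finite state set, rates bounded below away from $0$ for the exit, or \cref{A3.4}), letting $T\to\infty$ and using dominated convergence kills the boundary term and yields $\phi(i)=\Exp_i^{v}[\int_0^{\uptau}e^{\int_0^t c\,\D s}f(X_t)\D t]\ge\inf_{\zeta\in\Um}\Exp_i^{\zeta}[\cdots]$. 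For the reverse inequality I would take an arbitrary $\zeta\in\Um$ and apply the same Dynkin computation, but now using only the inequality $\sum_j\phi(j)q(j|i,\zeta_t)+c(i,\zeta_t)\phi(i)\ge -f(i)$ coming from the minimum; this gives $\phi(i)\le\Exp_i^{\zeta}[\int_0^{\uptau}e^{\int_0^t c\,\D s}f(X_t)\D t]$, and taking the infimum over $\zeta$ finishes the identification.

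The routine parts are the coefficient bookkeeping in the contraction estimate and the dominated-convergence justifications; the only genuinely delicate point is making sure the Dynkin/optional-sampling manipulation is legitimate, i.e.\ that $\Exp_i^\zeta[\uptau(\sD)]<\infty$ for \emph{every} admissible Markov $\zeta$ and that the local martingale built from $e^{\int_0^t c\,\D s}\phi(X_t)$ minus its compensator is a true martingale up to $\uptau$. Both follow from finiteness of $\sD$ together with \cref{A3.1Exp} (non-explosion) — on the finite set $\sD$ the jump rates are uniformly bounded, so the number of jumps before $\uptau$ has exponential moments and all the integrands are bounded, so I expect this to go through cleanly by the same successive-conditioning argument used for \cref{L-extra} and \cref{L2.1}. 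I would state it as "\eqref{EP3.1B} follows from Dynkin's formula", in parallel with the phrasing in the proof of \cref{P1.1}.
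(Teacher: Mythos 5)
Your proposal is correct and follows the paper's overall strategy (Banach fixed point for existence/uniqueness of the finite Dirichlet problem, then Dynkin's formula for the stochastic representation), but the contraction you construct is set up differently. The paper defines an \emph{implicit} iteration: given $\phi$, the new value $\cT_1\phi(i)$ is obtained by solving the scalar equation $\min_{u\in\Act(i)}\bigl[\sum_{i\neq j\in\sD}\phi(j)q(j|i,u)+(q(i|i,u)+c(i,u))\,x\bigr]=-f(i)$ for $x$, using the strict monotonicity of this expression in $x$ (which comes from $q(i|i,u)+c(i,u)\le -\delta<0$); the contraction modulus there is $\vartheta=\sup_{u}\frac{-q(i|i,u)}{-q(i|i,u)-c(i,u)}\le\frac{\bar q_\sD}{\bar q_\sD+\delta}<1$. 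You instead uniformize: add $\Lambda g(i)$ to both sides with $\Lambda>\bar q_\sD+\norm{c}_\sD$, turning the problem into a fixed point of an \emph{explicit} monotone map with nonnegative coefficients and total mass $\le\Lambda-\delta$, giving contraction modulus $1-\delta/\Lambda$. Both are valid; the uniformization version is more explicit and arguably easier to check (no scalar solve per coordinate), while the paper's version avoids choosing an auxiliary constant $\Lambda$. Both hinge on the same ingredients: finiteness of $\sD$, \cref{A3.1}(c), and $c<-\delta$ on $\sD$.

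One small remark on the verification step: you worry about $\Exp_i^\zeta[\uptau]<\infty$, but this is not needed. Since $c<-\delta$ on $\sD$ and the process stays in $\sD$ before $\uptau$, the prefactor satisfies $e^{\int_0^t c\,\D s}\le e^{-\delta t}$ for $t<\uptau$, so the boundary term $\Exp_i^\zeta\bigl[e^{\int_0^{\uptau\wedge T}c\,\D s}\phi(X_{\uptau\wedge T})\bigr]$ is bounded by $\norm{\phi}_\sD\, e^{-\delta T}\to 0$ on $\{T<\uptau\}$ and vanishes identically on $\{\uptau\le T\}$ because $\phi$ vanishes off $\sD$; no finiteness of $\Exp[\uptau]$ (or even $\Prob(\uptau<\infty)=1$) is required. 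The Dynkin/martingale manipulations are legitimate because on the finite set $\sD$ the jump rates are uniformly bounded, so the relevant local martingale is a true martingale up to $\uptau\wedge T$.
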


\begin{proof}
Given a tuple $(y_i)_{i\in\sD}$ and a fixed $i\in \sD$, let us consider the map
$$\RR\ni x\mapsto G(x)=\min_{u\in \Act(i)}\left[\sum_{i\neq j\in \sD}y_jq(j|i,u) + (q(i|i,u)+c(i,u))x\right].$$
We note that $G$ is strictly decreasing. For, $x_1>x_2$ we have
$$G(x_2)-G(x_1) \geq \min_{u\in \Act(i)} \left[(q(i|i,u)+c(i,u))(x_2-x_1)\right]
\geq \delta (x_1-x_2)>0\,,$$
since $q(i|i, u)(x_2-x_1)\geq 0$ for all $u\in\Act(i)$ and $i\in S$.
Furthermore, $\lim_{x\to \pm \infty}G(x)=\mp \infty$. Therefore, for every 
$y\in\RR$ there exists a unique $x$ satisfying $G(x)=y$. Using $G$ we can now define a map $\cT_1:\cB_\sD\to\cB_\sD$ that satisfies
\begin{equation}\label{EP3.1C}
\min_{u\in\Act(i)}\left[\sum_{i\neq j\in \sD}\phi(j)q(j|i,u) + 
(q(i|i,u)+ c(i,u))(\cT_1\phi(i))\right] \,=\, - f(i)\quad i\in \sD.
\end{equation}
We now show that $\cT_1$ is a contraction. Recall the norm $\norm{\cdot}_\sD$ from
\cref{P1.1}. Let $\psi_{m} = \cT_1\phi_{m}$ for $m=1,2$. For each $i\in\sD$ we then have
from \eqref{EP3.1C} that
\begin{align*}
0&\geq \min_{u\in\Act(i)}\left[\sum_{i\neq j\in \sD}\phi_1(j)q(j|i,u) + 
(q(i|i,u)+ c(i,u))\psi_1(i)\right]
\\
&\, \quad -\min_{u\in\Act(i)}\left[\sum_{i\neq j\in \sD}\phi_2(j)q(j|i,u) + 
(q(i|i,u)+ c(i,u))\psi_2(i)\right]
\\
&\geq \min_{u\in\Act(i)}\left[\sum_{i\neq j\in \sD}(\phi_1(j)-\phi_2(j))q(j|i,u) + 
(q(i|i,u)+ c(i,u))(\psi_1(i)-\psi_2(i))\right].
\end{align*}
Let $\tilde{u}\in\Act(i)$ be point where the minimum on RHS is attained.
Then we get from above
$$(q(i|i,\tilde{u})+ c(i,\tilde{u}))(\psi_1(i)-\psi_2(i)) + q(i|i, \tilde{u}) \norm{\phi_1-\phi_2}_\sD\leq 0,$$
which in turn, gives
$$(\psi_2(i)-\psi_1(i))\leq \sup_{u\in\Act(i)}\frac{-q(i|i, u)}{-q(i|i,u)-c(i, u)}
\norm{\phi_1-\phi_2}_\sD\leq \vartheta \norm{\phi_1-\phi_2}_\sD$$
for some $\vartheta<1$. Interchanging $\psi_1$ and $\psi_2$ in the above calculation and using the arbitrariness of $i$ we have
$$\norm{\cT_1\phi_1-\cT_1\phi_2}_\sD\leq \vartheta\norm{\phi_1-\phi_2}_\sD.$$
Therefore, $\cT_1$ is a contraction and for Banach fixed point theorem, we get a 
unique solution to \eqref{EP3.1A}. \eqref{EP3.1B} follows from Dynkin's formula.

\end{proof}

As before, applying \cref{T-KR,P3.1}, we obtain the existence of an eigenpair.
 
\begin{lemma}\label{L3.1}
Grant \cref{A3.1}, \cref{A3.1Exp} and \cref{A3.3}(a)-(b). Then there exists $(\rho_{\sD}, \psi_{\sD})\in\RR\times \cB^+_{\sD}$,
$\psi_\sD\gneq 0$, satisfying
\begin{equation}\label{EL3.1A}
\rho_{\sD}\psi_{\sD}(i) = \min_{u\in\Act(i)}\left[\sum_{j\in S} \psi_{\sD}(j)q(j|i,u) + c(i,u)\psi_{\sD}(i)\right].
\end{equation}
Moreover, we have
\begin{equation}\label{EL3.1B}
\rho_{\sD} \le \inf_{\zeta\in\Uadm}\limsup_{T\to\infty}\frac{1}{T}\log \Exp_i^{\zeta} \left[e^{\int_{0}^{T} c(X_t, \zeta_t)\D t}\right]\,,
\end{equation} for all $i\in S$ such that $\psi_{\sD}(i) >0$\,.
\end{lemma}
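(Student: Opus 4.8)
The plan is to follow the proof of \cref{L2.1} almost verbatim, in three stages: first reduce to a strictly negative running cost so that \cref{P3.1} applies; then build from \cref{P3.1} an order-preserving, $1$-homogeneous, completely continuous map on the finite-dimensional space $\cB_\sD$ and produce an eigenpair with the nonlinear Kre\u{\i}n--Rutman theorem (\cref{T-KR}); and finally read the bound \eqref{EL3.1B} off a Feynman--Kac submartingale associated with \eqref{EL3.1A}.

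To carry this out, fix $\delta>0$ and put $\tilde c\df c-\kappa$ with $\kappa\df\norm{c}_\sD+\delta$; since $0\le c\le\norm{c}_\sD$ on $\sD$ we have $-\kappa\le\tilde c\le-\delta$ there, so by \cref{P3.1} the assignment $f\mapsto\cT f\df\phi$, where $\phi$ is the unique solution of \eqref{EP3.1A} with cost $\tilde c$, is a well-defined map $\cT\colon\cB_\sD\to\cB_\sD$ with the stochastic representation \eqref{EP3.1B}. From \eqref{EP3.1B} one reads off at once that $\cT(\lambda f)=\lambda\cT(f)$ for $\lambda\ge0$, and that $f\succeq g$ forces $\cT(f)(i)-\cT(g)(i)\ge\inf_{\zeta\in\Um}\Exp_i^\zeta\bigl[\int_0^{\uptau}e^{\int_0^t\tilde c(X_s,\zeta_s)\D s}(f-g)(X_t)\D t\bigr]\ge0$, so $\cT$ is order preserving. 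Since $X_t\in\sD$ for $t<\uptau$ and $\tilde c\le-\delta$ on $\sD$, we get $\int_0^t\tilde c(X_s,\zeta_s)\D s\le-\delta t$ for $t<\uptau$, hence $\abs{\cT f(i)}\le\delta^{-1}\norm{f}_\sD$; applying the same bound to $f-g$ shows $\cT$ is Lipschitz, and since $\cB_\sD$ is finite-dimensional $\cT$ is automatically compact, hence completely continuous. For the Kre\u{\i}n--Rutman compatibility condition, bound the integral in \eqref{EP3.1B} below by its contribution over the first sojourn interval $[0,\theta_1)$, on which $X_t\equiv i$ and $e^{\int_0^t\tilde c}\ge e^{-\kappa t}$; this gives $\cT f(i)\ge f(i)\,\inf_{\zeta}\Exp_i^\zeta\bigl[\int_0^{\theta_1}e^{-\kappa t}\D t\bigr]$ for $i\in\sD$, and because the escape rate out of $i$ never exceeds $q(i)<\infty$ (cf.\ \cref{A3.1}(c)), $\theta_1$ stochastically dominates an exponential variable of rate $q(i)$, whence $\cT f(i)\ge f(i)\,(q(i)+\kappa)^{-1}$. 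Thus $\cT f\succeq M^{-1}f$ with $M\df\kappa+\max_{i\in\sD}q(i)$, i.e.\ $M\cT(\xi)\succeq\xi$ for every nonzero $\xi\in\cC\df\cB_\sD^+$. By \cref{T-KR} there are $\psi_\sD\in\cB_\sD^+$, $\psi_\sD\neq0$, and $\lambda_0>0$ with $\cT\psi_\sD=\lambda_0\psi_\sD$; inserting $f=\psi_\sD$, $\phi=\lambda_0\psi_\sD$ into \eqref{EP3.1A} and dividing by $\lambda_0$ shows $(-\lambda_0^{-1},\psi_\sD)$ solves \eqref{EL3.1A} for $\tilde c$, and adding $\kappa\psi_\sD(i)$ to both sides shows that $(\rho_\sD,\psi_\sD)$ with $\rho_\sD\df\kappa-\lambda_0^{-1}\in\RR$ solves \eqref{EL3.1A} for $c$.

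For \eqref{EL3.1B}, fix $i$ with $\psi_\sD(i)>0$ and $\zeta\in\Uadm$, and set $M_t\df e^{\int_0^t(c(X_s,\zeta_s)-\rho_\sD)\D s}\psi_\sD(X_t)$. By Dynkin's formula for the controlled jump process---legitimate because all data are bounded on $[0,\uptau]$ ($\sD$ finite, $c$ continuous on the compact sets $\Act(i)$, $q(i)<\infty$), while \cref{A3.1Exp} rules out explosion---$M_{t\wedge\uptau}$ equals $\psi_\sD(i)$ plus a martingale plus $\int_0^{t\wedge\uptau}e^{\int_0^s(c-\rho_\sD)\D r}\bigl[\sum_{j\in S}\psi_\sD(j)q(j|X_s,\zeta_s)+(c(X_s,\zeta_s)-\rho_\sD)\psi_\sD(X_s)\bigr]\D s$, and the integrand is nonnegative on $\{s<\uptau\}$ by \eqref{EL3.1A}. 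Hence $\{M_{t\wedge\uptau}\}$ is a submartingale, so $\psi_\sD(i)\le\Exp_i^\zeta[M_{T\wedge\uptau}]$; since $\psi_\sD$ vanishes outside $\sD$ and $X_\uptau\notin\sD$, on $\{T\ge\uptau\}$ we have $M_{T\wedge\uptau}=0$, so $M_{T\wedge\uptau}=e^{\int_0^T(c(X_s,\zeta_s)-\rho_\sD)\D s}\psi_\sD(X_T)\Ind_{\{T<\uptau\}}\le(\max_{\sD}\psi_\sD)\,e^{-\rho_\sD T}e^{\int_0^Tc(X_s,\zeta_s)\D s}$. Therefore $e^{\rho_\sD T}\psi_\sD(i)\le(\max_{\sD}\psi_\sD)\,\Exp_i^\zeta[e^{\int_0^Tc(X_t,\zeta_t)\D t}]$; taking logarithms, dividing by $T$, letting $T\to\infty$, and finally taking the infimum over $\zeta\in\Uadm$ yields \eqref{EL3.1B}.

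The step I expect to be the main obstacle is the continuous-time Feynman--Kac martingale decomposition underlying \eqref{EL3.1B}: in the discrete-time proof its role is played by the elementary one-step martingale \eqref{EL2.1E}, but here one must identify the compensator of the jump process, confirm that the relevant local martingale is a genuine martingale up to the (possibly infinite) exit time $\uptau$, and argue carefully on the event $\{\uptau=\infty\}$. The remaining steps transcribe directly from the proof of \cref{L2.1}.
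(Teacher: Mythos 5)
Your construction of the Dirichlet eigenpair is the same as the paper's: shift the cost to make it strictly negative on $\sD$, use \cref{P3.1} to define the map $\cT$ on $\cB_\sD$ through the stochastic representation \eqref{EP3.1B}, check the Kre\u{\i}n--Rutman hypotheses, and unwind the resulting eigenvalue relation. Your lower bound $\cT f(i)\ge f(i)(q(i)+\kappa)^{-1}$ via the first sojourn interval is exactly the paper's computation with $\phi(i)\ge f(i)/(\norm{c}_\sD+q(i))$, using $\Prob_i^\zeta(T_1>t)=e^{\int_0^tq(i|i,\zeta_s(i))\,\D s}\ge e^{-q(i)t}$; the paper only needs this for a point mass $f$, while you observe it holds for all $f\in\cC$, which is a harmless strengthening.

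For the bound \eqref{EL3.1B} the intent matches, but you flag the Dynkin/Feynman--Kac step as the "main obstacle" and justify it informally by boundedness on $[0,\uptau]$, without resolving how to legitimize Dynkin's formula for a \emph{stopped} process under a \emph{history-dependent} control. The paper fills exactly this gap by the following reduction, which your sketch is missing: it augments the state space with an absorbing cemetery $\Delta$, sets $\tilde q(j|i,u)=q(j|i,u)$ for $i,j\in\sD$, $\tilde q(\Delta|i,u)=\sum_{j\in\sD^c}q(j|i,u)$, $q(\cdot|\Delta,\cdot)\equiv0$, and modifies any admissible $\zeta$ to a $\tilde\zeta$ that takes the dummy action once the state is $\Delta$. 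The resulting process $\tilde{\mathbf X}$ is a bona fide CTCMP on the finite set $\sD\cup\{\Delta\}$, so the Dynkin's formula of \cite[Lemma~3.2]{GZ19} (or \cite[Theorem~3.1]{QW16}) applies without further ado; and since the law of $\{(X_t,\zeta_t):t<\uptau\}$ coincides with that of $\{(\tilde X_t,\tilde\zeta_t):t<\uptau_*\}$ (with $\uptau_*$ the hitting time of $\Delta$), the resulting inequality transfers back to the original process. Your boundedness remark does show that once a local martingale decomposition is available it is a true martingale, but it does not by itself produce the decomposition for a process stopped at an exit time under a non-Markov policy; the absorbing-state construction is precisely what replaces the stopping by a genuine CTCMP so that the cited Dynkin result is directly applicable. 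The remainder of your argument on $\{T\ge\uptau\}$ (that $M_{T\wedge\uptau}=0$ there, since $\psi_\sD$ vanishes off $\sD$) and the subsequent passage to logarithms is correct and matches the paper.
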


\begin{proof} 
It is evident from \eqref{EL3.1A} that we may assume
$c < -\delta$ in $\sD$ for some positive constant $\delta$. Otherwise, subtract $-\sup_{\sD}c-\delta$
from both sides of \eqref{EL3.1A}.
Let $\cT :\cB_{\sD}\to\cB_{\sD}$ be an operator defined as
\begin{equation}\label{EL3.1C}
\cT(f)(i) \,\df\, \phi(i) = \inf_{\zeta\in\Um}\Exp_i^{\zeta}\left[\int_{0}^{\uptau} e^{\int_{0}^{t}c(X_s, \zeta_s)\D s} f(X_t)\D t\right]\quad\forall \,\, i\in S\,.
\end{equation}
Then $\phi$ is the solution to \cref{EP3.1A}. It is fairly straightforward to show that $\cT$ is
completely continuous, order-preserving and $1$-homogeneous. Now choose $f\in\sB_\sD$ such that
$f(i)=1$ for some $i\in \sD$ and zero elsewhere.
Then, from \eqref{EL3.1C}, it follows that
\begin{align*}
\phi(i)\geq \inf_{\zeta\in\Um} &\Exp_i^\zeta\left[\int_0^{T_1}
e^{\int_{0}^{t}c(X_s, \zeta_s)\D s} f(X_t) \D{t}\right]
\\
&\geq f(i) \Exp_i^\zeta\left[\int_0^{T_1}
e^{-t\norm{c}_\sD}  \D{t}\right]
\\
&\geq \frac{f(i)}{\norm{c}_\sD} \inf_{\zeta\in\Um} \Exp_i^\zeta\left[1-
e^{-\norm{c}_\sD T_1}\right],
\end{align*}
where $T_1$ denotes the first jump time. It is well-known (cf. \cite{PZ20}) that
\begin{equation}\label{00A1}
\Prob_i^\zeta(T_1>t)=e^{\int_0^t q(i|i,\zeta_s(i))\,\D{s}}.
\end{equation}
Therefore,
\begin{align*}
\Exp_i^\zeta\left[1-e^{-\norm{c}_\sD T_1}\right]
&= 1 - \Exp_i^{\zeta}[e^{-\norm{c}_\sD T_1}]
\\
&=1-\int_0^\infty \norm{c}_{\sD} e^{-\norm{c}_\sD s} \Prob_i^\zeta(T_1\leq t)\, \D{t}
\\
&=\norm{c}_{\sD} \int_0^\infty e^{-\norm{c}_\sD s} \Prob_i^\zeta(T_1> t)
\,\D{t}
\\
&= \norm{c}_{\sD} \int_0^\infty e^{-\norm{c}_\sD s} 
e^{\int_0^t q(i|i,\zeta_s(i))\,\D{s}} \,\D{t}
\\
&\geq \norm{c}_{\sD} \int_0^\infty e^{-\norm{c}_\sD s} 
e^{-t q(i)} \,\D{t}=\frac{\norm{c}_{\sD}}{\norm{c}_\sD+q(i)},
\end{align*}
where in the forth line we use \eqref{00A1}. Hence
$$\phi(i)\geq \frac{f(i)}{\norm{c}_\sD+q(i)}.$$
Thus for some $M>0$ we have $M \cT(f) \succeq  f$.

By \cref{T-KR} there exist a nontrivial $\psi_\sD\in\sB^+_\sD, \psi_\sD\neq 0,$ and 
$\lambda_\sD > 0$ such that $\cT(\psi_\sD) = \lambda_\sD \psi_\sD$.
Applying \cref{P3.1} we then obtain
\begin{equation}\label{EL3.2D}
\min_{u\in\Act(i)}\left[\sum_{j\in S}\psi_\sD(j)q(j|i,u) + \psi_{\sD}(i)c(i,u)\right] = \rho_\sD
\psi_\sD(i)\quad \forall\;\; i\in\sD\,,
\end{equation} 
where $\rho_\sD=-[\lambda_\sD]^{-1}$. This gives us \eqref{EL3.1A}.

Next we show \eqref{EL3.1B}. Consider $i\in\sD$ satisfying $\psi_\sD(i)>0$. Choose an
admissible control $\zeta\in\Uadm$. We plan to apply Dynkin's formula upto 
the stopping time $t\wedge\uptau$ where $\uptau=\uptau(\sD)$ is the first exit time from $\sD$.
To apply the results from \cite{GZ19} (see also, \cite[Theorem~3.1]{QW16}), we define 
$\tilde{q}(j|i, u)=q(j|i, u)$ for $j, i\in \sD$, $\tilde{q}(\Delta|i, u)=\sum_{j\in\sD^c} q(j|i, u)$,
 and $q(j|\Delta, u)=0$ for all $j\in S, u\in\Act$.
Here $\Delta$ is an absorbing state. Also, define $\Act(\Delta)=u_\infty$. Recall the definition 
\eqref{ES1} and given a history dependent control $\zeta\in\Uadm$ we can redefine
another admissible control $\tilde\zeta$ to satisfy $\tilde{\zeta}_t\in\Act(\Delta)$ if $X_{t-}\in \sD^c$.
Let $\tilde{\mathbf X}$ be a process, corresponding to the control $\tilde\zeta$, taking
values in $\sD\cup\{\Delta\}$. Note that the law of $\{(X_t, \zeta_t)\;:\; t<\uptau\}$ is same as
$\{(\tilde{X}_t, \tilde\zeta_t)\; :\; t<\uptau_*\}$, where $\uptau_*$ denotes the first hitting time to $\Delta$ by 
$\tilde{\mathbf X}$. Now we can apply Dynkin's formula \cite[Lemma~3.2]{GZ19} to $\tilde{\mathbf X}$.
We apply it on $\psi_\sD$. Set $\psi_D(\Delta)=0$. Then 
\begin{align*}
\psi_{\sD}(i) &= \tilde\Exp_{i}^{\tilde\zeta}\left[ e^{\int_{0}^{T}(c(\tilde{X}_s,\tilde{\zeta}_{s}) - \rho_{\sD})\D s}\psi_{\sD}(\tilde{X}_{T}) \Ind_{\{T < \uptau_*\}}\right]
\\
&\quad - \tilde\Exp_{i}^{\tilde\zeta}\left[\int_0^{T\wedge\uptau_*} e^{\int_{0}^{t}(c(\tilde{X}_s,\tilde{\zeta}_{s}) - \rho_{\sD})\D{s}} \left(\sum_{j\in \sD} \psi_\sD(j)q(j|\tilde{X}_t,\tilde{\zeta}_t) + (c(\tilde{X}_t,\tilde{\zeta}_t)-\rho_\sD)\psi_\sD(\tilde{X}_t)\right)\D{t}\right]
\\
&= \Exp_{i}^{\zeta}\left[ e^{\int_{0}^{T}(c({X}_s,{\zeta}_{s}) - \rho_{\sD})\D s}\psi_{\sD}(X_{T})
 \Ind_{\{T < \uptau\}}\right]
\\
&\quad - \Exp_{i}^{\zeta}\left[\int_0^{T\wedge\uptau} e^{\int_{0}^{t}(c({X}_s,{\zeta}_{s}) - \rho_{\sD})\D{s}} \left(\sum_{j\in \sD} \psi_\sD(j)q(j|{X}_t,{\zeta}_t) + (c({X}_t, {\zeta}_t)-\rho_\sD)
\psi_\sD({X}_t)\right)\D{t}\right]
\\
&\le \Exp_{i}^{\zeta}\left[ e^{\int_{0}^{T}(c({X}_s,{\zeta}_{s}) - \rho_{\sD})\D s}\psi_{\sD}(X_{T})
 \Ind_{\{T < \uptau\}}\right]
\\
&\le (\sup_{\sD} \psi_{\sD}) \Exp_{i}^{\zeta}\left[ e^{\int_{0}^{T}(c(X_s,\zeta_{s}) - \rho_{\sD})\D s}\right],
\end{align*} 
where in the first inequality we use \eqref{EL3.2D}.
Now taking logarithm on both sides, dividing by $T$ and letting $T\to \infty$, we obtain
\begin{equation*}
\rho_{\sD} \le \limsup_{T\to\infty}\frac{1}{T}\log \Exp_{i}^{\zeta}\left[ e^{\int_{0}^{T}c(X_t,\zeta_{t})}\right]\,.
\end{equation*}
Since $\zeta\in\Uadm$ is arbitrary, we obtain \cref{EL3.1B}. This completes the proof.
\end{proof}


We begin with the following hitting time estimate which follows from \cref{A3.4} (compare
with \cref{L2.2}).
\begin{lemma}\label{L3.2}
Suppose that \cref{A3.4} holds. Let $\sB$ be a finite set containing $\cK$. Then for any
$\zeta\in\Um$ we get the following.
\begin{itemize}
\item[(i)] Under \cref{A3.4}(a), we have
\begin{equation}\label{EL3.2A}
\Exp_i^{\zeta}\left[e^{\gamma \uuptau(\sB)}\Lyap(X_{\uuptau(\sB)})\right] \,\le\, \Lyap(i)\ \ \text{ for all\ } i\in\sB^c\,,
\end{equation}
where $\uuptau(\sB)\,=\,\inf\{t>0\colon X_t\in \sB\}$\,.
\item[(ii)] Under \cref{A3.4}(b), we have
\begin{equation}\label{EL3.2B}
\Exp_i^{\zeta}\left[e^{\int_{0}^{\uuptau(\sB)}\ell(X_s)\D s}\Lyap(X_{\uuptau(\sB)})\right] \,\le\, \Lyap(i)\ \ \text{ for all\ } i\in\sB^c\,.
\end{equation}
\end{itemize}
\end{lemma}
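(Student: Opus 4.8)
The plan is to mimic the discrete-time argument of \cref{L2.2}, replacing sums by integrals and the one-step conditioning by an application of Dynkin's formula. As in \cref{L2.2}, part~(i) follows from part~(ii) by taking the constant function $\ell\equiv\gamma$ (and noting \eqref{Lyap1} is then a special case of \eqref{Lyap2}), so I would only prove (ii). Fix a finite set $\sB\supset\cK$ and a state $i\in\sB^c$. First I would localize: let $\sD_n=\{1,\ldots,n\}$ with $\sB\subset\sD_n$ and $i\in\sD_n\setminus\sB$, let $\uptau_n=\uptau(\sD_n)$ be the first exit time from $\sD_n$, and put $\uptau_\circ=\uuptau(\sB)\wedge\uptau_n$. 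Because $\Lyap$ may be unbounded, I would also truncate: set $\Lyap_m=\min\{\Lyap,m\}$. Since \eqref{Lyap2} reads $\sup_{u}\sum_j\Lyap(j)q(j|i,u)\le-\ell(i)\Lyap(i)$ for $i\in\sB^c$ (the indicator term vanishes there), for each fixed $n$ one can choose $m$ large enough that
\begin{equation*}
\sup_{u\in\Act(i)}\sum_{j\in S}\Lyap_m(j)q(j|i,u)\le-\ell(i)\Lyap_m(i)\qquad\text{for all }i\in\sD_n\setminus\sB .
\end{equation*}

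With this in hand I would apply Dynkin's formula (in the killed-process form used already in the proof of \cref{L3.1}, citing \cite[Lemma~3.2]{GZ19}) to the function $\Lyap_m$ and the multiplicative functional $e^{\int_0^t\ell(X_s)\D s}$, up to the stopping time $T\wedge\uptau_\circ$. The generator computation gives, for $t<\uptau_\circ$,
\begin{equation*}
\sum_{j\in S}\Lyap_m(j)q(j|X_t,\zeta_t)+\ell(X_t)\Lyap_m(X_t)\le 0 ,
\end{equation*}
so the process $e^{\int_0^{t\wedge\uptau_\circ}\ell(X_s)\D s}\Lyap_m(X_{t\wedge\uptau_\circ})$ is a supermartingale, whence
\begin{equation*}
\Exp_i^{\zeta}\Bigl[e^{\int_0^{T\wedge\uuptau(\sB)\wedge\uptau_n}\ell(X_s)\D s}\Lyap_m(X_{T\wedge\uuptau(\sB)\wedge\uptau_n})\Bigr]\le\Lyap_m(i)\le\Lyap(i).
\end{equation*}
Now I would pass to the limit in three stages exactly as in \cref{L2.2}: first $m\to\infty$ (monotone convergence on the left, since $\Lyap_m\uparrow\Lyap$ and the choice of $m$ above was for fixed $n$, so one must be slightly careful — actually one fixes $n$, sends $m\to\infty$ using monotone convergence to get the inequality with $\Lyap$ on $\sD_n$), then $T\to\infty$ (monotone convergence again, as the integrand is nonnegative and increasing in $T$ on $\{T<\uuptau(\sB)\wedge\uptau_n\}$, together with dominated/monotone control of the boundary term), obtaining
\begin{equation*}
\Exp_i^{\zeta}\Bigl[e^{\int_0^{\uuptau(\sB)\wedge\uptau_n}\ell(X_s)\D s}\Lyap(X_{\uuptau(\sB)\wedge\uptau_n})\Bigr]\le\Lyap(i),
\end{equation*}
and finally $n\to\infty$, using that $\uptau_n\uparrow\infty$ (the process is non-explosive by \cref{A3.1Exp}) and Fatou's lemma, to conclude \eqref{EL3.2B}.

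The step I expect to be the main obstacle is the rigorous justification of Dynkin's formula and the interchange of limits under these exponential weights for a \emph{history-dependent} control $\zeta$, since the naive generator identity only holds on the killed domain and the process could, a priori, spend unbounded exponential mass before hitting $\sB$. This is handled precisely by the truncation $\Lyap_m$ (which keeps everything bounded so Dynkin applies to the process killed at $\uptau_n$) combined with the ordered passage $m\to\infty$, $T\to\infty$, $n\to\infty$ and monotone/Fatou arguments — the same device already used in \cref{L2.2} and in \cref{L3.1}. Everything else is routine: the generator bound is just a rewriting of \eqref{Lyap2}, and the reduction of (i) to (ii) is immediate.
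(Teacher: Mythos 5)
Your proposal is correct and follows essentially the same route as the paper's proof: localize with $\sD_n$, apply Dynkin's formula up to $T\wedge\uuptau(\sB)\wedge\uptau_n$ using the Lyapunov bound, then pass to the limit with Fatou/monotone convergence. The only cosmetic differences are that the paper proves (i) and calls (ii) analogous (whereas you reduce (i) to (ii)), and the paper invokes Dynkin's formula directly on $\Lyap$ via \cite[Appendix~C.3]{GHL09} without the intermediate truncation $\Lyap_m$ that you carry over from \cref{L2.2} as an extra safeguard.
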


\begin{proof}
We only provide a proof for (i) and the proof for (ii) would be analogous. 
Suppose \cref{A3.4}(a) holds. Let $\sD_n$ be a collection of finite, increasing sets converging to $S$.
By $\uptau_n$ we denotes the first exit time from $\sD_n$. Choose $n$ large enough so that 
$\sB\Subset \sD_n$.
Applying Dynkin's formula \cite[Appendix~C.3]{GHL09} and using \cref{Lyap1} it follows that
\begin{equation*}
\Exp_i^{\zeta}\left[e^{\gamma (\uuptau(\sB)\wedge T \wedge \uptau_n)}\Lyap(X_{\uuptau(\sB)\wedge T \wedge \uptau_n})\right] \,\le\, \Lyap(i)\ \ \text{ for all\ } i\in\sB^c\cap \sD_n\,,
\end{equation*}
for $T>0$. Letting $T\to\infty$ first and, then $n\to\infty$ and applying Fatou's lemma we obtain \cref{EL3.2A}. This completes the proof.  
\end{proof}


Let $\{\sD_n\}$ be a collection of finite, increasing sets converging to $S$. 
Denote by $(\rho_n, \psi_n)$ the eigenpair in the domain $\sD_n$ obtained by \cref{L3.1}.
Next we study limit of $\rho_n$ as $n\to\infty$.

\begin{lemma}\label{L3.3}
Grant \cref{A3.1,A3.1Exp,A3.3,A3.4}. Then the following holds.
\begin{itemize}
\item[(i)]
\begin{equation}\label{EL3.3A}
\rho_{n} \le \inf_{\zeta\in\Uadm}\limsup_{T\to\infty}\frac{1}{T}\log \Exp_{i_0}^{\zeta} \left[e^{\int_{0}^{T} c(X_t, \zeta_t)\D t}\right]=\inf_{\zeta\in\Uadm} \sE_{i_0}(c, \zeta)\,,
\end{equation}
and $\{\rho_n\}$ is bounded above.
\item[(ii)] The sequence $\{\rho_n\}$ is bounded and we have $\liminf_{n\to\infty}\rho_n \ge 0$\,.
\end{itemize}
\end{lemma}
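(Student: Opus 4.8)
The plan is to follow the strategy of the discrete-time \cref{L2.3}, replacing the multiplicative Feynman--Kac functional by its continuous-time analogue and the successive-conditioning arguments by Dynkin's formula. First I would record, exactly as in \cref{L2.3}(i), that $\psi_n(i_0)>0$: if $\psi_n(i_0)=0$, evaluating \eqref{EL3.1A} at $i_0$ yields $0=\min_{u\in\Act(i_0)}\sum_{j\neq i_0}\psi_n(j)q(j|i_0,u)$, and since $q(j|i_0,u)>0$ for all $j\neq i_0$ by \cref{A3.3}(c) while $\psi_n\geq0$, this forces $\psi_n\equiv0$, contradicting $\psi_n\gneq0$. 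Part~(i) is then immediate: the inequality $\rho_n\le\inf_{\zeta\in\Uadm}\sE_{i_0}(c,\zeta)$ is \eqref{EL3.1B} applied at $i=i_0$, and the upper bound on $\{\rho_n\}$ follows once $\inf_{\zeta\in\Uadm}\sE_{i_0}(c,\zeta)<\infty$. The latter is trivial under \cref{A3.4}(a) (as $c$ is bounded), and under \cref{A3.4}(b) it follows by an argument analogous to \cref{L-extra}: rewriting \eqref{Lyap2} as $\sup_{u}\sum_j\Lyap(j)q(j|i,u)\le(\kappa_1-\ell(i))\Lyap(i)$ for a suitable constant $\kappa_1$ (using finiteness of $\cK$ and $\Lyap\ge1$), a localized Dynkin formula applied to $t\mapsto e^{\int_0^t(\ell(X_s)-\kappa_1)\,\D s}\Lyap(X_t)$ gives $\Exp_i^\zeta[e^{\int_0^T\ell(X_s)\,\D s}]\le e^{\kappa_1 T}\Lyap(i)$, hence $\sE_i(\ell,\zeta)\le\kappa_1$, and norm-likeness of $\ell(\cdot)-\max_u c(\cdot,u)$ then gives $\sE_i(c,\zeta)\le\kappa_1+\kappa_2<\infty$.

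For part~(ii) it suffices to prove $\liminf_n\rho_n\ge0$, since boundedness of $\{\rho_n\}$ then follows from part~(i). Suppose, to the contrary, that along a subsequence $\rho_n\to\hat\rho\in[-\infty,0)$, so $\rho_n<0$ for all large $n$. Normalize $\psi_n(i_0)=1$ and pick a minimizing selector $v_n$ of \eqref{EL3.1A}. Evaluating \eqref{EL3.1A} at $i_0$ gives $\sum_{j\neq i_0}\psi_n(j)q(j|i_0,v_n(i_0))=\rho_n-q(i_0|i_0,v_n(i_0))-c(i_0,v_n(i_0))\le\kappa+q(i_0)$, where $\kappa$ is the upper bound from part~(i), using $-q(i_0|i_0,u)\le q(i_0)$ and $c\ge0$. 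Since $u\mapsto q(j|i_0,u)$ is continuous and strictly positive on the compact set $\Act(i_0)$, it is bounded below there by a positive constant, so for each $j$ the sequence $\{\psi_n(j)\}$ is bounded. A diagonalization argument then gives a further subsequence along which $\psi_n\to\psi\ge0$ componentwise, $\psi(i_0)=1$, and $v_n(i)\to\hat v(i)$ for every $i$, for some $\hat v\in\Usm$.

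Next I would show that $\psi$ is excessive for the chain governed by $\hat v$. For fixed $i$ and all large $n$ (so $i\in\sD_n$ and $\rho_n<0$), \eqref{EL3.1A} gives the exact identity $\sum_j\psi_n(j)q(j|i,v_n(i))=(\rho_n-c(i,v_n(i)))\psi_n(i)\le0$, while $\sum_{j\neq i}\psi_n(j)q(j|i,v_n(i))\le-q(i|i,v_n(i))\psi_n(i)\le q(i)\sup_n\psi_n(i)<\infty$, so the off-diagonal tails are uniformly bounded. Using the setwise convergence of $q(\cdot|i,v_n(i))$ on $S\setminus\{i\}$ (which follows from \cref{A3.3}(b) by taking indicator test functions), the generalized Fatou lemma \cite[Lemma~8.3.7]{HL99} on the off-diagonal part, and continuity of $u\mapsto q(i|i,u)$ for the diagonal term, I would pass to the limit to obtain $\sum_j\psi(j)q(j|i,\hat v(i))\le0$ for every $i\in S$. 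Hence, by a localized Dynkin formula together with \cref{A3.1Exp} (non-explosion), $\{\psi(X_t)\}_{t\ge0}$ is a nonnegative supermartingale under $\Prob_i^{\hat v}$, so it converges a.s.\ as $t\to\infty$; since by \cref{A3.4} (see the discussion following it) {\bf X} is positive recurrent under $\hat v$, it visits $i_0$ and every other state at arbitrarily large times, forcing $\psi\equiv\psi(i_0)=1$. Finally, with $\psi\equiv1$ the right-hand side of the identity above converges to $\hat\rho-c(i,\hat v(i))<0$, whereas by the same Fatou passage the left-hand side has $\liminf$ at least $\sum_j\psi(j)q(j|i,\hat v(i))=\sum_jq(j|i,\hat v(i))=0$ by \cref{A3.1}(b); this contradiction proves $\liminf_n\rho_n\ge0$.

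The main obstacle I anticipate is the two limit passages in the last paragraph: transferring the supersolution inequality to $\psi$ via the generalized Fatou lemma for countable sums with simultaneously varying controls, and then upgrading $\sum_j\psi(j)q(j|\cdot,\hat v(\cdot))\le0$ to a genuine (not merely local) supermartingale property for the possibly unbounded $\psi$ --- this is where \cref{A3.1Exp} and the localization by the exit times of $\sD_n$ are essential --- after which positive recurrence from \cref{A3.4} closes the argument just as in the discrete-time case.
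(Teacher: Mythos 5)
Your proposal is essentially correct and follows the same route the paper takes: positivity of $\psi_n(i_0)$ via \cref{A3.3}(c), \eqref{EL3.1B} for the upper bound, the Lyapunov estimate for finiteness of $\inf_\zeta\sE_{i_0}(c,\zeta)$ under \cref{A3.4}(b), and then a compactness/Fatou passage to a supersolution $\psi$ followed by the supermartingale--recurrence argument to force $\psi\equiv1$ and derive a contradiction. One small organizational difference is worth flagging: the paper first establishes lower boundedness of $\{\rho_n\}$ by the one-line estimate
\begin{equation*}
\rho_n \,\ge\, \min_{u\in\Act(i_0)}\Bigl[\sum_{j\in S}\psi_n(j)\,q(j|i_0,u)\Bigr] \,\ge\, \min_{u\in\Act(i_0)} q(i_0|i_0,u) \,\ge\, -q(i_0) \,>\, -\infty,
\end{equation*}
obtained from \eqref{EL3.3B} at $i=i_0$ after normalizing $\psi_n(i_0)=1$, using $c\ge0$ and the nonnegativity of the off-diagonal rates. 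This disposes of the case $\hat\rho=-\infty$ outright, so the subsequent contradiction argument only needs to handle finite $\hat\rho\in(-\infty,0)$; you instead fold the $\hat\rho=-\infty$ case into the main contradiction, which does work (the right side of your identity tends to $-\infty$ while the left side has a finite liminf), but is a bit heavier than necessary. Two smaller remarks: the ``generalized Fatou'' machinery you invoke is more than is needed --- since the off-diagonal summands $\psi_n(j)q(j|i,v_n(i))$ are nonnegative and converge termwise, ordinary Fatou for series suffices; and in the supermartingale step the paper is explicit that one passes to the discrete skeleton $\{X_n:n\in\NN\}$ (which is recurrent by \cite[Proposition~5.1.1]{And91}) before concluding $\psi$ is constant, which is the clean way to make ``visits every state at arbitrarily large times'' rigorous.
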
 

\begin{proof}
Since
\begin{equation}\label{EL3.3B}
\rho_{n}\psi_{n}(i) = \min_{u\in\Act(i)}\left[\sum_{j\in S} \psi_{n}(j)q(j|i,u) + c(i,u)\psi_{n}(i)\right]\quad i\in\sD_n,
\end{equation}
and $\psi_n\gneq 0$, it follows from \cref{A3.3}(c) that $\psi_n(i_0)>0$. Then \eqref{EL3.3A} follows
from \eqref{EL3.1B}. To complete the proof of (i) we only need to show
that
\begin{equation}\label{00A}
\sE_i(c, \zeta)\leq \kappa\quad \forall \; \zeta\in\Uadm, \; \; i\in S,
\end{equation}
for some constant $\kappa$. Since $c$ is bounded under \cref{A3.4}(a), the
above is immediate. Under \cref{A3.4}(b), we write \eqref{Lyap2} as
$$\sup_{u\in\Act(i)} \sum_{j\in S} \Lyap(j) q(j|i,u) \le (\kappa_1 - \ell(i))\Lyap(i) \quad \forall \quad i\in S\,,$$
where $\kappa_1=\widehat{C} \max_{\cK}[\Lyap]^{-1}$. Applying the arguments
of \cref{L3.2} we then get
$$\Exp_i^{\zeta}\left[e^{\int_0^T (\ell(X_t)-\kappa_1)\D{t}}\Lyap(X_{T})\right] \,\le\, \Lyap(i).$$
Since $\Lyap\geq 1$, taking logarithm in the above, dividing by $T$ and
letting $T\to\infty$ we obtain that 
$$\sE_i(\ell, \zeta)\leq \kappa_1\quad \forall \; \zeta\in\Uadm, \; \; i\in S.$$
Again, since $\max_{u\in\Act(\cdot)} c(\cdot, u)\leq \ell(\cdot) + \kappa_2$ for some $\kappa_2>0$ by \cref{A3.4}(b), we have \eqref{00A} from the above estimate.

Next we consider (ii).
First we show that $\rho_n$ is bounded below.  Since $\psi_n(i_0) > 0$, normalizing $\psi_n$ we can assume that $\psi_n(i_0) = 1$. Since $c \ge 0$, from \cref{EL3.3B}, it follows that
\begin{equation*}
\rho_n\geq \min_{u\in\Act(i_0)}\left[\sum_{j\in S}\psi_n(j)q(j|i_0,u)\right]\geq 
\min_{u\in\Act(i_0)}q(i_0|i_0,u).
\end{equation*} 
Thus $\rho_n$ is bounded from below.

Thus we remain to show that $\hat{\rho} = \liminf_{n\to\infty} \rho_n \ge 0$.
Suppose, on the contrary, that $\hat{\rho} < 0$. We therefore have, along some subsequence, $\rho_n \to \hat{\rho}$, as $n\to\infty$.
Thus, using \cref{A3.1}(c) and \eqref{EL3.3B},  we have 
\begin{equation}\label{EL3.3C}
\psi_n(j)\le \sup_{u\in\Act(i_0)}\frac{-q(i_0|i_0, u)}{q(j|i_0,u)}\df \kappa_1
\quad \text{for all}\; j\in S\setminus\{i_0\},
\end{equation}
for all large $n$.
 Hence, by a standard diagonalization argument, there exists a function $\psi$ with $\psi(i_0) = 1$
 such that along some subsequence $\psi_n(i)\to\psi(i)$, as $n\to\infty$, for all $i\in S$. 
Let $\tilde{v}_n$ be a minimizing selector of \eqref{EL3.3B}. 
  Since $\Act(i)$ is compact for each $i\in S$, along a further subsequence, 
  $\tilde{v}_{n}(i)\to\tilde{v}(i)$, as $n\to\infty$, for all $i\in S$.
Therefore, letting $n\to\infty$ in 
\begin{equation*}
\rho_{n}\psi_{n}(i) = \left[\sum_{j\in S} \psi_{n}(j)q(j|i,\tilde{v}_n(i)) + 
c(i,\tilde{v}_n(i))\psi_{n}(i)\right],
\end{equation*}
using \cref{A3.3}(a)-(b) and Fatou's lemma, we obtain
\begin{equation}\label{EL3.3D}
\hat{\rho}\psi(i) \geq \left[\sum_{j\in S} \psi(j)q(j|i,\hat{v}(i)) + c(i,\tilde{v}(i))\psi(i)\right]\,
\quad i\in S\,.
\end{equation}
Since $\hat{\rho} < 0$ and $c \geq 0$, from \cref{EL3.3D} we deduce that
\begin{equation}\label{EL3.3E}
\left[\sum_{j\in S} \psi(j)q(j|i,\tilde{v}(i))\right] \leq 0\quad\forall \;\; i\in S\,.
\end{equation}
Applying Dynkin's formula to \cref{EL3.3E}, for any $t > 0$ and $i\in S$, it follows that
\begin{equation*}
\Exp_{i}^{\hat{v}}[\psi(X_t)] \le \psi(i)\,.
\end{equation*} 
Therefore, $\{\psi(X_t)\}$ is a supermartingale with respect to the canonical filtration of
{\bf X}, and thus, by Doob's martingale convergence theorem $\psi(X_t)$ converges as $t\to\infty$. By \cref{A3.4},
{\bf X} is recurrent implying the skeleton process $\{X_n\; :\; n\in\NN\}$ is also recurrent
(cf. \cite[Proposition~5.1.1]{And91}). Therefore, $\{X_n\; :\; n\in\NN\}$ visits every
state of $S$ infinitely often and this is possible only if $\psi\equiv 1$. This contradicts 
\eqref{EL3.3D}. Thus we must have $\hat{\rho}\geq 0$.
 This completes the proof.
\end{proof}

Using \cref{L3.3} and ideas from \cref{L2.4} we can now establish the existence of an eigenfunction on $S$.
\begin{lemma}\label{L3.4}
Consider \cref{A3.1,A3.1Exp,A3.3,A3.4}. Then there exists $(\rho, \psi^*)\in \RR_{+}\times\order(\Lyap)$, with $\psi^* > 0$, satisfying 
\begin{equation}\label{EL3.4A}
\rho\psi^*(i) \,=\, \min_{u\in\Act(i)}\left[\sum_{j\in S} \psi^*(j)q(j|i,u) + c(i,u)\psi^*(i)\right]
\quad i\in S\,.
\end{equation}
Moreover, we have 
\begin{itemize}
\item[(i)]
\begin{equation}\label{EL3.4B}
\rho \le \lamstr\,.
\end{equation}
\item[(ii)] There exists a finite set $\sB \supset \cK$ such that for any minimizing selector $v^{*}$ of \cref{EL3.4A} we have
\begin{equation}\label{EL3.4C}
\psi^*(i) = \Exp_i^{v^*} \left[e^{\int_{0}^{\uuptau(\sB)}(c(X_t, v^*(X_t)) - \rho)\D{s}}\psi^*(X_{\uuptau(\sB)})\right]\quad\forall \,\, i\in \sB^c\,.
\end{equation}  
\end{itemize}

\end{lemma}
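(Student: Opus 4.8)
The plan is to follow the scheme of \cref{L2.4}, with the martingale and optional-sampling steps there replaced by the Dynkin-formula computations on killed processes from the proof of \cref{L3.1}. I would start from the Dirichlet eigenpairs $(\rho_n,\psi_n)$ on $\sD_n$ given by \cref{L3.1}, so that
\[
\rho_n\psi_n(i)=\min_{u\in\Act(i)}\Bigl[\sum_{j\in S}\psi_n(j)q(j|i,u)+c(i,u)\psi_n(i)\Bigr],\qquad i\in\sD_n,
\]
and use \cref{L3.3} to extract a subsequence along which $\rho_n\to\rho\ge 0$. I would then fix a finite set $\sB\supset\cK$ such that, for all large $n$, $\max_{u\in\Act(i)}c(i,u)-\rho_n<\gamma$ on $\sB^c$ under \cref{A3.4}(a) (which holds for any such $\sB$ since $\norm c_\infty<\gamma$ and $\{\rho_n\}$ is bounded below), respectively $\max_{u\in\Act(i)}c(i,u)-\rho_n<\ell(i)$ on $\sB^c$ under \cref{A3.4}(b) (possible because $\ell-\max_u c$ is norm-like and $\{\rho_n\}$ is bounded).

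The next step is to rescale each $\psi_n$ so that it touches $\Lyap$ from below: set $\theta_n=\sup\{\kappa>0\colon\Lyap-\kappa\psi_n>0\text{ in }S\}$, which is finite and positive because $\psi_n$ has finite support and $\Lyap\ge1$, and replace $\psi_n$ by $\theta_n\psi_n$. I would then argue, exactly as in \cref{L2.4}, that the touching point lies in $\sB$: if $(\Lyap-\psi_n)(i_1)=0$ for some $i_1\in\sB^c$ while $\Lyap-\psi_n>0$ on $\sB$, then Dynkin's formula applied to $t\mapsto e^{\int_0^t(c-\rho_n)\D s}\psi_n(X_t)$ on the killed process on $\sD_n$ (constructed as in the proof of \cref{L3.1}) up to $\uuptau(\sB)\wedge T\wedge\uptau_n$, together with the eigen-inequality, the bound $\psi_n\le\Lyap$, \cref{L3.2} and dominated convergence as $T\to\infty$, yields $\psi_n(i_1)\le\Exp_{i_1}^{\zeta}\bigl[e^{\gamma\uuptau(\sB)}\psi_n(X_{\uuptau(\sB)})\bigr]$ for $\zeta\in\Um$; combining with \eqref{EL3.2A} gives $0=(\Lyap-\psi_n)(i_1)\ge\Exp_{i_1}^{\zeta}\bigl[e^{\gamma\uuptau(\sB)}(\Lyap-\psi_n)(X_{\uuptau(\sB)})\bigr]>0$, a contradiction (the case \cref{A3.4}(b) being analogous via \eqref{EL3.2B}). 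With $\psi_n\le\Lyap$ in hand, a diagonalization argument produces $\psi^*\le\Lyap$ with $\psi_n\to\psi^*$ pointwise; since $\Lyap-\psi_n$ vanishes somewhere in the finite set $\sB$ for all large $n$, so does $\Lyap-\psi^*$, whence $\psi^*$ is nontrivial and $\psi^*\in\order(\Lyap)$.

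To pass to the limit in the equation I would, as in \cref{L2.4}, handle the two inequalities for the $\min$ separately. For the $\limsup$ bound, fixing $u_0\in\Act(i)$ and using dominated convergence with dominating function $\Lyap(j)q(j|i,u_0)$ (summable by \cref{A3.4}) together with the continuity of $q(j|i,\cdot)$ and $c(i,\cdot)$ from \cref{A3.3}, I get $\limsup_n\min_u[\cdots\psi_n\cdots]\le\sum_j\psi^*(j)q(j|i,u_0)+c(i,u_0)\psi^*(i)$, and minimizing over $u_0$ bounds it by $\min_u[\cdots\psi^*\cdots]$. For the matching $\liminf$ bound, I take minimizing selectors $\tilde v_n$ of the Dirichlet equation, pass to a subsequence with $\tilde v_n(i)\to\tilde v(i)$ for each $i$ by compactness of $\Act(i)$, and apply Fatou's lemma to the nonnegative part $\sum_{j\neq i}\psi_n(j)q(j|i,\tilde v_n(i))$ together with the convergence of the remaining (continuous) terms, as in the generalized Fatou lemma \cite[Lemma~8.3.7]{HL99}. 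Since $\min_u[\cdots\psi_n\cdots]=\rho_n\psi_n(i)\to\rho\psi^*(i)$, letting $n\to\infty$ in the Dirichlet equation then yields \eqref{EL3.4A} with $(\rho,\psi^*)\in\RR_+\times\order(\Lyap)$. Strict positivity of $\psi^*$ follows as in \cref{L2.4}: picking a minimizing selector $v$ of \eqref{EL3.4A}, if $\psi^*(i)=0$ somewhere then, since $c\ge0$ and $\rho\ge0$, \eqref{EL3.4A} forces $\sum_{j\neq i}\psi^*(j)q(j|i,v(i))\le0$, so $\psi^*$ vanishes on every $j$ with $q(j|i,v(i))>0$, and irreducibility of {\bf X} under $v$ propagates this to all of $S$, contradicting $\psi^*\not\equiv0$.

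For (i), for any fixed $i$ one has $\psi_n(i)>0$ for all large $n$ (as $\psi_n\to\psi^*>0$), so \eqref{EL3.1B} applies with that $i$ and gives $\rho_n\le\inf_\zeta\sE_i(c,\zeta)$; letting $n\to\infty$ and then taking the infimum over $i$ yields $\rho\le\lamstr$. For (ii), with $v^*$ a minimizing selector of \eqref{EL3.4A}, Dynkin's formula on the killed process on $\sD_n$ applied to $\psi_n$ up to $\uuptau(\sB)\wedge T\wedge\uptau_n$ — splitting on $\{\uuptau(\sB)<T\wedge\uptau_n\}$ and $\{T<\uuptau(\sB)\wedge\uptau_n\}$, noting $\psi_n(X_{\uptau_n})=0$, and using $\psi_n\le\Lyap$, $\norm c_\infty-\rho_n<\gamma$, and \eqref{EL3.2A} (resp.\ \eqref{EL3.2B}) to discard the second term as $T\to\infty$ — gives $\psi_n(i)\le\Exp_i^{v^*}\bigl[e^{\int_0^{\uuptau(\sB)}(c(X_t,v^*(X_t))-\rho_n)\D t}\psi_n(X_{\uuptau(\sB)})\Ind_{\{\uuptau(\sB)<\uptau_n\}}\bigr]$ on $\sB^c\cap\sD_n$; dominated convergence as $n\to\infty$ (dominating function $e^{\gamma\uuptau(\sB)}\Lyap(X_{\uuptau(\sB)})$, integrable by \cref{L3.2}) gives ``$\le$'' in \eqref{EL3.4C}, while ``$\ge$'' comes from the equality in \eqref{EL3.4A} via Dynkin up to $\uuptau(\sB)\wedge T$ and Fatou's lemma as $T\to\infty$; combining gives \eqref{EL3.4C}. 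The main obstacle I anticipate is not conceptual but the careful bookkeeping of these Dynkin-formula and killed-process manipulations in continuous time — in particular ensuring the dominated- and monotone-convergence passages are uniform in $n$ and that explosion is handled through the construction in \cref{L3.1}; no idea beyond those already present in \cref{L3.1,L2.4} should be needed.
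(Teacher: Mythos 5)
Your proof is correct and follows essentially the same route as the paper's: Dirichlet eigenpairs from \cref{L3.1}, boundedness and nonnegativity of $\rho_n$ from \cref{L3.3}, scaling $\psi_n$ to touch $\Lyap$ inside $\sB$ via \cref{L3.2}, diagonal extraction, a two-sided (dominated convergence/Fatou) limit passage in the eigen-equation, positivity by irreducibility, and the stochastic representation in (ii) via the killed-process Dynkin argument as in \cref{L2.4,L3.1}. The paper itself refers to the arguments of \cref{L2.4} for the touching-point and representation steps, and you have simply spelled out the same continuous-time adaptations.
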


\begin{proof}
From \cref{L3.3} we know that the sequence $\{\rho_n\}$ is a bounded and $\liminf_{n\to\infty}\rho_n \ge 0$. Thus one can find a subsequence such that along this subsequence, $\rho_n$ converges to some 
$\rho \ge 0$, as $n\to\infty$. Now we repeat the method of \cref{L2.4}.
Using \cref{A3.4} and the fact $c \ge 0$, we can find a finite set $\sB$ containing $\cK$ such that
\begin{itemize}
\item[(i)]Under \cref{A3.4}(a): 
\begin{equation}\label{EL3.4D}
(\max_{u\in\Act(i)}c(i, u) - \rho_n) < \gamma \quad \forall\,\, i\in\sB^c, \; \text{for all $n$ large}.
\end{equation}
\item[(ii)]Under \cref{A3.4}(b): 
\begin{equation}\label{EL3.4E}
(\max_{u\in\Act(i)}c(i, u) - \rho_n) < \ell(i) \quad \forall\,\, i\in\sB^c, \; \text{for all $n$ large}.
\end{equation}
\end{itemize}
Now we scale $\psi_n$  by multiplying a suitable
scalar so that it touches $\Lyap$ from below. In particular, 
define 
\begin{equation*}
\theta_n \,=\, \sup\{ \kappa>0 \;\colon\; (\Lyap - \kappa\psi_n) > 0 \quad\text{in\ } S\}\,.
\end{equation*}
Replacing $\psi_n$ by $\theta_n\psi_n$ and from the arguments of \cref{L2.4} we see that
$\psi_n$ touches $\Lyap$ inside $\sB$.
Since $\psi_n \le \Lyap$ for all large $n$, by a standard diagonalization argument, one can extract a subsequence so that along this subsequence, $\psi_n(i) \to \psi^*(i)$ for all $i\in S$, as $n\to\infty$, and $\psi^*\le\Lyap$. Using \eqref{EL3.3B} and Fatou's lemma we get that (see \cref{EL3.3D})
\begin{equation}\label{EL3.4F}
\rho\psi^*(i) \ge\, \min_{u\in\Act(i)}\left[\sum_{j\in S} \psi^*(j)q(j|i,u) +  c(i,u)\psi^*(i)\right]
\quad i\in S\,.
\end{equation} 
On the other hand, for every $i\in S$ and $u\in \Act(i)$, we obtain from \eqref{EL3.3B} that
\begin{align*}
\rho\psi^*(i)=\lim_{n\to\infty} \rho_n\psi_n
&= \lim_{n\to\infty}\, \min_{u\in\Act(i)}\left[\sum_{j\in S} \psi_n(j)q(j|i,u) +  c(i,u)\psi_n(i)\right]
\\
&\leq \lim_{n\to\infty}\, \left[\sum_{j\in S} \psi_n(j)q(j|i,u) +  c(i,u)\psi_n(i)\right]
\\
&= \sum_{j\in S} \psi^*(j)q(j|i,u) +  c(i,u)\psi^*(i),
\end{align*}
by dominated convergence theorem, where we use the fact that $\psi_n\leq \Lyap$ for all large $n$.
Since $u$ is arbitrary, combining with \eqref{EL3.4F} we get \eqref{EL3.4A}. Next we show that $\psi^*>0$.
By our construction we have $(\Lyap - \psi_n) = 0$ at some point in $\sB$ for all $n$ large. Hence $(\Lyap - \psi^*) = 0$ at some point in $\sB$. Since $\Lyap \ge 1$, we deduce that $\psi^*$ is nonzero. We claim that $\psi^* > 0$. If not, then we must have $\psi^*(i) = 0$ for some $i\in S$. Then for any minimizing selector $v^*$ of \cref{EL3.4A}, we have
\begin{equation}\label{EL3.4H}
\sum_{j\neq i} \psi^*(j)q(j|i,v^*(i)) = 0\,.
\end{equation} 
Since the Markov chain {\bf X} is irreducible under $v^*$, from \cref{EL3.4H} it follows that 
$\psi^*\equiv 0$. This is a contradiction to fact that $\psi^*$ is nontrivial. This proves the claim.

Now we prove (i). In view of \eqref{EL3.1B}, it is enough to show that given $i\in S$, $\psi_n(i)>0$
for all $n$ large.
Since $\psi^* >0$ and $\psi_n(i)\to\psi^*(i)$ as $n\to\infty$, we have $\psi_n(i)>0$ for all large enough $n$. Hence $\lim_{n\to\infty}\rho_n\leq \inf_{\zeta\in \Uadm}\sE_i(c, \zeta)$ for all $i$. This gives us
\eqref{EL3.4B}.

(ii) follows from an argument similar to \cref{L2.4}.  
\end{proof}

\begin{remark}\label{R3.1}
It is easy to check that we can also apply the argument of \cref{L3.4} for every stationary Markov control. More precisely, if we impose \cref{A3.1,A3.1Exp,A3.3,A3.4},
then for every Markov control $v\in\Usm$ there exists $(\psi_v, \rho_v)\in\RR_+\times\order(\Lyap)$,
$\psi_v>0$, satisfying
\begin{equation}\label{ER3.2A}
\rho_v\psi_v(i) = \sum_{j\in S} \psi_v(j) q(j|i,v(i)) + c(i, v(i))\psi_v(i)\quad\forall\,\, i\in S\,.
\end{equation}
Furthermore, $\rho_v\leq \inf_i\,\sE_i(c,v)$ and for some finite set $\sB\supset\sK$ 
\begin{equation}\label{ER3.2B}
\psi_v(i) = \Exp_i^{v} \left[e^{\int_{0}^{\uuptau(\sB)}(c(X_t, v(X_t)) - \rho)}\psi_v(X_{\uuptau(\sB)})\right]\quad\forall \,\, i\in \sB^c\,.
\end{equation}
\end{remark}

Next we show that $\rho=\lamstr$. To do so we can use the perturbed cost $\tilde{c}_n$ introduced in
\cref{S-DT}. In fact, following an argument similar to \cref{L2.6} we can prove the following.
\begin{lemma}\label{L3.5}
Assume \cref{A3.1,A3.1Exp,A3.3,A3.4}. Then any minimizing selector of \cref{EL3.4A}, that is, any $v^*\in\Usm$ satisfying
\begin{equation}\label{EL3.5A}
\min_{u\in\Act(i)}\left[\sum_{j\in S} \psi^*(j) q(j | i, u)
+ c(i, u)\psi^*(i)\right] = \left[\sum_{j\in S} \psi(j) q(j | i, v^*(i)) +
c(i, v^*(i))\psi^*(i)\right]\quad\forall\,\, i\in S\,,
\end{equation}
is an optimal control and $\rho=\lamstr$. Moreover, $\psi^*$ is the unique solution of \eqref{EL3.4A} with $\psi^*(i_0)=1$.
\end{lemma}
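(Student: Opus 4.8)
The plan is to mirror the proof of \cref{L2.6}, with time sums replaced by time integrals and the discrete generator by the $q$-matrix; the perturbed costs $\tilde c_n$ are those introduced just before \cref{L2.6}. First I would fix a minimizing selector $v^*$ of \eqref{EL3.5A} and apply \cref{R3.1} to $\tilde c_n$ to get a linear eigenpair $(\rho_{v^*,n},\psi_{v^*,n})\in\RR_+\times\order(\Lyap)$, $\psi_{v^*,n}>0$, solving the $v^*$-version of \eqref{EL3.4A} with cost $\tilde c_n$, together with $0\le\rho_{v^*,n}\le\sE_i(\tilde c_n,v^*)$ and a stochastic representation on a finite set $\tilde{\sB}\supset\cK$. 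Using the localized Dynkin formula of \cite{GZ19} as in the proof of \cref{L3.1} (via the absorbing-state device), I would first show $\psi_{v^*,n}\ge\min_{\tilde{\sB}}\psi_{v^*,n}>0$ on all of $S$, and then, taking logarithms in the representation and letting $T\to\infty$, deduce $\rho_{v^*,n}\ge\sE_i(c,v^*)\ge\rho$ for every $i$. Since $\{\tilde c_n\}$ is monotone, $\{\rho_{v^*,n}\}$ is decreasing; moreover, by \cref{L3.2} together with $\|\tilde c_n\|_\infty<\gamma$ under \cref{A3.4}(a) (resp. norm-likeness of $\tilde c_n$ under (b)), the representation \eqref{ER3.2B} holds with a single fixed $\sB$, and, exactly as in \cref{L3.4}, one may arrange $\psi_{v^*,n}\le\Lyap$ touching $\Lyap$ inside $\sB$. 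A diagonalization argument then produces $(\tilde\rho,\psi_{v^*})\in\RR_+\times\order(\Lyap)$, $\psi_{v^*}>0$, solving \eqref{ER3.2A} with the unperturbed cost $c$, with $\tilde\rho=\lim_n\rho_{v^*,n}\ge\sE_i(c,v^*)\ge\rho$ for all $i$, and
\[
\psi_{v^*}(i)=\Exp_i^{v^*}\Bigl[e^{\int_0^{\uuptau(\sB)}(c(X_t,v^*(X_t))-\tilde\rho)\D t}\psi_{v^*}(X_{\uuptau(\sB)})\Bigr],\qquad i\in\sB^c,
\]
obtained by dominated convergence from \eqref{ER3.2B} using \cref{L3.2}.

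Next I would show $\tilde\rho=\rho$. Since $\tilde\rho\ge\rho$, \eqref{EL3.4C} gives $\psi^*(i)\ge\Exp_i^{v^*}\bigl[e^{\int_0^{\uuptau(\sB)}(c-\tilde\rho)\D t}\psi^*(X_{\uuptau(\sB)})\bigr]$ for $i\in\sB^c$; subtracting the identity above for $\psi_{v^*}$ and rescaling $\psi_{v^*}$ by a positive constant so that $w\df\psi^*-\psi_{v^*}\ge0$ on $\sB$ with $w(\hat i)=0$ for some $\hat i\in\sB$, the resulting inequality propagates $w\ge0$ to all of $S$ (using $\uuptau(\sB)<\infty$ almost surely, which holds by \cref{A3.4}). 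Because $v^*$ is a minimizing selector of \eqref{EL3.4A} while $\psi_{v^*}$ solves \eqref{ER3.2A} with $\tilde\rho\ge\rho$, the function $w$ satisfies $\sum_{j}w(j)q(j|i,v^*(i))+(c(i,v^*(i))-\rho)w(i)\le0$ for every $i$; evaluating this at $\hat i$ forces $w(j)=0$ whenever $q(j|\hat i,v^*(\hat i))>0$, and irreducibility of \textbf{X} under $v^*$ then propagates this to $w\equiv0$. Hence $\psi^*=\psi_{v^*}$, and comparing \eqref{EL3.4A} with \eqref{ER3.2A} forces $\tilde\rho=\rho$. Together with $\rho\le\lamstr\le\sE_i(c,v^*)\le\tilde\rho=\rho$ this yields $\rho=\lamstr=\sE_i(c,v^*)$ for all $i$, so $v^*$ is optimal.

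For the uniqueness of $\psi^*$, I would take any positive solution $V$ of \eqref{EL3.4A} with $\rho$ replaced by $\lamstr$, pick a minimizing selector $v$ of that equation, and repeat the representation argument: running the proof of \eqref{EL3.4C} with $v$ gives $\psi^*(i)\le\Exp_i^v\bigl[e^{\int_0^{\uuptau(\sB)}(c-\lamstr)\D t}\psi^*(X_{\uuptau(\sB)})\bigr]$ on $\sB^c$, while Dynkin's formula and Fatou applied to the linear equation for $V$ give $V(i)\ge\Exp_i^v\bigl[e^{\int_0^{\uuptau(\sB)}(c-\lamstr)\D t}\psi^*(X_{\uuptau(\sB)})\bigr]$ on $\sB^c$ (after normalizing $V\ge\psi^*$ on $\sB$). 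Subtracting and invoking irreducibility under $v$ exactly as above gives $V=\psi^*$ up to a multiplicative constant, which is the asserted uniqueness under the normalization $\psi^*(i_0)=1$.

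The step I expect to be the main obstacle is the rigorous justification of the Dynkin and stochastic-representation manipulations in continuous time: unlike in \cref{S-DT} one must localize at the exit time $\uptau(\sD_n)$ and control the contribution near $\uuptau(\sB)\wedge T$ as $T\to\infty$, which requires the $\order(\Lyap)$-bounds on the eigenfunctions together with the exponential hitting-time estimates of \cref{L3.2}, and also the absorbing-state reformulation of \cref{L3.1} that legitimizes the Dynkin formula of \cite{GZ19}. Once these ingredients are in place, the remaining algebra is identical to the discrete-time proof of \cref{L2.6}.
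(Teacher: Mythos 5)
Your proposal is correct and follows exactly the route the paper intends: the paper gives no proof of Lemma~\ref{L3.5}, instead pointing to the perturbed costs $\tilde c_n$ and the discrete-time argument of Lemma~\ref{L2.6}, and you have carried out that translation faithfully, using Remark~\ref{R3.1} in place of Remark~\ref{R2.2}, Lemma~\ref{L3.2} in place of Lemma~\ref{L2.2}, and the absorbing-state Dynkin device from Lemma~\ref{L3.1} where the discrete proof used the martingale \eqref{EL2.1E}. You also correctly identify the one genuinely new technical point (localizing at $\uptau(\sD_n)$ and controlling the tail near $\uuptau(\sB)\wedge T$ via the $\order(\Lyap)$ bound and \eqref{EL3.2A}--\eqref{EL3.2B}), so there is nothing missing.
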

Now we are ready to complete the proof of \cref{T3.1}.
\begin{proof}[Proof of \cref{T3.1}]
(i) follows from \cref{L3.4,L3.5}. By \cref{L3.5} we also get that any minimizing selector
of \eqref{ET3.1B} is an optimal Markov control. Using \cref{R3.1} and the arguments of \cref{T2.1} we
can also show the converse direction, that is, if for some $v\in\Usm$ we have $\sE_i(c, v)=\lamstr$
then $v$ satisfies \eqref{EL3.5A}. This gives us (ii).
\end{proof}

We conclude this section with the following remark.
\begin{remark}
\cref{A3.3}(c) can be replaced by other similar assumption. For instance,
if the killed process communicates with every state from
$i_0$ before leaving the domain $\sD_n$, for large $n$, then our method applies. More precisely,
for every $\sD_n$, $v\in\Usm$ and
 for every $j\in \sD_n\setminus\{i_0\}$, if there exists distinct $i_1, i_2, \ldots, i_m \in \sD_n\setminus\{i_0\}$
satisfying
$$q(i_1|i_0, v(i_0))q(i_2|i_1, v(i_1))\cdots q(j|i_m, v(i_m))>0\,,$$
then the conclusion of \cref{T3.1} holds. Note that in this case we also get $\psi_n(i_0)>0$ in $\sD_n$.
\end{remark}

\subsection{Near-monotone cost.}
In this section we replace \cref{A3.4} with a near-monotone assumption stated below.

\begin{assumption}\label{A3.5}
Define $\lambda_{\rm m}=\inf_{i\in S}\inf_{v\in \Usm}\sE_i(c, v)$, and 
$$\inf_{v\in\Usm}\sE_i(c, v)<\infty\quad \forall\; i\in S,$$
and the cost function $c$ satisfies the {\it near-monotone} condition with respect to $\lambda_{\rm m}$,
that is,
\begin{equation}\label{EA3.5A}
\liminf_{n\to\infty}\, \inf_{k\geq n}\inf_{u\in\Act(k)} c(k, u)\, >\, \lambda_{\rm m}.
\end{equation}
\end{assumption}
Note that by \cref{A3.5}, $\lamstr\leq\lambda_{\rm m}<\infty$ where $\lamstr =\inf_{i\in S}\inf_{\zeta\in \Uadm}\sE_i(c, \zeta)$ is
given by \eqref{00A2}.
The following result gives the existence of an
optimal stationary Markov control. This result should be compared with \cite{SKP15} where existence of 
optimal stationary Markov control is obtained under \eqref{EA3.5A}, but our
 hypotheses are weaker and we also allow history dependent controls (see (A1), (A2)(ii) in \cite{SKP15}).

\begin{theorem}\label{T3.2}
Grant \cref{A3.1,A3.1Exp,A3.3,A3.5}. We also assume that {\bf X} is recurrent under any control $v\in\Usm$.
Then there exists a positive function $\psi^*$ satisfying
\begin{equation}\label{ET3.2AA}
\lamstr \psi^*(i) \geq \min_{u\in\Act(i)}\left[\sum_{j\in S} \psi^*(j)q(j|i,u)
+ c(i, u)\psi^*(i)\right]\quad\text{for}\,\, i\in S\,.
\end{equation}
Futhermore, we have $\lamstr=\lambda_{\rm m}$ and any measurable selector of \eqref{ET3.2AA} is an
optimal stationary Markov control.
\end{theorem}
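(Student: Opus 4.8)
The plan is to mimic the proof of \cref{T2.3} (its discrete-time analogue), but working with the continuous-time Dirichlet eigenpairs produced by \cref{L3.1} and passing the limit along an increasing exhaustion $\{\sD_n\}$ of $S$. First I would invoke \cref{L3.1} to obtain eigenpairs $(\rho_n,\psi_n)\in\RR\times\cB^+_{\sD_n}$ satisfying $\rho_n\psi_n(i)=\min_{u\in\Act(i)}[\sum_{j}\psi_n(j)q(j|i,u)+c(i,u)\psi_n(i)]$ on $\sD_n$, together with $\rho_n\le\inf_{\zeta\in\Uadm}\sE_{i_0}(c,\zeta)\le\lambda_{\rm m}<\infty$ by \eqref{EL3.1B} and \cref{A3.3}(c), since $\psi_n(i_0)>0$. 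As in \cref{R2.1} (applied to the continuous-time setting), the recurrence of {\bf X} under every stationary Markov control plus \cref{A3.3}(c) gives boundedness of $\{\rho_n\}$ from below and $\liminf_n\rho_n\ge 0$: the argument of \cref{L3.3}(ii) goes through verbatim because it only uses \cref{A3.1}(c), \cref{A3.3}, Dynkin's formula, and recurrence (the finiteness $\inf_v\sE_i(c,v)<\infty$ is now a hypothesis rather than a consequence of a Lyapunov bound). Normalizing $\psi_n(i_0)=1$, the bound $\psi_n(j)\le\sup_{u\in\Act(i_0)}\tfrac{-q(i_0|i_0,u)}{q(j|i_0,u)}\cdot e^{\inf_v\sE_{i_0}(c,v)}$ for $j\neq i_0$ (derived from \eqref{EL3.3B} and $\rho_n\le\lambda_{\rm m}$, cf.\ the estimate in the proof of \cref{T2.3}) gives a componentwise bound, so by a diagonalization argument I extract a subsequence with $\rho_n\to\rho\ge 0$ and $\psi_n\to\psi^*$ componentwise, $\psi^*(i_0)=1$.

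Next I would pass to the limit in the eigenequation. Using the compactness of minimizing selectors and the generalized Fatou lemma (as in \cref{L2.4} and \cref{L3.4}), letting $n\to\infty$ yields the \emph{inequality} $\rho\psi^*(i)\ge\min_{u\in\Act(i)}[\sum_{j}\psi^*(j)q(j|i,u)+c(i,u)\psi^*(i)]$ for all $i\in S$; strict equality need not survive because the tails of the sums are only controlled by Fatou, not by dominated convergence (there is no Lyapunov function here). Irreducibility under every stationary Markov control forces $\psi^*>0$ via \eqref{EL3.4H}: if $\psi^*(i)=0$ at some $i$, then for any minimizing selector $v$ of the inequality we get $\sum_{j\ne i}\psi^*(j)q(j|i,v(i))=0$, and irreducibility propagates the zero to all of $S$, contradicting $\psi^*(i_0)=1$. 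Since for any $i$ we have $\psi_n(i)>0$ for all large $n$, \eqref{EL3.1B} gives $\rho\le\inf_{\zeta\in\Uadm}\sE_i(c,\zeta)$ for every $i$, hence $\rho\le\lamstr\le\lambda_{\rm m}$.

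Now I would use near-monotonicity \eqref{EA3.5A}: since $\rho\le\lambda_{\rm m}$, there is a finite set $\sB$ with $\inf_{u\in\Act(i)}c(i,u)-\rho>\delta>0$ for all $i\in\sB^c$. Let $v$ be a measurable selector of the limiting inequality and $\uuptau=\uuptau(\sB)$. Applying Dynkin's formula to the supersolution $\psi^*$ (as in the proof of \cref{L3.2}, truncating at $\uptau_n$ and sending $n\to\infty$ with Fatou) gives, for $i\in\sB^c$,
\begin{equation*}
\psi^*(i)\ge\Exp_i^{v}\Bigl[e^{\int_0^{\uuptau}(c(X_t,v(X_t))-\rho)\D t}\psi^*(X_{\uuptau})\Bigr]\ge\Bigl(\min_{\sB}\psi^*\Bigr)\Exp_i^{v}\bigl[e^{\delta\uuptau}\bigr]\ge\min_{\sB}\psi^*>0,
\end{equation*}
so $\inf_S\psi^*>0$. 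Then, using the Markov property and the inequality iterated over $[0,T]$,
\begin{equation*}
\psi^*(i)\ge\Exp_i^{v}\Bigl[e^{\int_0^T(c(X_t,v(X_t))-\rho)\D t}\psi^*(X_T)\Bigr]\ge\Bigl(\inf_S\psi^*\Bigr)\Exp_i^{v}\Bigl[e^{\int_0^T(c(X_t,v(X_t))-\rho)\D t}\Bigr];
\end{equation*}
taking logarithms, dividing by $T$, and letting $T\to\infty$ gives $\rho\ge\sE_i(c,v)\ge\lambda_{\rm m}$ for all $i$. Combining with $\rho\le\lamstr\le\lambda_{\rm m}$ yields $\rho=\lamstr=\lambda_{\rm m}=\sE_i(c,v)$ for all $i$, so $v$ is an optimal stationary Markov control and any measurable selector of \eqref{ET3.2AA} works. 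The main obstacle I expect is the continuous-time Dynkin/optional-sampling bookkeeping for the supersolution $\psi^*$ near the boundary of $\sD_n$ — one must introduce the killed/absorbed process $\tilde{\mathbf X}$ on $\sD_n\cup\{\Delta\}$ exactly as in the proof of \cref{L3.1} to legitimately apply \cite[Lemma~3.2]{GZ19}, and then carefully pass $n\to\infty$; everything else is a transcription of the discrete-time argument of \cref{T2.3}.
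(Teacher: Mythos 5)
Your proposal is correct and follows essentially the same route as the paper's own proof: Dirichlet eigenpairs from \cref{L3.1}, the L3.3(ii) argument for lower-boundedness of $\rho_n$ (with recurrence assumed rather than derived from a Lyapunov condition), componentwise bound on the normalized $\psi_n$, diagonalization, Fatou to pass to the limit and obtain the inequality, positivity of $\psi^*$ via irreducibility, near-monotonicity to locate the finite set $\sB$, Dynkin/Fatou for the stochastic representation, and finally the log-divide-by-$T$ argument to close the loop. One minor slip: the explicit componentwise bound $\psi_n(j)\le\sup_u\tfrac{-q(i_0|i_0,u)}{q(j|i_0,u)}\,e^{\inf_v\sE_{i_0}(c,v)}$ is transcribed from the multiplicative structure of the discrete-time case; in continuous time the correct bound read off from \eqref{EL3.3B} with $\psi_n(i_0)=1$ and $\rho_n$ bounded above is additive, of the form $\psi_n(j)\le\sup_u\tfrac{\kappa-q(i_0|i_0,u)}{q(j|i_0,u)}$ for a constant $\kappa$ bounding $\rho_n$, but this does not affect the argument since all that is needed is a bound depending on $j$ but not on $n$.
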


\begin{proof}
The proof is similar to \cref{T2.3}. Recall the eigenpair $(\rho_n, \psi_n)$ satisfying
\begin{equation}\label{ET3.2A}
\rho_{n}\psi_{n}(i) = \min_{u\in\Act(i)}\left[\sum_{j\in S} \psi_{n}(j)q(j|i,u) + c(i,u)\psi_{n}(i)\right]\quad i\in\sD_n.
\end{equation}
Since the CTCMP {\bf X} is  recurrent under any stationary Markov control, the argument of 
\cref{L3.3} works, and therefore, we have
\begin{equation}\label{ET3.2B}
0\leq \liminf_{n\to\infty}\rho_n\leq \limsup_{n\to\infty} \rho_n\leq 
\inf_{\zeta\in \Uadm}\sE_{i_0}(c, \zeta).
\end{equation}
Normalize $\psi_n$ to satisfying $\psi_n(i_0)=1$. Let $\tilde{v}_n$ be a minimizing selector of 
\eqref{ET3.2A}. Using \eqref{ET3.2A}-\eqref{ET3.2B} it then follows that 
$\psi_n(i)\leq \kappa_i$ for all $n$ (see \eqref{EL3.3C}), for some constant $\kappa_i$. Using a
standard diagonalization argument, we can find a subsequence along which we have
\begin{equation*}
\rho_n\to \rho,\quad \psi_n(i)\to \psi^*(i)\quad \text{and}\quad \tilde{v}_n(i)\to v(i),
\end{equation*}
for all $i\in S$. From Fatou's lemma, we then have
\begin{align}\label{ET3.2C}
\rho\psi^*(i) &\geq \sum_{j\in S} \psi^*(j)q(j|i,v(i)) +  c(i,v(i))\psi^*(i)\nonumber
\\
&\ge\, \min_{u\in\Act(i)}\left[\sum_{j\in S} \psi^*(j)q(j|i,u) +  c(i,u)\psi^*(i)\right]
\quad i\in S\,.
\end{align} 
Note that $\psi^*(i_0)=1$. Using irreducibility it is then easy to see that $\psi^*>0$. This in particular,
implies that for any $i\in S$ we have $\psi_n(i)>0$ for all large $n$. Using 
\eqref{EL3.1B} and \eqref{ET3.2A} we obtain $\rho\leq \lamstr\leq \lambda_{\rm m}$.
Since $\rho\leq\lambda_{\rm m}$, applying the near-monotonicity
condition \eqref{EA3.5A}, we find a finite set $\sB$ such that 
\begin{equation}\label{ET3.2D}
\inf_{u\in \Act(i)} c(i, u)-\rho>0\quad \text{for}\; i\in \sB^c.
\end{equation}
Now consider a measurable selector $v$ of \eqref{ET2.3C} and by $\uuptau=\uuptau(\sB)$ we denote the first hitting time to $\sB$. Since {\bf X} is recurrent under $v$, we have $\Prob_i(\uuptau<\infty)=1$
for all $i\in\sB^c$ (cf. \cite[Proposition~5.1.1]{And91}). Thus applying
Dynkin's formula on \eqref{ET3.2C}, followed by Fatou's lemma, we get
$$ \psi^*(i)\geq \Exp_i^v\left[e^{\int_{t=0}^{\uuptau-1}(c(X_t, v(X_t))-\rho)} \psi^*(X_{\uuptau})\right]
\ge \min_{j\in \sB}\psi^*(j)
\quad i\in\sB^c,$$
using \eqref{ET3.2D}.
Thus, $\inf_{i\in S}\psi^*(i)>0$.  Now we can repeat the argument of \cref{T3.2} to show that
$\rho=\lamstr=\lambda_{\rm m}=\sE_i(c, v)$ for all $i$. This completes the proof.
\end{proof}

In a similar fashion we can extend \cref{T2.4} to a continuous time set-up. 
\begin{theorem}\label{T3.3}
Grant \cref{A3.5}. Also, assume that
there exists a function $W:S\to [1, \infty)$ satisfying $W(i)\geq i$ for all large $i$ and 
\begin{equation*}
\sup_{u\in\Act(i)}\sum_{j\in S} W(i) q(j|i, u)\leq g(i)\quad \text{for}\; i\in S,
\end{equation*}
for some function $g:S\to \RR$ satisfying $\lim_{i\to\infty} g(i)=0$. Furthermore, for some
$\eta>0$ we have
\begin{equation*}
\min_{u\in\Act(i)}q(i-1|i, u)\,\geq\, \eta\quad \text{for all}\; i\geq 1,
\end{equation*}
and $q(\cdot|1, u)$ supported in a finite set $C$, independent of $u$. We also assume that
 for $\sD_n\df\{1, \ldots, n\}$, $v\in\Usm$ and any $j\in\sD_n\setminus\{1\}$ there exists
distinct $i_1, i_2, \ldots, i_k\in \sD_n$ we have
\begin{equation*}
q(i_1|1, v(1))q(i_2|i_1, v(i_1))\cdots q(j|i_k, v(i_k))>0\,.
\end{equation*}
Furthermore, {\bf X} is irreducible under any stationary Markov control.
Then there exists a positive function $\psi^*$ satisfying
\begin{equation}\label{ET3.3A}
\lamstr \psi^*(i) \geq \min_{u\in\Act(i)}\left[\sum_{j\in S} \psi^*(j)q(j|i,u)
+ c(i, u)\psi^*(i)\right]\quad\text{for}\,\, i\in S\,.
\end{equation}
Futhermore, we have $\lamstr=\lambda_{\rm m}$ and any measurable selector of \eqref{ET3.3A} is an
optimal stationary Markov control.
\end{theorem}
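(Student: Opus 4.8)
The plan is to follow the proof of \cref{T2.4} step by step, carrying it from discrete to continuous time: sums $\sum_{t=0}^{T-1}$ become integrals $\int_0^T\,\D s$, the kernel $P(j|i,u)$ becomes the rate $q(j|i,u)$, the discrete Dynkin formula is replaced by the CTCMP version already used in the proof of \cref{L3.1}, and the recurrence/transience facts of \cite{MT93} invoked there are replaced by their continuous time analogues in \cite{And91}. First I would take, for each $n$, the Dirichlet eigenpair $(\rho_n,\psi_n)$ on $\sD_n=\{1,\dots,n\}$ produced by \cref{L3.1}. The communication condition within $\sD_n$ together with $q(i-1|i,u)\ge\eta$ propagate positivity, so $\psi_n>0$ on $\sD_n$; hence $\rho_n\le\lamstr$ by \eqref{EL3.1B}. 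Moreover, unlike in the discrete case, a uniform lower bound on $\rho_n$ is immediate: after normalizing $\psi_n(1)=1$, using $c\ge0$ and $q(1|1,u)\ge-q(1)$ in the eigen-equation gives $\rho_n=\min_{u\in\Act(1)}\bigl[\sum_{j\ne1}\psi_n(j)q(j|1,u)+q(1|1,u)+c(1,u)\bigr]\ge-q(1)$ for all $n$.

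Next comes the uniform upper bound on $\psi_n$. For $j\in\sD_n$ and $T>0$ I claim
\[
\inf_{v\in\Usm}\Prob^v_1\bigl(\uuptau_j\le T\wedge\uptau_n\bigr)\ \ge\ \kappa(T,j)\ >\ 0,
\]
where $\uptau_n$ is the first exit time from $\sD_n$ and $\uuptau_j$ the first hitting time of $j$. This is proved exactly as \eqref{ET2.4C}: if it failed along some $v_m\in\Usm$, then compactness of the $\Act(i)$ yields a subsequence with $v_m(i)\to v(i)$ for every $i$; by \cref{A3.3}(b) the rates $q(\cdot|\cdot,v_m(\cdot))$ converge on the finite set $\sD_n$, so the laws of the corresponding killed CTCMPs converge, and one contradicts the strictly positive probability of following the finite path singled out by the communication condition. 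Applying Dynkin's formula to $\psi_n$ up to $\uuptau_j\wedge T\wedge\uptau_n$, and using $\psi_n(1)=1$, $c\ge0$ and $\rho_n\le\lamstr$ (together with $\lamstr\ge0$), one obtains
\[
1=\psi_n(1)\ \ge\ e^{-\lamstr T}\,\psi_n(j)\,\inf_{v\in\Usm}\Prob^v_1\bigl(\uuptau_j\le T\wedge\uptau_n\bigr),
\]
so the choice $T=j$ gives $\psi_n(j)\le\kappa(j,j)^{-1}e^{\lamstr j}$ for all $n>j$. With this bound in hand, a diagonal extraction produces $(\rho,\psi^*)$ with $\psi^*(1)=1$ and, via Fatou's lemma together with compactness of the minimizing selectors (as for \eqref{EL3.4F}), the inequality \eqref{ET3.3A}; irreducibility of $\textbf{X}$ then forces $\psi^*>0$, exactly as in \eqref{EL3.4H}. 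Since $\psi_n(i)>0$ for all large $n$, \eqref{EL3.1B} gives $\rho\le\lamstr\le\lambda_{\rm m}$.

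It remains to show $\rho=\lamstr=\lambda_{\rm m}=\sE_i(c,v)$ for a minimizing selector $v$ of \eqref{ET3.3A}; as in \cref{T3.2} this follows once $\textbf{X}$ is shown to be recurrent under $v$. Iterating \eqref{ET3.3A} with $q(i-1|i,v(i))\ge\eta$, and bounding the diagonal rate $-q(i|i,v(i))$ by $q(i)$ (controlled through \cref{A3.1Exp}(b)), produces a lower bound $\log\psi^*(i)\ge-\tilde\kappa W(i)-\widehat\kappa$. Assuming $\textbf{X}$ were transient under $v$, it would carry no invariant probability measure, hence $\tfrac1T\Exp^v_1\bigl[\int_0^T\Ind_B(X_s)\,\D s\bigr]\to0$ for every finite $B\subset S$ by the continuous time ratio-limit results of \cite{And91}. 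Applying Dynkin's formula to $W$ (whose drift is dominated by $g\to0$) and invoking the near-monotonicity \eqref{EA3.5A} to fix a finite $B_\circ$ and $\delta>0$ with $\inf_{u\in\Act(i)}\bigl(c(i,u)-\tilde\kappa g(i)-\rho\bigr)>\delta$ on $B_\circ^c$ then reproduces the contradiction in \eqref{ET2.4I}--\eqref{ET2.4J}. Hence $\textbf{X}$ is recurrent under $v$, and the remainder is identical to the argument in \cref{T3.2}.

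I expect the main difficulty to be twofold. First, the hitting-time estimate above: the convergence-of-laws argument for \emph{killed} CTCMPs needs to be set up more carefully than in the DTCMP case. Second, and more delicate, one must make the lower bound $\log\psi^*(i)\ge-\tilde\kappa W(i)-\widehat\kappa$ genuinely compatible with the growth of $W$, i.e.\ keep the jump rates $q(i)$ sufficiently under control along the iteration; the remaining steps are a routine transcription of the discrete time proof.
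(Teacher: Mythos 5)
Your proposal is precisely what the paper does: the paper's proof of \cref{T3.3} consists of the single sentence ``The proof is analogous to \cref{T2.4} and thus omitted,'' and your sketch is a careful transcription of that discrete-time proof into the continuous-time vocabulary of \cref{S-CT}, with the same sequence of ideas (Dirichlet eigenpairs on $\sD_n$, hitting-time lower bound to control $\psi_n(j)$, diagonal extraction, irreducibility for strict positivity, pointwise lower bound on $\psi^*$ in terms of $W$, and the transience-contradiction via the drift condition on $W$). The small simplifications you note, such as the immediate lower bound $\rho_n\geq -q(1)$ from the eigen-equation at state $1$, are correct and save a step relative to the discrete argument.

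That said, your second flagged difficulty is a genuine one and deserves emphasis rather than dismissal as ``routine.'' In the discrete case the iteration step is $\psi^*(i)\geq \eta\,e^{-\lamstr}\,\psi^*(i-1)$, with a factor bounded away from zero uniformly in $i$, so $\log\psi^*(i)\gtrsim -\tilde\kappa\, i\geq -\tilde\kappa W(i)$ is automatic. In the continuous case the corresponding step coming from \eqref{ET3.3A}, after isolating the $j=i-1$ term and using $-q(i|i,v(i))\leq q(i)$, is
\[
\psi^*(i)\ \geq\ \frac{\eta}{\lamstr+q(i)}\,\psi^*(i-1),
\]
whose factor degenerates whenever $q(i)\to\infty$; for example $q(i)\sim i$ yields $\log\psi^*(i)\gtrsim -\,i\log i$, which is \emph{not} $\gtrsim -\tilde\kappa W(i)$ if one only knows $W(i)\geq i$. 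Neither \cref{A3.1Exp} nor the drift inequality $\sup_u\sum_j W(j)q(j|i,u)\leq g(i)$ forces a uniform bound on $q(i)$, so the transcription of \eqref{ET2.4G} is not automatic: one needs either a standing hypothesis bounding $q(i)$ (e.g.\ $\sup_i q(i)<\infty$, or $\log q(i)$ summable against $W$), or a strengthening of ``$W(i)\geq i$'' that absorbs the extra $\log q(i)$ factors. The paper's stated hypotheses for \cref{T3.3} do not visibly close this gap, so your instinct to flag it is sound and the proof should not be labelled a routine transcription at this step.
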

The proof is analogous to \cref{T2.4} and thus omitted.


\section{Policy iteration}\label{S-PIA}
In this section we propose policy improvement algorithms (PIA) and establish its convergence to the optimal
value. To do so, we introduce the notion of generalized Perron-Frobenius eigenvalue.
Our definition below can be seen as the counterpart of the elliptic generalized eigenvalue in the case of discrete Markov chain (cf. \cite{Berestycki-94,Berestycki-15,ABS19,NP92}).
For discrete time CMP we define the generalized Perron-Frobenius eigenvalue as follows
\begin{equation}\label{E4.1}
\lamd=\inf\{\lambda\in\RR\; :\; \exists\; \Psi>0\; \text{satisfying}\;
\min_{u\in\Act(i)} e^{c(i, u)}\sum_{j\in S} \Psi(j) P(j|i, u)\leq e^\lambda\Psi(i)\; \forall \; i\in S\}.
\end{equation}
Similarly, we can also define a generalized Perron-Frobenius eigenvalue for every stationary
Markov control $v$ as follows.
\begin{equation}\label{E4.2}
\lamd(v)=\inf\{\lambda\in\RR\; :\; \exists\; \Psi>0\; \text{satisfying}\;
e^{c(i, v(i))}\sum_{j\in S} \Psi(j) P(j|i, v(i))\leq e^\lambda\Psi(i)\; \forall \; i\in S\}.
\end{equation}
In relation with the risk-sensitive control we would be interested to know 
whether $\lamd=\lamstr$. Note that
for nondegenerate elliptic operator this is not true in general. See for instance, Example~3.1
in \cite{ABS19}. Our next result would be helpful in answering this question.

\begin{lemma}\label{L4.1}
Assume that {\bf X} is irreducible under every stationary Markov control and $\lamd$ is finite. Let
$\sD$ be a finite domain and $(\rho, \psi)\in\RR\times \cB^+_\sD, \psi\neq 0,$ be such that
\begin{equation}\label{EL4.1A}
\min_{u\in\Act(i)}e^{c(i,u)}\sum_{j\in S}\psi(j)P(j|i,u) \,=\, e^\rho\psi(i), \quad \forall\,\, i\in \sD\,, \quad\text{and\ } \psi(i) = 0\quad \forall\,\, i\in \sD^c\,.
\end{equation}
Then we must have $\rho\leq \lamd$. Similar result also holds under every stationary Markov control.
\end{lemma}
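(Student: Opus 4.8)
The plan is to prove $\rho\le\lambda$ for an arbitrary $\lambda>\lamd$ and then let $\lambda\downarrow\lamd$. Since $\lamd$ is finite such $\lambda$ exist, and by the definition \eqref{E4.1} there are a real number $\lambda_\circ<\lambda$ and a positive function $\Psi$ on $S$ with
\[
\min_{u\in\Act(i)} e^{c(i,u)}\sum_{j\in S}\Psi(j)P(j|i,u)\ \le\ e^{\lambda_\circ}\Psi(i)\ <\ e^{\lambda}\Psi(i)\qquad\text{for all }i\in S.
\]
Because $S$ is countable we may simply pick, for each $i$, a point $\hat v(i)\in\Act(i)$ with $e^{c(i,\hat v(i))}\sum_{j\in S}\Psi(j)P(j|i,\hat v(i))\le e^{\lambda}\Psi(i)$; this gives $\hat v\in\Usm$ and forces $\sum_{j\in S}\Psi(j)P(j|i,\hat v(i))<\infty$ for every $i$.

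The crux is that the single control $\hat v$ makes $\psi$ a subsolution and $\Psi$ a supersolution of the corresponding linear equation at once. From \eqref{EL4.1A}, $e^{\rho}\psi(i)=\min_{u\in\Act(i)}e^{c(i,u)}\sum_{j\in S}\psi(j)P(j|i,u)\le e^{c(i,\hat v(i))}\sum_{j\in S}\psi(j)P(j|i,\hat v(i))$ for $i\in\sD$, while $\psi\equiv0$ on $\sD^c$ and $e^{c(i,\hat v(i))}\sum_{j\in S}\Psi(j)P(j|i,\hat v(i))\le e^{\lambda}\Psi(i)$ for all $i$. Let $\uptau=\uptau(\sD)$ be the first exit time of $\textbf{X}$ from $\sD$. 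A conditioning computation exactly as in the proof of \cref{L2.1} (cf.\ \eqref{EL2.1E}) shows that, under $\Prob_i^{\hat v}$, the process $\bigl(e^{\sum_{t=0}^{k\wedge\uptau-1}(c(X_t,\hat v(X_t))-\rho)}\psi(X_{k\wedge\uptau})\bigr)_{k\ge0}$ is a submartingale and $\bigl(e^{\sum_{t=0}^{k\wedge\uptau-1}(c(X_t,\hat v(X_t))-\lambda)}\Psi(X_{k\wedge\uptau})\bigr)_{k\ge0}$ is a nonnegative supermartingale. Using $\psi(X_\uptau)=0$ and optional sampling, for every $k\ge1$ and $i\in S$,
\[
\psi(i)\le\Exp_i^{\hat v}\!\bigl[e^{\sum_{t=0}^{k-1}(c(X_t,\hat v(X_t))-\rho)}\psi(X_k)\Ind_{\{k<\uptau\}}\bigr]\quad\text{and}\quad \Exp_i^{\hat v}\!\bigl[e^{\sum_{t=0}^{k-1}(c(X_t,\hat v(X_t))-\lambda)}\Psi(X_k)\Ind_{\{k<\uptau\}}\bigr]\le\Psi(i).
\]

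To finish, note that on $\{k<\uptau\}$ one has $X_k\in\sD$, hence $\psi(X_k)\le\max_{\sD}\psi$ and $\Psi(X_k)\ge\min_{\sD}\Psi>0$ (both finite and positive since $\sD$ is finite), and $\sum_{t=0}^{k-1}(c(X_t,\hat v(X_t))-\rho)=\sum_{t=0}^{k-1}(c(X_t,\hat v(X_t))-\lambda)-k(\rho-\lambda)$. Substituting these into the first estimate and then invoking the second one gives, for all $i\in S$,
\[
\psi(i)\ \le\ \frac{\max_{\sD}\psi}{\min_{\sD}\Psi}\,e^{-k(\rho-\lambda)}\,\Exp_i^{\hat v}\!\bigl[e^{\sum_{t=0}^{k-1}(c(X_t,\hat v(X_t))-\lambda)}\Psi(X_k)\Ind_{\{k<\uptau\}}\bigr]\ \le\ \frac{\max_{\sD}\psi}{\min_{\sD}\Psi}\,e^{-k(\rho-\lambda)}\,\Psi(i).
\]
If $\rho>\lambda$, letting $k\to\infty$ forces $\psi\equiv0$, contradicting $\psi\neq0$; hence $\rho\le\lambda$, and therefore $\rho\le\lamd$. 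The statement for a stationary control $v\in\Usm$ follows from the same argument with $\hat v$ replaced by $v$ throughout: $\psi$ then already solves the $v$-indexed equation on $\sD$, and for $\lambda>\lamd(v)$ there is, by \eqref{E4.2}, a positive $\Psi$ with $e^{c(i,v(i))}\sum_{j\in S}\Psi(j)P(j|i,v(i))\le e^{\lambda}\Psi(i)$, so no selection is needed and one obtains $\rho\le\lamd(v)$. The only points requiring a little care are the construction of $\hat v$ (immediate here, since $S$ is countable) and the bookkeeping around $\uptau$ in the (super)martingale identities; I do not foresee any genuine obstacle.
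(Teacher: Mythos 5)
Your proof is correct, but it takes a genuinely different route from the paper's. The paper argues by contradiction: assuming $\rho>\lamd$ and picking $\lambda\in[\lamd,\rho)$ with a corresponding $\Psi>0$, it forms the comparison function $V=\Psi-\theta\psi$, rescaled to touch zero at some $j_0\in\sD$ while staying strictly positive on $\sD^c$, shows $V$ is a nonnegative $\lambda$-supersolution along the $\hat v$-chain, and then invokes optional sampling \emph{plus irreducibility} (to guarantee $\Prob^{\hat v}_{j_0}(\uptau<\infty)>0$ and hence a strictly positive right-hand side) to reach a contradiction. You instead bypass the touching trick altogether: from the submartingale estimate for $e^{\sum(c-\rho)}\psi$ (using $\psi(X_\uptau)=0$) and the nonnegative supermartingale estimate for $e^{\sum(c-\lambda)}\Psi$ under the single control $\hat v$, you derive the pointwise bound $\psi(i)\le \frac{\max_\sD\psi}{\min_\sD\Psi}\,e^{k(\lambda-\rho)}\Psi(i)$ and send $k\to\infty$. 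This buys you two things: the argument is somewhat more mechanical (no auxiliary normalization constant $\theta$, no ``vanishing at an interior point'' discussion), and, more notably, it does not use the irreducibility hypothesis at all for this step, so it in fact establishes the lemma under weaker assumptions than stated. Both proofs rely on the same opening move — choosing a selector $\hat v$ that nearly attains the supersolution inequality for $\Psi$, and observing that the minimum in \eqref{EL4.1A} then makes $\psi$ a subsolution for the same $\hat v$ — and both require (as you correctly flag) a little care that such a selector exists, which you handle by first passing to $\lambda_\circ<\lambda$ to create slack.
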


\begin{proof}
Suppose, on the contrary, that $\rho>\lamd$. From the definition of $\lamd$, we find
a pair $(\lambda, \Psi)$ with $\Psi>0$ satisfying
\begin{equation}\label{EL4.1B}
\min_{u\in\Act(i)} e^{c(i, u)}\sum_{j\in S} \Psi(j) P(j|i, u)\leq e^\lambda\Psi(i)\quad \text{for all}
\; i\in S.
\end{equation}
Consider a minimizing selector $v\in\Usm$ of the left-hand side of \eqref{EL4.1B}. From \eqref{EL4.1A} we then 
have
\begin{equation}\label{EL4.1C}
e^{c(i,v(i))}\sum_{j\in S}\psi(j)P(j|i,v(i)) \,\ge\, e^\rho\psi(i), \quad \forall\,\, i\in \sD\,, \quad\text{and\ } \psi(i) = 0\quad \forall\,\, i\in \sD^c\,.
\end{equation}
Define 
$$\theta=\sup\{\kappa>0\; :\; \Psi-\kappa\psi>0\; \text{in}\; \sD\}.$$
Since $\psi\gneq 0$, we have $\theta\in (0, \infty)$ and $V\df \Psi-\theta\psi$ is non-negative in 
$S$. Furthermore, $V$ must vanish at some point, say $j_0$, in $\sD$ and $V>0$ in $\sD^c$.

Since $\rho \geq \lambda$, using \eqref{EL4.1B} and \eqref{EL4.1C} we also get
\begin{equation}\label{EL4.1D}
V(i)\geq e^{c(i,v(i))-\lambda}\sum_{j\in S}V(j)P(j|i,v(i))\quad \forall\,\, i\in \sD\,.
\end{equation}
Denote by $\uptau=\uptau(\sD)$ the first exit time from $\sD$. From optional sampling theorem we
then obtain from \eqref{EL4.1D} that (see \eqref{EL2.1D}) 
$$0=V(j_0)\geq \Exp^v_{j_0}\left[ e^{\sum_{t=0}^{\uptau\wedge T -1}(c(X_t, v(X_t)-\lambda)}
V(X_{\uptau\wedge T})\right].$$
Letting $T\to\infty$ and applying Fatou's lemma we get from above that
$$0\geq \Exp^v_{j_0}\left[ e^{\sum_{t=0}^{\uptau -1}(c(X_t, v(X_t)-\lambda)}
V(X_{\uptau})\Ind_{\{\uptau<\infty\}}\right].$$
Since {\bf X} is irreducible, we have $\Prob^v_{j_0}(\uptau<\infty)>0$ and also $V>0$ in $\sD^c$. This
is clearly a contradiction to the above. Thus we must have $\rho\leq\lamd$.
\end{proof}

The following remark is immediate from \cref{R2.1}, \cref{L4.1} and the proof of \cref{T2.3}.
\begin{remark}\label{R4.1}
Suppose that \cref{A1.1} holds and {\bf X} is irreducible under every stationary Markov control.
Then $\lim_{n\to\infty}\rho_n=\lamd$. Moreover, if $\lamd<\infty$, there exists a positive eigenvector $V$
satisfying
$$\min_{u\in\Act(i)}e^{c(i, u)}\sum_{j\in S}V(j) P(j|i, u)\leq e^{\lamd} V(i)\quad i\in S\,.$$
\end{remark}

\subsection{Discrete time stable case}\label{S-DPIA}
In this section we propose a policy improvement algorithm (PIA) and show that it converges to 
the optimal value $\lamstr$. To this aim we use a stronger hypothesis compared to \cref{EA2.2}.
\begin{assumption}\label{A4.1}
We suppose that \cref{EA2.2} holds for a norm-like function $\Lyap$. Furthermore, in case of 
\cref{EA2.2}(b), we have $\max_{u\in\Act(i)} c(i, u)\leq \eta\ell(i)$ for $i\in S$ and some 
$\eta\in(0,1)$. Also, there exists a state $z_\circ$ in $S$ such that
\begin{equation}\label{EA4.1A}
\inf_{u\in \Act(i)} P(z_\circ|i, u)>0\quad \text{for all}\; i\in S.
\end{equation}
\end{assumption}
\eqref{EA4.1A} will be used to construct a suitable small set and to apply certain convergence result
from \cite{MeynT-94b}. \cref{A1.1,A4.1} are imposed throughout this section. Suppose that $v$ is
a stationary Markov control and $(\rho_v, \psi_v)$ be the corresponding eigenpair obtained in 
\cref{R2.2}. Let $\sB\supset \cK$ be such that for $i\in\sB^c$ we have
\begin{align}\label{E4.8}
\begin{rcases}
\max_{u\in\Act(i)} c(i, u) -\rho_v &< \alpha \gamma, \quad \text{under \cref{EA2.2}(a)},
\\
\max_{u\in\Act(i)} c(i, u) -\rho_v &< \alpha \ell(i), \quad \text{under \cref{EA2.2}(b)},
\end{rcases}
\end{align}
for some $\alpha\in (0, 1)$.
Then the arguments of \cref{L2.4} gives us
\begin{equation}\label{E4.9}
\psi_v(i) = \Exp_i^{v} \left[e^{\sum_{t=0}^{\uuptau(\sB) - 1}(c(X_t, v(X_t)) - \rho_v)}\psi_v(X_{\uuptau(\sB)})\right]\quad\forall \,\, i\in \sB^c\,.
\end{equation}
We are going to use this observation in the later part of this section.
 We now describe our 
PIA.

\begin{algorithm}\label{Alg4.1}
Policy iteration.
\begin{itemize}
\item[1.] Initialization. Set $k=0$ and select any $v_0\in\Usm$.
\smallskip
\item[2.] Value determination. Let $V_k$
be the unique principal eigenfunction satisfying $V_k(i_0)=1$ and
\begin{equation}\label{E4.10}
e^{\rho_k} V_k(i) = e^{c(i, v_k(i))} \sum_{j\in S} V_k(j) P(j|i, v_k(i))\quad i\in S.
\end{equation}
Existence of a unique principal eigenfunction in \cref{E4.10} follows from
\cref{R2.2,L2.6}. We let $\lambda_k\df\lamd(v_k)=\sE_i(c, v_k)=\rho_k$.
\smallskip
\item[3.] Policy improvement. Choose any $v_{k+1}\in\Usm$ satisfying
$$v_{k+1}(i)\in \Argmin_{u\in\Act(i)}\,
\left[e^{c(i, u)} \sum_{j\in S} V_k(j) P(j|i, u)\right], \quad i\in S\,.$$
\end{itemize}
\end{algorithm}

Then our main result of this section is
\begin{theorem}\label{T4.1}
Under \cref{A1.1,A4.1} the following hold.
\begin{itemize}
\item[(i)] For all $k\in\NN$, we have $\lambda_{k+1}\leq\lambda_k$ and $\lim_{k\to\infty}\lambda_k=\lamstr$.
\item[(ii)] $V_k$ converges pointwise, as $k\to\infty$, to $\psi^*$ where $\psi^*$ is the unique solution to  \eqref{ET2.1A}.
\end{itemize}
\end{theorem}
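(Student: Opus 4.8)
The plan is to show monotonicity of $\{\lambda_k\}$ first, then identify the limit with $\lamstr$, and finally deduce pointwise convergence of the eigenfunctions. For the monotonicity in (i): fix $k$ and write the eigen-equation \eqref{E4.10} for $v_k$. By the choice of $v_{k+1}$ in step~3 of \cref{Alg4.1},
\begin{equation*}
e^{c(i, v_{k+1}(i))}\sum_{j\in S} V_k(j) P(j|i, v_{k+1}(i)) \le e^{c(i, v_k(i))}\sum_{j\in S} V_k(j) P(j|i, v_k(i)) = e^{\rho_k} V_k(i)\quad \forall\; i\in S.
\end{equation*}
Thus $V_k$ is a positive subsolution for the linear operator associated with $v_{k+1}$ at level $\rho_k$, i.e.\ $V_k>0$ satisfies $e^{c(i,v_{k+1}(i))}\sum_j V_k(j)P(j|i,v_{k+1}(i))\le e^{\rho_k}V_k(i)$. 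By the definition \eqref{E4.2} of $\lamd(v_{k+1})$ this immediately yields $\lamd(v_{k+1})\le\rho_k=\lambda_k$; since \eqref{E4.10} for $v_{k+1}$ gives $\lambda_{k+1}=\rho_{k+1}=\lamd(v_{k+1})$ by step~2, we obtain $\lambda_{k+1}\le\lambda_k$. The sequence is bounded below by $0$ (each $\rho_k\ge 0$, cf.\ \cref{R2.2}), hence $\lambda_k\downarrow\bar\lambda$ for some $\bar\lambda\ge\lamstr$.

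Next I must show $\bar\lambda=\lamstr$. I would use the uniform stochastic representation \eqref{E4.9}: under \cref{A4.1}, the finite set $\sB$ in \eqref{E4.8} can be chosen independently of the policy $v$, because the bound $\max_u c(i,u)-\rho_v<\alpha\ell(i)$ (resp.\ $<\alpha\gamma$) holds uniformly in $v$ once $\rho_v\ge 0$ and $\max_u c(i,u)\le\eta\ell(i)$ with $\eta<1$. Using \cref{L2.2} together with \eqref{E4.9} I would produce a uniform bound: after normalizing $V_k(i_0)=1$ and scaling so that $V_k$ touches $\Lyap$ in $\sB$, we get $V_k\le C\Lyap$ with $C$ independent of $k$, exactly as in the proof of \cref{L2.4}. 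Condition \eqref{EA4.1A} makes $\{z_\circ\}$ (or a finite set around it) a small set, which lets me invoke the convergence result of \cite{MeynT-94b} to control the return times to $\sB$ uniformly and to conclude that the family $\{V_k\}$ is relatively compact in the product topology and that no mass escapes to infinity. Passing to a subsequential limit $(\bar\lambda,\bar V)$, the stability of minimizing selectors under \cref{A1.1}(a) and generalized Fatou (as in \eqref{EL2.4G}--\eqref{EL2.4I}) give that $\bar V>0$ solves $e^{\bar\lambda}\bar V(i)=\min_{u\in\Act(i)}[e^{c(i,u)}\sum_j \bar V(j)P(j|i,u)]$ on $S$. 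By the uniqueness part of \cref{L2.6} (equivalently \cref{T2.1}(i)) this forces $\bar\lambda=\rho=\lamstr$ and $\bar V=\psi^*$ (with $\psi^*(i_0)=1$). Since every subsequential limit is the same, the full sequence $V_k\to\psi^*$ pointwise, giving (ii); and $\lambda_k\to\lamstr$, completing (i).

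The main obstacle is the uniformity in the policy index $k$: one needs the stochastic representation \eqref{E4.9} and the Lyapunov estimates of \cref{L2.2} to hold with a single finite set $\sB$ and a single constant $C$ for all $v_k$ simultaneously, and one needs the return-time bounds to $\sB$ to be uniform so that $\{V_k\}$ cannot degenerate (vanish) or blow up in the limit. This is precisely where the extra hypotheses of \cref{A4.1} enter — norm-like $\Lyap$, the sub-multiplicative cost bound $\max_u c(i,u)\le\eta\ell(i)$, and the small-set condition \eqref{EA4.1A} via \cite{MeynT-94b}; without them the argument of \cref{L2.4} would only give policy-dependent constants. Once that uniform compactness is in hand, the identification of the limit is routine from \cref{T2.1}/\cref{L2.6}.
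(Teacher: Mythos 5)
Your handling of part (i)'s monotonicity $\lambda_{k+1}\le\lambda_k$ is correct and matches the paper. The uniform bound $V_k\le\kappa\Lyap^\alpha$ that you sketch is also the right idea (it is \cref{L4.2}(i)). But there is a genuine gap at the heart of your argument: you claim that ``the stability of minimizing selectors under \cref{A1.1}(a) and generalized Fatou (as in \eqref{EL2.4G}--\eqref{EL2.4I}) give that $\bar V>0$ solves the HJB equation.'' This does not follow. Unlike the Dirichlet eigenfunctions $\psi_n$ of \cref{L2.4}, which solve the HJB equation on $\sD_n$ by construction, the iterates $V_k$ of \cref{Alg4.1} satisfy only the \emph{linear} eigen-equation \eqref{E4.10} for the fixed policy $v_k$. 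Passing to a subsequential limit $(\bar\lambda,\bar V, v)$ along $k_j$ yields the linear identity $e^{\bar\lambda}\bar V(i)=e^{c(i,v(i))}\sum_j \bar V(j)P(j|i,v(i))$, where $v=\lim v_{k_j}$; whereas the improvement step says $v_{k_j+1}$ minimizes for $V_{k_j}$, and a priori $\{V_{k_j+1}\}$ and $\{V_{k_j}\}$ may have different subsequential limits. So the limiting $v$ need not be a minimizing selector for the limiting $\bar V$, and you do not obtain the HJB equation. Your citations to \eqref{EL2.4G}--\eqref{EL2.4I} are inapplicable for exactly this reason.

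The paper closes this gap by isolating the ``gap to optimality'' of $v_{k+1}$ with the error term $\theta_{k+1}(i)=1-V_k(i)^{-1}e^{c_{k+1}(i)-\lambda_k}\sum_j V_k(j)P(j|i,v_{k+1}(i))\in[0,1]$ (so that $\min_u[e^{c(i,u)-\lambda_k}\sum_j V_k(j)P(j|i,u)]=(1-\theta_{k+1}(i))V_k(i)$, see \eqref{ET4.1A}), and then proving $\theta_k(i)\to 0$ for every $i$ (\cref{L4.4}). That is the crux, and your proposal does not address it. The proof of $\theta_k\to 0$ requires the machinery you partially gesture at but misapply: the \emph{twisted kernel} $P^{(k)}$ in \eqref{E4.13}, the identity \eqref{E4.14}, and, crucially, the \emph{uniform} geometric ergodicity of the twisted chains $\mathbf Y^{(k)}$ via the small-set condition \eqref{EA4.1A} and \cite[Theorem~2.3]{MeynT-94b} (\cref{L4.3}). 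You invoke the small set and Meyn--Tweedie to argue tightness/non-degeneracy of $\{V_k\}$ -- that is not what they are for (the compactness of $\{V_k\}$ already follows from \cref{L4.2} and \cref{L2.2} alone); they are needed to obtain $k$-independent constants $M,r$ in the ergodic estimate \eqref{EL4.3A} and hence to extract the vanishing of $\pi_{k+1}(h_{k+1}\Ind_\sD)$ in \eqref{EL4.4E}. A smaller but real issue: the identification of the limit uses the new uniqueness statement \cref{T4.2} (uniqueness among positive solutions in $\sorder(\Lyap)$ with eigenvalue $\ge\lamstr$), not the uniqueness of \cref{L2.6}/\cref{T2.1}(i) which is stated only at $\lamstr$; without knowing $\bar\lambda=\lamstr$ in advance you cannot appeal to the latter.
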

Our proof of \cref{T4.1} is inspired from \cite{ABP20} which also establishes convergence of PIA for controlled 
diffusions. The proof technique of \cite{ABP20} uses several estimates from elliptic partial differential equations which are not obvious in the present situation. So our proofs requires a more careful analysis. We denote by
$c_k(i)=c(i, v_k(i))$. It is also obvious from \eqref{E4.2} that $\lambda_{k+1}\leq \lambda_k$ for all $k\geq 0$. Fix a set $\sB$
containing $\cK$. Since $\rho_v\geq 0$, from \cref{A4.1} \eqref{E4.8} holds for $\rho_{v_0}=\lambda_0$. Let $\kappa_k=\min_{\sB}\frac{\Lyap}{V_k}$
and replace $V_k$  by $\kappa_k V_k$. Using \eqref{E4.9}
and \cref{L2.2}, it then follows that
$V_k\leq \Lyap$ in $S$.
Define
\begin{equation}\label{theta}
\theta_{k+1}(i) = 1-\frac{1}{V_k(i)}e^{c_{k+1}(i)-\lambda_k}\sum_{j\in S} V_k(j)P(j|i, v_{k+1}(i)).
\end{equation}
Since
\begin{align*}
V_k(i)\geq \min_{u\in\Act(i)}\left[e^{c(i, u)-\lambda_k} \sum_{j\in S} V_k(j) P(j|i, u)\right]
= e^{c_{k+1}(i)-\lambda_k}\sum_{j\in S} V_k(j)P(j|i, v_{k+1}(i)),
\end{align*}
we have $0\leq \theta_{k}\leq 1$ for all $k\in\NN$. We begin with the following estimate 
which will be useful to establish convergence of $V_k$.
\begin{lemma}\label{L4.2}
Grant \cref{A1.1,A4.1}. Then the following hold.
\begin{itemize}
\item[(i)] There exists $\kappa$, independent of $k$, such that
\begin{equation}\label{EL4.2A}
V_k(i)\leq \kappa (\Lyap(i))^\alpha\quad \text{for all}\; i\in S,
\end{equation}
where $\alpha$ is given by \eqref{E4.8}.

\item[(ii)] For every $i\in S$ we have $\inf_{k\in\NN} V_k(i)>0$.
\end{itemize}
\end{lemma}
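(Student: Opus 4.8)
The plan is to obtain (i) from the stochastic representation \eqref{E4.9}, the Lyapunov estimates of \cref{L2.2}, and Jensen's inequality, and then to deduce (ii) by a compactness/diagonalization argument followed by the irreducibility argument already used in \eqref{EL2.4JJ}.

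For (i), the first point is that since $\lambda_k=\rho_{v_k}\ge 0$ for every $k$ (by \cref{R2.2}), the inequalities \eqref{E4.8} can be arranged to hold with \emph{one} fixed finite set $\sB\supset\cK$ and \emph{one} fixed $\alpha\in(0,1)$ simultaneously for all $k$: under \cref{EA2.2}(a) it suffices to pick $\alpha$ with $\norm{c}_\infty<\alpha\gamma$, and under \cref{EA2.2}(b), using $\max_{u\in\Act(i)}c(i,u)\le\eta\ell(i)$ from \cref{A4.1}, it suffices to pick $\alpha\in(\eta,1)$ and enlarge $\sB$ so that $\ell>0$ on $\sB^{c}$. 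Fix $i\in\sB^{c}$. On $\{0\le t<\uuptau(\sB)\}$ the process lies in $\sB^{c}$, so \eqref{E4.8} controls the exponent in \eqref{E4.9}: under \cref{EA2.2}(a) one has $\sum_{t=0}^{\uuptau(\sB)-1}\bigl(c(X_t,v_k(X_t))-\lambda_k\bigr)\le\alpha\gamma\,\uuptau(\sB)$, and since $V_k\le\Lyap$ (after the rescaling performed above) and $X_{\uuptau(\sB)}\in\sB$,
\begin{equation*}
V_k(i)\ \le\ \Exp_i^{v_k}\Bigl[\bigl(e^{\gamma\uuptau(\sB)}\Lyap(X_{\uuptau(\sB)})\bigr)^{\alpha}\bigl(\Lyap(X_{\uuptau(\sB)})\bigr)^{1-\alpha}\Bigr]\ \le\ \bigl(\max_{\sB}\Lyap\bigr)^{1-\alpha}\,\Exp_i^{v_k}\Bigl[\bigl(e^{\gamma\uuptau(\sB)}\Lyap(X_{\uuptau(\sB)})\bigr)^{\alpha}\Bigr].
\end{equation*}
Applying Jensen's inequality to the concave map $x\mapsto x^{\alpha}$ and then \eqref{EL2.2A} gives $V_k(i)\le(\max_{\sB}\Lyap)^{1-\alpha}(\Lyap(i))^{\alpha}$ for $i\in\sB^{c}$; on the finite set $\sB$ the bound $V_k\le\Lyap\le(\max_{\sB}\Lyap)^{1-\alpha}\Lyap^{\alpha}$ is immediate, so \eqref{EL4.2A} holds with $\kappa=(\max_{\sB}\Lyap)^{1-\alpha}$, which does not depend on $k$. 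The case \cref{EA2.2}(b) is identical, with $\gamma\uuptau(\sB)$ replaced by $\sum_{t=0}^{\uuptau(\sB)-1}\ell(X_t)$ and \eqref{EL2.2A} by \eqref{EL2.2B}.

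For (ii), suppose $\inf_{k}V_k(i_1)=0$ for some $i_1\in S$ and pass to a subsequence along which $V_k(i_1)\to0$. Using the uniform bound from (i) and the compactness of each $\Act(i)$, a diagonal extraction produces a further subsequence along which $\lambda_k\to\bar\lambda\ge0$ (the limit exists, $\{\lambda_k\}$ being nonincreasing and nonnegative), $v_k(i)\to v(i)$ for every $i\in S$, and $V_k(i)\to V(i)$ for every $i\in S$, with $0\le V\le\kappa\Lyap^{\alpha}\in\sorder(\Lyap)$. Since each $V_k$ touches $\Lyap$ at some point of the finite set $\sB$, a sub-subsequence has this point equal to a fixed $j^{*}\in\sB$, so $V(j^{*})=\Lyap(j^{*})\ge1$ and $V\not\equiv0$. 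Passing to the limit in $e^{\lambda_k}V_k(i)=e^{c(i,v_k(i))}\sum_{j\in S}V_k(j)P(j|i,v_k(i))$ — the left side converges, while by \cref{A1.1}(a) and Fatou's lemma the $\liminf$ of the right side is at least $e^{c(i,v(i))}\sum_{j\in S}V(j)P(j|i,v(i))$ — yields
\begin{equation*}
e^{\bar\lambda}V(i)\ \ge\ e^{c(i,v(i))}\sum_{j\in S}V(j)P(j|i,v(i))\qquad\text{for all } i\in S.
\end{equation*}
Since $V(i_1)=0$, this forces $\sum_{j\in S}V(j)P(j|i_1,v(i_1))=0$, hence $V(j)=0$ whenever $P(j|i_1,v(i_1))>0$; iterating along paths and invoking the irreducibility of {\bf X} under $v\in\Usm$ exactly as in \eqref{EL2.4JJ} gives $V\equiv0$, contradicting $V(j^{*})>0$. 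Therefore $\inf_{k}V_k(i)>0$ for all $i\in S$.

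I expect the main obstacle to be essentially bookkeeping: in (i) one must check that the cutoff set $\sB$ and the exponent $\alpha$ can genuinely be fixed once and for all independently of $k$, which is where $\lambda_k\ge0$ and the sub-linear growth of the cost built into \cref{A4.1} are used; after that, Jensen's inequality against \cref{L2.2} is routine. In (ii) the two delicate points are the justification of the passage to the limit in the eigen-equation under the envelope $\kappa\Lyap^{\alpha}$ (dominated convergence for each fixed coordinate plus Fatou for the series) and the verification that the limiting supersolution is nontrivial, which is precisely why the normalization forcing $V_k$ to touch $\Lyap$ inside $\sB$ is essential.
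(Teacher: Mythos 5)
Your proof is correct and follows essentially the same route as the paper: for (i) the stochastic representation \eqref{E4.9} combined with $V_k\le\Lyap$, Jensen's inequality for $x\mapsto x^\alpha$, and the estimates of \cref{L2.2}; for (ii) a diagonal extraction, passage to the limit in the eigen-equation, the normalization forcing a touch-point in $\sB$, and the irreducibility argument of \eqref{EL2.4JJ}. Two minor differences, both harmless: in (i) you correctly emphasize that $\lambda_k\ge 0$ (together with $\norm{c}_\infty<\alpha\gamma$ resp.\ $\max_u c\le\eta\ell$) is what lets $\sB$ and $\alpha$ be fixed once and for all, which is cleaner than the paper's phrasing ``since $\lambda_k\le\lambda_0$''; and in (ii) you use Fatou to obtain only the supersolution inequality in the limit, whereas the paper proves full convergence of the sum via a tail-truncation against the $\kappa\Lyap^\alpha$ envelope, but the inequality is all that the irreducibility contradiction actually needs, so your shortcut is valid.
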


\begin{proof}
(i) actually follows from \eqref{E4.8} and \cref{L2.2}. Since  $\lambda_k\leq\lambda_0$, we see from
\eqref{E4.8} that
\begin{align*}
\max_{u\in\Act(i)} c(i, u) -\lambda_k &< \alpha \gamma, \quad \text{under \cref{EA2.2}(a)},
\\
\max_{u\in\Act(i)} c(i, u) -\lambda_k &< \alpha \ell(i), \quad \text{under \cref{EA2.2}(b)},
\end{align*}
for all $i\in\sB^c$.
Therefore, the stochastic representation of $V_k$ is possible with respect to $\sB$, that is, 
$$V_k(i) = \Exp_i^{v_k} \left[e^{\sum_{t=0}^{\uuptau(\sB) - 1}(c_k(X_t) - \lambda_k)}V_k(X_{\uuptau(\sB)})\right]\quad\forall \,\, i\in \sB^c\,.$$
Let us consider \cref{EA2.2}(a) first.
Since $\alpha\in (0,1)$, from \cref{L2.2} it follows that
\begin{align*}
V_k(i) &\leq\Exp_i^{v_k} \left[e^{\alpha\gamma\uuptau(\sB)}V_k(X_{\uuptau(\sB)})\right]
\\
&\leq \Exp_i^{v_k}\left[e^{\gamma\uuptau(\sB)}(V_k)^{\nicefrac{1}{\alpha}}(X_{\uuptau(\sB)})\right]^\alpha
\\
&\leq (\max_\sB\Lyap )^{1-\alpha} \Exp_i^{v_k}\left[e^{\gamma\uuptau(\sB)}\Lyap(X_{\uuptau(\sB)})\right]^\alpha
\\
& \leq (\max_\sB\Lyap )^{1-\alpha}\, \Lyap^\alpha(i)
\end{align*}
for all $i\in \sB^c$. This gives (i). Similar argument also works for \cref{EA2.2}(b).

Next we consider (ii). Fix $i\in S$. Suppose, on the contrary, that $V_k(i)\to 0$, along some
subsequence, as $k\to\infty$. Using a standard diagonalization argument and the bound in (i),
we can find a further subsequence so that
$$\lambda_k\to\lambda_\circ,\quad v_k(j)\to v(j), \quad V_k(j)\to V(j)$$
for all $j\in S$, as $k\to\infty$. It is also evident from (i) that 
$V(j)\leq \kappa (\Lyap(j))^\alpha$ for all $j$. We claim that
\begin{equation}\label{EL4.2B}
e^{c(j, v_k(j))} \sum_{z\in S} V_k(z) P(z|j, v_k(j))\to e^{c(j, v(j))} \sum_{z\in S} V(z) P(z|j, v(j))
\end{equation}
as $k\to\infty$, for all $j$. Note that for any given $\epsilon>0$, since $\Lyap$ is norm-like,
we can find $z_1\in S$ such that 
$$\sum_{z\geq z_1} V_k(z) P(z|j, v_k(j))
\leq [\kappa \sup_{j\geq z_1} \Lyap^{\alpha-1}(j)]\, \sum_{z\ge z_1} \Lyap(z) P(z|j, v_k(j))<\epsilon,$$
using \eqref{EL4.2A} and \cref{EA2.2}. Thus, applying \cref{A1.1}(a), we get \eqref{EL4.2B}.
Now passing the limit in \eqref{E4.10} we have
$$e^{\lambda_\circ} V(j)=e^{c(j, v(j))} \sum_{z\in S} V(z) P(z|j, v(j))\quad j\in S,$$
and $V(i)=0$. On the other hand, since $\max_{\sB}(\Lyap-V_k)=0$, we must have 
$\max_{\sB}(\Lyap-V)=0$ implying $V$ is positive at some point in $\sB$. Applying the arguments
of \cref{L2.4} (see \eqref{EL2.4JJ}) we get a contradiction. This proves (ii).
\end{proof}
We also need the following uniqueness result.
\begin{theorem}\label{T4.2}
Suppose that $(\rho, W)\in\RR_+\times\sorder(\Lyap)$ be such that $W>0$
and 
\begin{equation}\label{ET4.2A}
W(i)=\min_{u\in\Act(i)}\left[e^{(c(i, u)-\rho)}\sum_{j\in S} W(j) P(j|i, u)\right]
\quad i\in S.
\end{equation}
Moreover, assume that $\rho\geq \lamstr$. Then we must have $\rho=\lamstr$
and $W$ is a scalar multiple of $\psi^*$.
\end{theorem}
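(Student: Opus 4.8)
The plan is to adapt the uniqueness argument of \cref{L2.6}; the new ingredient is that the hypothesis $\rho\geq\lamstr$ turns $W$ into a \emph{subsolution} of the nonlinear eigenequation at the level $\lamstr$, which is exactly what is needed in order to compare $W$ with the principal eigenfunction $\psi^*$. Indeed, from \eqref{ET4.2A}, for every $i\in S$ and $u\in\Act(i)$ we have $W(i)\leq e^{c(i,u)-\rho}\sum_{j\in S}W(j)P(j|i,u)$; multiplying by $e^{\rho}$ and using $e^{\rho}\geq e^{\lamstr}$ together with $\sum_{j}W(j)P(j|i,u)\geq 0$ yields
\begin{equation*}
e^{\lamstr}W(i)\ \leq\ e^{c(i,u)}\sum_{j\in S}W(j)P(j|i,u)\qquad\text{for all }i\in S,\ u\in\Act(i).
\end{equation*}
Also, since $W\in\sorder(\Lyap)\subset\order(\Lyap)$, there is $\mathfrak{c}>0$ with $W\leq\mathfrak{c}\,\Lyap$ on $S$.

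Next I would fix a measurable minimizing selector $v^*\in\Usm$ of \eqref{ET2.1A} (which exists by \cref{A1.1}). By \cref{T2.1,L2.4,L2.6}, the pair $(\lamstr,\psi^*)$ then satisfies $e^{\lamstr}\psi^*(i)=e^{c(i,v^*(i))}\sum_{j}\psi^*(j)P(j|i,v^*(i))$ on $S$ and the stochastic representation $\psi^*(i)=\Exp_i^{v^*}\bigl[e^{\sum_{t=0}^{\uuptau(\sB)-1}(c(X_t,v^*(X_t))-\lamstr)}\psi^*(X_{\uuptau(\sB)})\bigr]$ for $i\in\sB^c$, where $\sB\supset\cK$ is a suitable finite set (which we are free to enlarge). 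The main obstacle, and the only place where the class $\sorder(\Lyap)$ and \cref{A4.1} are really used, is to establish the companion \emph{inequality}
\begin{equation*}
W(i)\ \leq\ \Exp_i^{v^*}\Bigl[e^{\sum_{t=0}^{\uuptau(\sB)-1}(c(X_t,v^*(X_t))-\lamstr)}W(X_{\uuptau(\sB)})\Bigr]\qquad\text{for }i\in\sB^c,
\end{equation*}
together with finiteness of the right-hand side. This is done exactly as in the derivation of \eqref{EL2.4N}: iterate the subsolution inequality along $v^*$, stop at $\uuptau(\sB)\wedge N$, and let $N\to\infty$, splitting according to $\{\uuptau(\sB)\leq N\}$ and $\{\uuptau(\sB)>N\}$. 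The first part converges by monotone convergence; on $\{\uuptau(\sB)>N\}$ the path stays in $\sB^c$, so $c(X_t,v^*(X_t))-\lamstr\leq\|c\|_\infty<\gamma$ under \cref{EA2.2}(a) (resp. $\leq\eta\,\ell(X_t)$ with $\eta<1$ under \cref{EA2.2}(b)), and combining $W\leq\mathfrak{c}\Lyap$, the Foster--Lyapunov estimate of \cref{L2.2}, and this strict gap one bounds that part by $\mathfrak{c}\,e^{-\delta N}\Lyap(i)$ for some $\delta>0$, which vanishes.

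With the two displays in hand the comparison is routine and parallels \cref{L2.6}. Put $\kappa\df\min_{i\in\sB}\psi^*(i)/W(i)\in(0,\infty)$ and $\Phi\df\psi^*-\kappa W$; then $\Phi\geq 0$ on $\sB$ with $\Phi(\hat{i})=0$ for some $\hat{i}\in\sB$, and subtracting the two displays gives $\Phi(i)\geq\Exp_i^{v^*}\bigl[e^{\sum_{t=0}^{\uuptau(\sB)-1}(c(X_t,v^*(X_t))-\lamstr)}\Phi(X_{\uuptau(\sB)})\bigr]$ for $i\in\sB^c$. Since $\mathbf{X}$ is recurrent under $v^*$ (by \cref{EA2.2}), $\uuptau(\sB)<\infty$ almost surely and $X_{\uuptau(\sB)}\in\sB$, where $\Phi\geq 0$; hence $\Phi\geq 0$ on $S$. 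Feeding this back into the one-step inequality $\Phi(i)\geq e^{c(i,v^*(i))-\lamstr}\Exp_i^{v^*}[\Phi(X_1)]$, valid on all of $S$ (since $\psi^*$ satisfies the eigenequation with equality for $v^*$ while $W$ satisfies it as a subsolution), and evaluating at $\hat{i}$ forces $\Phi$ to vanish on the one-step support of $v^*$ from $\hat{i}$; iterating and invoking the irreducibility of $\mathbf{X}$ under $v^*$ gives $\Phi\equiv 0$, i.e.\ $\psi^*=\kappa W$. Finally, substituting $W=\kappa^{-1}\psi^*$ into \eqref{ET4.2A} and using \eqref{ET2.1A} leaves $\psi^*(i)=e^{\lamstr-\rho}\psi^*(i)$ on $S$; as $\psi^*>0$ this forces $\rho=\lamstr$, and $W=\kappa^{-1}\psi^*$ is the claimed scalar multiple of $\psi^*$.
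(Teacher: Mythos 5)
Your proof is correct and follows the same architecture as the paper's: derive the subsolution inequality for $W$, establish the stochastic inequality up to the hitting time $\uuptau(\sB)$, rescale and subtract against the representation of $\psi^*$, drive $\Phi=\psi^*-\kappa W$ to zero by recurrence and irreducibility, and finally read off $\rho=\lamstr$ from $\psi^*=e^{\lamstr-\rho}\psi^*$. The one genuine difference is in how the key comparison inequality
$W(i)\leq\Exp_i^{v^*}\bigl[e^{\sum_{t=0}^{\uuptau(\sB)-1}(c(X_t,v^*(X_t))-\lamstr)}W(X_{\uuptau(\sB)})\bigr]$
is established on $\sB^c$. The paper stops Dynkin at the triple time $\uuptau(\sB)\wedge\uptau_n\wedge T$, first sends $T\to\infty$ inside the finite set $\sD_n\setminus\sB$ via \cref{L2.2}, and then kills the contribution of $\{\uptau_n<\uuptau(\sB)\}$ by using $W\in\sorder(\Lyap)$, i.e.\ $\sup_{j>n}W(j)/\Lyap(j)\to 0$. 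You instead stop only at $\uuptau(\sB)\wedge N$ and show the part on $\{\uuptau(\sB)>N\}$ decays geometrically, using $W\leq\mathfrak{c}\Lyap$ together with the strict gap ($\norm{c}_\infty-\lamstr<\gamma$ in case (a), or $c\leq\eta\ell$ with $\eta<1$ from \cref{A4.1} in case (b)). That is a clean adaptation of the derivation of \eqref{EL2.4N} and, as a side benefit, only invokes $W\in\order(\Lyap)$; conversely the paper's spatial-cutoff route does not actually need the $\eta<1$ strengthening. Two small points worth writing out: in case (b) your bound $\mathfrak{c}\,e^{-\delta N}\Lyap(i)$ holds, but it requires a short H\"older step (exponents $1/\eta$ and $1/(1-\eta)$) combined with $\inf_{\sB^c}\ell>0$ (enlarge $\sB$ if necessary); and one should verify the integrability (from $W\leq\mathfrak{c}\Lyap$, $\lamstr\geq 0$, and \cref{L2.2}) that makes the stopped process $e^{\sum_{t=0}^{(N\wedge\uuptau(\sB))-1}(c(X_t,v^*(X_t))-\lamstr)}W(X_{N\wedge\uuptau(\sB)})$ a genuine submartingale before taking expectations at each $N$.
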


\begin{proof}
Let $v$ be a minimizing selector of \eqref{ET2.1A}. Then for a finite set $\sB$
containing $\cK$ and satisfying 
\begin{align*}
\max_{u\in\Act(i)} c(i, u) -\lamstr &< \alpha \gamma, \quad \text{under \cref{EA2.2}(a)},
\\
\max_{u\in\Act(i)} c(i, u) -\lamstr &< \alpha \ell(i), \quad \text{under \cref{EA2.2}(b)},
\end{align*}
for all $i\in \sB^c$, we have the representation
\begin{equation}\label{ET4.2B}
\psi^*(i) = \Exp_i^{v} \left[e^{\sum_{t=0}^{\uuptau(\sB) - 1}(c(X_t, v^*(X_t)) - \lamstr)}\psi^*(X_{\uuptau(\sB)})\right]\quad\forall \,\, i\in \sB^c\,,
\end{equation}
from \cref{L2.4}. Using \eqref{ET4.2A} we get
$$W(i)\leq e^{(c(i, u)-\rho)}\sum_{j\in S} W(j) P(j|i, v(i))
\leq e^{(c(i, u)-\lamstr)}\sum_{j\in S} W(j) P(j|i, v(i)).$$
Denoting $\uptau_n$, the first exit time from $\{1, 2, \ldots, n\}$, and applying
Dynkin's formula we obtain
$$W(i)\leq \Exp_i^{v} \left[e^{\sum_{t=0}^{\uuptau(\sB)\wedge\uptau_n\wedge T - 1}(c(X_t, v^*(X_t)) - \lamstr)}W(X_{\uuptau(\sB)\wedge\uptau_n\wedge T})\right],$$
for $i\in\sB^c\cap\sD_n$. In view of \cref{L2.2}, we can let $T\to\infty$ in
the above to obtain
\begin{equation}\label{ET4.2C}
W(i)\leq \Exp_i^{v} \left[e^{\sum_{t=0}^{\uuptau(\sB)\wedge\uptau_n - 1}(c(X_t, v^*(X_t)) - \lamstr)}W(X_{\uuptau(\sB)\wedge\uptau_n})\right].
\end{equation}
Since $W\in \sorder(\Lyap)$, it also follows from \eqref{EA2.2A} that
\begin{align*}
\Exp_i^{v} \left[e^{\sum_{t=0}^{\uptau_n - 1}(c(X_t, v^*(X_t)) - \lamstr)}W(X_{\uptau_n})\Ind_{\{\uptau_n<\uuptau(\sB)\}}\right]
&\leq
\sup_{j> n}\frac{W(j)}{\Lyap(j)}\, \Exp_i^{v} \left[e^{\gamma\uptau_n}\Lyap(X_{\uptau_n})\Ind_{\{\uptau_n<\uuptau(\sB)\}}\right]
\\
&\leq \sup_{j> n}\frac{W(j)}{\Lyap(j)}\, \Lyap(i)\to 0,
\end{align*}
as $n\to\infty$. Thus, letting $n\to\infty$ in \eqref{ET4.2C}, we obatin
\begin{equation*}
W(i)\leq \Exp_i^{v} \left[e^{\sum_{t=0}^{\uuptau(\sB) - 1}(c(X_t, v^*(X_t)) - \lamstr)}W(X_{\uuptau(\sB)})\right] \quad i\in \sB^c.
\end{equation*}
Now we can apply the arguments in \cref{L2.6} (see \eqref{EL2.6H} and the argument that follows) to obtain that $W$ is a scalar multiple of $\psi^*$. Using
\eqref{ET4.2A} it then follows that $\rho=\lamstr$ and this completes the proof.
\end{proof}

Next we need the notion of twisted kernel. Consider the eigenpair $(\lambda_k, V_k)$ and define
the transition kernel
\begin{equation}\label{E4.13}
P^{(k)}(j|i)=\frac{V_k(j) P(j|i)}{\sum_{j\in S} V_k(j) P(j|i, v_k(i))}\quad j\in S.
\end{equation}
Let ${\mathbf Y^{(k)}}$ be the Markov process associated to this kernel. Since $P(j|i, v_k(i))$ generates
an irreducible Markov chain, ${\mathbf Y^{(k)}}$ is also irreducible. We denote by $\Exp^{(k)}[\cdot]$ the expectation operator with respect to the kernel $P^{(k)}$. For any nonnegative function $g$
we then have
\begin{equation}\label{E4.14}
\Exp_i^{v_k}\left[e^{\sum_{t=0}^{m-1}(c_k(X_t)-\lambda_k)} V_k(X_m)g(X_m)\right]=
V_k(i)\Exp^{(k)}_i[g(Y^{(k)}_m)]\quad i\in S.
\end{equation}
This can be proved by induction and Markov property. In particular, for $m=1$, \eqref{E4.14} follows from
\eqref{E4.10} and \eqref{E4.13}. Suppose \eqref{E4.14} is holds for $m-1$. Then 
\begin{align*}
\Exp_i^{v_k}\left[e^{\sum_{t=0}^{m}(c_k(X_t)-\lambda_k)} V_k(X_{m+1})g(X_{m+1})\right]
&= \Exp_i^{v_k}\left[e^{\sum_{t=0}^{m}(c_k(X_t)-\lambda_k)} \Exp_{X_m}[V_k(X_{1})g(X_{1})]\right]
\\
&=\Exp_i^{v_k}\left[e^{\sum_{t=0}^{m-1}(c_k(X_t)-\lambda_k)} V_k(X_m) \Exp^{(k)}_{X_m}[g(Y^{(k)}_{1})]\right]
\\
&= V_k(i) \Exp^{(k)}_i\left[ \Exp^{(k)}_{Y^{(k)}_m}[g(Y^{(k)}_{1})]\right]
= V_k(i) \Exp^{(k)}_i\left[ g(Y^{(k)}_{m+1})\right].
\end{align*}
This gives \eqref{E4.14}.

Let $C$ be a finite set containing $z_\circ, \sB$ and $\cK$ (see \cref{EA2.2}). Let
$\tilde{\Lyap}_k = \frac{\Lyap}{V_k}$.
Using \eqref{E4.10} and \eqref{EA2.2A} see note that
\begin{equation}\label{E4.15}
\sum_{j\in S} \tilde{\Lyap}_k(j) P^{(k)}(j|i)
= \frac{\sum_{j\in S} \Lyap(j) P(j|i, v_k(i))}{\sum_{j\in S} V_k(j) P(j|i, v_k(i))}
\leq e^{-(1-\alpha)\gamma}\tilde\Lyap_k(i) + \kappa_1 \Ind_{C},
\end{equation}
for some constant $\kappa_1$, independent of $k$, which is possible by \cref{L4.2}(ii). Similar
estimate also possible under \cref{EA2.2}(b). We also note that since $V_k\leq\Lyap$
$$\sum_{j\in S} V_k(j) P(j|i, v_k(i))\leq \sum_{j\in S} \Lyap(j) P(j|i, v_k(i))
\leq \kappa(i),$$
by \cref{EA2.2}, for some constant $\kappa(i)$.
Therefore, letting $\nu=\delta_{z_\circ}$, we see
from \eqref{EA4.1A} that 
$$P^{(k)}(A|i)\geq \kappa_2 \nu(A)\quad \text{for}\; A\subset C, \; \text{and}\; i\in C,$$
for some $\kappa_2$, independent of $k$. This is possible due to \cref{L4.2}(ii). Therefore,
$C$ is a small set for the chain ${\mathbf Y^{(k)}}$. Applying \cite[Theorem~2.3]{MeynT-94b}
we then obtain the following
\begin{lemma}\label{L4.3}
For every $k\in \NN$, the Markov chain ${\mathbf Y^{(k)}}$ has a unique stationary probability measure 
$\pi_k$
and there are constant $M$ and $r\in (0,1)$, not depending on $k$, that satisfy
\begin{equation}\label{EL4.3A}
\Bnorm{\left(P^{(k)}\right)^n -\pi_k}_{\tilde\Lyap_k}\leq M\tilde\Lyap_k(i) r^n\quad \text{for all}\; n\in\NN,
\end{equation}
when the chain starts from the initial state $i$.
\end{lemma}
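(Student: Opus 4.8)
The plan is to check that the twisted kernels $P^{(k)}$ satisfy, \emph{with all constants independent of $k$}, the hypotheses of the quantitative geometric ergodicity theorem \cite[Theorem~2.3]{MeynT-94b} --- a geometric Lyapunov drift toward a fixed small set, together with a uniform minorization on that set --- and then simply to quote that theorem. Both ingredients have, in essence, already been recorded in the paragraph preceding the statement. The drift function is $\tilde\Lyap_k=\Lyap/V_k$; it satisfies $\tilde\Lyap_k\ge 1$ on $S$ since $V_k\le\Lyap$, and \eqref{E4.15} asserts a drift of the form $\sum_{j\in S}\tilde\Lyap_k(j)P^{(k)}(j|i)\le \bar r\,\tilde\Lyap_k(i)+\kappa_1\Ind_{C}(i)$ for all $i\in S$, with $\bar r=e^{-(1-\alpha)\gamma}<1$ under \cref{EA2.2}(a) and $\kappa_1$ independent of $k$; under \cref{EA2.2}(b) the analogous estimate (using $\max_u c(i,u)\le\eta\ell(i)$ from \cref{A4.1}, $\rho_k\ge 0$, and enlarging $C$ so that $\ell\ge\ell_\circ>0$ off $C$) gives a drift of the same shape with rate $\bar r=e^{-(1-\eta)\ell_\circ}<1$. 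The minorization $P^{(k)}(A|i)\ge\kappa_2\,\nu(A)$ for $A\subset C$, $i\in C$, with $\nu=\delta_{z_\circ}$ and $\kappa_2$ not depending on $k$, was obtained from \eqref{EA4.1A}, \cref{L4.2}(ii) and the bound $\sum_{j\in S}V_k(j)P(j|i,v_k(i))\le\kappa(i)$; thus $C$ is a small set for every $\mathbf{Y}^{(k)}$.

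Given these two facts I would argue as follows. Each $\mathbf{Y}^{(k)}$ is irreducible: because $V_k>0$, the kernel \eqref{E4.13} satisfies $P^{(k)}(j|i)>0$ exactly when $P(j|i,v_k(i))>0$, and $v_k\in\Usm$ generates an irreducible chain by \cref{EA2.2}. The geometric drift toward the small set $C$ makes $\mathbf{Y}^{(k)}$ positive Harris recurrent, so it has a unique invariant probability measure $\pi_k$, and aperiodicity is immediate from the minorization, which in particular forces $P^{(k)}(z_\circ|z_\circ)\ge\kappa_2>0$. Applying \cite[Theorem~2.3]{MeynT-94b} to $P^{(k)}$ with the drift data $(\bar r,\kappa_1)$, the minorization constant $\kappa_2$, the set $C$ and $\nu=\delta_{z_\circ}$ --- all the same for every $k$ --- yields \eqref{EL4.3A} with $M$ and $r\in(0,1)$ that are explicit functions of $\bar r$, $\kappa_1$, $\kappa_2$ and $\nu(C)$ only; in particular they do not depend on $k$, which is what is claimed.

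The single point that needs care is exactly the claim that every constant in \eqref{EL4.3A} is $k$-free, and this is where \cref{L4.2} is indispensable: part~(i) (in particular $V_k\le\Lyap$, and $V_k\le\kappa\,\Lyap^\alpha$) keeps the drift well posed and the right-hand side of \eqref{EL4.3A} finite, while part~(ii) ($\inf_{k}V_k(i)>0$ for each fixed $i$) is precisely what makes the constant $\kappa_1$ in the drift and the minorization constant $\kappa_2$ uniform in $k$, since both only involve the behaviour of the finitely many kernels $P^{(k)}(\cdot|i)$, $i\in C$. I do not expect a genuine obstacle: the work is bookkeeping, the one substantive choice being the enlargement of $C$ in the \cref{EA2.2}(b) case so that the twisted drift rate is genuinely $<1$ off $C$, and the only thing to confirm is that the invoked version of \cite{MeynT-94b} delivers convergence constants depending solely on the drift and minorization data and not on the particular chain --- which is the content of that theorem.
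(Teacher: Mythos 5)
Your proposal matches the paper's own treatment: the paper, too, records the $k$-uniform geometric drift \eqref{E4.15} with Lyapunov function $\tilde\Lyap_k=\Lyap/V_k$ and the $k$-uniform minorization $P^{(k)}(A|i)\ge\kappa_2\,\delta_{z_\circ}(A)$ on the fixed small set $C$, noting that \cref{L4.2}(ii) is exactly what keeps the constants $\kappa_1,\kappa_2$ free of $k$, and then simply invokes \cite[Theorem~2.3]{MeynT-94b}. Your additional remarks (irreducibility of $P^{(k)}$ inherited from $P(\cdot|\cdot,v_k(\cdot))$, aperiodicity forced by $P^{(k)}(z_\circ|z_\circ)\ge\kappa_2>0$, the enlargement of $C$ in the \cref{EA2.2}(b) case so that the twisted contraction rate stays strictly below one) are exactly the bookkeeping the paper glosses over with ``Similar estimate also possible under \cref{EA2.2}(b)'', so the argument is the same, just spelled out a bit more fully.
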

In the above
$$\norm{\mu_1-\mu_2}_V\df \sup_{|f|\leq V}|\mu_1(f)-\mu_2(f)|,$$
where $\mu_{m}(f) = \sum_{j\in S}f(j)\mu_{m}(j)$ for $m = 1,2$.
Recall from \eqref{EL4.2A} that
\begin{equation*}
\tilde\Lyap_k(i)\geq \kappa^{-1}\Lyap^{1-\alpha}(i)\quad i\in S.
\end{equation*}
Hence using \eqref{E4.15}, we get
\begin{equation}\label{E4.17}
\sum_{j\in S} \Lyap^{1-\alpha}(j) \pi_k(j)\,\leq \kappa_3\quad \text{for all}\; k\in\NN
\end{equation}
for some constant $\kappa_3$. This of course, implies that $\{\pi_k\}$ is tight. We claim that every
limit points of $\{\pi_k\}$ will have support in all of $S$. To see this, suppose along some
subsequence, $\pi_k\rightharpoonup \pi$ as $k\to\infty$. In view of \cref{L4.2} and compactness of action space,
we can extract a further subsequence along which
$$V_k(i)\to V(i)>0\quad \text{and}\quad v_k(i)\to v(i)\quad \text{for all}\; i\in S,$$
as $k\to\infty$.  This of course, implies (see \eqref{EL4.2B})
$$P^{(k)}(j|i) = \frac{V_k(j) P(j|i, v_k(i))}{\sum_{j\in S} V_k(j) P(j|i, v_k(i))}
\to \frac{V(j) P(j|i, v(i))}{\sum_{j\in S} V(j) P(j|i, v(i))} \df \tilde{P}(j|i), $$
as $k\to\infty$. Therefore, for any bounded function $f:S\to \RR$ supported on
a finite set, we get
\begin{align*}
\sum_{j\in S} f(j) \pi(j) &= \lim_{k\to\infty} \sum_{j\in S} f(j) \pi_k(j)
\\
&=\lim_{k\to\infty} \sum_{j\in S }\pi_k(j)\left[\sum_{z\in S} f(z) P^{(k)}(z|j)\right]
\\
&=\sum_{j\in S }\pi(j)\left[\sum_{z\in S} f(z) \tilde{P}(z|j)\right],
\end{align*}
where in the last line we use tightness of $\{\pi_k\}$. By a limiting argument we see that 
$$\sum_{j\in S} f(j) \pi(j)=\sum_{j\in S }\pi(j)\left[\sum_{z\in S} f(z) \tilde{P}(z|j)\right]$$
for all bounded function $f$. Thus $\pi$ is a stationary distribution to the Markov process generated
by the kernel $\tilde{P}$. Since $\tilde{P}$ is irreducible, which follows from the irreducibility of 
$P(j|i, v(i))$, $\pi$ must have support in all of $S$. Now we are ready to prove the following key lemma.
\begin{lemma}\label{L4.4}
Grant \cref{A1.1,A4.1}. Then we have $\theta_k(i)\to 0$ as $k\to\infty$, for all $i\in S$.
\end{lemma}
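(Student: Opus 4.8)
The plan is to argue by contradiction, converting a persistent positive defect of the policy–improvement map into a sub‑invariant probability measure that is incompatible with irreducibility. Suppose the conclusion fails, so that along a subsequence (not relabelled) $\theta_{k+1}(i_*)\ge\delta>0$ for some fixed $i_*\in S$. Since $\{\lambda_k\}$ is nonincreasing (as noted after \eqref{E4.2}) and bounded below by $\lamstr\ge 0$, it converges, so $\lambda_k-\lambda_{k+1}\to 0$; set $\Lambda_{k+1}:=e^{\lambda_{k+1}-\lambda_k}\le 1$, hence $\Lambda_{k+1}\to 1$. Consider the sub‑probability kernel
\[
R_{k+1}(j|i):=e^{c_{k+1}(i)-\lambda_k}\,\frac{V_k(j)}{V_k(i)}\,P(j|i,v_{k+1}(i)),
\]
that is, the $V_k$‑conjugate of $g\mapsto e^{c_{k+1}(\cdot)-\lambda_k}\sum_j g(j)P(j|\cdot,v_{k+1}(\cdot))$. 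By the definition \eqref{theta} of $\theta_{k+1}$ we have $\sum_j R_{k+1}(j|i)=1-\theta_{k+1}(i)$; a short computation using \eqref{E4.10} shows that $R_{k+1}W_k=\Lambda_{k+1}W_k$ with $W_k:=V_{k+1}/V_k>0$, and that the $W_k$‑transform of $R_{k+1}$ is exactly the kernel $P^{(k+1)}$ of \eqref{E4.13}. Consequently $\eta_{k+1}:=\pi_{k+1}/W_k$, with $\pi_{k+1}$ the invariant measure of $\mathbf{Y}^{(k+1)}$ (\cref{L4.3}), is a left eigenvector: $\eta_{k+1}R_{k+1}=\Lambda_{k+1}\eta_{k+1}$.

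\textbf{Tightness of the dual measures.} Since $\tilde{\Lyap}_k(i)/W_k(i)=\Lyap(i)/V_{k+1}(i)=\tilde{\Lyap}_{k+1}(i)$, we have $\eta_{k+1}(\tilde{\Lyap}_k)=\pi_{k+1}(\tilde{\Lyap}_{k+1})$, and by \eqref{E4.15} together with the invariance of $\pi_{k+1}$ this is bounded by $\kappa_1/(1-e^{-(1-\alpha)\gamma})$, uniformly in $k$ (and similarly under \cref{EA2.2}(b)). As $\tilde{\Lyap}_k\ge 1$ and $\tilde{\Lyap}_k\ge\kappa^{-1}\Lyap^{1-\alpha}$ by \eqref{EL4.2A}, the masses $\eta_{k+1}(\mathbf 1)$ and the moments $\eta_{k+1}(\Lyap^{1-\alpha})$ are uniformly bounded; moreover $\eta_{k+1}(\mathbf 1)$ is bounded below by a positive constant (using \cref{L4.2}(ii), the bound $V_{k+1}\le\Lyap$, and the uniform lower bound $\pi_{k+1}(C)\ge c_0>0$ coming from the drift \eqref{E4.15}). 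Since $\Lyap$ is norm‑like, $\{\eta_{k+1}\}$ is tight; set $\hat\eta_{k+1}:=\eta_{k+1}/\eta_{k+1}(\mathbf 1)$, a probability measure.

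\textbf{Passage to the limit.} Invoking \cref{L4.2}, the compactness of each $\Act(i)$, and diagonalisation, pass to a further subsequence along which $\lambda_k\to\rho$ (the limit of the whole sequence), $V_k(i)\to V(i)>0$, $v_{k+1}(i)\to v'(i)\in\Act(i)$, $\theta_{k+1}(i)\to\theta(i)$ with $\theta(i_*)\ge\delta$, and $\hat\eta_{k+1}\to\hat\eta$ weakly, where $\hat\eta$ is a probability measure by tightness. The uniform tail estimate for $\sum_j V_k(j)P(j|i,u)$ (as in the proof of \eqref{EL4.2B}) gives $R_{k+1}(j|i)\to R(j|i):=e^{c(i,v'(i))-\rho}\frac{V(j)}{V(i)}P(j|i,v'(i))$ for all $i,j$, with $\sum_j R(j|i)=1-\theta(i)$; since $\Lambda_{k+1}\to 1$, $R_{k+1}\le 1$, and $\{\hat\eta_{k+1}\}$ is tight, passing to the limit in $\hat\eta_{k+1}R_{k+1}=\Lambda_{k+1}\hat\eta_{k+1}$ yields $\hat\eta R=\hat\eta$. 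Summing this over the target state gives $1=\hat\eta(\mathbf 1)=\sum_i\hat\eta(i)(1-\theta(i))=1-\hat\eta(\theta)$, so $\hat\eta(\theta)=0$. Finally, $\hat\eta(j)=\sum_i\hat\eta(i)R(j|i)$, the equivalence $R(j|i)>0\iff P(j|i,v'(i))>0$, and the irreducibility of $\mathbf X$ under $v'\in\Usm$ force $\hat\eta(i)>0$ for every $i$ (if $\hat\eta(i_1)=0$, propagate the zero backwards along a path reaching $i_1$); then $\hat\eta(\theta)=0$ forces $\theta\equiv 0$, contradicting $\theta(i_*)\ge\delta$. Hence $\theta_k(i)\to 0$ for every $i\in S$.

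\textbf{Main obstacle.} The delicate point is the uniform control of the dual measures $\eta_{k+1}=\pi_{k+1}/W_k$: since $1/W_k=V_k/V_{k+1}$ is not bounded uniformly in $i$, tightness of $\{\pi_{k+1}\}$ does not pass to $\{\eta_{k+1}\}$ by itself, and one must exploit the identity $\eta_{k+1}(\tilde{\Lyap}_k)=\pi_{k+1}(\tilde{\Lyap}_{k+1})$ together with the geometric drift \eqref{E4.15} for the twisted chains and with $\Lambda_{k+1}\to 1$. This is exactly where the strengthened hypotheses of \cref{A4.1} (norm‑like $\Lyap$; $\max_u c\le\eta\ell$ in the unbounded‑cost case) are used; the remaining steps are the same compactness and diagonalisation arguments already employed in \cref{L2.4,L4.2}.
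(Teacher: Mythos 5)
Your proof is correct but follows a genuinely different route from the paper's. The paper proves \cref{L4.4} by combining Dynkin's formula with the twisted-kernel identity \eqref{E4.14} to obtain
\(
V_k(i)\geq V_{k+1}(i)\sum_{t=0}^T e^{(\lambda_{k+1}-\lambda_k)t}\,\tilde\Exp_i^{(k+1)}\bigl[h_{k+1}(Y^{(k+1)}_t)\bigr],
\)
and then splits into cases: if $\lambda_{k+1}=\lambda_k$, then $U_{k+1}=V_k/V_{k+1}$ is a super\-martingale for the recurrent twisted chain, hence constant, so $\theta_{k+1}\equiv 0$; if $\lambda_k$ strictly decreases, the uniform geometric ergodicity of \cref{L4.3} bounds $\tilde\Exp^{(k+1)}_i[h_{k+1}]$ from below by $\pi_{k+1}(h_{k+1}\Ind_\sD)-M\tilde\Lyap_k(i)r^t$, and the divergence of $\sum_t e^{(\lambda_{k+1}-\lambda_k)t}$ as $\lambda_{k+1}\uparrow\lambda_k$ forces $\pi_{k+1}(h_{k+1}\Ind_\sD)\to 0$; the full-support property of subsequential limits of $\pi_k$ then finishes. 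You instead dualize: the sub-Markov kernel $R_{k+1}$ has row deficits $\theta_{k+1}$ and right $\Lambda_{k+1}$-eigenvector $W_k$, so the Doob-untwist $\eta_{k+1}=\pi_{k+1}/W_k$ of the twisted invariant measure is a left $\Lambda_{k+1}$-eigenmeasure; you derive tightness of $\{\eta_{k+1}\}$ directly from \eqref{E4.15} via the identity $\eta_{k+1}(\tilde\Lyap_k)=\pi_{k+1}(\tilde\Lyap_{k+1})$ together with a uniform positive lower bound on the masses, and pass to a limiting invariance equation $\hat\eta R=\hat\eta$ for a probability measure $\hat\eta$. Summing over the target state and using $\sum_j R(j|i)=1-\theta(i)$ gives $\hat\eta(\theta)=0$, and propagating the zero set backward against the irreducible kernel $R$ forces $\hat\eta>0$ everywhere, hence $\theta\equiv 0$, a contradiction. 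What each buys: the paper's argument is rate-based, needing the explicit rate $r$ of \cref{L4.3} and a two-case analysis; yours is a soft compactness and fixed-point argument that unifies the cases, requires only the existence of the twisted invariant measures and the uniform drift \eqref{E4.15} (not the convergence rate), and makes transparent why a persistent defect must integrate to zero against a fully supported invariant probability. The one delicate step, which you correctly flag, is the uniform lower bound on $\eta_{k+1}(\mathbf 1)$; routing it through $\pi_{k+1}(C)\geq c_0$, $V_{k+1}\leq\Lyap$, and \cref{L4.2}(ii) is sound.
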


\begin{proof}
From \eqref{theta} we see that for $k\in\NN$
\begin{equation}\label{EL4.4A}
\theta_{k+1}(i)V_k(i) = V_k(i)-e^{c_{k+1}(i)-\lambda_k}\sum_{j\in S} V_k(j)P(j|i, v_{k+1}(i))
\quad i\in S.
\end{equation}
Applying Dynkin's formula to \eqref{EL4.4A} gives
\begin{align}\label{EL4.4B}
V_k(i) & = \Exp^{v_{k+1}}_i\left[e^{\sum_{t=0}^T (c_{k+1}(X_t)-\lambda_k)}V_k(X_{T+1})\right]
+ \sum_{t=0}^T\Exp^{v_{k+1}}_i\left[e^{\sum_{n=0}^{t-1} (c_{k+1}(X_n)-\lambda_k)} \theta_{k+1}(X_t)
V_{k}(X_t)\right]\nonumber
\\
&\geq \sum_{t=0}^T\Exp^{v_{k+1}}_i\left[e^{\sum_{n=0}^{t-1} (c_{k+1}(X_n)-\lambda_k)} \theta_{k+1}(X_t)
V_{k}(X_t)\right]
\end{align}
for all $T\in\NN$ and $i\in S$. Let us now define
$$h_{k+1}(i)=\theta_{k+1}(i)\frac{V_k(i)}{V_{k+1}(i)}\quad i\in S.$$
Then combining \eqref{E4.14} and \eqref{EL4.4B} we have
\begin{equation}\label{EL4.4C}
V_k(i)\geq V_{k+1}(i)\sum_{t=0}^T e^{(\lambda_{k+1}-\lambda_k)t}\,\tilde\Exp_i^{(k+1)}\left[h_{k+1}\bigl(Y^{(k+1)}_t\bigr)\right].
\end{equation}
Let
$$U_{k+1}(i)=\frac{V_k(i)}{V_{k+1}(i)}\quad i\in S.$$
Since 
$$V_k(i)\geq e^{c_{k+1}(i)-\lambda_k}\sum_{j\in S} V_k(j)P(j|i, v_{k+1}(i)),
\quad V_{k+1}(i)=e^{c_{k+1}(i)-\lambda_{k+1}}\sum_{j\in S} V_{k+1}(j)P(j|i, v_{k+1}(i)),$$
we obtain from \eqref{E4.13}
\begin{equation}\label{EL4.4D}
U_{k+1}(i)\geq e^{\lambda_{k+1}-\lambda_k}\sum_{j\in S} U_{k+1}(j) P^{(k+1)}(j|i)
\quad \text{for all}\; i\in S.
\end{equation}
We now split the proof into two cases.

{\bf Case 1.} Suppose that some $k\geq 0$ we have $\lambda_{k+1}=\lambda_k$.
It then follows from \eqref{EL4.4D} that $\{U_{k+1}(Y^{(k+1)}_n)\}$ is a
super-martingale. Since ${\mathbf Y}^{(k+1)}$ is recurrent, we must have
$U_{k+1}$ constant. From \eqref{theta} we get $\theta_{k+1}=0$ and 
$$V_k(i) = e^{c_k(i)-\lambda_k} \sum_{j\in S} V_k(j) P(j|i, v_k(i))
= \min_{u\in\Act(i)}\left[e^{c(i, u)-\lambda_k} \sum_{j\in S} V_k(j) P(j|i, u)\right].$$
Hence $v_k$ is also a minimizing selector. From \cref{T4.2} we then see that
$V_k$ is a scalar multiple of $\psi^*$ which in turn, implies from \cref{T2.1}
that $\lamstr=\lambda_k=\lambda_{k+1}=\lambda_{k+2}=\cdots$ and $\theta_j=0$ for $j\geq k+1$.

{\bf Case 2.} Suppose that the sequence $\{\lambda_k\}$ is strictly decreasing. Note from
\cref{L4.2} that $\sup_{k}h_k(i)<\infty$ for all $i$. Let $\sD$ be any finite set.
Then using \eqref{EL4.3A} and \eqref{EL4.4C} we have
\begin{align*}
U_{k+1}(i) \geq \sum_{t=0}^T e^{(\lambda_{k+1}-\lambda_k)t}
\Bigl[\pi_{k+1}(h_{k+1}\Ind_{\sD})- M\tilde{\Lyap}(i) r^t\Bigr].
\end{align*}
Letting $T\to\infty$ we get
$$U_{k+1}(i)\geq [1-e^{(\lambda_{k+1}-\lambda_k)}]^{-1}\pi_{k+1}(h_{k+1}\Ind_{\sD})
- \kappa_4 \tilde{\Lyap}(i),$$
for some constant $\kappa_4$, not dependent on $k$. Since $\{U_k(i)\}$ is a bounded
sequence, by \cref{L2.2}, and $\{\lambda_k\}$ is decreasing, it follows from above that
\begin{equation}\label{EL4.4E}
\lim_{k\to\infty} \pi_{k+1}(h_{k+1}\Ind_{\sD})=0,
\end{equation}
for every finite set $\sD$. Now suppose that for some $i\in S$, 
$\limsup_{k\to\infty} \theta_k(i)>0$. We choose $\sD=\{i\}$.
Since every subsequential limit of $\{\pi_k\}$ has support in $S$, 
$\liminf_{k\to\infty} V_k(i)>0$ by \cref{L4.2}(ii), this gives a contradiction
to \eqref{EL4.4E}. Hence we must have $\lim_{k\to\infty} \theta_k(i)=0$ for all
$i\in S$. This completes the proof.
\end{proof}
Now we are ready to complete the proof of \cref{T4.1}.
\begin{proof}[Proof of \cref{T4.1}]
Suppose that $\lim_{k\to\infty}\lambda_k=\rho$. It is obvious that $\rho\geq\lamstr$.
Using \eqref{theta} we write
\begin{equation}\label{ET4.1A}
\theta_{k+1}(i) V_k(i) + \min_{u\in\Act(i)}\left[e^{c(i, u)-\lambda_k}
\sum_{j\in S}V_k(j) P(j|i, u)\right]= V_k(i).
\end{equation}
Using \cref{L4.2} and a diagonalization argument we can find positive $V\in\sorder(\Lyap)$
so that, along some subsequence, $V_k(i)\to V(i)$ for all $i\in S$. Passing
the limit in \eqref{ET4.1A} and using \cref{L4.4} we obtain
$$\min_{u\in\Act(i)}\left[e^{c(i, u)-\rho}
\sum_{j\in S}V(j) P(j|i, u)\right]= V(i)\quad i\in S.$$
From \cref{T4.2} we see that $\rho=\lamstr$ and $V$ is a scalar multiple of $\psi^*$.
Since $\psi^*$ is unique upto a normalization, (ii) follows.
\end{proof}

\subsection{Continuous time stable case}
In this section we prove a PIA for the CTCMP we considered in \cref{S-CT}.
 Most of the statement and proofs in the section are continuous time analogue
 of \cref{S-DPIA}, therefore we mainly provide sketches for most of the results.
 We begin with the following assumption which we impose in this section,
 compare it with \cref{A4.1}.
 
\begin{assumption}\label{A4.2}
We suppose that \cref{A3.4} holds for a norm-like function $\Lyap$. Furthermore, in case of 
\cref{A3.4}(b), we have $\max_{u\in\Act(i)} c(i, u)\leq \eta\ell(i)$ for $i\in S$ and some $\eta\in(0,1)$.
\end{assumption}

As before, we define the generalized Perron-Frobenius eigenvalue as follows
\begin{equation}\label{E4.28}
\lamc=\inf\{\lambda\in\RR\; :\; \exists\; \Psi>0\; \text{satisfying}\;
\min_{u\in\Act(i)} \left[\sum_{j\in S} \Psi(j) q(j|i, u) + c(i, u)\Psi(i)\right]\leq \lambda\Psi(i)\; \forall \; i\in S\},
\end{equation}
and for every stationary Markov control $v$ we also define
\begin{equation}\label{E4.29}
\lamc(v)=\inf\{\lambda\in\RR\; :\; \exists\; \Psi>0\; \text{satisfying}\;
\left[\sum_{j\in S} \Psi(j) P(j|i, v(i)) + c(i, v(i))\Psi(i)\right]\leq \lambda\Psi(i)\; \forall \; i\in S\}.
\end{equation}
A claim analogous to \cref{L4.1} holds true for CTCMP and under the setting of
\cref{T3.1} we also have $\lamc=\lamstr$ and $\lamc(v)=\rho_v$ (see \cref{R3.1}).
We now describe our 
PIA.

\begin{algorithm}\label{Alg4.2}
Policy iteration.
\begin{itemize}
\item[1.] Initialization. Set $k=0$ and select any $v_0\in\Usm$.
\smallskip
\item[2.] Value determination. Let $V_k$
be the unique principal eigenfunction satisfying $V_k(i_0)=1$ and
\begin{equation}\label{E4.30}
\rho_k V_k(i) = \sum_{j\in S} V_k(j) q(j|i, v_k(i)) + c(i, v_k(i)) V_k(i)
\quad i\in S.
\end{equation}
Existence of a unique principal eigenfunction in \cref{E4.30} follows from
\cref{R3.1,L3.5} which is based on \cref{L2.6}. 
We let $\lambda_k\df\lamc(v_k)=\sE_i(c, v_k)=\rho_k$.
\smallskip
\item[3.] Policy improvement. Choose any $v_{k+1}\in\Usm$ satisfying
$$v_{k+1}(i)\in \Argmin_{u\in\Act(i)}\,
\left[\sum_{j\in S} V_k(j) q(j|i, u) + c(i, u) V_k(i)\right], \quad i\in S\,.$$
\end{itemize}
\end{algorithm}
We show that \cref{Alg4.2} converges. 
\begin{theorem}\label{T4.3}
Under \cref{A3.1,A3.1Exp,A3.3,A4.2} the following holds.
\begin{itemize}
\item[(i)] For all $k\in\NN$, we have $\lambda_{k+1}\leq\lambda_k$ and $\lim_{k\to\infty}\lambda_k=\lamstr$.
\item[(ii)] $V_k$ converges pointwise, as $k\to\infty$, to $\psi^*$ where $\psi^*$ is the unique solution to  \eqref{ET3.1A}.
\end{itemize}
\end{theorem}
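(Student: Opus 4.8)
The plan is to transcribe the proof of \cref{T4.1} into the continuous-time setting, replacing the discrete generator $f\mapsto e^{c(i,u)}\sum_jf(j)P(j|i,u)$ by $f\mapsto\sum_jf(j)q(j|i,u)+c(i,u)f(i)$, sums over time by integrals, and \cref{L2.2,P1.1,L2.1,R2.2} by their continuous-time counterparts \cref{L3.2,P3.1,L3.1,R3.1}; I will only point out where the continuous-time situation is not a verbatim translation. Monotonicity $\lambda_{k+1}\le\lambda_k$ is immediate from \eqref{E4.29}, so \cref{A4.2} supplies a fixed finite set $\sB\supset\cK$ and some $\alpha\in(0,1)$ with $\max_{u\in\Act(i)}c(i,u)-\lambda_k<\alpha\gamma$ (resp. $<\alpha\ell(i)$) on $\sB^c$ under \cref{A3.4}(a) (resp. (b)), exactly as in \eqref{E4.8}. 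Rescaling $V_k$ by $\min_\sB(\Lyap/V_k)$ and combining the stochastic representation \eqref{ER3.2B} with \cref{L3.2} via Jensen's inequality (as in \cref{L4.2}(i)) would give a uniform bound $V_k(i)\le\kappa(\Lyap(i))^\alpha$; in particular each $V_k\in\sorder(\Lyap)$ with a $k$-independent bound. The positivity $\inf_kV_k(i)>0$ for every $i$ follows as in \cref{L4.2}(ii): a subsequence along which $V_k(i)\to0$ produces, by diagonalization using this bound, compactness of $\Act(i)$, the continuity hypotheses \cref{A3.3}(a)--(b) and dominated convergence over the tails (valid since $\Lyap$ is norm-like, cf. \eqref{EL4.2B}), a limit $(\lambda_\circ,V)$ with $V(i)=0$, $\max_\sB(\Lyap-V)=0$ and $\lambda_\circ V=\sum_jV(j)q(j|\cdot,v(\cdot))+c(\cdot,v(\cdot))V$; irreducibility under $v$ then forces $V\equiv0$ (cf. \eqref{EL3.4H}), a contradiction.

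Next I would prove the continuous-time uniqueness statement corresponding to \cref{T4.2}: if $(\rho,W)\in\RR_+\times\sorder(\Lyap)$, $W>0$, solves $\rho W(i)=\min_{u\in\Act(i)}[\sum_jW(j)q(j|i,u)+c(i,u)W(i)]$ and $\rho\ge\lamstr$, then $\rho=\lamstr$ and $W$ is a scalar multiple of $\psi^*$; one picks a minimizing selector $v$ of \eqref{ET3.1A}, applies Dynkin's formula to $W$ along $\uuptau(\sB)\wedge\uptau_n$, discards the $\{\uptau_n<\uuptau(\sB)\}$ contribution as $n\to\infty$ using $W\in\sorder(\Lyap)$, \eqref{Lyap1}/\eqref{Lyap2} and \cref{L3.2}, and then runs the comparison argument of \cref{L2.6}/\cref{L3.5} against $\psi^*$. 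With the eigenpair $(\lambda_k,V_k)$ of \eqref{E4.30} I would then introduce the twisted rates $q^{(k)}(j|i)=\frac{V_k(j)}{V_k(i)}q(j|i,v_k(i))$ for $j\ne i$, $q^{(k)}(i|i)=-\sum_{j\ne i}q^{(k)}(j|i)$, let $\mathbf Y^{(k)}$ be the associated irreducible non-explosive chain with semigroup $P^{(k)}_t$ and expectation $\Exp^{(k)}_i$, and prove by Dynkin's formula the Feynman--Kac identity $\Exp_i^{v_k}[e^{\int_0^t(c_k(X_s)-\lambda_k)\D s}V_k(X_t)g(X_t)]=V_k(i)\,\Exp^{(k)}_i[g(Y^{(k)}_t)]$ for nonnegative $g$, where $c_k(i)=c(i,v_k(i))$. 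From \eqref{E4.30}, \eqref{Lyap1} and $\inf_kV_k>0$ one gets a Foster--Lyapunov inequality $\sum_j\tilde\Lyap_k(j)q^{(k)}(j|i)\le-(1-\alpha)\gamma\,\tilde\Lyap_k(i)+\kappa_1\Ind_C(i)$, $\tilde\Lyap_k=\Lyap/V_k$ (and its \cref{A3.4}(b) analogue), with $\kappa_1$ and the finite set $C\supset\sB\cup\cK$ independent of $k$. Since the state space is countable, each $\mathbf Y^{(k)}$ is irreducible, and the twisted rates are controlled on $C$ uniformly in $k$, the set $C$ is petite with $k$-independent constants, so a continuous-time ergodic theorem (the analogue of \cite[Theorem~2.3]{MeynT-94b}) would give $M$ and $r\in(0,1)$ independent of $k$, a unique stationary probability $\pi_k$, and $\norm{P^{(k)}_t-\pi_k}_{\tilde\Lyap_k}\le M\tilde\Lyap_k(i)e^{-rt}$; the drift inequality gives $\sum_j\Lyap^{1-\alpha}(j)\pi_k(j)\le\kappa_3$, hence $\{\pi_k\}$ is tight, and exactly as in \cref{S-DPIA} every subsequential limit of $\pi_k$ is stationary for the irreducible limiting kernel and so has full support on $S$.

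Setting $\theta_{k+1}(i)V_k(i)\df\lambda_kV_k(i)-[\sum_jV_k(j)q(j|i,v_{k+1}(i))+c_{k+1}(i)V_k(i)]$, the policy-improvement step gives $\theta_{k+1}\ge0$, while $q(i|i,\cdot)\ge-q(i)$ together with $\lambda_k\le\lambda_0$ and $c_{k+1}\ge0$ gives $\theta_{k+1}(i)\le\lambda_0+q(i)<\infty$, so $\sup_kh_k(i)<\infty$ for every $i$, where $h_{k+1}=\theta_{k+1}V_k/V_{k+1}$ and $U_{k+1}=V_k/V_{k+1}$ are bounded in $k$ by \cref{L4.2}. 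Applying Dynkin's formula to $\sum_jV_k(j)q(j|\cdot,v_{k+1})+(c_{k+1}-\lambda_k)V_k=-\theta_{k+1}V_k$ and then the Feynman--Kac identity would yield $V_k(i)\ge V_{k+1}(i)\int_0^Te^{(\lambda_{k+1}-\lambda_k)t}\Exp^{(k+1)}_i[h_{k+1}(Y^{(k+1)}_t)]\,\D t$, the continuous-time analogue of \cref{L4.4}. If $\lambda_{k+1}=\lambda_k$ for some $k$, then $U_{k+1}$ is excessive for the (positive recurrent) chain $\mathbf Y^{(k+1)}$, hence constant, so $\theta_{k+1}\equiv0$, $v_k$ is a minimizing selector of the eigenequation, and the uniqueness statement forces $\lambda_k=\lamstr$, $V_k=\kappa\psi^*$ and stabilization of the iterates; otherwise $\{\lambda_k\}$ is strictly decreasing, and letting $T\to\infty$ in the last inequality with the uniform ergodic bound over a finite $\sD$ gives $U_{k+1}(i)\ge(\lambda_k-\lambda_{k+1})^{-1}\pi_{k+1}(h_{k+1}\Ind_\sD)-\kappa_4\tilde\Lyap_{k+1}(i)$; since $U_{k+1}(i)$ is bounded in $k$ and $\lambda_k-\lambda_{k+1}\to0$, this forces $\pi_{k+1}(h_{k+1}\Ind_\sD)\to0$, and taking $\sD=\{i\}$ and using full support of the limits of $\pi_k$ together with $\inf_kV_k(i)>0$ gives $\theta_k(i)\to0$ for all $i$. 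Finally, from $\theta_{k+1}(i)V_k(i)+\min_{u\in\Act(i)}[\sum_jV_k(j)q(j|i,u)+c(i,u)V_k(i)]=\lambda_kV_k(i)$, a diagonalization ($V_k\le\kappa\Lyap^\alpha$, $V_k>0$) and passage to the limit using $\theta_k\to0$ and dominated convergence over the tails would produce a positive $V\in\sorder(\Lyap)$ with $\min_{u\in\Act(i)}[\sum_jV(j)q(j|i,u)+c(i,u)V(i)]=\rho V(i)$ and $\rho=\lim_k\lambda_k\ge\lamstr$; the uniqueness statement then gives $\rho=\lamstr$, and since $V(i_0)=\psi^*(i_0)=1$ it gives $V=\psi^*$, whence $\lim_k\lambda_k=\lamstr$ and $V_k\to\psi^*$ pointwise, establishing (i) and (ii).

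The step I expect to be the main obstacle is the uniform-in-$k$ geometric ergodicity of the twisted chains $\mathbf Y^{(k)}$. Unlike \cref{A4.1}, \cref{A4.2} contains no explicit minorization hypothesis (there is no analogue of \eqref{EA4.1A}), so one must argue that for countable-state continuous-time chains, irreducibility together with the $k$-uniform drift condition already makes a single finite set $C$ petite with constants independent of $k$; establishing this uniformity — rather than the bookkeeping, which is a routine transcription of \cref{S-DPIA} — is where the real care is needed.
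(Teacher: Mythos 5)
The bulk of your transcription matches the paper's proof: the bound $V_k\le\kappa\Lyap^\alpha$ with $\inf_k V_k(i)>0$, the uniqueness statement for $(\rho,W)\in\RR_+\times\sorder(\Lyap)$, the twisted rate kernel $q^{(k)}$ with the Foster--Lyapunov estimates \eqref{E4.33}--\eqref{E4.34}, the Feynman--Kac identity, and the error term $\theta_{k+1}$ from \eqref{ctheta} are all exactly as in the paper. Your final paragraph, however, correctly flags the one place where a verbatim translation breaks, and this is a genuine gap in the proposal as written: under \cref{A4.2} there is no analogue of the minorization \eqref{EA4.1A}, so the $k$-uniform exponential ergodicity of the twisted chains $\mathbf{Y}^{(k)}$ that your argument requires (in order to let $T\to\infty$ before $k\to\infty$ and extract $\pi_{k+1}(h_{k+1}\Ind_{\sD})\to 0$) is not available. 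Rates $q^{(k)}(j|i)$ can degenerate as $v_k$ moves, and irreducibility of each chain plus a $k$-uniform drift does not by itself produce $k$-independent petite-set constants.

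The paper avoids the issue by interchanging the order of limits. Starting from
\[
U_k(i)\;\geq\;\int_0^T e^{(\lambda_k-\lambda_{k-1})t}\,\Exp^{(k)}_i\bigl[\Ind_{\{i\}}(Y^{(k)}_t)\,h_k(Y^{(k)}_t)\bigr]\,\D t,
\]
it fixes $T$ and lets $k\to\infty$ first, using weak convergence of $\mathbf{Y}^{(k)}$ to a single limit chain $\tilde{\mathbf{Y}}$ (via convergence of $q^{(k)}$ to a limit kernel $\tilde q$, boundedness of $U_k(i)$ from \cref{L4.5}, lower bound on $h_k(i)$ from the assumed $\theta_k(i)\to\tilde\theta>0$, and $\lambda_k-\lambda_{k-1}\to 0$). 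This yields $\int_0^T\Prob_i(\tilde Y_t=i)\,\D t\le\kappa_1$ uniformly in $T$. Then one needs the exponential ergodicity, with full support of $\pi$, of just the single limit chain $\tilde{\mathbf Y}$ --- which follows from a single application of \cite[Theorem~3.13]{PRHL} using the limits of $\tilde\Lyap_k$ and $\Phi_{\upkappa,k}$, no uniformity needed --- and letting $T\to\infty$ gives the contradiction. So your ``Case 2'' should be replaced by this compactness argument rather than by establishing a $k$-uniform geometric rate; once that one step is fixed, the rest of the proposal (Case 1, the diagonalization, and the final identification $V=\psi^*$) is exactly the paper's proof.
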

We adapt the proof of \cref{T4.1} with suitable modification. Our next 
lemma follows by adapting the arguments of \cref{L4.2,T4.2} in a straightforward
manner.

\begin{lemma}\label{L4.5}
Grant the setting of \cref{T4.3}. Then the following hold.
\begin{itemize}
\item[(i)] There exists $\kappa$, independent of $k$, such that
\begin{equation}\label{EL4.5A}
V_k(i)\leq \kappa (\Lyap(i))^\alpha\quad \text{for all}\; i\in S,
\end{equation}
where $\alpha$ is given by \eqref{E4.8}.

\item[(ii)] For every $i\in S$ we have $\inf_{k\in\NN} V_k(i)>0$.

\item[(iii)] If for some $(\rho, W)\in\RR_+\times\sorder(\Lyap)$
 with $W>0$ we have
\begin{equation*}
\rho W(i)=\min_{u\in\Act(i)}\left[\sum_{j\in S} W(j) q(j|i, u) + c(i, u) W(i)\right]
\quad i\in S,
\end{equation*}
where $\rho\geq \lamstr$. Then we must have $\rho=\lamstr$
and $W$ is a scaler multiple of $\psi^*$ where $\psi^*$ is given by \eqref{ET3.1A}.
\end{itemize}
\end{lemma}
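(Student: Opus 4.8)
The plan is to transcribe the proofs of \cref{L4.2} and \cref{T4.2} into the continuous-time setting, replacing the one-step Markov-property manipulations by Dynkin's formula for the CTCMP and the exit-time estimate \cref{L2.2} by its continuous-time counterpart \cref{L3.2}. Fix a finite set $\sB\supset\cK$ and an exponent $\alpha\in(0,1)$ chosen as in the discrete-time case (cf.\ \eqref{E4.8}), but with \cref{A3.4} in place of \cref{EA2.2}, so that (using $\lambda_k\le\lambda_0$) one has $\max_{u}c(\cdot,u)-\lambda_k<\alpha\gamma$ on $\sB^c$ under \cref{A3.4}(a) and $\max_{u}c(\cdot,u)-\lambda_k<\alpha\ell(\cdot)$ on $\sB^c$ under \cref{A3.4}(b), for all $k$. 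After the normalization $V_k\mapsto(\min_{\sB}\nicefrac{\Lyap}{V_k})\,V_k$, the representation argument of \cref{L3.4} together with \cref{L3.2} gives $V_k\le\Lyap$ on $S$, that $\Lyap-V_k$ vanishes at some point of $\sB$, and the stochastic representation $V_k(i)=\Exp_i^{v_k}\bigl[e^{\int_0^{\uuptau(\sB)}(c(X_t,v_k(X_t))-\lambda_k)\D t}V_k(X_{\uuptau(\sB)})\bigr]$ for $i\in\sB^c$, all uniformly in $k$.

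For (i), I would insert $V_k\le\Lyap$ into this representation and apply H\"older's inequality with exponents $\nicefrac1\alpha$ and $\nicefrac1{1-\alpha}$, using $e^{\alpha\gamma\uuptau(\sB)}V_k(X_{\uuptau(\sB)})\le\bigl(e^{\gamma\uuptau(\sB)}V_k^{\nicefrac1\alpha}(X_{\uuptau(\sB)})\bigr)^{\alpha}$ and then \cref{L3.2}(i), to obtain $V_k(i)\le(\max_\sB\Lyap)^{1-\alpha}\Lyap^\alpha(i)$ for $i\in\sB^c$; on $\sB$ the bound is immediate. Under \cref{A3.4}(b) the same computation applies with $\gamma\uuptau(\sB)$ replaced by $\int_0^{\uuptau(\sB)}\ell(X_t)\D t$ and \cref{L3.2}(ii) in place of \cref{L3.2}(i).

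For (ii), I would argue by contradiction exactly as in \cref{L4.2}(ii). If $V_k(i)\to0$ along a subsequence, then by (i) and a diagonal argument extract a further subsequence along which $\lambda_k\to\lambda_\circ$, $v_k(j)\to v(j)$ and $V_k(j)\to V(j)$ for every $j$, with $V(j)\le\kappa\Lyap^\alpha(j)$ and $V(i)=0$. The one point needing care is the passage to the limit in the countable sum on the right of \eqref{E4.30} (the continuous-time analogue of \eqref{EL4.2B}); for this I would note that by \cref{A3.4} one has $\sum_{j\neq i}\Lyap(j)q(j|i,u)\le\widehat C\Ind_\cK(i)+q(i)\Lyap(i)$, whence $\sum_{z\ge z_1}V_k(z)q(z|i,v_k(i))\le\kappa\bigl(\sup_{z\ge z_1}\Lyap^{\alpha-1}(z)\bigr)\sum_{z\ge z_1}\Lyap(z)q(z|i,v_k(i))$ is small uniformly in $k$, and then \cref{A3.3}(a)--(b) gives convergence of each finite truncation and hence of the full sum. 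Passing to the limit in \eqref{E4.30} yields $\lambda_\circ V(i')=\sum_j V(j)q(j|i',v(i'))+c(i',v(i'))V(i')$ with $V(i)=0$, while $\Lyap-V$ still vanishes somewhere in $\sB$, so $V\gneq0$; the argument following \eqref{EL3.4H} (irreducibility under $v$ forcing $V\equiv0$) then gives a contradiction.

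For (iii), I would follow \cref{T4.2} line by line. Let $v$ be a minimizing selector of \eqref{ET3.1A}, so that by \cref{L3.4}(ii) one has $\psi^*(i)=\Exp_i^v\bigl[e^{\int_0^{\uuptau(\sB)}(c(X_t,v(X_t))-\lamstr)\D t}\psi^*(X_{\uuptau(\sB)})\bigr]$ on $\sB^c$ for a suitable finite $\sB\supset\cK$. From the hypothesis and $\rho\ge\lamstr$ we get $\sum_j W(j)q(j|i,v(i))+(c(i,v(i))-\lamstr)W(i)\ge(\rho-\lamstr)W(i)\ge0$; applying Dynkin's formula on $[0,\uuptau(\sB)\wedge\uptau_n\wedge T]$ (as in \cref{L3.1}), letting $T\to\infty$ with the help of \eqref{Lyap1}, and then letting $n\to\infty$ using $W\in\sorder(\Lyap)$ together with \eqref{Lyap1} to kill the contribution of $\{\uptau_n<\uuptau(\sB)\}$, I obtain $W(i)\le\Exp_i^v\bigl[e^{\int_0^{\uuptau(\sB)}(c(X_t,v(X_t))-\lamstr)\D t}W(X_{\uuptau(\sB)})\bigr]$ on $\sB^c$. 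Combining this with the equality for $\psi^*$ and running the argument of \cref{L3.5} (equivalently \cref{L2.6}, around \eqref{EL2.6H}: rescale $W$ so that $\psi^*-W\ge0$ on $\sB$ with equality at some $\hat{i}\in\sB$, deduce $\psi^*-W\ge0$ on $S$, evaluate at $\hat{i}$, and invoke irreducibility) gives $W=\kappa\psi^*$; substituting back forces $\rho=\lamstr$. I expect the only genuinely non-routine point to be the uniform control of the transition rates in part (ii); once that is in hand, the remainder is a transcription of the discrete-time proofs with Dynkin's formula in place of the one-step Markov property.
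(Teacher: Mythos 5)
Your proposal is correct and follows exactly the adaptation the paper intends: the paper proves \cref{L4.5} only by asserting that it ``follows by adapting the arguments of \cref{L4.2,T4.2} in a straightforward manner,'' and your write-up is a faithful and careful fleshing-out of that adaptation, replacing the one-step Markov identity by Dynkin's formula, \cref{L2.2} by \cref{L3.2}, and handling the one genuinely new issue---uniform control of the tail of the countable sum $\sum_j V_k(j)\,q(j|i,v_k(i))$ in part (ii)---by the correct observation that \cref{A3.4} bounds $\sum_{j\neq i}\Lyap(j)q(j|i,u)$ by $\widehat C\Ind_\cK(i)+q(i)\Lyap(i)$, so that the bound $V_k\le\kappa\Lyap^\alpha$ with $\alpha<1$ makes the tail small uniformly in $k$.
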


As before, we denote by $c_k(i)=c(i, v_k(i))$.
Define $\Lyap_\upkappa(i)=(\Lyap(i))^{\upkappa}$ where $\upkappa\in (0, 1)$.
Since $t\mapsto t^\upkappa$ is concave in $(0, \infty)$,
we observe from \cref{A3.4} that
\begin{align}\label{E4.32}
\sum_{j\in S} \Lyap_\upkappa(j) q(j|i, u)
& = \sum_{j\neq i} (\Lyap^\upkappa(j)-\Lyap^\upkappa(i)) q(j|i, u)\nonumber
\\
&\leq \sum_{j\neq i} \upkappa \Lyap^{\upkappa-1}(i)(\Lyap(j)-\Lyap(i)) q(j|i, u)
\nonumber
\\
&\leq
\begin{cases}
&\upkappa \Lyap^{\upkappa-1}(i)\widehat{C}\Ind_\cK-\upkappa\gamma \Lyap_\upkappa(i),\quad \text{under \eqref{Lyap1}},
\\
&\upkappa \Lyap^{\upkappa-1}(i)\widehat{C}\Ind_\cK-\upkappa\ell(i) \Lyap_\upkappa(i),\quad \text{under \eqref{Lyap2}}.
\end{cases}
\end{align}
We fix $\upkappa\in  (\eta, 1)$ where $\eta$ is given by \cref{A4.2}.
For \eqref{Lyap1}, we shall fix $\upkappa$ close to $1$ so that 
$\norm{c}_\infty<\upkappa\gamma$. Also, note that we may choose $\alpha<\upkappa$ in  \eqref{E4.8}.

Let us now introduce the {\it twisted rate kernel}. For the eigenpair
$(\lambda_k, V_k)$ we define the kernel
$$q^{(k)}(j|i)=\frac{V_k(j)}{V_k(i)}q(j|i, v_k(i))\quad \text{for}\; i\neq j,
\quad \text{and}\quad q^{(k)}(i|i)=-\sum_{j\neq i} q^{(k)}(j|i).$$
From \eqref{E4.30} we see that
$$-q^{(k)}(i|i) = \rho_k-c_k-q(i|i,v_k(i))<\infty\quad i\in S.$$
Let $\tilde{\Lyap}_k(i)=\frac{\Lyap(i)}{V_k(i)}$ for $i\in S$. From 
\cref{A3.4} it then follows that
\begin{align}\label{E4.33}
\sum_{j\in S} \tilde\Lyap_k(j)q^{(k)}(j|i)
&= \sum_{j\neq} (\tilde\Lyap_k(j)-\tilde\Lyap_k(i))\frac{V_k(j)}{V_k(i)}q(j|i, v_k(i))\nonumber
\\
&= \frac{1}{V_k(i)}\sum_{j\in S} \Lyap(j) q(j|i, v_k(i))
- \frac{\Lyap(i)}{V^2_k(i)}\sum_{j\in S} V_k(j) q(j|i, v_k(i))\nonumber
\\
&\leq 
\begin{cases}
&\frac{1}{V_k(i)}\widehat{C}\Ind_\cK - (\gamma-c_k(i)+\lambda_k)\tilde\Lyap_k(i),
\quad \text{by}\; \eqref{Lyap1},
\\
&\frac{1}{V_k(i)}\widehat{C}\Ind_\cK - (\ell(i)-c_k(i)+\lambda_k)\tilde\Lyap_k(i),
\quad \text{by}\; \eqref{Lyap2},
\end{cases}
\nonumber
\\
&\leq 
\begin{cases}
&\widehat{C}_1\Ind_\cK - (1-\alpha)\gamma\tilde\Lyap_k(i),
\\
&\widehat{C}_1\Ind_\cK - (1-\alpha)\ell(i)\tilde\Lyap_k(i),
\end{cases}
\end{align}
for some constant $\widehat{C}_1$, where we use \eqref{E4.8}. Using 
\eqref{E4.33} and \cite[Theorem~2.2.4]{PZ20} we find a non-explosive 
Markov process ${\mathbf Y^{(k)}}$  corresponding to the kernel $q^{(k)}$.
Furthermore, since $q$ is irreducible for every stationary Markov control, we have
${\mathbf Y^{(k)}}$ irreducible. Letting 
$$\Phi_{\upkappa, k}(i)=\frac{\Lyap_\upkappa(i)}{V_k(i)}\quad i\in S,$$
from \eqref{E4.32} and \eqref{E4.33} we obtain
\begin{equation}\label{E4.34}
\sum_{j\in S} \Phi_{\upkappa,k}(j)q^{(k)}(j|i)
\leq
\begin{cases}
&\widehat{C}_2\Ind_\cK - (\upkappa-\alpha)\gamma \Phi_{\upkappa,k}(i),
\\
&\widehat{C}_2\Ind_\cK - (\upkappa-\alpha)\ell(i) \Phi_{\upkappa,k}(i),
\end{cases}
\end{equation}
for some constant $\widehat{C}_2$. By \cref{A4.2}, 
$\tilde\Lyap_k(i)/\Phi_{\upkappa, k}=\Lyap_{1-\upkappa}(i)\to \infty$ as
$i\to\infty$. Therefore, by \cite[Theorem~3.13]{PRHL}, ${\mathbf Y^{(k)}}$ is
exponentially ergodic with a unique invariant measure $\pi_k$. Using
\eqref{EL4.5A} and \eqref{E4.33} we get that
$$\sup_{k}\sum_{j\in S}\Lyap^{1-\alpha}(j)\pi_k(j)\leq \kappa,$$
for some constant $\kappa$. Thus $\{\pi_k\}$ is tight. As before, see \cref{S-DPIA}, we next show that any subsequential limit of $\{\pi_k\}$ is supported 
on whole of $S$. Since we do not have an exact analogue of \cref{L4.3} for
CTCMP, we modify the argument a bit. Consider a subsequnce of $\{\pi_k\}$
along which $\pi_k\rightharpoonup \pi$. Using a diaginalization argument and
selecting a further subsequence, if required, we can assure that
$$V_k(i)\to V(i), \quad v_k(i)\to v(i)\quad \text{for all}\; i\in S.$$
Using \eqref{EL4.5A} it is easily seen that
$$q^{(k)}(j|i)\to \tilde{q}(j|i),\; \tilde\Lyap_k(i)\to \tilde\Lyap(i),\;
\Phi_{\upkappa, k}(i)\to\Phi_\upkappa(i) \quad \text{for all}\; i, j,$$
where
$$\tilde\Lyap(i)=\frac{\Lyap(i)}{V(i)}, \; 
\Phi_\upkappa(i)=\frac{\Lyap_\upkappa(i)}{V(i)}\,.$$
\eqref{E4.33} and \eqref{E4.34} holds true for the kernel $\tilde{q}$. It can also
be easily checked that $\pi$ is the invariant measure corresponding to the kernel
$\tilde{q}$. Since $\tilde{q}$ generates an irreducible Markov process, $\pi$ must 
have its support in all of $S$. This proves the claim.

Let us know define the error term
\begin{equation}\label{ctheta}
\theta_{k+1}(i)=\lambda_k-c_{k+1}(i)-\frac{1}{V_k(i)}\sum_{j\in S} V_k(j) q(j|i, v_{k+1}(i))\quad i\in S.
\end{equation}
It follows from the definition (see step 3 of \cref{Alg4.2}) that $\theta_k\geq 0$.
On the other hand,
$$\theta_{k+1}(i)\leq \lambda_k + (\sup_{u\in \Act(i)} -q(i|i, u))
\leq \lambda_0 + (\sup_{u\in \Act(i)} -q(i|i, u))<\infty\quad i\in S.$$
Thus, $\{\theta_k\}$ is locally bounded in $k$. Next we show the following.
\begin{lemma}\label{L4.6}
Grant the setting of \cref{T4.3}. Then we have $\lim_{k\to\infty} \theta_k(i)=0$
for all $i\in S$.
\end{lemma}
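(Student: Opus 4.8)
The plan is to follow the proof of \cref{L4.4} line by line, replacing the discrete sums by time integrals and the Meyn--Tweedie input \cref{L4.3} by the exponential ergodicity of the twisted CTCMPs $\mathbf{Y}^{(k)}$ established above. First I would rewrite \eqref{ctheta} as
\begin{equation*}
\sum_{j\in S} V_k(j)\,q(j|i,v_{k+1}(i)) + \bigl(c_{k+1}(i)-\lambda_k\bigr)V_k(i) \,=\, -\,\theta_{k+1}(i)\,V_k(i),\qquad i\in S,
\end{equation*}
and apply Dynkin's formula under the control $v_{k+1}$ to the functional $t\mapsto e^{\int_0^t(c_{k+1}(X_s)-\lambda_k)\D s}V_k(X_t)$ (localising at the exit time from $\sD_n$ and removing the localisation via \cref{A3.4} together with the bound $V_k\le\kappa\Lyap^\alpha$ from \cref{L4.5}(i), exactly as in the proofs of \cref{L3.1,L3.4}). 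Dropping the nonnegative terminal term yields, for all $T>0$ and $i\in S$,
\begin{equation*}
V_k(i)\,\ge\,\int_0^T \Exp_i^{v_{k+1}}\!\left[e^{\int_0^t(c_{k+1}(X_s)-\lambda_k)\D s}\,\theta_{k+1}(X_t)\,V_k(X_t)\right]\D t .
\end{equation*}

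Next I would change measure to the twisted process $\mathbf{Y}^{(k+1)}$ via the continuous-time Doob $h$-transform identity
\begin{equation*}
\Exp_i^{v_{k+1}}\!\left[e^{\int_0^t(c_{k+1}(X_s)-\lambda_{k+1})\D s}\,V_{k+1}(X_t)\,g(X_t)\right] \,=\, V_{k+1}(i)\,\Exp^{(k+1)}_i\!\left[g\bigl(Y^{(k+1)}_t\bigr)\right],\qquad g\ge 0,
\end{equation*}
which is checked by verifying that $q^{(k+1)}$ is precisely the generator obtained from $f\mapsto\sum_j f(j)q(j|\cdot,v_{k+1})+(c_{k+1}-\lambda_{k+1})f$ by the $h$-transform with $h=V_{k+1}$, using \eqref{E4.30} and a localisation/Fatou argument for integrability and non-explosion. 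Writing $U_{k+1}=V_k/V_{k+1}$ and $h_{k+1}=\theta_{k+1}U_{k+1}$, the two displays combine (after factoring $e^{\int_0^t(c_{k+1}-\lambda_k)\D s}=e^{-(\lambda_k-\lambda_{k+1})t}e^{\int_0^t(c_{k+1}-\lambda_{k+1})\D s}$) into
\begin{equation*}
U_{k+1}(i)\,\ge\,\int_0^T e^{(\lambda_{k+1}-\lambda_k)t}\,\Exp^{(k+1)}_i\!\left[h_{k+1}\bigl(Y^{(k+1)}_t\bigr)\right]\D t .
\end{equation*}
Moreover, combining \eqref{E4.30}, the definition of $q^{(k+1)}$ and \eqref{ctheta} one finds $\sum_j U_{k+1}(j)q^{(k+1)}(j|i)=U_{k+1}(i)\bigl[(\lambda_k-\lambda_{k+1})-\theta_{k+1}(i)\bigr]\le(\lambda_k-\lambda_{k+1})U_{k+1}(i)$, so $\{e^{-(\lambda_k-\lambda_{k+1})t}U_{k+1}(Y^{(k+1)}_t)\}$ is a supermartingale, and $\{U_{k+1}(i)\}_k$ is bounded for each fixed $i$ by \cref{L4.5}.

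As in \cref{L4.4} I would then split into two cases. If $\lambda_{k+1}=\lambda_k$ for some $k$, then $U_{k+1}(Y^{(k+1)}_t)$ is a nonnegative supermartingale on the recurrent irreducible process $\mathbf{Y}^{(k+1)}$, hence constant; plugging this into \eqref{ctheta} gives $\theta_{k+1}\equiv 0$, so $\lambda_k V_k(\cdot)=\min_{u\in\Act(\cdot)}[\sum_j V_k(j)q(j|\cdot,u)+c(\cdot,u)V_k(\cdot)]$ with $V_k\in\sorder(\Lyap)$ and $\lambda_k\ge\lamstr$, whence $\lambda_k=\lamstr$ and $V_k\propto\psi^*$ by \cref{L4.5}(iii); by monotonicity of $\{\lambda_k\}$ and uniqueness of $\psi^*$ the scheme is stationary thereafter, so $\theta_j\equiv 0$ for $j\ge k+1$. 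If $\{\lambda_k\}$ is strictly decreasing, set $\beta_k=\lambda_k-\lambda_{k+1}>0$; letting $T\to\infty$ and restricting $h_{k+1}$ to a finite set $\sD$ gives $U_{k+1}(i)\ge\int_0^\infty e^{-\beta_k t}\Exp^{(k+1)}_i[h_{k+1}\Ind_\sD(Y^{(k+1)}_t)]\D t$. Using exponential ergodicity of $\mathbf{Y}^{(k+1)}$ towards $\pi_{k+1}$ with rate and constant uniform in $k$ (these come from the $k$-independent drift inequalities \eqref{E4.33}--\eqref{E4.34} together with the fact that $\cK$ is a petite set for every $\mathbf{Y}^{(k)}$ with a $k$-independent minorisation, a consequence of \cref{L4.5}(ii) and the irreducibility of $q$), one obtains
\begin{equation*}
U_{k+1}(i)\,\ge\,\frac{1}{\beta_k}\,\pi_{k+1}\!\bigl(h_{k+1}\Ind_\sD\bigr)\,-\,C\,\Phi_{\upkappa,k+1}(i),
\end{equation*}
with $C$ independent of $k$. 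Since $U_{k+1}(i)$ and $\Phi_{\upkappa,k+1}(i)$ are bounded in $k$ while $\beta_k\to 0$, this forces $\pi_{k+1}(h_{k+1}\Ind_\sD)\to 0$ for every finite $\sD$. Finally, if $\limsup_k\theta_k(i_\ast)>0$ for some $i_\ast$, take $\sD=\{i_\ast\}$; along a subsequence $\theta_{k+1}(i_\ast)$ is bounded away from $0$, and so is $U_{k+1}(i_\ast)$ by \cref{L4.5}(i)--(ii), hence $\pi_{k+1}(i_\ast)\to 0$; but every subsequential weak limit of $\{\pi_k\}$ has full support, a contradiction. Therefore $\theta_k(i)\to 0$ for every $i\in S$.

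I expect the main obstacle to be exactly the last ingredient: unlike the discrete-time setting, there is no ready-made geometric ergodicity statement with uniform constants to quote (this is the content of the remark that one must ``modify the argument a bit''), so one must extract from the $k$-uniform Foster--Lyapunov inequalities \eqref{E4.33}--\eqref{E4.34}, the lower bound $\inf_k V_k>0$ on finite sets from \cref{L4.5}(ii), and the irreducibility of $q$, an ergodic bound whose constant and rate do not depend on $k$; verifying the uniform petiteness/minorisation of $\cK$ for the twisted chains $\mathbf{Y}^{(k)}$ is the delicate point. A viable alternative, which avoids uniform rates altogether, is to argue by contradiction: pass to a subsequence along which $(\lambda_k,V_k,v_k,q^{(k)},\pi_k)$ converge, apply the (non-uniform) ergodic theorem to the limiting twisted process, and derive a contradiction from the fact that the limiting invariant measure charges $i_\ast$.
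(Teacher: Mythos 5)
Your Dynkin-plus-$h$-transform reduction is exactly the paper's, and the two-case split with $\lambda_{k+1}=\lambda_k$ handled by the supermartingale/\cref{L4.5}(iii) argument is identical. The divergence is in Case 2, and your primary route there has a genuine gap that you yourself partially flag.

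In discrete time, the $k$-uniform minorisation that makes \cref{L4.3} work is bought by Assumption~\cref{A4.1}, specifically \eqref{EA4.1A}: $\inf_{u}P(z_\circ|i,u)>0$ gives $P^{(k)}(\cdot|i)\ge\kappa_2\delta_{z_\circ}$ on $C$ with $\kappa_2$ independent of $k$ once you invoke \cref{L4.2}(ii). In the continuous-time Assumption~\cref{A4.2} there is no analogue of \eqref{EA4.1A} at all, and even if there were, a condition on the rate matrix $q^{(k)}$ does not directly translate to a time-$t_0$ transition-probability minorisation without nontrivial work; so the claim that ``$\cK$ is a petite set for every $\mathbf{Y}^{(k)}$ with a $k$-independent minorisation, a consequence of \cref{L4.5}(ii) and the irreducibility of $q$'' is an assertion, not a proof, and nothing in the standing hypotheses supplies it. This is precisely what the paper's remark ``Since we do not have an exact analogue of \cref{L4.3} for CTCMP, we modify the argument a bit'' is warning about. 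Your first plan would need an additional minorisation-type hypothesis on $q$ to close.

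Your fallback (``pass to a subsequence along which $(\lambda_k,V_k,v_k,q^{(k)},\pi_k)$ converge, apply the non-uniform ergodic theorem to the limiting twisted process, and derive a contradiction from the fact that the limiting invariant measure charges $i_\ast$'') is indeed the paper's argument, and it is worth spelling out why it sidesteps uniformity: keep $T$ fixed, use $e^{(\lambda_{k+1}-\lambda_k)t}\to 1$ uniformly on $[0,T]$, distributional convergence $\mathbf{Y}^{(k+1)}\Rightarrow\tilde{\mathbf Y}$ on $[0,T]$, the $k$-uniform upper bound on $U_{k+1}(i_\ast)$ from \cref{L4.5}, and the lower bound $h_{k+1}(i_\ast)\ge\kappa_0>0$ (from $\theta_{k+1}(i_\ast)\to\tilde\theta>0$ and $\inf_k V_k(i_\ast)/V_{k+1}(i_\ast)>0$), to get a bound $\int_0^T\Exp_i\bigl[\Ind_{\{i_\ast\}}(\tilde Y_t)\bigr]\D t\le\kappa_1$ with $\kappa_1$ independent of $T$. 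Only the \emph{single limiting} chain $\tilde{\mathbf Y}$ needs to be (exponentially) ergodic, which follows from \eqref{E4.33}--\eqref{E4.34} in the limit and \cite[Theorem~3.13]{PRHL}; since $\pi(i_\ast)>0$, the left side grows linearly in $T$, a contradiction. No $k$-uniform rates are required anywhere. I would recommend promoting your one-sentence ``alternative'' to the main argument and dropping the uniform-ergodicity route unless you are prepared to add a minorisation hypothesis on $q$.
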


\begin{proof}
Suppose, on the contrary, that for some $i\in S$, we have 
$\limsup_{k\to\infty}\theta_k(i)>0$. Passing to the subsequence we assume that
$\theta_k(i)\to \tilde\theta>0$. Now applying Dynkin's formula to \eqref{ctheta}
we have
\begin{equation}\label{EL4.6A}
V_{k-1}(i)\geq \int_0^T \Exp^{v_k}_i\left[e^{\int_0^t (c_k(X_s)-\lambda_{k-1})\D{s}} \theta_k(X_t)V_{k-1}(X_t)\right]\D{t},
\end{equation}
for all $T>0$. On the other hand, for $U_k(i)=\frac{V_{k-1}(i)}{V_k(i)}$, we have
$$\sum_{j\in S} U_k(j) q^{(k)}(j|i)\leq (\lambda_{k-1}-\lambda_k) U_k(i).$$
Thus, if $\lambda_{k-1}=\lambda_{k}$ the proof follows from the argument of 
\cref{L4.4}, Case 1. So we assume that $\{\lambda_k\}$ is strictly decreasing.

To this end, we need a continuous time counterpart of \eqref{E4.14}. Suppose that
$g$ is a non-negative function supported on a finite subset of $S$. Defining
$u(t)(i)=\Exp^{(k)}_i[g(Y^{(k)}_t)]$ we know that
\begin{equation*}
\frac{\D u(t)(i)}{\D{t}}= \sum_{j\in S} u(t)(j) q^{(k)}(j|i)\quad i\in S.
\end{equation*}
Using \eqref{E4.30} this can be rewritten as
$$\frac{\D V_k(i)u(t)(i)}{\D{t}}= \sum_{j\in S} V_k(j)u(t)(j) q(j|i, v_k(i))
+ (c_k(i)-\lambda_k) V_k(i)u(t)(i).$$
Thus, from Dynkin's formula, we obtain
\begin{equation}\label{EL4.6B}
V_k(i)\Exp^{(k)}_i[g(Y^{(k)}_t)]=\Exp^{v_k}_i\left[e^{\int_0^t (c_k(i)-\lambda_k)\D{s}} g(X_t)V_k(t)\right].
\end{equation}
By a standard approximation the above relation can be extended to all nonnegative
functions $g$ on $S$. Let $h_k(i)=\theta_k(i)V_{k-1}(i)/V_k(i)$. Using 
\eqref{EL4.6A} and \eqref{EL4.6B} we obtain
$$ U_{k}(i) \geq  \int_0^T e^{(\lambda_k-\lambda_{k-1})t} 
\Exp^{(k)}_i[\Ind_{\{i\}}(Y^{(k)}_t)h_k(Y^{(k)}_t)].$$
Now we let $k\to\infty$, so that $q^{(k)}\to\tilde{q}$ and $\pi_k\to\pi$,
along some subsequnece. Since ${\mathbf Y^{(k)}}$ converges in distribution
to $\tilde{\mathbf Y}$ where $\tilde{\mathbf Y}$ is the Markov process
associated to $\tilde{q}$ (this can be seen by adapting the arguments of 
\cite[Lemma~5.8]{PRHL} ) and, $U_k(i)$ is bounded above and $h_k(i)$ is bounded 
below (by \cref{L4.5}), we
get from above
$$\int_0^T  
\Exp^{(k)}_i[\Ind_{\{i\}}(\tilde{Y}_t)]\leq \kappa_1$$
for some $\kappa_1$. But $\tilde{\mathbf Y}$ is exponentially ergodic
\cite[Theorem~3.13]{PRHL} with
invariant measure $\pi$ having support in $i$. Letting $T\to\infty$, we get a contradiction.
\end{proof}

Now we can complete the proof of \cref{T4.3}.
\begin{proof}[Proof of \cref{T4.3}]
The proof follows from \cref{L4.5,L4.6} together with the arguments of \cref{T4.1}.
\end{proof}

\subsection*{Acknowledgement}
We thank the anonymous reviewers for their careful reading of our    manuscript and suggestions.
The authors are grateful to Mrinal Ghosh, Chandan Pal and Subhamay Saha for their comments
on this article.
The research of Anup Biswas was supported in part by a SwarnaJayanti fellowship and DST-SERB grant MTR/2018/000028. Somnath Pradhan was supported in part by a National Postdoctoral Fellowship PDF/2020/001938.
\bibliographystyle{plain}
\bibliography{Risk_Markov}

\end{document}